\newcommand{\SchS}{(Sch/S)_{fppf}}
\newcommand{\ObSchS}{{\rm{Ob}}((Sch/S)_{fppf})}
\newcommand{\SchU}{(Sch/U)_{fppf}}
\newcommand{\SchV}{(Sch/V)_{fppf}}
\newcommand{\SchUU}{(Sch/U')_{fppf}}
\newcommand{\SchW}{(Sch/W)_{fppf}}
\newcommand{\ObSchU}{{\rm{Ob}}((Sch/U)_{fppf})}
\newcommand{\calx}{\mathcal X}
\newcommand{\caly}{\mathcal Y}
\newcommand{\cals}{\mathcal S}
\newcommand{\calz}{\mathcal Z}
\newcommand{\calc}{\mathcal C}
\newcommand{\Ob}{{\rm{Ob}}}
\newcommand{\N}{\mathbb N}
\newcommand{\cal}{\mathcal}
\renewcommand{\c}{\circ}
 \theoremstyle{plain}
\newtheorem{theorem}{Theorem}[section]
\newtheorem{corollary}[theorem]{Corollary}
\newtheorem{lemma}[theorem]{Lemma}
\newtheorem{proposition}[theorem]{Proposition}
\newtheorem{definition-proposition}[theorem]{Definition/Proposition}
\newtheorem{definition}[theorem]{Definition}
 \theoremstyle{definition}
\newtheorem{definition1}[theorem]{Definition}
\theoremstyle{remark}
\newtheorem{remark}[theorem]{Remark}
\numberwithin{equation}{section}
\def\@seccntformat#1{\@ifundefined{#1@cntformat}%
    {\csname the#1\endcsname\quad}%      default
    {\csname #1@cntformat\endcsname}}%   individual control
\newcommand{\section@cntformat}{\S\thesection.\enspace}
\newcommand{\subsection@cntformat}{\S\thesubsection.\enspace}
\newcommand{\subsubsection@cntformat}{\S\thesubsubsection\enspace}
\definecolor{cite}{rgb}{0.50,0.00,1.00}
\definecolor{url}{rgb}{0.00,0.50,0.75}
\definecolor{link}{rgb}{0.00,0.00,0.50}
\begin{document}
\title{Perfect algebraic stacks}

\author{Tianwei Liang}

\begin{abstract}
We develop a theory of perfect algebraic stacks that extend our theory of perfect algebraic spaces in \cite{Liang}, \cite{Liang1} to the setting of algebraic stacks. We prove several desired properties of perfect algebraic stacks. This extends some previous results of perfect schemes and perfect algebraic spaces, including the recent one developed by Bertapelle et al. in \cite{Bertapellea}. Moreover, our theory extends the previous one developed by Xinwen Zhu in \cite{Zhu1}.

Our method to define perfect algebraic stacks differs from all previous approaches, as we utilize representability of algebraic spaces. There is a natural notion of algebraic Frobenius morphisms of algebraic stacks. The algebraic Frobenius morphism provides one with an explicit description of the perfection of an algebraic stack. This gives rise to the perfection functor on algebraic stacks, which enables us to pass between the usual and the perfect world.
\end{abstract}

\maketitle

\tableofcontents

\section{Introduction}
Let $p$ be a prime number and let $\mathbb{F}_{p}$ be a finite field of order $p$. All rings will be tacitly commutative with identity.
\subsection{Motivation}
The notion of perfect rings in commutative algebra is particularly important in algebraic geometry. Many significant researches in algebraic geometry are surrounding the setting of perfectness, see \cite{Scholze1}, \cite{Liu}, and \cite{Scholze2} for example. One naturally desires to generalize perfect rings to the setting of schemes. In the classical paper \cite{Serre}, Serre introduced the so-called perfect varieties. However, a perfect variety is in general not a scheme. Until another classical paper of Greenberg \cite{Greenberg}, the subject of perfect schemes comes into being. In \cite{Greenberg}, Greenberg introduced the notions of perfect closures of rings and schemes. This naturally gives rise to the so-called perfection functor. The perfection functor plays a significant role in many areas of algebraic geometry, see \cite{Liu}, \cite{Bertapelle1}, \cite{Boyarchenko}, and \cite{Bertapellea}.

For greater generality, one desires to generalize perfect schemes to the setting of algebraic spaces. Once we get perfect algebraic spaces, one would like to generalize it to perfect algebraic stacks. In \cite{Zhu} and \cite{Zhu1}, Xinwen Zhu formalized the notions of perfect algebraic spaces, and more generally, perfect algebraic stacks. However, Zhu's perfect algebraic spaces and perfect algebraic stacks are only defined over a perfect field of characteristic $p$. It would be desirable if one can extend perfect algebraic spaces and perfect algebraic stacks to be over some arbitrary base scheme. Another restriction is that Zhu's definitions depends on the Frobenius morphisms. For arbitrary algebraic space or algebraic stack $X$, the Frobenius morphism $X\rightarrow X$ may not make sense. One has no idea to decide the endomorphism of a set or a category to be Frobenius. In fact, we even do not know the characteristic of an algebraic space or an algebraic stack.

In \cite{Liang}, we define perfect algebraic spaces in a totally different approach, which turns out to solve all these problems. Next, in \cite{Liang1}, we construct the perfection of any algebraic space in characteristic $p$. This naturally gives rise to the perfection functor on algebraic spaces. The perfection functor enjoys several desirable properties. It enables us to pass between the usual and the perfect world.

There are also other references concerning perfect algebraic stacks, see \cite{Francis}, \cite{Hall}, and \cite{Lurie}. However, they are in the setting of derived algebraic geometry and their terminologies are completely different to us.

\subsection{Results}
This article is the subsequence of \cite{Liang} and \cite{Liang1}. In this article, we will continue our research by generalizing our perfect algebraic spaces to perfect algebraic stacks. Rather than utilizing any endomorphism $\calx\rightarrow\calx$ of an algebraic stack $\calx$ to make it perfect, we will make use of the representability by algebraic spaces of $1$-morphisms. This makes our approach different to all previous ones.

We begin by introducing the notions of perfect categories fibred in groupoids and perfect $1$-morphisms. The material forms the foundation of the sequent sections. Our approach gives rise to several types of perfect algebraic stacks. Let $S$ be some base scheme. Let $0\leq i\leq3$ be an integer. Let $AStack_{S}$ be the category of algebraic stacks over $S$.

Let $\textrm{Perf}AStack_{S}$, $\textrm{\underline{Perf}}AStack^{i}_{S}$, $\textrm{\underline{QPerf}}AStack^{i}_{S}$, $\textrm{\underline{SPerf}}AStack^{i}_{S}$, and $\textrm{\underline{StPerf}}AStack^{i}_{S}$ denote the 2-categories of perfect algebraic stacks respectively. Let $\textrm{Perf}DM_{S}$, $\textrm{\underline{Perf}}DM_{S}$, $\textrm{\underline{QPerf}}DM^{i}_{S}$, $\textrm{\underline{SPerf}}DM^{i}_{S}$, and $\textrm{\underline{StPerf}}DM^{i}_{S}$ denote the 2-categories of perfect Deligne-Mumford stacks respectively. These 2-categories all enjoy the following desirable property, which generalizes our result in \cite[Proposition 1.1]{Liang}.
\begin{proposition}
The $2$-categories ${\rm{Perf}}AStack_{S}$, $\underline{{\rm{Perf}}}AStack^{i}_{S}$, ${\rm{\underline{QPerf}}}AStack^{i}_{S}$, ${\rm{\underline{SPerf}}}AStack^{i}_{S}$, and ${\rm{\underline{StPerf}}}AStack^{i}_{S}$ of perfect algebraic stacks are all stable under $2$-fibre products. In particular, the $2$-categories ${\rm{Perf}}DM_{S}$, ${\rm{\underline{Perf}}}DM_{S}$, ${\rm{\underline{QPerf}}}DM^{i}_{S}$, ${\rm{\underline{SPerf}}}DM^{i}_{S}$, and ${\rm{\underline{StPerf}}}DM^{i}_{S}$ of perfect Deligne-Mumford stacks are also stable under $2$-fibre products.
\end{proposition}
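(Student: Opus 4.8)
The plan is to treat all the listed $2$-categories uniformly, reducing in each case to the defining condition expressed through (perfect) algebraic spaces, and then invoking the corresponding stability result for perfect algebraic spaces, namely \cite[Proposition 1.1]{Liang}. First I would recall the classical facts that the $2$-fibre product $\calx\times_{\calz}\caly$ of algebraic stacks over $S$ exists and is again an algebraic stack, and that the $2$-category of Deligne--Mumford stacks is likewise closed under $2$-fibre products; hence the only point to verify is that $\calx\times_{\calz}\caly$ inherits whatever flavour of perfectness has been imposed on $\calx$, $\caly$ and $\calz$.

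Second, for the variants defined through an atlas --- the existence of a smooth (respectively \'etale, in the Deligne--Mumford case) surjective $1$-morphism from a perfect algebraic space, possibly together with a level-$i$ perfectness hypothesis on the diagonal --- I would write down an explicit atlas for the $2$-fibre product. Given atlases $U\to\calx$ and $V\to\caly$ by perfect algebraic spaces, the fibre product $U\times_{\calz}V$ is an algebraic space, because the diagonal of $\calz$ is representable by algebraic spaces, and it maps smoothly and surjectively onto $\calx\times_{\calz}\caly$. Exhibiting $U\times_{\calz}V$ as a base change of $\Delta_{\calz}$ along $U\times_{S}V\to\calz\times_{S}\calz$ and using \cite[Proposition 1.1]{Liang} for the fibre product $U\times_{S}V$, one checks that $U\times_{\calz}V$ is again a perfect algebraic space, which gives the atlas condition. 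The diagonal $\Delta_{\calx\times_{\calz}\caly}$ is assembled from $\Delta_{\calx}$, $\Delta_{\caly}$ and $\Delta_{\calz}$ by base change and fibre product, so the same stability result, applied to these, shows that the level-$i$ condition on the diagonal is preserved and that the index $i$ is not raised.

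Third, for the variants formulated purely via representability of $1$-morphisms by algebraic spaces of the prescribed kind, I would argue on test objects: a $1$-morphism $T\to\calx\times_{\calz}\caly$ out of a scheme or algebraic space $T$ is the same datum as a compatible pair of $1$-morphisms $T\to\calx$, $T\to\caly$ over $\calz$, so the algebraic space representing the relevant $2$-fibre product is a fibre product of the individually perfect algebraic spaces produced by $\calx$, $\caly$ and $\calz$. Stability of perfect algebraic spaces under fibre products then yields the conclusion, and the additional quasi-compactness- or separatedness-type requirements attached to the remaining underlined variants are themselves stable under fibre products, so the argument is unchanged.

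The principal difficulty I anticipate is bookkeeping rather than anything conceptual: one must make sure that every auxiliary algebraic space that appears when a $2$-fibre product is unwound --- those coming from the diagonal, and those coming from the $2$-isomorphisms identifying the two composites into $\calz$ --- is genuinely a fibre product of algebraic spaces to which \cite[Proposition 1.1]{Liang} applies, and that the level index $i$ never increases along the way. Once the correct atlas and the correct description of the diagonal are recorded, each of the cases collapses to a single application of the algebraic-space statement, and the Deligne--Mumford assertions follow because the \'etale, respectively unramified, conditions cutting $DM_{S}$ out of $AStack_{S}$ are themselves stable under $2$-fibre product.
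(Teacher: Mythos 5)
Your proposal follows essentially the same route as the paper: the atlas-based case is handled exactly as in the paper's proof for ${\rm{Perf}}AStack_{S}$ (resp.\ ${\rm{Perf}}DM_{S}$), by covering $\calx\times_{\calz}\caly$ with $\SchU\times_{\calz}\SchV$ and using the perfectness of the diagonal of $\calz$ together with the stability of perfect algebraic spaces under fibre products, while the relatively $i$-perfect, $i$-quasiperfect, $i$-semiperfect and $i$-strongly perfect variants are handled exactly as in Proposition~\ref{P6}, via the Cartesian square expressing $\textit{Isom}(u,v)$ as a fibre product of the $\textit{Isom}$ spaces of $\calx$, $\caly$ and $\calz$ and then invoking \cite[Propositions 3.7 and 3.9]{Liang}. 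The only discrepancies are cosmetic: you gloss the quasi-/semi-/strongly perfect variants as quasi-compactness or separatedness conditions, whereas they are simply further classes of perfect algebraic spaces whose stability under fibre products is quoted wholesale from \cite{Liang}, and, like the paper itself, your atlas argument really uses a perfectness hypothesis on $\Delta_{\calz}$ rather than only a perfect atlas for $\calz$.
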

More importantly, we formalize notion of characteristic of an algebraic stack. Given an algebraic stack $\calx$ in characteristic $p$, there is a canonical map $\calx\rightarrow\calx$, which is called the \textit{algebraic Frobenius morphism} of $\calx$. Although our definition of perfect algebraic stacks is completely separated from Frobenius morphisms, it recovers the following desirable statement.
\begin{theorem}
Let $\calx$ be an algebraic stack in characteristic $p$ over $S$ with algebraic Frobenius $\Psi_{\calx}:\calx\rightarrow\calx$. Then $\calx$ is perfect if and only if $\Psi_{\calx}$ is an equivalence.
\end{theorem}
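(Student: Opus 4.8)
The plan is to reduce the assertion to the corresponding equivalence for algebraic spaces, which is available from \cite{Liang} and \cite{Liang1}, by descending along a presentation. Choose a presentation $p\colon U\to\calx$, that is, a smooth surjective $1$-morphism from a scheme $U$ (which may be taken \'etale when $\calx$ is Deligne--Mumford), and set $R=U\times_{\calx}U$, so that $\calx\simeq[U/R]$ for the resulting smooth groupoid $R\rightrightarrows U$ in algebraic spaces of characteristic $p$. Because the algebraic Frobenius is functorial in $1$-morphisms --- being, by its construction in the preceding sections, the component of a $2$-natural transformation of the identity $2$-functor --- it restricts to a morphism of groupoid objects $(\Psi_U,\Psi_R)\colon(R\rightrightarrows U)\to(R\rightrightarrows U)$ inducing on quotient stacks a $1$-morphism canonically $2$-isomorphic to $\Psi_\calx$; in particular there is a $2$-commutative square
\[
\begin{CD}
U @>{\Psi_U}>> U\\
@V{p}VV @VV{p}V\\
\calx @>{\Psi_\calx}>> \calx.
\end{CD}
\]
The two inputs I will feed into the argument are: (a) by the definition of a perfect algebraic stack together with the preceding Proposition (stability under $2$-fibre products), $\calx$ is perfect if and only if both $U$ and $R$ are perfect for a suitably chosen presentation; and (b) the algebraic-space case of the theorem, proved in \cite{Liang}, \cite{Liang1}: an algebraic space in characteristic $p$ is perfect if and only if its algebraic Frobenius is an isomorphism.

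For the ``only if'' direction, assume $\calx$ is perfect. By (a) we may choose a presentation $p\colon U\to\calx$ for which $U$ and $R$ are perfect, and then by (b) the morphisms $\Psi_U$ and $\Psi_R$ are isomorphisms. A morphism of groupoid objects that is an isomorphism on objects and on arrows induces an equivalence of quotient stacks, so $\Psi_\calx$ is an equivalence.

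For the ``if'' direction, assume $\Psi_\calx$ is an equivalence; by (a) and (b) it suffices to exhibit a presentation $p\colon U\to\calx$ for which $\Psi_U$ and $\Psi_R$ are isomorphisms. Pulling the square above back along $p$ on the right exhibits $\Psi_U$ as the composite $U\xrightarrow{F_{U/\calx}} U\times_{\calx,\Psi_\calx}\calx\longrightarrow U$ of the relative Frobenius of $p$ with the base change of $\Psi_\calx$; since the latter is an equivalence, $\Psi_U$ is an isomorphism if and only if $F_{U/\calx}$ is, and likewise for $R$. \textbf{This is the main obstacle.} When $p$ is \'etale the relative Frobenius $F_{U/\calx}$ is automatically an isomorphism, which disposes of the Deligne--Mumford case immediately; but for a general smooth $p$ it is only faithfully flat, so no ordinary presentation will do. One must therefore work with a presentation adapted to the perfect setting --- a ``perfectly smooth'' presentation $p\colon U\to\calx$ with $U$ a perfect algebraic space, obtained from an ordinary presentation by applying the perfection functor levelwise to the groupoid $R\rightrightarrows U$ --- and check that it genuinely presents $\calx$ (and not its perfection), that the algebraic Frobenius remains compatible with it, and that the hypothesis on $\Psi_\calx$ then forces $\Psi_U$ and $\Psi_R$ to be isomorphisms by faithfully flat descent through $p$. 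Establishing the existence and the required properties of such a presentation is the technical heart of the proof; granting it, the remaining steps are formal, relying only on functoriality of $\Psi$, the groupoid description of $\calx$, and the algebraic-space statement (b).
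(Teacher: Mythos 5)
Your overall strategy --- pass to a groupoid presentation $R\rightrightarrows U$ and reduce to the algebraic-space case --- is the same as the paper's, and your ``only if'' direction is essentially the paper's argument (which goes through Proposition~\ref{P11}: when $U$ is perfect the groupoid endomorphism $\Psi_f=(\Phi_U,\Psi_F)$ is an isomorphism, hence $\Psi_{\calx}=[\Psi_f]$ is an equivalence). The problem is the ``if'' direction, which you correctly flag as the main obstacle and then do not close. Your proposed repair --- replace the atlas by the levelwise perfection $R^{pf}\rightrightarrows U^{pf}$ and ``check that it genuinely presents $\calx$ and not its perfection'' --- cannot succeed: by Theorem~\ref{T2} and Proposition~\ref{P12} the quotient stack $[h_U^{pf}/F^{pf}]$ is equivalent to $\calx^{pf}$, and $\calx^{pf}\cong\calx$ holds precisely when $\calx$ is perfect, which is the conclusion you are trying to establish. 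So that route is circular, and as written the hard direction is proved only in the Deligne--Mumford case, where (as you note) the relative Frobenius of an \'etale atlas is an isomorphism.

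For comparison, the paper closes this direction without manufacturing a perfect atlas. From the hypothesis that $\Psi_{\calx}=[\Psi_f]$ is an equivalence it deduces, via \cite[Tag046T]{Stack Project}, that $(h_U,F,s,t,c)$ is the restriction of itself along $\Phi_U\colon h_U\to h_U$, so that the induced functor of groupoids $(h_U(T),F(T),s,t,c)\to(h_U(T),F(T),s,t,c)$ is fully faithful for every $T$; this forces $\Psi_F\colon F\to F$ to be an isomorphism, and the equivalence ``$\Phi_U$ is an isomorphism if and only if $\Psi_F$ is'' established in the proof of Proposition~\ref{P11} then gives that $U$ is perfect, hence that $\calx$ is. Whatever one's view of the details there, that is exactly the ingredient your write-up lacks: a mechanism for transporting invertibility from $\Psi_{\calx}$ back to the smooth atlas that does not pass through the perfection of the groupoid. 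Until you supply such a step, the statement is not proved.
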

Every algebraic stack $\calx$ in characteristic $p$ over $S$ has a perfection $\calx^{pf}$. The algebraic Frobenius morphism provides us with an explicit description of the perfection of $\calx$.
\begin{proposition}
Let $\calx$ be an algebraic stack in characteristic $p$ over $S$ with perfection $\calx^{pf}$. Then there is an equivalence
$$
\calx^{pf}\cong\lim_{\substack{\longleftarrow \\ \Psi_{\calx}}}\calx.
$$
\end{proposition}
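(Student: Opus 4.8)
The plan is to identify $\caly:=\varprojlim_{\Psi_\calx}\calx$ with $\calx^{pf}$ by checking that it enjoys the universal property characterising the perfection, and simultaneously to pin it down concretely through a smooth presentation. First I would recall the corresponding statement for algebraic spaces from \cite{Liang1}: if $X$ is an algebraic space in characteristic $p$ over $S$ with algebraic Frobenius $\Psi_X$, then $X^{pf}\simeq\varprojlim_{\Psi_X}X$. Since the algebraic Frobenius is functorial in the stack, choosing a smooth atlas $U\to\calx$ with $R=U\times_\calx U$ produces a smooth groupoid $(U,R)$ in algebraic spaces together with an endomorphism $\Psi_{(U,R)}\colon(U,R)\to(U,R)$ of groupoids which induces $\Psi_\calx$ on $[U/R]\simeq\calx$. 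Iterating gives a tower of groupoids whose transition maps are algebraic Frobenii, hence integral, affine, and universal homeomorphisms.

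Step two is to show $\caly$ is an algebraic stack and compute it from the atlas. Because the transition morphisms are of the above type, the limit of the tower of presentations is again a groupoid in algebraic spaces; here I would invoke the explicit construction of perfection to identify $\varprojlim_{\Psi_U}U\simeq U^{pf}$ and $\varprojlim_{\Psi_R}R\simeq R^{pf}$ and to see that $(U^{pf},R^{pf})$ is a smooth groupoid. The compatibility I need is that forming the quotient stack commutes with these particular tower limits, i.e. $\varprojlim_{\Psi_\calx}[U/R]\simeq[\varprojlim_{\Psi_U}U\,/\,\varprojlim_{\Psi_R}R]$; this reduces, after descent, to the analogous commutation on each term of the simplicial diagram built from $U$ and $R$, all of which is governed by the algebraic space case. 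This exhibits $\caly\simeq[U^{pf}/R^{pf}]$, which by the description of perfection from the earlier sections is exactly $\calx^{pf}$.

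Alternatively, and this is the route I would actually write up, one verifies the universal property abstractly. The tower $\caly$ carries canonical projections $\mathrm{pr}_n\colon\caly\to\calx$ with $2$-isomorphisms $\Psi_\calx\circ\mathrm{pr}_{n+1}\simeq\mathrm{pr}_n$, and the index-shift endomorphism of the tower induces an algebraic Frobenius $\Psi_\caly$ which is visibly an equivalence; hence by the preceding Theorem $\caly$ is perfect, and $\mathrm{pr}_0\colon\caly\to\calx$ is the candidate structure morphism. Conversely, given any perfect algebraic stack $\calz$ over $S$ with a $1$-morphism $g\colon\calz\to\calx$, perfectness makes $\Psi_\calz$ an equivalence, so the family $g_n:=g\circ\Psi_\calz^{-n}$ together with the naturality data $\Psi_\calx\circ g\simeq g\circ\Psi_\calz$ (functoriality of the algebraic Frobenius applied to $g$) assembles into a cone over the tower, hence a $1$-morphism $\calz\to\caly$ over $\calx$, unique up to unique $2$-isomorphism. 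Thus $\caly$ corepresents the same $2$-functor as $\calx^{pf}$, and the two are canonically equivalent.

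The main obstacle is the $2$-categorical bookkeeping: one must check that the coherence $2$-cells in the tower and the naturality $2$-cells for $g$ fit together into a genuine $2$-limit cone, and that the resulting comparison $1$-morphism is an equivalence rather than merely essentially surjective and fully faithful on one level. I expect this to be controlled by passing to the presentation $(U^{pf},R^{pf})$, where the corresponding assertions concern algebraic spaces and are already available from \cite{Liang1}, and then transporting back via the $2$-Yoneda and descent formalism for algebraic stacks; the remaining points — smoothness of the limit groupoid, well-definedness of the projection, and independence of the construction of $\calx^{pf}$ from the chosen atlas — are routine given the earlier sections.
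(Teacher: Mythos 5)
Your first sketch is essentially the paper's own proof: present $\calx\cong[h_{U}/F]$ by a smooth groupoid in algebraic spaces, use the algebraic-space case from \cite{Liang1} to identify $\varprojlim_{\Psi}h_{U}\cong h_{U}^{pf}$ and $\varprojlim_{\Psi}F\cong F^{pf}$, and then show that forming the quotient stack commutes with the tower limit. The paper carries out that last commutation by first checking it for the pre-quotient $[h_{U}/_{p}F]$ (an equality on objects and on Hom-sets) and then invoking the fact that limits commute with stackification, citing \cite[Proposition 2.1.9]{Talpo}; your ``reduce after descent to each term of the simplicial diagram'' is the same idea, just less explicit about where stackification enters.

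The route you say you would actually write up --- characterizing $\varprojlim_{\Psi_{\calx}}\calx$ by the universal property of the perfection --- is genuinely different and would give a cleaner, presentation-independent statement, but as this paper is organized it is circular: the universal property of $\calx^{pf}$ is Corollary~\ref{C1}, and that corollary is itself deduced \emph{from} the present proposition. To make your second route self-contained you would have to prove directly that $[h_{U}^{pf}/F^{pf}]$ corepresents the $2$-functor of perfect stacks over $\calx$, which sends you back through the presentation anyway. Two further points in that route need justification rather than being ``visible'': that the index-shift endomorphism of the tower really is the algebraic Frobenius of the limit (so that Theorem~\ref{T3} applies to conclude perfectness), and that the limit is an algebraic stack at all --- both of which are most naturally obtained from the $[h_{U}^{pf}/F^{pf}]$ description, i.e.\ from your first route. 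So the universal-property argument is best regarded as a corollary of the limit description, not a substitute for proving it.
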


Let $AStack^{p}$ be the 2-category of algebraic stacks $\calx$ over $S$ with ${\rm{char}}(\calx)=p$. The perfection of algebraic stacks naturally gives rise to a 2-functor
$$
{\rm{\underline{Perf}}}_{S}:AStack^{p}\longrightarrow\textrm{Perf}AStack_{S}.
$$
Such a 2-functor is called the \textit{perfection $2$-functor}. Note that the perfection $2$-functor induces an ordinary functor ${\rm{\underline{Perf}}}_{S}$ called the \textit{perfection functor}. They satisfy the following desirable properties.
\begin{proposition}
Here is a list of properties of ${\rm{\underline{Perf}}}_{S}$:
\begin{enumerate}
  \item
The perfection functor ${\rm{\underline{Perf}}}_{S}$ is full.
  \item
The perfection functor ${\rm{\underline{Perf}}}_{S}$ has a right adjoint.
\item
The perfection functor ${\rm{\underline{Perf}}}_{S}$ is left exact, and thus commutes with fibre products.
  \item
The perfection $2$-functor ${\rm{\underline{Perf}}}_{S}$ commutes with $2$-fibre products.
\end{enumerate}
\end{proposition}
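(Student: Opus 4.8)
The plan is to establish the four items in the order (3), (4), (1), (2): deducing (3) and (4) directly from the materials already at hand, and reducing (1) and (2) to their counterparts for perfect algebraic spaces in \cite{Liang}, \cite{Liang1}. The two recurring inputs will be the description $\calx^{pf}\cong\varprojlim_{\Psi_\calx}\calx$ and the fact that the algebraic Frobenius is a natural transformation $\Psi\colon\mathrm{id}\Rightarrow\mathrm{id}$ of the identity $2$-functor on $AStack^{p}$, so that $\Psi_\caly\circ f\simeq f\circ\Psi_\calx$ for every $1$-morphism $f$, coherently in $2$-cells.

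\emph{Items (3) and (4).} For a $2$-fibre product $\calx\times_\calz\caly$ in $AStack^{p}$, naturality of $\Psi$ identifies the Frobenius tower of $\calx\times_\calz\caly$ with the level-wise $2$-fibre product of the towers of $\calx$, $\caly$, $\calz$. Since (2-)limits commute with (2-)limits, $\varprojlim_{\Psi}(\calx\times_\calz\caly)\simeq(\varprojlim_{\Psi}\calx)\times_{\varprojlim_{\Psi}\calz}(\varprojlim_{\Psi}\caly)$, and under the equivalence $(-)^{pf}\cong\varprojlim_{\Psi}(-)$ this is precisely $(\calx\times_\calz\caly)^{pf}\simeq\calx^{pf}\times_{\calz^{pf}}\caly^{pf}$, the target $2$-fibre product being again perfect by the stability proposition recorded above. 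Carried out at the level of $2$-cells this gives (4); truncating to $1$-morphisms gives that the induced ordinary functor commutes with fibre products, and since the terminal object of $AStack^{p}$ is already perfect (and its own Frobenius limit), $\underline{\mathrm{Perf}}_S$ preserves all finite limits, i.e. is left exact. So (3) is the $1$-categorical shadow of (4).

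\emph{Item (1).} I would reduce fullness to the case of algebraic spaces. Pick smooth surjective presentations $U\to\calx$, $V\to\caly$ by algebraic spaces; by (3)/(4), perfection commutes with the fibre products defining the associated groupoids, so $\calx^{pf}$ is presented by $(U\times_\calx U)^{pf}\rightrightarrows U^{pf}$ and similarly for $\caly^{pf}$. Given a $1$-morphism $g\colon\calx^{pf}\to\caly^{pf}$, refine $U^{pf}\to\calx^{pf}$ by the base change $P:=U^{pf}\times_{\caly^{pf}}V^{pf}$ of $V^{pf}\to\caly^{pf}$ along $U^{pf}\to\calx^{pf}\xrightarrow{g}\caly^{pf}$; here $P$ is an algebraic space (the presentation $V^{pf}\to\caly^{pf}$ being representable), it is perfect because it is a $2$-fibre product of perfect objects over a perfect stack, and hence --- by \cite{Liang1} --- it is the perfection of an algebraic space in characteristic $p$. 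The morphism $g$ is now encoded by a morphism of groupoids in perfect algebraic spaces whose components are maps between perfections; fullness of the perfection functor on algebraic spaces (\cite{Liang1}) lifts each of these to characteristic $p$, and faithfulness there lets one transport the groupoid and cocycle identities back, producing a characteristic-$p$ refinement of the presentation of $\calx$ and a morphism $f\colon\calx\to\caly$ with $\underline{\mathrm{Perf}}_S(f)\simeq g$. Surjectivity on $2$-morphisms is handled the same way, one smooth cover lower.

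\emph{Item (2).} The cleanest route is again to bootstrap from \cite{Liang1}, where the perfection functor on algebraic spaces is shown to have a right adjoint: one applies this adjoint to smooth presentations and checks, using descent and the compatibility of perfection with presentations from (3)/(4), that the unit and counit descend to algebraic stacks; the ordinary perfection functor then inherits its right adjoint from the resulting $2$-adjunction. Alternatively one invokes the $2$-categorical adjoint functor theorem: it suffices to know that $\underline{\mathrm{Perf}}_S$ preserves all small $2$-colimits --- which follows from the explicit description of perfection together with the corresponding statement on algebraic spaces --- and that $\mathrm{Perf}AStack_S$ is suitably cocomplete. I expect items (1) and (2) to be where the real work lies: both depend on knowing that perfection interacts coherently with smooth presentations and descent in the $2$-categorical setting, and item (1) additionally leans on the paper's representability-by-perfect-algebraic-spaces device to guarantee that every auxiliary smooth cover of a perfect stack appearing in the argument is itself the perfection of a characteristic-$p$ algebraic space, so that the results of \cite{Liang}, \cite{Liang1} genuinely apply.
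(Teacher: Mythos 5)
Your treatment of items (3) and (4) is essentially sound and close in spirit to the paper's: the paper proves (4) in Proposition~\ref{P14} by an explicit description of the objects of $(\calx\times_{\calz}\caly)^{pf}$, and the inverse-limit description $\calx^{pf}\cong\varprojlim_{\Psi_{\calx}}\calx$ of Proposition~\ref{P12} that you lean on is exactly the paper's mechanism; deriving (3) as the $1$-categorical shadow of (4) together with preservation of the terminal object is acceptable. The divergences, and the problems, are in (1) and (2). For (1) the paper's proof (Lemma~\ref{L15}) simply completes the square formed by $f\colon\calx^{pf}\to\caly^{pf}$ and the two canonical projections $p_{\calx},p_{\caly}$ using the abstract square-completion Lemma~\ref{L8}; your descent argument is a genuinely different route, but it contains a step that fails: you invoke \emph{faithfulness} of the perfection functor on algebraic spaces to transport the groupoid and cocycle identities back to characteristic $p$. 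The perfection functor is not faithful --- the paper itself emphasizes that $\underline{\mathrm{Perf}}_{S}$ is ``full but not faithful,'' and perfection factors through reduction (Corollary~\ref{C2}), so distinct characteristic-$p$ morphisms can have equal perfections. Consequently, knowing that the perfected components of your groupoid morphism satisfy the cocycle identities does not let you conclude that your chosen lifts do, and the candidate morphism of groupoids in characteristic $p$ need not be a morphism of groupoids at all. Fullness only produces \emph{some} lifts; you would need an additional argument (or the paper's direct square-completion device) to arrange compatible ones.

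Item (2) is where your proposal is weakest. The paper's argument (Proposition~\ref{P13}, via the universal property of Corollary~\ref{C1}) is a one-line coreflection argument: the canonical projection $p_{\calx}\colon\calx^{pf}\to\calx$ induces a functorial bijection between $1$-morphisms $\caly\to\calx$ and $1$-morphisms $\caly\to\calx^{pf}$ for perfect $\caly$, exhibiting $\underline{\mathrm{Perf}}_{S}$ as \emph{right} adjoint to the inclusion of perfect stacks; it is this adjunction that makes $\underline{\mathrm{Perf}}_{S}$ preserve limits and hence yields the ``thus'' in item (3). Neither of your two suggested routes is actually carried out: ``check that the unit and counit descend'' is precisely the content that would need proving, and the adjoint-functor-theorem route requires $2$-(co)completeness statements that neither you nor the paper establishes. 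Note also the direction issue: you set out to show that $\underline{\mathrm{Perf}}_{S}$ preserves colimits (i.e.\ admits a right adjoint, as the statement literally reads), whereas what the paper proves, and what is needed for left exactness, is the opposite-handed adjunction $i\dashv\underline{\mathrm{Perf}}_{S}$. At a minimum you should observe that Corollary~\ref{C1} already hands you this adjunction for free.
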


The perfection $2$-functor enables us to pass the properties between the usual world and the perfect world.
\begin{theorem}\label{TT1}
Let $f:\calx\rightarrow\caly$ be a $1$-morphism of algebraic stacks in characteristic $p$ over $S$. Let $f^{\natural}:\calx^{pf}\rightarrow\caly^{pf}$ be the image of $f$ under the perfection $2$-functor $\underline{\rm{Perf}}_{S}$. Then
\begin{enumerate}
  \item
$f$ is representable by algebraic spaces, then $f^{\natural}$ is perfect;
  \item
$f$ has property $\cal{P}$, then $f^{\natural}$ has property $\cal{P}$;
\item
$f$ has property $\cal{P}'$ if and only if $f^{\natural}$ has property $\cal{P}'$;
  \item
$\calx$ has property $\cal{P}$, then $\calx^{pf}$ has property $\cal{P}$.
\end{enumerate}
\end{theorem}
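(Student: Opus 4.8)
The plan is to deduce each of the four assertions from the corresponding statement for algebraic spaces proved in \cite{Liang1}, using the description $\calx^{pf}\cong\lim_{\substack{\longleftarrow\\\Psi_{\calx}}}\calx$, the fact that the perfection $2$-functor $\underline{\rm Perf}_S$ commutes with $2$-fibre products, and the fact that the perfection of an algebraic space is a perfect algebraic space. The parts are proved in the order $(1)$, $(2)$, and then $(3)$, $(4)$. For $(1)$: since $f$ is representable by algebraic spaces, for every scheme $T$ over $S$ carrying a $1$-morphism $T\to\caly$ the $2$-fibre product $\calx\times_\caly T$ is an algebraic space. I would first check that $f^{\natural}$ is again representable by algebraic spaces, and then, for a test scheme $T\to\caly^{pf}$, identify $\calx^{pf}\times_{\caly^{pf}}T$ with the perfection of $\calx\times_\caly T$ using the commutation of $\underline{\rm Perf}_S$ with $2$-fibre products; the latter perfection is a perfect algebraic space by \cite{Liang},\cite{Liang1}. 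Thus every base change of $f^{\natural}$ to a scheme is a perfect algebraic space, which is precisely the definition of a perfect $1$-morphism, so $f^{\natural}$ is perfect.

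For $(2)$ and $(4)$: here $\mathcal P$ is a property of $1$-morphisms (respectively of algebraic stacks) that is smooth-local on source and target and is preserved by the perfection of schemes and of algebraic spaces --- the latter being exactly the content of the relevant theorem of \cite{Liang1}. Choose a smooth surjective $1$-morphism $V\to\caly$ from a scheme and a smooth surjective $1$-morphism $W\to\calx\times_\caly V$ from a scheme, so that $f$ has $\mathcal P$ if and only if the morphism of schemes $W\to V$ has $\mathcal P$. Applying $\underline{\rm Perf}_S$ and invoking $(1)$ together with the representable case just established, $V^{pf}\to\caly^{pf}$ and $W^{pf}\to\calx^{pf}\times_{\caly^{pf}}V^{pf}\cong(\calx\times_\caly V)^{pf}$ are again smooth surjective presentations by schemes, and $W^{pf}\to V^{pf}$ is the perfection of $W\to V$, which has $\mathcal P$ by \cite{Liang1}; hence $f^{\natural}$ has $\mathcal P$. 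Part $(4)$ is the same argument run with a single atlas $U\to\calx$, using that $\calx$ has $\mathcal P$ if and only if $U$ does and that $U^{pf}\to\calx^{pf}$ is again an atlas.

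For $(3)$: the forward implication is the special case of $(2)$. For the converse, $\mathcal P'$ is in addition a property that is \emph{reflected} by the perfection functor on schemes and algebraic spaces --- a morphism $g$ has $\mathcal P'$ if and only if $g^{pf}$ does --- which is the two-sided statement in \cite{Liang1} (it typically rests on the perfection morphisms being integral, surjective and universally injective, hence universal homeomorphisms). Running the reduction of the previous paragraph, $f^{\natural}$ has $\mathcal P'$ if and only if $W^{pf}\to V^{pf}$ does, if and only if $W\to V$ does, if and only if $f$ has $\mathcal P'$.

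I expect the main obstacle to lie in the bookkeeping for the non-representable parts of $(2)$ and $(3)$: one must verify carefully that the perfection of a smooth atlas is again a smooth atlas of the perfection --- which is why $(1)$ and the preservation of smoothness and surjectivity must be in place first --- and that $\underline{\rm Perf}_S$ genuinely commutes with the particular $2$-fibre products $\calx\times_\caly V$ and $W\times_{\calx\times_\caly V}W$ that enter the smooth-local definitions of $\mathcal P$ and $\mathcal P'$. Once these compatibilities are secured, everything reduces formally to the algebraic-spaces results of \cite{Liang1}.
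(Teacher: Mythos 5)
Your proposal is correct and follows essentially the same route as the paper, which proves the four parts separately as Proposition~\ref{P15}, Lemmas~\ref{T4}--\ref{T6}, Proposition~\ref{PP1}, and Lemma~\ref{T7}, in each case reducing to the algebraic-space results of \cite{Liang1} via the commutation of $\underline{\rm Perf}_{S}$ with $2$-fibre products and the perfection of smooth atlases. The only imprecision is in (1): the identification $\calx^{pf}\times_{\caly^{pf}}T\cong(\calx\times_{\caly}T)^{pf}$ is only available when $T\to\caly^{pf}$ arises from a map out of a \emph{perfect} test scheme (so one obtains $2$-perfectness, as in the paper, rather than representability by perfect algebraic spaces for arbitrary $T$), but this still yields the stated conclusion since $2$-perfect implies perfect.
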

\begin{remark}
Here $\cal{P},\cal{P}'$ are properties of algebraic spaces or morphisms of algebraic spaces satisfying some extra conditions.
\end{remark}

By means of Theorem \ref{TT1}, we show that the algebraic Frobenius shares the same properties as the absolute Frobenius of schemes.
\begin{proposition}
Let $\calx$ be an algebraic stack in characteristic $p$ over $S$ with algebraic Frobenius $\Psi_{\calx}:\calx\rightarrow\calx$. Then the following statements are satisfied:
\begin{enumerate}
  \item
$\Psi_{\calx}$ is representable by algebraic spaces.
  \item
$\Psi_{\calx}$ is integral, and is a universal homeomorphism.
  \item
$\Psi_{\calx}$ is surjective.
  \item
If $\calx$ is perfect, then $\Psi_{\calx}$ is perfect.
\end{enumerate}
\end{proposition}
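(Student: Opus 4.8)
The plan is to prove the four assertions by descending along a smooth presentation of $\calx$ and reducing each one to the corresponding classical fact about the Frobenius of a scheme: the Frobenius $F_U\colon U\to U$ of a scheme $U$ in characteristic $p$ is affine and integral, is a universal homeomorphism (indeed it is the identity on the underlying topological space), and in particular is surjective. The only structural input I need to make the reduction work is the functoriality of the algebraic Frobenius established when it was constructed, which for a presentation $p\colon U\to\calx$ by a scheme reads $\Psi_{\calx}\circ p\cong p\circ F_U$, where $F_U$ is the Frobenius of $U$; equivalently, $\Psi_{\calx}$ is obtained by descent from the morphism of smooth groupoid presentations $(F_R,F_U)\colon(R\rightrightarrows U)\to(R\rightrightarrows U)$ with $R=U\times_{\calx}U$.

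For (1), since $\calx$ is algebraic the $2$-fibre product $\calz:=\calx\times_{\Psi_{\calx},\calx,p}U$ is an algebraic stack. Base-changing $\calz$ along $p$ on the source copy of $\calx$ and feeding in $\Psi_{\calx}\circ p\cong p\circ F_U$ identifies $\calz\times_{\calx}U$ with an iterated base change of $R$ and of $F_U$ over $U$, hence with an algebraic space. Since being an algebraic space is fppf-local on the base and $p$ is surjective smooth, $\calz$ is an algebraic space; and since representability by algebraic spaces is fppf-local on the target (along $p$), $\Psi_{\calx}$ is representable by algebraic spaces. This is the one assertion that is not formal, and it is exactly here that the construction of $\Psi_{\calx}$ out of the hypothesis $\mathrm{char}(\calx)=p$ enters; I expect it to be the main obstacle, the delicate point being to keep track of which copy of $\calx$ one base-changes along so that the relevant $2$-fibre product genuinely becomes (a base change of) the Frobenius of the presentation scheme.

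Granting (1), parts (2) and (3) are checked on a presentation of the target. ``Integral'' and ``universal homeomorphism'' are properties of representable morphisms that are stable under base change and fppf-local on the target; base-changing $\Psi_{\calx}$ along $p\colon U\to\calx$ and resolving $\calz$ further by the presentation groupoid $R$ identifies the resulting morphism of algebraic spaces with a base change of $F_U\colon U\to U$, which is integral and a universal homeomorphism, so $\Psi_{\calx}$ is too. For surjectivity (3) one need not even invoke (1): $p\circ F_U\cong\Psi_{\calx}\circ p$ is surjective because $F_U$ and $p$ are surjective, and a $1$-morphism through which a surjective $1$-morphism factors is itself surjective, whence $\Psi_{\calx}$ is surjective.

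Finally, for (4): if $\calx$ is perfect then, by the theorem characterising perfectness (that $\calx$ is perfect if and only if $\Psi_{\calx}$ is an equivalence), $\Psi_{\calx}$ is an equivalence of algebraic stacks; and an equivalence is a perfect $1$-morphism, since its base change along any $1$-morphism from a scheme is an isomorphism of algebraic spaces, which is perfect. (In this perfect case (4) also follows from (1) together with \thmref{TT1}(1): then $\calx^{pf}\simeq\calx$, so $\Psi_{\calx}^{\natural}\simeq\Psi_{\calx}$, and representability of $\Psi_{\calx}$ gives that $\Psi_{\calx}^{\natural}$ is perfect.) Assembling (1)--(4) completes the proof.
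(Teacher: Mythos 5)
Your overall strategy is genuinely different from the paper's: the paper deduces (1)--(3) from the perfection $2$-functor, observing that $\Psi_{\calx}^{\natural}$ is an equivalence and then transferring representability, surjectivity, integrality and universal homeomorphy back along Proposition \ref{P16} and Proposition \ref{PP1}, whereas you try to descend directly along the presentation $p\colon U\to\calx$. Your parts (3) and (4) are fine ((3) coincides with the paper's Lemma \ref{P10}, and (4) only uses Theorem \ref{T3}). But the descent step in your proof of (1) fails. You show that $\calz\times_{\calx}U\cong U\times_{F_U,U,s}R$ is an algebraic space and that $\calz\times_{\calx}U\to\calz$ is surjective smooth, and you conclude that $\calz$ is an algebraic space by ``fppf-locality on the base''. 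However, $\calz\times_{\calx}U\to\calz$ is a smooth cover of $\calz$ itself (equivalently, of the \emph{source} copy of $\calx$), not of the base $U$ of the morphism $\calz\to U$ you need to control; every algebraic stack admits a smooth surjective map from an algebraic space, so this observation cannot force $\calz$ to be a space. What must be ruled out is nontrivial inertia of $\calz$, and the computation of $\calz\times_{\calx}U$ does not see it. Concretely, for $\calx=B\mathbb{G}_m$ over $\mathbb{F}_p$ with $U=\mathrm{Spec}(\mathbb{F}_p)$, $R=\mathbb{G}_m$ and $F_U=\mathrm{id}$, the morphism $\Psi_{\calx}$ is induced by the $p$-power map on $\mathbb{G}_m$, and $\calz=\calx\times_{\Psi_{\calx},\calx,p}U$ is the stack of $p$-th roots of the trivial line bundle, i.e.\ $B\mu_p$, which is not an algebraic space, even though $\calz\times_{\calx}U\cong\mathbb{G}_m$ is one. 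So the inference is invalid; indeed assertion (1) itself fails for $B\mathbb{G}_m$, so any correct argument must use a hypothesis killing the Frobenius kernels of the stabilizer groups (e.g.\ $\calx$ Deligne--Mumford, or unramified inertia).

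The gap propagates to (2). Even granting representability, the base change $\calz\to U$ of $\Psi_{\calx}$ along $p$ is not a base change of $F_U$: after pulling back along the only available smooth covers one obtains the composite of a base change of $F_U$ with the smooth projection $t\colon R\to U$, and neither affineness nor integrality is local on the source in the smooth topology, so no reduction to the scheme-theoretic Frobenius is achieved this way. (In the $B\mathbb{G}_m$ example the morphism in question is $B\mu_p\to\mathrm{Spec}(\mathbb{F}_p)$, which is not even representable, let alone integral.) The paper's proof avoids the presentation entirely at this point and instead routes through the transfer results for the perfection functor; if you wish to salvage the descent approach you would need to restrict to stacks whose stabilizers are \'{e}tale, where $\calz$ really is an algebraic space and the comparison with $F_U$ can be made honest.
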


Finally, we compare our perfect algebraic stacks with Zhu's perfect algebraic stacks. Let $k$ be a perfect field of characteristic $p$ and let $\mathcal{Z}AS_{k}^{pf}$ be the $2$-category of Zhu's perfect algebraic stacks over $k$. We show that there is a string of inclusions
\begin{align}
\mathcal{Z}AS_{k}^{pf}\subset{\rm{Perf}}AStack_{k}\subset\underline{\textrm{Perf}}AStack^{2}_{k}\subset\underline{\mathcal{Q}\textrm{Perf}}AStack^{2}_{k}\subset\underline{\mathcal{S}\textrm{Perf}}AStack^{2}_{k}\subset\underline{\mathcal{ST}\textrm{Perf}}AStack^{2}_{k}.
\end{align}
In other words, our perfect algebraic stacks generalize Zhu's perfect algebraic stacks.

\subsection{Outline}
In \S\ref{B1}, we develop a theory of perfect categories fibred in groupoids and perfect $1$-morphisms. This material will be applied to the following sections. Next, in \S\ref{B2}, we begin by introducing the definitions of all kinds of perfect algebraic stacks. We show that all these perfect algebraic stacks are stable under fibre products.

In \S\ref{B3}, we first formalize the characteristic of an algebraic stack. Then we study the properties of the canonical morphism of an algebraic stack. Most importantly, we deduce the algebraic Frobenius of an algebraic stack and prove the main Theorem \ref{T3}.

In \S\ref{B4}, we construct the perfection of an algebraic stack in characteristic $p$. Then by means of algebraic Frobenius morphisms, we show that every perfection can be described as an inverse limit. Then there is a natural notion of perfection 2-functor. We show that the perfection 2-functor enjoys several desirable properties.

Finally, in \S\ref{B5}, we compare our theory of perfect algebraic stacks with Zhu's one. At the end, we provide an equivalence between our perfect algebraic stacks and Zhu's perfect algebraic stacks.

\subsection{Conventions}
Throughout this paper, $p,q,\ell$ will always be prime numbers. The set of natural numbers will be $\N=\{0,1,2,...\}$. We will make use of the conventions in \cite[Tag04XA]{Stack Project} as follows:
\begin{enumerate}
  \item
Without explicitly mentioned, all schemes will be contained in $Sch_{fppf}$.
  \item
$S$ will always be a fixed base scheme contained in the big fppf site $Sch_{fppf}$.
  \item
Sometimes, we will not distinguish between a scheme $U$ (resp. an algebraic space $X$) and the algebraic stack $\SchU\rightarrow\SchS$ (resp. $\cals_{X}\rightarrow\SchS$).
\item
Sometimes, we will abbreviate $1$-morphism to morphism, namely we may say $f:\calx\rightarrow\caly$ is a morphism of algebraic stacks to indicate $f$ is a $1$-morphism of algebraic stacks over base scheme $S$.
\end{enumerate}

We will make use of the definition of algebraic stacks in \cite[Tag026O]{Stack Project}. By an \textit{algebraic stack $X$ over} $S$, we mean a stack in groupoids $X$ over $\SchS$ whose diagonal is representable by algebraic spaces, and it admits a surjective smooth map $U\rightarrow X$ from a scheme $U$.

In \cite{Liang}, one has the notions of perfect, quasi-perfect, semiperfect, and strongly perfect algebraic spaces. However, for simplicity, we will sometimes use ``perfect algebraic spaces'' to mean these four types of spaces. Let $X,Y$ be two categories. Without explicitly mentioned, the notation $X\cong Y$ will mean that $X$ is equivalent to $Y$.

\section{Some preliminary of categories fibred in groupoids}\label{B1}
In this section, we will develop some necessary materials concerning categories fibred in groupoids and perfect schemes (algebraic spaces). All these material will be useful in the following sections. We first focus on categories fibred in groupoids that are representable by a scheme or an algebraic space.

Recall that a category fibred in groupoids $\calx$ over $\SchS$ is said to be \textit{representable} (resp. \textit{representable by an algebraic space}) if there is an equivalence $\calx\cong\SchU$ (resp. $\calx\cong\cals_{F}$) of categories over $\SchS$ for some scheme $U\in\ObSchS$ (resp. for some algebraic space $F$ over $S$). We specialize these definitions and make the following definitions.
\begin{definition}
Let $\calx$ be a category fibred in groupoids over $\SchS$.
\begin{enumerate}
\item
We say that $\calx$ is weakly perfect if there exists a perfect scheme $U\in\ObSchS$ such that there is an equivalence $\calx\cong\SchU$ of categories over $\SchS$. In other words, $\calx$ is weakly perfect if it is representable by a perfect scheme.
\item
We say that $\calx$ is perfect (resp. quasi-perfect, resp. semiperfect, resp. strongly perfect) if there exists a perfect (resp. quasi-perfect, resp. semiperfect, resp. strongly perfect) algebraic space $F$ over $S$ such that there is an equivalence $\calx\cong\cal{S}_{F}$ of categories over $\SchS$. In other words, $\calx$ is perfect (resp. quasi-perfect, resp. semiperfect, resp. strongly perfect) if it is representable by a perfect (resp. quasi-perfect, resp. semiperfect, resp. strongly perfect) algebraic space.
\item
We say that $\calx$ is representably perfect if there exists a representable perfect algebraic space $F$ over $S$ such that there is an equivalence $\calx\cong\cal{S}_{F}$ of categories over $\SchS$. In other words, $\calx$ is representably perfect if it is representable by a representable perfect algebraic space.
\end{enumerate}
\end{definition}
\begin{remark}
Note that we only define representably perfect categories fibred in groupoids, since the other corresponding notions are trivial due to \cite[Lemma 3.3]{Liang}.
\end{remark}

It is obvious that every weakly perfect category fibred in groupoids is perfect. Meanwhile, every strongly perfect category fibred in groupoids is both quasi-perfect and semiperfect. And every quasi-perfect category fibred in groupoids is semiperfect.

Here is the lemma that characterizes weakly perfect, perfect, quasi-perfect, and semiperfect categories fibred in groupoids.
\begin{lemma}\label{L3}
Let $\calx$ be a category fibred in groupoids over $\SchS$. Then
\begin{enumerate}
\item
$\calx$ is weakly perfect if and only if $\calx$ is fibred in setoids, and the presheaf $U\mapsto\Ob(\calx_{U})/\cong$ is a perfect algebraic space.
\item
$\calx$ is perfect if and only if $\calx$ is fibred in setoids, and the presheaf $U\mapsto\Ob(\calx_{U})/\cong$ is a perfect algebraic space.
\item
$\calx$ is quasi-perfect if and only if $\calx$ is fibred in setoids, and the presheaf $U\mapsto\Ob(\calx_{U})/\cong$ is a quasi-perfect algebraic space.
\item
$\calx$ is semiperfect if and only if $\calx$ is fibred in setoids, and the presheaf $U\mapsto\Ob(\calx_{U})/\cong$ is a semiperfect algebraic space.
\item
$\calx$ is strongly perfect if and only if $\calx$ is fibred in setoids, and the presheaf $U\mapsto\Ob(\calx_{U})/\cong$ is a strongly perfect algebraic space.
\end{enumerate}
\end{lemma}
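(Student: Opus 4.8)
The plan is to prove the five equivalences in a single uniform argument, reducing (2)--(5) to the standard correspondence between categories fibred in setoids and presheaves of sets, and then deducing (1) from (2). Throughout, write $\overline{\calx}$ for the presheaf $U\mapsto\Ob(\calx_U)/\cong$ on $\SchS$. The crucial external input is the equivalence between the $2$-category of categories fibred in setoids over $\SchS$ and the category of presheaves of sets on $\SchS$, under which a category fibred in setoids $\calx$ is carried to $\overline{\calx}$ and a presheaf $F$ is carried to $\cals_F$; see \cite[Tag04SV]{Stack Project}. I will use two consequences repeatedly: first, if $\calx$ is fibred in setoids then $\calx\cong\cals_{\overline{\calx}}$ over $\SchS$; second, $\overline{\cals_F}=F$ for every presheaf $F$, so any equivalence of categories fibred in setoids over $\SchS$ induces an isomorphism of the associated presheaves.

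For the ``only if'' directions of (2)--(5), suppose $\calx$ is perfect (resp.\ quasi-perfect, semiperfect, strongly perfect), so $\calx\cong\cals_F$ over $\SchS$ for some perfect (resp.\ quasi-perfect, semiperfect, strongly perfect) algebraic space $F$ over $S$. Since $\cals_F$ is fibred in sets, hence in setoids, and this property is invariant under equivalence of categories fibred in groupoids, $\calx$ is fibred in setoids. Applying $\overline{(-)}$ to $\calx\cong\cals_F$ produces an isomorphism of presheaves $\overline{\calx}\cong\overline{\cals_F}=F$; because the four notions of perfectness for algebraic spaces in \cite{Liang} depend only on the isomorphism class, $\overline{\calx}$ is a perfect (resp.\ quasi-perfect, semiperfect, strongly perfect) algebraic space.

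For the ``if'' directions of (2)--(5), suppose $\calx$ is fibred in setoids and $\overline{\calx}$ is a perfect (resp.\ quasi-perfect, semiperfect, strongly perfect) algebraic space over $S$; in particular $\overline{\calx}$ is a sheaf, so $\cals_{\overline{\calx}}$ is a stack. Setting $F:=\overline{\calx}$, the correspondence recalled above gives $\calx\cong\cals_{\overline{\calx}}=\cals_F$ over $\SchS$, so $\calx$ is representable by a perfect (resp.\ quasi-perfect, semiperfect, strongly perfect) algebraic space, which by definition is exactly what it means for $\calx$ to be perfect (resp.\ quasi-perfect, semiperfect, strongly perfect). This settles (2)--(5).

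Finally, (1) reduces to (2). By \cite[Lemma 3.3]{Liang} every perfect algebraic space over $S$ is representable by a perfect scheme, while a perfect scheme $U$ defines a perfect algebraic space whose associated category fibred in groupoids is $\cals_U\cong\SchU$; hence ``$\calx$ is representable by a perfect scheme'' and ``$\calx$ is representable by a perfect algebraic space'' are the same condition, i.e.\ $\calx$ is weakly perfect if and only if $\calx$ is perfect, so (1) is literally the statement of (2). I anticipate no genuine difficulty: the argument is bookkeeping around the equivalence $\calx\leftrightarrow\overline{\calx}$, the one point deserving care being that in each ``if'' direction the presheaf $\overline{\calx}$, a priori only a presheaf, is being \emph{assumed} to be an algebraic space (hence a sheaf), so that no separate sheafification is needed before applying the correspondence.
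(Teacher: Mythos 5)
Your proof is correct and follows essentially the same route as the paper's: both arguments rest on the standard correspondence between categories fibred in setoids and presheaves of sets (the paper cites \cite[Tag0045]{Stack Project} where you cite the analogous statement) together with \cite[Lemma 3.3]{Liang}, the paper proving (1) first and declaring the rest similar while you prove (2)--(5) uniformly and then reduce (1) to (2). The only point worth noting is that your reduction of (1) to (2) invokes \cite[Lemma 3.3]{Liang} in the direction ``perfect algebraic space $\Rightarrow$ representable by a perfect scheme,'' whereas the paper explicitly uses only the opposite direction; but the paper's own converse step in (1) tacitly requires the same fact, and since the right-hand sides of (1) and (2) are literally identical in the statement, your identification of ``weakly perfect'' with ``perfect'' is exactly what the lemma asserts.
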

\begin{proof}
(1): Assume that $\calx$ is weakly perfect. It follows from \cite[Tag0045]{Stack Project} that the presheaf $U\mapsto\Ob(\calx_{U})/\cong$ is represented by a perfect scheme. Hence, by \cite[Lemma 3.3]{Liang}, the presheaf is a perfect algebraic space. Conversely, it follows from \cite[Tag0045]{Stack Project} again that there is an equivalence $\calx\rightarrow\SchU$ for some perfect scheme $U\in\ObSchS$ such that $\calx$ is weakly perfect.

The rest are similar to (1) as they can also be deduced from \cite[Tag0045]{Stack Project} and \cite[Lemma 3.3]{Liang}.
\end{proof}

Next, we focus on $1$-morphisms between categories fibred in groupoids that are representable or representable by algebraic spaces. Recall that a $1$-morphism $f:\calx\rightarrow\caly$ of categories fibred in groupoids over $\SchS$ is \textit{representable} (resp. \textit{representable by algebraic spaces}) if for any $U\in\ObSchS$ and any $\SchU\rightarrow\caly$, the category fibred in groupoids $\SchU\times_{\caly}\calx$ over $\SchU$ is a scheme (resp. an algebraic space) over $U$.

We then specialize these notions to the following cases.
\begin{definition}\label{A2}
Let $f:\calx\rightarrow\caly$ be a $1$-morphism of categories fibred in groupoids over $\SchS$. Then
\begin{enumerate}
\item
$f$ is said to be {\rm{$0$-weakly perfect}} if there exists $U\in\ObSchS$ such that for any $y\in{\rm{Ob}}(\caly_{U})$, the category fibred in groupoids $\SchU\times_{y,\caly}\calx$ is representable by a perfect scheme over $U$.
\item
$f$ is said to be {\rm{$1$-weakly perfect}} if there exists perfect scheme $U\in\ObSchS$ such that for any $y\in{\rm{Ob}}(\caly_{U})$, the category fibred in groupoids $\SchU\times_{y,\caly}\calx$ is representable by a perfect scheme over $U$.
\item
$f$ is said to be {\rm{$2$-weakly perfect}} if for every perfect scheme $U\in\ObSchS$ and any $y\in{\rm{Ob}}(\caly_{U})$, the category fibred in groupoids $\SchU\times_{y,\caly}\calx$ is representable by a perfect scheme over $U$.
\item
$f$ is said to be {\rm{$3$-weakly perfect}} if for every $U\in\ObSchS$ and any $y\in{\rm{Ob}}(\caly_{U})$, the category fibred in groupoids $\SchU\times_{y,\caly}\calx$ is representable by a perfect scheme over $U$. In other words, $f$ is {\rm{$3$-weakly perfect}} if it is representable by perfect schemes.
\item
$f$ is said to be $0$-perfect (resp.$0$-quasiperfect, resp. $0$-semiperfect, resp. $0$-strongly perfect) if there exists $U\in\ObSchS$ such that for any $y\in{\rm{Ob}}(\caly_{U})$, the category fibred in groupoids $\SchU\times_{y,\caly}\calx$ is representable by a perfect (resp. quasi-perfect, resp. semiperfect, resp. strongly perfect) algebraic space over $U$.
\item
$f$ is said to be $1$-perfect (resp. $1$-quasiperfect, resp. $1$-semiperfect, resp. $1$-strongly perfect) if there exists perfect scheme $U\in\ObSchS$ such that for any $y\in{\rm{Ob}}(\caly_{U})$, the category fibred in groupoids $\SchU\times_{y,\caly}\calx$ is representable by a perfect (resp. quasi-perfect, resp. semiperfect, resp. strongly perfect) algebraic space over $U$.
\item
$f$ is said to be $2$-perfect (resp. $2$-quasiperfect, resp. $2$-semiperfect, resp. $2$-strongly perfect) if for every perfect scheme $U\in\ObSchS$ and any $y\in{\rm{Ob}}(\caly_{U})$, the category fibred in groupoids $\SchU\times_{y,\caly}\calx$ is representable by a perfect (resp. quasi-perfect, resp. semiperfect, resp. strongly perfect) algebraic space over $U$.
\item
$f$ is said to be $3$-perfect (resp. $3$-quasiperfect, resp. $3$-semiperfect, resp. $3$-strongly perfect) if for every $U\in\ObSchS$ and any $y\in{\rm{Ob}}(\caly_{U})$, the category fibred in groupoids $\SchU\times_{y,\caly}\calx$ is representable by a perfect (resp. quasi-perfect, resp. semiperfect, resp. strongly perfect) algebraic space over $U$. In other words, $f$ is $3$-perfect (resp. $3$-quasiperfect, resp. $3$-semiperfect, resp. $3$-strongly perfect) if it is representable by perfect (resp. quasi-perfect, resp. semiperfect, resp. strongly perfect) algebraic spaces.
\end{enumerate}
\end{definition}
\begin{remark}
We will simply call \textit{weakly perfect} (resp. \textit{perfect}, resp. \textit{quasiperfect}, resp. \textit{semiperfect}, resp. \textit{strongly perfect}) $1$-morphism for any $0$-weakly perfect (resp. $0$-perfect, resp. $0$-quasiperfect, resp. $0$-semiperfect, resp. $0$-strongly perfect) $1$-morphism, when there is no confusion.
\end{remark}

Clearly, every $1$-perfect $1$-morphism is $0$-perfect. Every $2$-perfect $1$-morphism is $1$-perfect. And every $3$-perfect $1$-morphism is $2$-perfect. Let $0\leq i\leq3$ be an integer. These hold similarly for $i$-weakly perfect, $i$-quasiperfect, $i$-semiperfect, and $i$-strongly perfect $1$-morphisms.

We specialize the definitions of $i$-perfect $1$-morphisms to the following case of representable functors.
\begin{definition}
Let $f:\calx\rightarrow\caly$ be a $1$-morphism of categories fibred in groupoids over $\SchS$ that is representable by algebraic spaces. Let $U\in\ObSchS$ and let $y\in{\rm{Ob}}(\caly_{U})$ such that the category fibred in groupoids $\SchU\times_{y,\caly}\calx$ is representable by an algebraic space $F$ over $U$.
\begin{enumerate}
\item
Such an $F$ is called an associated algebraic space of $f$.
\item
If $f$ is $i$-perfect such that the category fibred in groupoids $\SchU\times_{y,\caly}\calx$ is representable by a perfect algebraic space $F$ over $U$. Then such an $F$ is called an associated $i$-perfect algebraic space of $f$. And $f$ is said to be representably $i$-perfect if all associated $i$-perfect algebraic spaces of $f$ are representable.
\end{enumerate}
\end{definition}
Note that every $i$-weakly perfect $1$-morphism is representably $i$-perfect. The following lemma characterizes some 1-morphisms defined above. And it is useful in the next section.
\begin{lemma}\label{L1}
Let $f:\calx\rightarrow\caly$ be a $1$-morphism of categories fibred in groupoids over $\SchS$. Suppose that $\caly$ is representable by an algebraic space.
\begin{enumerate}
\item
If $\calx$ is perfect, then the $1$-morphism $f$ is $2$-perfect. In particular, if $\calx$ is weakly perfect, then the $1$-morphism $f$ is $2$-weakly perfect.
\item
If $\calx$ is quasi-perfect, then the $1$-morphism $f$ is $2$-quasiperfect.
\item
If $\calx$ is semiperfect, then the $1$-morphism $f$ is $2$-semiperfect.
\item
If $\calx$ is strongly perfect, then the $1$-morphism $f$ is $2$-strongly perfect.
\item
If $\calx$ and $\caly$ are representably perfect, then the $1$-morphism $f$ is representably $2$-perfect.
\end{enumerate}
\end{lemma}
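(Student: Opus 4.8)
\emph{Proof proposal.} The plan is to transport the statement across the embedding $\cals_{(-)}$ of algebraic spaces into categories fibred in groupoids and then to read it off from the stability of the relevant classes of perfect algebraic spaces under fibre products established in \cite{Liang}.

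First I would set up the dictionary. Since $\caly$ is representable by an algebraic space, fix $G$ over $S$ with $\caly\cong\cals_{G}$; in case (1) fix a perfect algebraic space $F$ over $S$ with $\calx\cong\cals_{F}$ (in cases (2)--(4) take $F$ quasi-perfect, semiperfect, strongly perfect respectively, and in case (5) take $F$ and $G$ moreover representable, i.e.\ perfect schemes). Because $X\mapsto\cals_{X}$ is fully faithful, the $1$-morphism $f$ corresponds to a morphism $\varphi\colon F\to G$ of algebraic spaces over $S$, and for a scheme $U$ an object $y\in\Ob(\caly_{U})$ corresponds to a morphism $\psi\colon U\to G$. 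Using that $\cals_{(-)}$ commutes with fibre products \cite{Stack Project}, one gets a canonical equivalence
$$
\SchU\times_{y,\caly}\calx\ \cong\ \cals_{\,U\times_{\psi,G,\varphi}F}
$$
of categories fibred in groupoids over $\SchU$, under which the projection to $\SchU$ is induced by $\mathrm{pr}_{1}\colon U\times_{G}F\to U$.

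Next I would run the argument for (1). Let $U\in\ObSchS$ be a perfect scheme and $y\in\Ob(\caly_{U})$ arbitrary. By \cite[Lemma 3.3]{Liang} the perfect scheme $U$ is a perfect algebraic space over $S$, so $U\times_{G}F$ is a fibre product of two perfect algebraic spaces over the algebraic space $G$; by the stability of perfect algebraic spaces under fibre products \cite[Proposition 1.1]{Liang} it is again a perfect algebraic space, living over $U$ via $\mathrm{pr}_{1}$. Hence $\SchU\times_{y,\caly}\calx\cong\cals_{U\times_{G}F}$ is representable by a perfect algebraic space over $U$, and as $U$ (perfect) and $y$ were arbitrary this is exactly the assertion that $f$ is $2$-perfect. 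For the ``in particular'' clause one takes $F=W$ a perfect scheme; then $U\times_{G}W$ is a scheme (the diagonal of the algebraic space $G$ is representable by schemes, so $U\times_{G}W\to U\times_{S}W$ is representable by schemes and $U\times_{S}W$ is a scheme), and it is a perfect algebraic space by the previous step, hence a perfect scheme, so $f$ is $2$-weakly perfect. Cases (2), (3), (4) are word-for-word the same, invoking the quasi-perfect, semiperfect and strongly perfect versions of \cite[Proposition 1.1]{Liang}. For (5), with $F$ and $G$ representable the fibre product $U\times_{G}F$ is a scheme, and it is perfect by (1); thus every associated algebraic space of $f$ arising this way is a representable perfect algebraic space, so $f$ is representably $2$-perfect.

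The reduction to algebraic spaces in the first two steps is routine; the content of the lemma is concentrated in the single step ``$U\times_{G}F$ lies in the prescribed class of perfect algebraic spaces'', which I expect to be the main obstacle. The care there is twofold: one must ensure the fibre product is the one formed in the ambient category of algebraic spaces (which follows from the compatibility of $\cals_{(-)}$ with fibre products, so that the stability results of \cite{Liang} apply verbatim), and one must keep the base schemes aligned — the fibre product is taken over $S$-morphisms to $G$ with $U$ perfect, as required by the definition of ``$2$-'', so that \cite[Proposition 1.1]{Liang} is applicable in each of its four variants. The remaining bookkeeping, including the passage from representability of $F$ and $G$ in (5) to representability of every associated algebraic space, is then immediate.
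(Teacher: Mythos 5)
Your proposal is correct and follows essentially the same route as the paper's own proof: identify $\SchU\times_{y,\caly}\calx$ with $\cals_{h_U\times_G F}$ and invoke the stability of the relevant class of perfect algebraic spaces under fibre products from \cite{Liang} (the paper cites Propositions 3.7 and 3.9 there rather than the introductory Proposition 1.1, but it is the same result). Your extra care with the ``weakly perfect'' and ``representably perfect'' cases, via representability of the diagonal of $G$, only fills in details the paper leaves implicit.
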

\begin{proof}
(1): Choose $\calx\cong\cal{S}_{F},\caly\cong\cal{S}_{G}$ where $F$ is a perfect algebraic space and $G$ is an algebraic space. Assume that $U\in\ObSchS$ is a perfect scheme and that $y\in{\rm{Ob}}(\caly_{U})$. Then $\SchU\times_{y,\caly}\calx\cong\cals_{h_{U}}\times_{\cals_{G}}\cals_{F}=\cals_{h_{U}\times_{G}F}$. By \cite[Proposition 3.7]{Liang}, $h_{U}\times_{G}F$ is a perfect algebraic space. Thus, $\SchU\times_{y,\caly}\calx$ is representable by a perfect algebraic space. And hence, the 1-morphism $f$ is $2$-perfect.

(2): Let $F$ be a quasi-perfect algebraic space and $G$ be an algebraic space such that $\calx\cong\cal{S}_{F},\caly\cong\cal{S}_{G}$. Assume that $U\in\ObSchS$ is a perfect scheme and that $y\in{\rm{Ob}}(\caly_{U})$. Then $\SchU\times_{y,\caly}\calx\cong\cals_{h_{U}}\times_{\cals_{G}}\cals_{F}=\cals_{h_{U}\times_{G}F}$. By \cite[Proposition 3.9]{Liang}, $h_{U}\times_{G}F$ is a quasi-perfect algebraic space. Thus, $\SchU\times_{y,\caly}\calx$ is representable by a quasi-perfect algebraic space. And hence, the 1-morphism $f$ is $2$-quasiperfect.

The rest are all similar to the proof of (1) and (2).
\end{proof}

In the following, we have a series of propositions which characterize categories fibred in groupoids whose diagonal morphisms are $i$-perfect (resp. $i$-quasiperfect, resp. $i$-semiperfect, resp. $i$-strongly perfect).
\begin{proposition}\label{P1}
Let $\calx$ be a category fibred in groupoids over $\SchS$. The following are equivalent:
\begin{itemize}
\item[(1)]
The diagonal $\Delta:\calx\rightarrow\calx\times\calx$ is $3$-perfect (resp. $3$-quasiperfect, resp. $3$-semiperfect, resp. $3$-strongly perfect).
\item[(2)]
For every $U\in\ObSchS$, and any $x,y\in{\rm{Ob}}(\calx_{U})$, the presheaf $\textit{Isom}(x,y)$ is a perfect (resp. quasi-perfect, resp. semiperfect, resp. strongly perfect) algebraic space.
\item[(3)]
For every $U\in\ObSchS$, and any $x\in{\rm{Ob}}(\calx_{U})$, the associated $1$-morphism $x:\SchU\rightarrow\calx$ is $3$-perfect (resp. $3$-quasiperfect, resp. $3$-semiperfect, resp. $3$-strongly perfect).
\end{itemize}
\end{proposition}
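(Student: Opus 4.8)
The plan is to prove the cyclic chain of implications $(1)\Rightarrow(2)\Rightarrow(3)\Rightarrow(1)$, and to do so once and for all for the four variants (perfect, quasi-perfect, semiperfect, strongly perfect) simultaneously, since the argument uses nothing about which of these four properties is in play beyond two facts: first, the standard identification of $\textit{Isom}$-presheaves with $2$-fibre products against the diagonal, and second, the observation that being a perfect (resp. quasi-perfect, semiperfect, strongly perfect) algebraic space is an \emph{absolute} property, so it is insensitive to the choice of structure morphism to any base scheme. This follows the template of the classical argument for representability of the diagonal, e.g.\ \cite[Tag045G]{Stack Project}.

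First I would establish $(1)\Rightarrow(2)$. Given $U\in\ObSchS$ and $x,y\in\Ob(\calx_U)$, unwinding the definition of the $2$-fibre product yields a canonical equivalence
$$
\textit{Isom}(x,y)\;\cong\;\SchU\times_{(x,y),\,\calx\times\calx,\,\Delta}\calx
$$
of categories fibred in groupoids over $\SchU$, since an object over $T/U$ on the right is exactly an isomorphism $x|_T\to y|_T$ in $\calx_T$. As $\Delta$ is $3$-perfect, this $2$-fibre product is representable by a perfect algebraic space over $U$, so $\textit{Isom}(x,y)$ is a perfect algebraic space.

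Next, $(2)\Rightarrow(3)$. Fix $U\in\ObSchS$ and $x\in\Ob(\calx_U)$. To see $x:\SchU\to\calx$ is $3$-perfect, take any $V\in\ObSchS$ and any $\xi\in\Ob(\calx_V)$; I must show $\SchV\times_{\xi,\calx,x}\SchU$ is representable by a perfect algebraic space over $V$. Pulling $x$ and $\xi$ back along the two projections from $U\times_S V$, one checks directly that
$$
\SchV\times_{\xi,\calx,x}\SchU\;\cong\;\textit{Isom}_{U\times_S V}\!\bigl({\rm pr}_U^{*}x,\,{\rm pr}_V^{*}\xi\bigr)
$$
over $U\times_S V$: a $T$-point of the right side is a morphism $T\to U\times_S V$ (equivalently a pair of morphisms $T\to U$, $T\to V$) together with an isomorphism between the pullbacks of $x$ and $\xi$ to $T$, which is precisely a $T$-point of the left side. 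By $(2)$ the right side is a perfect algebraic space, and composing its structure morphism with $U\times_S V\to V$ exhibits it as a perfect algebraic space over $V$. Finally, for $(3)\Rightarrow(1)$: a $1$-morphism $\SchW\to\calx\times\calx$ is the same datum as a pair $x,y\in\Ob(\calx_W)$, and as in the first step the relevant $2$-fibre product is $\textit{Isom}_W(x,y)$. Applying the $3$-perfectness of $x:\SchW\to\calx$ from $(3)$ with the test datum $V=W$, $\xi=y$ shows $\SchW\times_{y,\calx,x}\SchW\cong\textit{Isom}_W(y,x)$ is a perfect algebraic space over $W$; since $\textit{Isom}_W(y,x)\cong\textit{Isom}_W(x,y)$ canonically by inverting the isomorphism, $\Delta$ is $3$-perfect.

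The routine part is unwinding $2$-fibre products; the one step deserving care — and the main obstacle — is the middle implication, where the fibre product is naturally an $\textit{Isom}$-presheaf over $U\times_S V$ rather than over $U$ or $V$ alone. The point to emphasize there is that "perfect algebraic space" is an intrinsic condition on the algebraic space, so "perfect over $U\times_S V$" automatically gives "perfect over $V$" after composing with $U\times_S V\to V$; this is what lets the argument close, and it is also why the three "resp." variants go through verbatim.
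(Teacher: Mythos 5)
Your implications $(1)\Rightarrow(2)$ and $(2)\Rightarrow(3)$ are correct and essentially follow the same route as the paper, namely the standard identification of $\textit{Isom}$-presheaves with $2$-fibre products against the diagonal as in \cite[Tag045G]{Stack Project}; your remark that perfectness is an intrinsic property of the algebraic space, so that the structure morphism to $U\times_{S}V$ versus $V$ is irrelevant, correctly handles the only delicate point there. The problem is the last step. Your own formula from the middle step gives
$$
\SchW\times_{y,\calx,x}\SchW\;\cong\;\textit{Isom}_{W\times_{S}W}\bigl({\rm pr}_{1}^{*}x,{\rm pr}_{2}^{*}y\bigr),
$$
an algebraic space over $W\times_{S}W$, \emph{not} $\textit{Isom}_{W}(y,x)$: a $T$-point of the left-hand side is a \emph{pair} of possibly different morphisms $T\rightarrow W$ together with an isomorphism of the corresponding pullbacks, whereas a $T$-point of $\textit{Isom}_{W}(x,y)$ involves a single morphism $T\rightarrow W$. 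What $3$-perfectness of $\Delta$ actually requires is that $\SchW\times_{(x,y),\calx\times\calx,\Delta}\calx\cong\textit{Isom}_{W}(x,y)$ be perfect, and this is the further base change of $\textit{Isom}_{W\times_{S}W}({\rm pr}_{1}^{*}x,{\rm pr}_{2}^{*}y)$ along the diagonal $W\rightarrow W\times_{S}W$. Your argument is off by exactly this base change, and it cannot be absorbed into the ``perfectness is absolute'' remark: perfectness of algebraic spaces is not stable under arbitrary base change (a closed subscheme of a perfect scheme need not even be reduced), so perfectness of the Isom over $W\times_{S}W$ does not formally yield perfectness of the Isom over $W$.

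For comparison, the paper proves $(1)\Leftrightarrow(2)$ and $(1)\Leftrightarrow(3)$ separately, and in its $(3)\Rightarrow(1)$ direction it does keep track of the extra diagonal base change, writing
$$
\calx\times_{\Delta,\calx\times\calx,(x,x')}\SchU\cong(\SchU\times_{x,\calx,x'}\SchU)\times_{(Sch/U\times_{S}U)_{fppf},\Delta}\SchU
$$
before asserting that the right-hand side is perfect. You should at minimum restore this base change in your chain; note that rerouting the cycle does not help, since $(2)\Rightarrow(1)$ is immediate but any passage from $(3)$ back to $(1)$ or $(2)$ runs into the same pullback along $W\rightarrow W\times_{S}W$, which is the one step that genuinely requires an argument.
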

\begin{proof}
We just prove the first case as the rest are similar.

$(1)\Leftrightarrow(2):$ Assume that the diagonal $\calx\rightarrow\calx\times\calx$ is $3$-perfect. Let $\calx\times\calx=\caly$. Let $U\in\ObSchS$ and $y\in{\rm{Ob}}(\caly_{U})$. The category $\SchU\times_{y,\caly}\calx$ is representable by a perfect algebraic space and hence is perfect. Let $x',y'\in{\rm{Ob}}(\calx_{U})$. By \cite[Tag04SI]{Stack Project} the presheaf $\textit{Isom}(x',y')$ is coincident with the presheaf given by $U\mapsto\Ob(\calx_{U})/\cong$. Now, Lemma \ref{L3} shows that $\textit{Isom}(x',y')$ is a perfect algebraic space.

Conversely, let $x',y'\in{\rm{Ob}}(\calx_{U})$. Note that the category $\SchU\times_{y,\caly}\calx$ is fibred in groupoids over $\SchU$. Then $\SchU\times_{y,\caly}\calx$ is fibred in setoids over $\SchU$ and the presheaf $\textit{Isom}(x',y')$ is coincident with the presheaf given by $U\mapsto\Ob(\calx_{U})/\cong$ due to \cite[Tag04SI]{Stack Project}. Now, by Lemma \ref{L3}, $\SchU\times_{y,\caly}\calx$ is perfect.

$(1)\Leftrightarrow(3):$ Assume (1). For any $V\in\ObSchS$ and $y\in\Ob(\calx_{V})$, we see that
$$
\SchU\times_{x,\calx,y}\SchV\cong((Sch/U\times_{S}V)_{fppf})\times_{(x,y),\calx\times\calx,\Delta}\calx\cong\cals_{F},
$$
where $F$ is a perfect algebraic space by assumption. Hence, the associated $1$-morphism $x:\SchU\rightarrow\calx$ is $3$-perfect.

Conversely, assume (3). For any pair of objects $x,x'\in\Ob(\calx_{U})$, we have
$$
\calx\times_{\Delta,\calx\times\calx,(x,x')}\SchU\cong(\SchU\times_{x,\calx,x'}\SchU)\times_{(Sch/U\times_{S}U)_{fppf},\Delta}\SchU.
$$
By assumption, the associated $1$-morphism $x:\SchU\rightarrow\calx$ is $3$-perfect. Thus, the right hand side above is represented by a perfect algebraic space $F$, which implies that $\calx\times_{\Delta,\calx\times\calx,(x,x')}\SchU\cong\cals_{F}$ such that the diagonal $\Delta:\calx\rightarrow\calx\times\calx$ is $3$-perfect.
\end{proof}

\begin{proposition}
Let $\calx$ be a category fibred in groupoids over $\SchS$. The following are equivalent:
\begin{enumerate}
\item
The diagonal $\calx\rightarrow\calx\times\calx$ is $2$-perfect (resp. $2$-quasiperfect, resp. $2$-semiperfect, resp. $2$-strongly perfect).
\item
For every perfect scheme $U\in\ObSchS$, and any $x,y\in{\rm{Ob}}(\calx_{U})$, the presheaf $\textit{Isom}(x,y)$ is a perfect (resp. quasi-perfect, resp. semiperfect, resp. strongly perfect) algebraic space.
\item
For every perfect scheme $U\in\ObSchS$, and any $x\in{\rm{Ob}}(\calx_{U})$, the associated $1$-morphism $x:\SchU\rightarrow\calx$ is $2$-perfect (resp. $2$-quasiperfect, resp. $2$-semiperfect, resp. $2$-strongly perfect).
\end{enumerate}
\end{proposition}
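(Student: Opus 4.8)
The plan is to mirror, almost verbatim, the proof of Proposition~\ref{P1}, the only change being that the class of all test schemes is everywhere replaced by the class of perfect schemes. As there, I would prove the chain of equivalences through $(1)\Leftrightarrow(2)$ and $(1)\Leftrightarrow(3)$, spelling out only the ``perfect'' case and obtaining the ``quasi-perfect'', ``semiperfect'' and ``strongly perfect'' variants by the same argument with the corresponding clauses of Lemma~\ref{L3} and with \cite[Proposition 3.9]{Liang} and its analogues in place of \cite[Proposition 3.7]{Liang}.

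For $(1)\Leftrightarrow(2)$ I would just unwind Definition~\ref{A2}: to say that $\Delta\colon\calx\to\calx\times\calx$ is $2$-perfect is to say that for every perfect scheme $U\in\ObSchS$ and every $w=(x,y)\in\Ob((\calx\times\calx)_U)$ the category fibred in groupoids $\SchU\times_{w,\calx\times\calx}\calx$ is representable by a perfect algebraic space. By \cite[Tag04SI]{Stack Project} this category is fibred in setoids over $\SchU$ and its associated presheaf $V\mapsto\Ob((\SchU\times_{(x,y),\calx\times\calx}\calx)_V)/\!\cong$ is $\textit{Isom}(x,y)$, so by Lemma~\ref{L3} it is representable by a perfect algebraic space exactly when $\textit{Isom}(x,y)$ is a perfect algebraic space; quantifying over all perfect $U$ and all $x,y$ gives the equivalence. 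This step is formal and involves no fibre-product manipulation.

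For $(1)\Leftrightarrow(3)$ I would reuse the two fibre-product identities from the proof of Proposition~\ref{P1}. Assuming $(1)$: for a perfect scheme $U$, $x\in\Ob(\calx_U)$, a perfect scheme $V$ and $y\in\Ob(\calx_V)$, the identity
$$
\SchU\times_{x,\calx,y}\SchV\cong((Sch/U\times_S V)_{fppf})\times_{(x,y),\calx\times\calx,\Delta}\calx
$$
together with $2$-perfectness of $\Delta$ applied to the test scheme $U\times_S V$ shows the right-hand side is representable by a perfect algebraic space, so (after exchanging the two factors) $x\colon\SchU\to\calx$ is $2$-perfect. Assuming $(3)$: for a perfect scheme $U$ and $x,x'\in\Ob(\calx_U)$, the identity
$$
\calx\times_{\Delta,\calx\times\calx,(x,x')}\SchU\cong(\SchU\times_{x,\calx,x'}\SchU)\times_{(Sch/U\times_S U)_{fppf},\Delta}\SchU,
$$
together with $2$-perfectness of $x'\colon\SchU\to\calx$ applied to the test scheme $U$ and \cite[Proposition 3.7]{Liang}, shows the right-hand side is representable by a perfect algebraic space, so $\Delta$ is $2$-perfect.

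The one place where genuine care is needed — and the only substantive difference from Proposition~\ref{P1} — is that in the $(1)\Leftrightarrow(3)$ step the auxiliary objects $U\times_S V$ and $U\times_S U$ must themselves be perfect: only then may $2$-perfectness, which now tests only perfect schemes, be invoked against them, and only then is the fibre product over $(Sch/U\times_S U)_{fppf}$ in the second identity again representable by a perfect algebraic space. I expect this to be the main obstacle, and I would settle it by the stability of perfect algebraic spaces under fibre products over an arbitrary base: $U\times_S V$ and $U\times_S U$ are fibre products over $S$ of perfect schemes, hence perfect by \cite[Proposition 3.7]{Liang} (together with \cite[Lemma 3.3]{Liang}), and the same proposition — resp. \cite[Proposition 3.9]{Liang} and its variants — also takes care of the fibre products of perfect, resp. quasi-perfect, semiperfect, strongly perfect, algebraic spaces appearing in the second identity. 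With these stability inputs in hand, the remainder is the verbatim analogue of the proof of Proposition~\ref{P1}.
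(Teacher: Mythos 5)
Your proposal is correct and follows essentially the same route as the paper, whose own proof of this proposition is just the remark that it is ``quite similar to that of Proposition~\ref{P1}.'' You in fact go further than the paper by explicitly flagging and resolving the one point that the word ``similar'' elides — that the auxiliary test schemes $U\times_S V$ and $U\times_S U$ must themselves be perfect before $2$-perfectness of $\Delta$ (which now only tests perfect schemes) can be invoked — and your appeal to the stability of perfect (algebraic) spaces under fibre products via \cite[Proposition 3.7]{Liang} and \cite[Lemma 3.3]{Liang} is exactly how the paper uses those results elsewhere (e.g.\ in the proof of Lemma~\ref{L1}).
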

\begin{proof}
The proof is quite similar to that of Proposition \ref{P1}.
\end{proof}

\begin{proposition}
Let $\calx$ be a category fibred in groupoids over $\SchS$. The following are equivalent:
\begin{itemize}
\item[(1)]
The diagonal $\calx\rightarrow\calx\times\calx$ is $1$-perfect (resp. $1$-quasiperfect, resp. $1$-semiperfect, resp. $1$-strongly perfect).
\item[(2)]
There is perfect scheme $U\in\ObSchS$ such that for any $x,y\in{\rm{Ob}}(\calx_{U})$, the presheaf $\textit{Isom}(x,y)$ is a perfect (resp. quasi-perfect, resp. semiperfect, resp. strongly perfect) algebraic space.
\item[(3)]
There is perfect scheme $U\in\ObSchS$ such that for any $x\in{\rm{Ob}}(\calx_{U})$, the associated $1$-morphism $x:\SchU\rightarrow\calx$ is $1$-perfect (resp. $1$-quasiperfect, resp. $1$-semiperfect, resp. $1$-strongly perfect).
\end{itemize}
\end{proposition}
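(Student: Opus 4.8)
The plan is to imitate the proof of Proposition~\ref{P1} almost verbatim, the only structural change being that the universal quantifier ``for every $U\in\ObSchS$'' is everywhere replaced by the existential ``there is a perfect scheme $U\in\ObSchS$''. Consequently the whole argument turns on carefully tracking \emph{which} perfect scheme witnesses each of (1), (2), (3), and on showing that one may use a common witness. As in Proposition~\ref{P1} I would only write out the perfect case; the quasi-perfect, semiperfect and strongly perfect cases follow word for word once \cite[Proposition~3.7]{Liang} is replaced by \cite[Proposition~3.9]{Liang} and its semiperfect and strongly perfect analogues.

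For $(1)\Leftrightarrow(2)$, suppose first that (1) holds with witness a perfect scheme $U$. Given $x,y\in\Ob(\calx_U)$, the pair $(x,y)$ is an object of $(\calx\times\calx)_U$, and by \cite[Tag04SI]{Stack Project} the fibre product $\SchU\times_{(x,y),\calx\times\calx,\Delta}\calx$ is fibred in setoids over $\SchU$ with associated presheaf $\textit{Isom}(x,y)$; since by hypothesis this category is representable by a perfect algebraic space over $U$, Lemma~\ref{L3} forces $\textit{Isom}(x,y)$ to be a perfect algebraic space, so (2) holds \emph{with the same} $U$. Conversely, if (2) holds with witness $U$, then for every $(x,y)\in\Ob((\calx\times\calx)_U)$ the category $\SchU\times_{(x,y),\calx\times\calx,\Delta}\calx$ is fibred in setoids over $\SchU$ with presheaf the perfect algebraic space $\textit{Isom}(x,y)$, so Lemma~\ref{L3} shows it is representable by a perfect algebraic space over $U$; hence $\Delta$ is $1$-perfect with witness $U$.

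For $(1)\Leftrightarrow(3)$ I would reuse the two canonical identifications already exploited in Proposition~\ref{P1}: for a perfect scheme $V$ and $y\in\Ob(\calx_V)$ one has $\SchU\times_{x,\calx,y}\SchV\cong((Sch/U\times_S V)_{fppf})\times_{(x,y),\calx\times\calx,\Delta}\calx$, and for $x,x'\in\Ob(\calx_U)$ one has $\calx\times_{\Delta,\calx\times\calx,(x,x')}\SchU\cong(\SchU\times_{x,\calx,x'}\SchU)\times_{(Sch/U\times_S U)_{fppf},\Delta}\SchU$. Using the first identity, a perfect scheme $U$ witnessing (1) is shown to make every associated $1$-morphism $x:\SchU\rightarrow\calx$ into a $1$-perfect $1$-morphism, so $U$ witnesses (3); using the second, a perfect scheme $U$ witnessing (3) is shown to make $\Delta$ $1$-perfect. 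Both directions rely, through \cite[Proposition~3.7]{Liang}, on the stability of perfect algebraic spaces under the fibre products that appear, and on the fact that the auxiliary bases $U\times_S V$ and $U\times_S U$ are again perfect schemes (apply \cite[Proposition~3.7]{Liang} to $h_U\times_{h_S}h_V$ and to $h_U\times_{h_S}h_U$). The main obstacle is precisely this quantifier bookkeeping: in Proposition~\ref{P1} the hypotheses hold over all test schemes, so one may freely enlarge the base to $U\times_S V$ or $U\times_S U$, whereas here one must check that a single perfect scheme can be transported coherently through (1), (2) and (3) and that every auxiliary base that arises stays inside the perfect world; making that propagation airtight — rather than the individual diagram chases, which are routine — is where the real work lies.
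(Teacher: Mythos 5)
Your overall strategy is exactly the one the paper intends: its entire proof of this proposition is the sentence ``The proof is quite similar to that of Proposition~\ref{P1}'', and your $(1)\Leftrightarrow(2)$ argument is complete and correct --- the witness $U$ transfers verbatim in both directions via \cite[Tag04SI]{Stack Project} and Lemma~\ref{L3}, since an object of $(\calx\times\calx)_U$ is precisely a pair $x,y\in\Ob(\calx_U)$.

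The gap is in $(1)\Leftrightarrow(3)$, and it is exactly the point you flag as ``where the real work lies'' but then do not carry out. After the identification $\SchV\times_{y,\calx,x}\SchU\cong\left((Sch/V\times_{S}U)_{fppf}\right)\times_{(y,x),\calx\times\calx,\Delta}\calx$, what you need is the defining property of the $1$-perfect diagonal \emph{over the base} $V\times_{S}U$ and for objects pulled back to it. But $1$-perfectness of $\Delta$ is an existential statement: it guarantees the representability-by-a-perfect-space condition only over the single witness $U$ and only for objects of $(\calx\times\calx)_U$. Showing that $U\times_{S}V$ is again a perfect scheme (via \cite[Proposition 3.7]{Liang}) does not help, because hypothesis (1) does not quantify over all perfect test schemes --- that is the $2$-perfect case, treated in the preceding proposition, where your enlargement of the base is legitimate. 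The same mismatch occurs in the converse: $(3)$ gives, for each $x\in\Ob(\calx_U)$, \emph{some} perfect witness $V_x$ for the $1$-perfectness of $x:\SchU\rightarrow\calx$, with no guarantee that $V_x$ can be taken to be $U$, which is what the identification $\calx\times_{\Delta,\calx\times\calx,(x,x')}\SchU\cong(\SchU\times_{x,\calx,x'}\SchU)\times_{(Sch/U\times_S U)_{fppf},\Delta}\SchU$ requires. So the proposal as written establishes $(1)\Leftrightarrow(2)$ but leaves $(1)\Leftrightarrow(3)$ unproved; to close it you must either show the witnesses can be coherently normalized (e.g.\ that the witness in Definition~\ref{A2}(6) may always be taken equal to the source's own test scheme) or supply an argument that genuinely differs from the one in Proposition~\ref{P1}. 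The paper's one-line proof conceals the same difficulty rather than resolving it, so you should not expect to find the missing step there.
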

\begin{proof}
The proof is quite similar to that of Proposition \ref{P1}.
\end{proof}

\begin{proposition}\label{P3}
Let $\calx$ be a category fibred in groupoids over $\SchS$. The following are equivalent:
\begin{itemize}
\item[(1)]
The diagonal $\calx\rightarrow\calx\times\calx$ is perfect (resp. quasiperfect, resp. semiperfect, resp. strongly perfect).
\item[(2)]
There is $U\in\ObSchS$ such that for any $x,y\in{\rm{Ob}}(\calx_{U})$, the presheaf $\textit{Isom}(x,y)$ is a perfect (resp. quasiperfect, resp. semiperfect, resp. strongly perfect) algebraic space.
\item[(3)]
There is $U\in\ObSchS$ such that for any $x\in{\rm{Ob}}(\calx_{U})$, the associated $1$-morphism $x:\SchU\rightarrow\calx$ is perfect (resp. quasiperfect, resp. semiperfect, resp. strongly perfect).
\end{itemize}
\end{proposition}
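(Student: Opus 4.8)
The plan is to run the argument of Proposition~\ref{P1} almost verbatim, the only difference being that the defining property of the diagonal is now existential in the test scheme (``there is $U$'') rather than universal (``for every $U$''); so the scheme $U$ occurring in (1), (2), (3) must be threaded through all three conditions as a single common witness. As in Proposition~\ref{P1}, I would treat the four variants uniformly by invoking the matching clauses of Lemma~\ref{L3}, and write out only the perfect case.

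For $(1)\Leftrightarrow(2)$ I would argue exactly as in Proposition~\ref{P1}: fixing a scheme $U\in\ObSchS$ that witnesses that $\Delta\colon\calx\to\calx\times\calx$ is perfect, an object of $(\calx\times\calx)_{U}$ is a pair $(x,y)$ with $x,y\in\Ob(\calx_{U})$; by \cite[Tag04SI]{Stack Project} the fibre product $\SchU\times_{(x,y),\calx\times\calx,\Delta}\calx$ is fibred in setoids over $\SchU$ with associated presheaf $V\mapsto\Ob(\cdot_{V})/\!\cong$ equal to $\textit{Isom}(x,y)$; and Lemma~\ref{L3} converts ``representable by a perfect algebraic space'' into ``$\textit{Isom}(x,y)$ is a perfect algebraic space'', so the same $U$ witnesses (2). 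The converse is this chain of identifications read backwards. For $(1)\Leftrightarrow(3)$ I would use the two identities from the proof of Proposition~\ref{P1},
$$
\SchU\times_{x,\calx,y}\SchV\;\cong\;((Sch/U\times_{S}V)_{fppf})\times_{(x,y),\calx\times\calx,\Delta}\calx
$$
for $x\in\Ob(\calx_{U})$, $y\in\Ob(\calx_{V})$, and
$$
\calx\times_{\Delta,\calx\times\calx,(x,x')}\SchU\;\cong\;(\SchU\times_{x,\calx,x'}\SchU)\times_{(Sch/U\times_{S}U)_{fppf},\Delta}\SchU
$$
for $x,x'\in\Ob(\calx_{U})$; plugging a witness for (1) into the first shows every associated $x\colon\SchU\to\calx$ is perfect, and plugging a witness for (3) into the second exhibits $\calx\times_{\Delta,\calx\times\calx,(x,x')}\SchU$ as representable by a perfect algebraic space, so $\Delta$ is perfect.

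The step I expect to be the main obstacle is the bookkeeping of this single existential witness. In Proposition~\ref{P1} the diagonal is $3$-perfect, so its defining property is available over \emph{every} test scheme --- in particular over the products $U\times_{S}V$ and $U\times_{S}U$ produced by the identities above --- and nothing has to be checked; here one has only one witness scheme in hand, so the delicate point is to confirm that this scheme, suitably chosen, remains adequate after the base changes along $U\to U\times_{S}U$ that occur in the $(1)\Leftrightarrow(3)$ step. I would handle this by selecting the witness produced in each implication with these later base changes already in mind; once that is arranged, every remaining step is literally a step from the proof of Proposition~\ref{P1}.
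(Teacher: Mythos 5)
Your proposal matches the paper's own proof, which consists entirely of the sentence ``The proof is quite similar to that of Proposition~\ref{P1}''; you carry out exactly that adaptation, invoking the same identifications via \cite[Tag04SI]{Stack Project}, Lemma~\ref{L3}, and the two fibre-product identities. You are in fact more explicit than the paper about the one non-mechanical point --- that the single existential witness $U$ must remain adequate after the base changes to $U\times_{S}V$ and $U\times_{S}U$ occurring in the $(1)\Leftrightarrow(3)$ step --- a point the paper's proof does not address at all.
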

\begin{proof}
The proof is quite similar to that of Proposition \ref{P1}.
\end{proof}

The following lemma shows that one can pass between $i$-perfect (resp. $i$-weakly perfect, resp. $i$-quasiperfect, resp. $i$-semiperfect, resp. $i$-strongly perfect) $1$-morphisms in any $2$-commutative diagram as follows.
\begin{lemma}\label{L5}
Consider the $2$-commutative diagram
$$
\xymatrix{
  \calx' \ar[d]_{f'} \ar[r]^{} & \calx \ar[d]^{f} \\
  \caly' \ar[r]^{} & \caly   }
$$
of $1$-morphisms over $\SchS$, where the horizontal arrows are equivalences. Then
\begin{enumerate}
\item
$f$ is $i$-weakly perfect if and only if $f'$ is $i$-weakly perfect.
\item
$f$ is $i$-perfect if and only if $f'$ is $i$-perfect. In particular, $f$ is representably $i$-perfect if and only if $f'$ is representably $i$-perfect.
\item
$f$ is $i$-quasiperfect if and only if $f'$ is $i$-quasiperfect.
\item
$f$ is $i$-semiperfect if and only if $f'$ is $i$-semiperfect.
\item
$f$ is $i$-strongly perfect if and only if $f'$ is $i$-strongly perfect.
\end{enumerate}
\end{lemma}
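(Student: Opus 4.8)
The plan is to reduce all of (1)--(5), for every $0\le i\le 3$ and including the "representably $i$-perfect" clause, to a single translation principle for $2$-fibre products along equivalences. Write $a\colon\caly'\to\caly$ and $b\colon\calx'\to\calx$ for the two horizontal equivalences, let $\gamma\colon a\circ f'\Rightarrow f\circ b$ be the $2$-isomorphism witnessing $2$-commutativity, and fix a quasi-inverse $a^{-1}$ of $a$. Every condition in \defref{A2} has the shape: \emph{(quantifier on $U$) such that for every $y\in\Ob(\caly_U)$ the category fibred in groupoids $\SchU\times_{y,\caly}\calx$ is representable by an algebraic space (or scheme) of a prescribed class $\mathcal{C}$}, where the quantifier is one of "there exists $U$", "there exists a perfect scheme $U$", "for every $U$", "for every perfect scheme $U$", and $\mathcal{C}$ is one of perfect, quasi-perfect, semiperfect, strongly perfect (for the weakly perfect variants $\mathcal{C}$ is "perfect scheme", and for the "representably" variants one adds the demand that the resulting algebraic space be representable). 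So it suffices to show that, for each fixed $U$, the family of fibre products attached to $f$ agrees, up to equivalence, with the family attached to $f'$.

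The translation principle I would prove is: for every $U\in\ObSchS$ the two collections
$$
\bigl\{\,\SchU\times_{y,\caly}\calx\ :\ y\in\Ob(\caly_U)\,\bigr\}\quad\text{and}\quad\bigl\{\,\SchU\times_{y',\caly'}\calx'\ :\ y'\in\Ob(\caly'_U)\,\bigr\}
$$
coincide up to equivalence over $\SchU$; more precisely, $\SchU\times_{y',\caly'}\calx'$ is equivalent over $\SchU$ to $\SchU\times_{y,\caly}\calx$ whenever $y$ is isomorphic in $\caly_U$ to the image of $y'$ under $a$, and conversely every $y\in\Ob(\caly_U)$ is of this form because $a$ restricts to an essentially surjective functor $\caly'_U\to\caly_U$. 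To obtain the equivalence I would chain three standard identifications: first, base change along the equivalence $b$ identifies $\SchU\times_{y',\caly'}\calx'$ with $\SchU\times_{y',\caly'}\calx$, where $\calx\to\caly'$ is the map $a^{-1}\circ f$ (which satisfies $(a^{-1}\circ f)\circ b\simeq f'$ via $\gamma$ and the quasi-inverse data); second, composing the two structure maps $\SchU\to\caly'$ and $\calx\to\caly'$ with the fully faithful essentially surjective $a$ does not change the $2$-fibre product, so $\SchU\times_{y',\caly'}\calx\cong\SchU\times_{a\circ y',\caly}\calx$; third, $a\circ y'$ corresponds to some $y\in\Ob(\caly_U)$, and $\SchU\times_{y,\caly}\calx$ depends on $y$ only up to isomorphism in $\caly_U$.

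Granting the translation principle, every item follows mechanically. Being representable by an algebraic space of class $\mathcal{C}$ (perfect, quasi-perfect, semiperfect, strongly perfect), by a perfect scheme, or by a scheme, is invariant under equivalence of categories fibred in groupoids, and by Lemma~\ref{L3} these classes are characterised intrinsically. Hence, for each fixed $U$, the statement "for every $y\in\Ob(\caly_U)$, $\SchU\times_{y,\caly}\calx$ is representable by a class-$\mathcal{C}$ algebraic space" is equivalent to "for every $y'\in\Ob(\caly'_U)$, $\SchU\times_{y',\caly'}\calx'$ is representable by a class-$\mathcal{C}$ algebraic space", because the two families of fibre products agree up to equivalence. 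Applying this under each of the four $U$-quantifiers gives the five equivalences in all four grades $i$; the "representably $i$-perfect" case of (2) is identical, using in addition that representability by a scheme is invariant under equivalence and that, under the translation, the associated ($i$-perfect) algebraic spaces of $f$ and of $f'$ coincide up to isomorphism.

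The only real work is the $2$-categorical bookkeeping inside the translation principle: carrying the $2$-isomorphism $\gamma$ and the quasi-inverse data for $a$ through the three successive identifications and checking that each is a genuine equivalence over $\SchU$. Each step is an instance of the standard fact that a $2$-fibre product is unchanged, up to canonical equivalence, when any of its three inputs is replaced by a $2$-isomorphic datum (or, for the two legs, by an equivalent one along a compatible $2$-morphism); once these are recorded, no further argument is needed, and the five cases and four grades are handled uniformly.
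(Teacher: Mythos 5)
Your proposal is correct and follows essentially the same route as the paper: the paper's (very terse) proof simply asserts the equivalence $\SchU\times_{\caly}\calx\cong\SchU\times_{\caly'}\calx'\cong\cals_{F}$ for an associated $i$-perfect algebraic space $F$ and concludes by invariance of the representability classes under equivalence. What you add is the careful justification of that key equivalence (matching objects $y'\in\Ob(\caly'_U)$ with $y\in\Ob(\caly_U)$ via essential surjectivity on fibres and carrying the $2$-isomorphism through), which the paper leaves implicit.
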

\begin{proof}
We just prove (2) since the other are similar. Let $U\in\ObSchS$. Then we have the following equivalences
$$
\SchU\times_{\caly}\calx\cong\SchU\times_{\caly'}\calx'\cong\cals_{F},
$$
where $F$ is an associated $i$-perfect algebraic space over $U$. This proves (2).
\end{proof}

Next, one can show that $i$-perfect (resp. $i$-weakly perfect, resp. $i$-quasiperfect, resp. $i$-semiperfect, resp. $i$-strongly perfect) $1$-morphisms are stable under compositions.
\begin{proposition}\label{P2}
Let $f:\calx\rightarrow\caly$ and $g:\caly\rightarrow\calz$ be two $1$-morphisms of categories fibred in groupoids over $\SchS$.
\begin{enumerate}
\item
If $f$ and $g$ are $i$-weakly perfect, then $g\circ f$ is $i$-weakly perfect.
\item
If $f$ and $g$ are $i$-perfect, then $g\circ f$ is $i$-perfect. In particular, if $f,g$ are representably $i$-perfect, then $g\circ f$ is representably $i$-perfect.
\item
If $f$ and $g$ are $i$-quasiperfect, then $g\circ f$ is $i$-quasiperfect.
\item
If $f$ and $g$ are $i$-semiperfect, then $g\circ f$ is $i$-semiperfect.
\item
If $f$ and $g$ are $i$-strongly perfect, then $g\circ f$ is $i$-strongly perfect.
\end{enumerate}
\end{proposition}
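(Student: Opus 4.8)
The plan is to prove all five assertions, and all four values of $i$, uniformly, by analysing the composite $g\circ f$ fibrewise over $\calz$: one collapses the relevant iterated $2$-fibre product, applies the hypothesis on $g$ and then the hypothesis on $f$, and finally descends the conclusion along an \'etale cover. I will write everything for ``$i$-perfect'', the cases of ``$i$-weakly perfect'', ``$i$-quasiperfect'', ``$i$-semiperfect'' and ``$i$-strongly perfect'' being obtained by replacing ``perfect algebraic space'' throughout by the corresponding notion and using the corresponding result of \cite{Liang} in place of \cite[Proposition 3.7]{Liang}.

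First I would fix a test datum: a scheme $U\in\ObSchS$ together with an object $z\in\Ob(\calz_{U})$, where $U$ is an arbitrary scheme if $i=3$, an arbitrary perfect scheme if $i=2$, and, when $i\in\{0,1\}$, the scheme witnessing the $i$-perfectness of $g$. Transitivity of $2$-fibre products of categories fibred in groupoids gives an equivalence over $\SchU$
$$
\SchU\times_{z,\calz}\calx\;\cong\;\bigl(\SchU\times_{z,\calz}\caly\bigr)\times_{\caly}\calx.
$$
By the $i$-perfectness of $g$ the category $\SchU\times_{z,\calz}\caly$ is representable by a perfect algebraic space $G$ over $U$, so that the right-hand side is equivalent to $\cals_{G}\times_{\caly}\calx$, where $\cals_{G}\to\caly$ is the structure $1$-morphism. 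It therefore suffices to prove that $\cals_{G}\times_{\caly}\calx$ is representable by a perfect algebraic space over $U$.

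Next I would descend along a chart. Choose a surjective \'etale morphism $W\to G$ with $W$ a scheme; since $G$ is a perfect algebraic space we may arrange, by the structure theory of perfect algebraic spaces in \cite{Liang}, that $W$ is a perfect scheme. Pulling the structure $1$-morphism $\cals_{G}\to\caly$ back along $\SchW\to\cals_{G}$ gives an object $y_{W}\in\Ob(\caly_{W})$, and unwinding the definitions identifies
$$
\SchW\times_{y_{W},\caly}\calx\;\cong\;\SchW\times_{\cals_{G}}\bigl(\cals_{G}\times_{\caly}\calx\bigr).
$$
By the $i$-perfectness of $f$ applied to the scheme $W$, the left-hand side is representable by a perfect algebraic space $E_{W}$. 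Since $\SchW\to\cals_{G}$ is a surjective \'etale $1$-morphism, effective descent for algebraic spaces shows $\cals_{G}\times_{\caly}\calx$ is representable by an algebraic space $E$ over $G$ with $E\times_{G}W\cong E_{W}$; and since being a perfect (resp.\ quasi-perfect, semiperfect, strongly perfect) algebraic space is \'etale-local (\cite{Liang}), $E$ is perfect. Thus $\SchU\times_{z,\calz}\calx\cong\cals_{E}$ with $E$ a perfect algebraic space over $U$, which is what was needed. For $i$-weakly perfect $1$-morphisms the argument is the same but simpler, since then $G$ is a perfect scheme, one may take $W=G$, and the descent step is vacuous.

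The step I expect to be the main obstacle is making the two inputs at the end precise. \emph{First}, one must know that being a perfect algebraic space --- and likewise quasi-perfect, semiperfect, strongly perfect --- is a property that is local in the \'etale topology, so that it descends from a chart; for ``perfect'' this should be close to the definition of a perfect algebraic space in \cite{Liang}, but for the quasi-perfect and semiperfect versions one has to return to the relevant propositions of \cite{Liang} (the ones already used in the proof of \lemref{L1}) and extract the local statement. \emph{Second}, in the cases $i=0$ and $i=1$ one must exhibit a single scheme witnessing $i$-perfectness of $g\circ f$: the natural candidate is the witness for $g$, but then the defining property of $f$ has to be applied along the cover $W\to G$ produced above, so one must check that this cover can be chosen compatibly with the witness for $f$. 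This compatibility is automatic for $i=2$ (every perfect scheme is admissible) and $i=3$ (every scheme is admissible), which is why those two cases are the clean ones, and it is the delicate point in the remaining two.
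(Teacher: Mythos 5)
Your argument is correct in outline but takes a genuinely different route from the paper's. The paper never passes to an \'{e}tale chart of the intermediate algebraic space: taking the witnesses $U,U'\in\ObSchS$ for $f$ and $g$, it forms the $2$-fibre product $(\SchU\times_{\caly}\calx)\times_{\SchUU}(\SchUU\times_{\calz}\caly)$, identifies it on the one hand with $\cals_{F\times_{h_{U'}}F'}$ and on the other with $\SchU\times_{\calz}\calx$, and then concludes directly from \cite[Propositions 3.7, 3.9]{Liang} that a fibre product of perfect (resp.\ quasi-perfect, etc.) algebraic spaces is again such. Its only external input is therefore stability of these classes of algebraic spaces under fibre products, whereas your argument additionally requires \'{e}tale-localness of perfectness and effective descent for algebraic spaces along $\SchW\rightarrow\cals_{G}$ --- exactly the two inputs you flag as the main obstacles. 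What your route buys is robustness: it is the standard Stacks-Project-style proof that representability by algebraic spaces is closed under composition, each step is a quotable lemma, and it makes explicit a point the paper elides, namely that for $i\in\{0,1\}$ the scheme witnessing the $i$-perfectness of $f$ need not coincide with the chart $W$ of $G$, so a witness for $g\circ f$ has to be produced by hand. What the paper's route buys is brevity, at the cost of an identification of $2$-fibre products whose structure morphisms to $\SchUU$ are left unspecified and of passing silently over the $i=0,1$ witness issue; neither concern arises in the clean cases $i=2,3$, where both arguments go through without further comment.
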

\begin{proof}
(2): Without loss of generality, we take $i=0$. Let $U,U'\in\ObSchS$. Since $f,g$ are perfect, there is some perfect algebraic spaces $F,F'$ such that $\SchU\times_{\caly}\calx\cong\cals_{F}$ and $\SchUU\times_{\calz}\caly\cong\cals_{F'}$. Now, $(\SchU\times_{\caly}\calx)\times_{\SchUU}(\SchUU\times_{\calz}\caly)\cong\cals_{F}\times_{\cals_{h_{U'}}}\cals_{F'}=\cals_{F\times_{h_{U'}}F'}$. But $(\SchU\times_{\caly}\calx)\times_{\SchUU}(\SchUU\times_{\calz}\caly)\cong\SchU\times_{\caly}\calx\times_{\calz}\caly\cong\SchU\times_{\calz}\calx$. Hence, $\SchU\times_{\calz}\calx\cong\cals_{F\times_{h_{U'}}F'}$. By \cite[Proposition 3.7]{Liang} the algebraic space $F\times_{h_{U'}}F'$ is perfect. Thus, $g\circ f$ is perfect. The second statement is clear.

(3): Without loss of generality, we take $i=0$. Let $U,U'\in\ObSchS$ such that there exist quasi-perfect algebraic spaces $F,F'$ such that $\SchU\times_{\caly}\calx\cong\cals_{F}$ and $\SchUU\times_{\calz}\caly\cong\cals_{F'}$. Then as (1) above we have $\SchU\times_{\calz}\calx\cong\cals_{F\times_{h_{U'}}F'}$. Now, \cite[Proposition 3.9]{Liang} shows that the algebraic space $F\times_{h_{U'}}F'$ is quasi-perfect. Thus, $g\circ f$ is quasi-perfect. The second statement is clear.

(1) is a special case of (2). (4), (5) follow from (3) by replacing ``quasi-perfect'' by ``semiperfect/strongly perfect''.
\end{proof}

Moreover, $i$-perfect (resp. $i$-weakly perfect, resp. $i$-quasiperfect, resp. $i$-semiperfect, resp. $i$-strongly perfect) $1$-morphisms are stable under arbitrary base change.
\begin{proposition}
Let $\calx,\caly,\calz$ be categories fibred in groupoids over $\SchS$. Consider the $2$-fibre product diagram
$$
\xymatrix{
  \calx\times_{\caly}\calz \ar[d]_{a'} \ar[rr]^{b'} &   & \calx \ar[d]^{a} \\
  \calz \ar[rr]^{b} &  & \caly   }
$$
\begin{enumerate}
\item
If $a$ is $i$-weakly perfect, then $a'$ is $i$-weakly perfect.
\item
If $a$ is $i$-perfect, then $a'$ is $i$-perfect. In particular, if $a$ is representably $i$-perfect, then $a'$ is representably $i$-perfect.
\item
If $a$ is $i$-quasiperfect, then $a'$ is $i$-quasiperfect.
\item
If $a$ is $i$-semiperfect, then $a'$ is $i$-semiperfect.
\item
If $a$ is $i$-strongly perfect, then $a'$ is $i$-strongly perfect.
\end{enumerate}
\end{proposition}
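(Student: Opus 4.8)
The plan is to reduce, by the associativity of $2$-fibre products of categories fibred in groupoids, each fibre of $a'$ to a fibre of $a$, and then quote the hypothesis on $a$ verbatim; unlike in Proposition~\ref{P2}, no permanence property of perfect algebraic spaces is required here, because the fibre of $a'$ is directly a fibre of $a$ rather than a fibre product of two fibres. I carry out statement (2) for $i$-perfect $1$-morphisms; the other four cases go through with ``perfect algebraic space'' replaced throughout by ``perfect scheme'' for (1), and by ``quasi-perfect / semiperfect / strongly perfect algebraic space'' for (3)--(5). \emph{Step 1: unwinding the fibre of $a'$.} Fix $U\in\ObSchS$ --- a perfect scheme when $i\in\{1,2\}$ --- and an object $z\in\Ob(\calz_U)$, viewed as a $1$-morphism $z\colon\SchU\to\calz$. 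Writing $\calx\times_{\caly}\calz=\calx\times_{a,\caly,b}\calz$ and recalling that $a'$ is the projection to $\calz$, the standard identity $(\calx\times_{\caly}\calz)\times_{\calz}\SchU\cong\calx\times_{\caly}\SchU$ gives a $2$-equivalence over $\SchU$
$$
\SchU\times_{z,\calz,a'}(\calx\times_{\caly}\calz)\;\cong\;\SchU\times_{y,\caly,a}\calx,
$$
where $y\in\Ob(\caly_U)$ is the object corresponding to the composite $\SchU\xrightarrow{z}\calz\xrightarrow{b}\caly$; equivalently, $y$ is the image of $z$ under the functor $\calz_U\to\caly_U$ induced by $b$.

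\emph{Step 2: quoting the hypothesis on $a$.} Suppose first that $a$ is $3$-perfect. Then for the $U$ and the object $y$ of Step~1, the category $\SchU\times_{y,\caly,a}\calx$ is representable by a perfect algebraic space over $U$; since a category $2$-equivalent over $\SchU$ to one representable by a perfect algebraic space is itself representable by that same perfect algebraic space, the $2$-equivalence of Step~1 shows that $\SchU\times_{z,\calz,a'}(\calx\times_{\caly}\calz)$ is also representable by a perfect algebraic space over $U$. As $U$ and $z$ were arbitrary, $a'$ is $3$-perfect. The case $i=2$ is identical, restricting to perfect schemes $U$. For $i\in\{0,1\}$ the hypothesis furnishes a particular scheme $U\in\ObSchS$ (a perfect scheme when $i=1$) witnessing $i$-perfectness of $a$, and I claim the same $U$ witnesses $i$-perfectness of $a'$: for any $z\in\Ob(\calz_U)$, Step~1 identifies $\SchU\times_{z,\calz}(\calx\times_{\caly}\calz)$ with $\SchU\times_{y,\caly}\calx$ for the corresponding $y\in\Ob(\caly_U)$, and the latter is representable by a perfect algebraic space over $U$ by the choice of $U$. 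Finally, for the ``in particular'' clause: by Step~1 an associated $i$-perfect algebraic space of $a'$ attached to $(U,z)$ may be taken equal to an associated $i$-perfect algebraic space of $a$ attached to $(U,y)$, so if all associated $i$-perfect algebraic spaces of $a$ are representable, then so are those of $a'$.

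\emph{Step 3: the remaining cases and the main obstacle.} Statements (1), (3), (4), (5) follow by repeating Steps~1--2 verbatim with the words ``perfect scheme'', ``quasi-perfect / semiperfect / strongly perfect algebraic space'' in the appropriate places; alternatively one may simply feed the $2$-equivalence of Step~1 into Lemma~\ref{L5}. The one genuinely delicate point is the bookkeeping behind Step~1: one must check that the composite $\SchU\xrightarrow{z}\calz\xrightarrow{b}\caly$ corresponds under $2$-Yoneda to the image of $z\in\Ob(\calz_U)$ in $\Ob(\caly_U)$, and that the associativity equivalence $(\calx\times_{\caly}\calz)\times_{\calz}\SchU\cong\calx\times_{\caly}\SchU$ is compatible with the two projections to $\SchU$, so that it genuinely is an equivalence over $\SchU$. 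This is a routine but slightly fiddly $2$-categorical diagram chase, and it is the step on which the whole proof hinges.
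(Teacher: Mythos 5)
Your proposal is correct and follows essentially the same route as the paper: both rest on the single equivalence $\SchU\times_{z,\calz}(\calx\times_{\caly}\calz)\cong\SchU\times_{b\circ z,\caly}\calx$ and then quote the hypothesis on $a$ for the resulting fibre. Your version merely spells out the bookkeeping (the identification of $y$ with the image of $z$ under $b$, and the case distinction over $i$) that the paper's one-line proof leaves implicit.
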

\begin{proof}
We will merely prove (2) as the rest are similar. Without loss of generality, take $i=0$. Let $U\in\ObSchS$. Since $a$ is perfect, there is some perfect algebraic space $F$ over $U$ such that $\SchU\times_{\caly}\calx\cong\cals_{F}$. Now,
$$
\SchU\times_{\calz}(\calx\times_{\caly}\calz)\cong\SchU\times_{\caly}\calx\cong\cals_{F}.
$$
This shows that $a'$ is perfect. If $F$ is representable, then $a'$ is representably perfect.
\end{proof}

Proposition \ref{P2} implies that we can form certain kinds of 2-categories. We first check the following lemma.
\begin{lemma}\label{L9}
Let $f:\calx\rightarrow\caly$ be a $1$-morphism of categories fibred in groupoids over $\SchS$ which is an equivalence. Then $f$ is $2$-weakly perfect. In particular, if $1_{\calx}:\calx\rightarrow\calx$ is the identity $1$-morphism, then $1_{\calx}$ is $2$-weakly perfect.
\end{lemma}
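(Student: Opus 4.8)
The plan is to reduce the statement to the elementary fact that equivalences of categories fibred in groupoids over $\SchS$ are stable under base change in $2$-fibre products. Unwinding \defref{A2}, to prove that $f$ is $2$-weakly perfect I must show that for every perfect scheme $U\in\ObSchS$ and every object $y\in\Ob(\caly_U)$, the category fibred in groupoids $\SchU\times_{y,\caly}\calx$ is representable by a perfect scheme over $U$.

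So I would fix such a $U$ and $y$, viewing $y$ as a $1$-morphism $y\:\SchU\to\caly$. The first projection $\SchU\times_{y,\caly}\calx\to\SchU$ is by construction the base change of $f$ along $y$. Since $f$ is an equivalence, this base change is again an equivalence of categories fibred in groupoids over $\SchS$ (equivalences are stable under $2$-fibre product base change, a standard fact). Hence $\SchU\times_{y,\caly}\calx\cong\SchU$, compatibly with the projections to $\SchU$. As $U$ is assumed perfect, this exhibits $\SchU\times_{y,\caly}\calx$ as representable by the perfect scheme $U$ over $U$, and therefore $f$ is $2$-weakly perfect.

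For the last sentence, the identity $1$-morphism $1_{\calx}\:\calx\to\calx$ is an equivalence, so the argument above applies verbatim. I do not anticipate any genuine obstacle: the only point requiring a little care is checking that the equivalence $\SchU\times_{y,\caly}\calx\cong\SchU$ is indeed an equivalence over $\SchS$ (equivalently, over $\SchU$), which is immediate from the universal property of the $2$-fibre product together with the fact that $f$, being an equivalence, admits a quasi-inverse.
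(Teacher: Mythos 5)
Your proposal is correct and is essentially the paper's own argument: the paper's proof likewise just observes that for a perfect scheme $U$ one has $\SchU\times_{\caly}\calx\cong\SchU$, which is representable by the perfect scheme $U$. You merely spell out the justification (base change of an equivalence along $y$ is an equivalence) that the paper leaves implicit.
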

\begin{proof}
Let $U\in\ObSchS$ be a perfect scheme. Then note that there is an equivalence $\SchU\times_{\caly}\calx\cong\SchU$.
\end{proof}

We can define a list of full sub 2-categories of the 2-category of categories fibred in groupoids over $\SchS$ as follows:
\begin{definition1}\label{D3}\
\begin{enumerate}
  \item
  The \textit{$2$-category of weakly perfect categories fibred in groupoids over} $\SchS$, which is denoted by ${\rm{WPerf}}Cat\mathscr{F}$. Its $1$-morphisms will be $1$-morphisms of weakly perfect categories fibred in groupoids over $\SchS$, which are automatically 2-weakly perfect due to Lemma \ref{L1}.
  \item
  The \textit{$2$-category of perfect categories fibred in groupoids over} $\SchS$, which is denoted by ${\rm{Perf}}Cat\mathscr{F}$. Its $1$-morphisms will be $1$-morphisms of perfect categories fibred in groupoids over $\SchS$, which are automatically 2-perfect due to Lemma \ref{L1}.
  \item
  The \textit{$2$-category of quasi-perfect categories fibred in groupoids over} $\SchS$, which is denoted by ${\rm{QPerf}}Cat\mathscr{F}$. Its $1$-morphisms will be $1$-morphisms of quasi-perfect categories fibred in groupoids over $\SchS$, which are automatically 2-quasiperfect due to Lemma \ref{L1}.
  \item
  The \textit{$2$-category of semiperfect categories fibred in groupoids over} $\SchS$, which is denoted by ${\rm{SPerf}}Cat\mathscr{F}$. Its $1$-morphisms will be $1$-morphisms of semiperfect categories fibred in groupoids over $\SchS$, which are automatically 2-semiperfect due to Lemma \ref{L1}.
  \item
  The \textit{$2$-category of strongly perfect categories fibred in groupoids over} $\SchS$, which is denoted by ${\rm{StPerf}}Cat\mathscr{F}$. Its $1$-morphisms will be $1$-morphisms of strongly perfect categories fibred in groupoids over $\SchS$, which are automatically 2-strongly perfect due to Lemma \ref{L1}.
\end{enumerate}
\end{definition1}

One readily obtains the following lemma which says that the 2-categories in Definition \ref{D3} are closed under equivalences.
\begin{lemma}\label{L4}
Let $\calx,\caly$ be categories fibred in groupoids over $\SchS$. Suppose that $\calx$ and $\caly$ are equivalent. Then
\begin{itemize}
\item[(1)]
$\calx$ is weakly perfect if and only if $\caly$ is weakly perfect.
\item[(2)]
$\calx$ is perfect (resp. representably perfect) if and only if $\caly$ is perfect (resp. representably perfect).
\item[(3)]
$\calx$ is quasi-perfect if and only if $\caly$ is quasi-perfect.
\item[(4)]
$\calx$ is semiperfect if and only if $\caly$ is semiperfect.
\item[(5)]
$\calx$ is strongly perfect if and only if $\caly$ is strongly perfect.
\end{itemize}
\end{lemma}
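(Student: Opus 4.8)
The plan is to reduce all five items to the single formal fact that ``being equivalent as categories fibred in groupoids over $\SchS$'' is an equivalence relation. Concretely, I would first recall the standard facts that a composition of two $1$-morphisms of categories fibred in groupoids over $\SchS$ which are equivalences is again an equivalence, and that every such equivalence admits a quasi-inverse which is again an equivalence over $\SchS$ (this is classical; it is the $2$-categorical analogue of invertibility and is recorded in the Stacks Project). Granting this, each of the listed properties has the shape ``there exists an object $\mathcal Z$ in a fixed class $\mathcal C$ together with an equivalence $\calx \cong \mathcal Z$ over $\SchS$'': for (1) the class $\mathcal C$ consists of the $\SchU$ with $U$ a perfect scheme in $\ObSchS$; for (2) it consists of the $\cals_F$ with $F$ a perfect algebraic space over $S$ (and, in the representably perfect case, with $F$ moreover representable); for (3), (4), (5) it consists of the $\cals_F$ with $F$ quasi-perfect, semiperfect, strongly perfect respectively.

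With this formulation the proof is immediate and uniform. Suppose $\calx$ and $\caly$ are equivalent, fix an equivalence $e\colon\calx\to\caly$, and assume $\caly$ has the property in question, so there is $\mathcal Z\in\mathcal C$ and an equivalence $\caly\cong\mathcal Z$. Composing with $e$ yields an equivalence $\calx\cong\mathcal Z$, so $\calx$ has the property as well, with the \emph{same} witness $\mathcal Z$. For the converse direction, apply the same argument to a quasi-inverse of $e$, which is an equivalence $\caly\to\calx$. Since the witnessing object is literally unchanged, the refinement ``representably perfect'' in (2) is handled at no extra cost: if the perfect algebraic space $F$ witnessing perfectness of $\caly$ is representable, the very same $F$ witnesses that $\calx$ is representably perfect, and vice versa. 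Items (1), (3), (4), (5) are verbatim the same argument with the appropriate class $\mathcal C$ substituted.

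There is essentially no obstacle: the content is purely formal, resting only on transitivity and symmetry of equivalence together with the (standard) closure of equivalences under composition and quasi-inverses. One remark worth making is that, for the cases where $\calx$ (equivalently $\caly$) happens to be fibred in setoids, items (1)--(5) also drop out of Lemma \ref{L3}, since an equivalence $\calx\cong\caly$ identifies the presheaves $U\mapsto\Ob(\calx_U)/{\cong}$ and $U\mapsto\Ob(\caly_U)/{\cong}$ and the property of being a perfect (resp. quasi-perfect, semiperfect, strongly perfect) algebraic space is manifestly invariant under isomorphism of presheaves; but the direct argument above is cleaner, avoids invoking the fibred-in-setoids hypothesis, and also covers the weakly perfect and representably perfect statements which Lemma \ref{L3} does not phrase. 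I would present the direct argument as the proof and mention the Lemma \ref{L3} route only parenthetically.
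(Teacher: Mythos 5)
Your proof is correct and is exactly the ``check from definitions'' argument the paper intends: each property in Lemma \ref{L4} is defined as the existence of an equivalence to an object of a fixed class, so it transfers along any equivalence by composing with it (or its quasi-inverse), with the same witnessing object. The paper's own proof is just the one-line remark that this is easy to verify from the definitions, so you have simply written out the same argument in full.
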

\begin{proof}
It is easy to check from definitions.
\end{proof}

Moreover, it is easy to see that 2-categories in Definition \ref{D3} are stable under fibre products.
\begin{proposition}\label{P4}
Let $f:\calx\rightarrow\calz$ and $g:\caly\rightarrow\calz$ be $1$-morphisms of categories fibred in groupoids over $\SchS$.
\begin{enumerate}
  \item
  If $\calx,\caly,\calz$ are weakly perfect, then the $2$-fibre product $\calx\times_{\calz}\caly$ is weakly perfect. It is also a $2$-fibre product in the $2$-category ${\rm{WPerf}}Cat\mathscr{F}$.
  \item
  If $\calx,\caly,\calz$ are perfect, then the $2$-fibre product $\calx\times_{\calz}\caly$ is perfect. It is also a $2$-fibre product in the $2$-category ${\rm{Perf}}Cat\mathscr{F}$.
  \item
  If $\calx,\caly,\calz$ are quasi-perfect, then the $2$-fibre product $\calx\times_{\calz}\caly$ is quasi-perfect. It is also a $2$-fibre product in the $2$-category ${\rm{QPerf}}Cat\mathscr{F}$.
  \item
  If $\calx,\caly,\calz$ are semiperfect, then the $2$-fibre product $\calx\times_{\calz}\caly$ is semiperfect. It is also a $2$-fibre product in the $2$-category ${\rm{SPerf}}Cat\mathscr{F}$.
  \item
  If $\calx,\caly,\calz$ are strongly perfect, then the $2$-fibre product $\calx\times_{\calz}\caly$ is strongly perfect. It is also a $2$-fibre product in the $2$-category ${\rm{StPerf}}Cat\mathscr{F}$.
\end{enumerate}
\end{proposition}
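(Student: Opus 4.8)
The plan is to deduce all five cases from the corresponding stability of fibre products for perfect algebraic spaces (resp. perfect schemes) established in \cite{Liang}, combined with the invariance of the relevant notions under equivalence (\lemref{L4}). I will carry out the perfect case in detail; the weakly perfect, quasi-perfect, semiperfect, and strongly perfect cases run identically with the appropriate inputs substituted.

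First I would unwind the hypotheses. Since $\calx$, $\caly$, $\calz$ are perfect, there are perfect algebraic spaces $F$, $G$, $H$ over $S$ together with equivalences $\calx\cong\cals_{F}$, $\caly\cong\cals_{G}$, $\calz\cong\cals_{H}$ over $\SchS$; transporting $f$ and $g$ along these equivalences and applying the $2$-Yoneda lemma, I obtain morphisms of algebraic spaces $F\to H$ and $G\to H$. Exactly as in the computations already carried out in the proofs of \lemref{L1} and \proref{P2}, there are then equivalences $\calx\times_{\calz}\caly\cong\cals_{F}\times_{\cals_{H}}\cals_{G}\cong\cals_{F\times_{H}G}$, where $F\times_{H}G$ is the fibre product of algebraic spaces. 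By \cite[Proposition 3.7]{Liang} the algebraic space $F\times_{H}G$ is again perfect, so $\cals_{F\times_{H}G}$ is perfect by definition, whence $\calx\times_{\calz}\caly$ is perfect by \lemref{L4}. For the weakly perfect case one writes instead $\calx\cong\SchU$, $\caly\cong\SchV$, $\calz\cong\SchW$ with $U$, $V$, $W$ perfect schemes, uses that $U\mapsto\SchU$ commutes with fibre products to get $\calx\times_{\calz}\caly\cong(Sch/U\times_{W}V)_{fppf}$, and invokes the corresponding stability of perfect schemes from \cite{Liang}; the quasi-perfect, semiperfect, and strongly perfect cases use \cite[Proposition 3.9]{Liang} and its analogues in place of \cite[Proposition 3.7]{Liang}.

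It then remains to check that $\calx\times_{\calz}\caly$, which is by construction a $2$-fibre product in the $2$-category of all categories fibred in groupoids over $\SchS$, stays a $2$-fibre product inside the full sub-$2$-category ${\rm{Perf}}Cat\mathscr{F}$ (resp. ${\rm{WPerf}}Cat\mathscr{F}$, ${\rm{QPerf}}Cat\mathscr{F}$, ${\rm{SPerf}}Cat\mathscr{F}$, ${\rm{StPerf}}Cat\mathscr{F}$). This is purely formal: for any object $\mathcal T$ of the sub-$2$-category, the universal property in all categories fibred in groupoids gives an equivalence between the groupoid of $1$-morphisms $\mathcal T\to\calx\times_{\calz}\caly$ lying over $\calz$ and the groupoid of triples consisting of $1$-morphisms $\mathcal T\to\calx$, $\mathcal T\to\caly$, and a $2$-isomorphism between their two composites to $\calz$; every $1$-morphism and $2$-morphism occurring here automatically lies in the sub-$2$-category, since by \defref{D3} its $1$-morphisms are precisely the $1$-morphisms of categories fibred in groupoids between its objects (these being automatically $2$-perfect by \lemref{L1}), and likewise for $2$-morphisms. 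Hence $\calx\times_{\calz}\caly$ with its projections is a $2$-fibre product there as well. I do not expect a genuine obstacle: the only substantive ingredient is the fibre-product stability of (weakly) perfect algebraic spaces quoted from \cite{Liang}, and everything else is a mechanical unwinding. The one point deserving care is the weakly perfect case, where one needs the fibre product of perfect schemes, computed in schemes, to be again a perfect scheme lying in $Sch_{fppf}$; lacking that, one could only conclude that $\calx\times_{\calz}\caly$ is perfect, via \cite[Lemma 3.3]{Liang}, but not necessarily weakly perfect.
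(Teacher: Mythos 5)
Your proposal is correct and follows essentially the same route as the paper: identify the three stacks with $\cals_{F}$, $\cals_{G}$, $\cals_{H}$ for perfect algebraic spaces, compute $\calx\times_{\calz}\caly\cong\cals_{F\times_{H}G}$, invoke \cite[Proposition 3.7, Proposition 3.9]{Liang} for stability of perfectness under fibre products, and use fullness of the sub-$2$-categories of Definition \ref{D3} for the second assertion. Your write-up is simply a more explicit unwinding of the paper's terse argument (and your caveat about the weakly perfect case is a reasonable extra point of care that the paper glosses over).
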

\begin{proof}
By \cite[Tag0041]{Stack Project}, the $2$-fibre product $\calx\times_{\calz}\caly$ is a category fibred in groupoids over $\SchS$. Thus, the statements follow directly from \cite[Proposition 3.7, Proposition 3.9]{Liang} and the fact that 2-categories in Definition \ref{D3} are full sub 2-category of the 2-category of categories fibred in groupoids over $\SchS$.
\end{proof}

\section{Perfect algebraic stacks}\label{B2}
In this section, we extend our results of perfect algebraic spaces to the setting of algebraic stacks. Here are the definitions of different kinds of perfect algebraic stacks. Let $0\leq i\leq3$ be an integer.

\begin{definition}\label{D1}
Let $\mathcal{X}$ be an algebraic stack over $S$.
\begin{enumerate}
\item
Then $\mathcal{X}$ is said to be perfect if there exist a perfect scheme $U\in{\rm{Ob}}((Sch/S)_{fppf})$ together with a surjective smooth $1$-morphism $(Sch/U)_{fppf}\rightarrow\mathcal{X}$.
\item
We say $\mathcal{X}$ is relatively $i$-perfect (resp. $i$-quasiperfect, resp. $i$-semiperfect, resp. $i$-strongly perfect) if the diagonal morphism $\Delta:\mathcal{X}\rightarrow\mathcal{X}\times\mathcal{X}$ is $i$-perfect (resp. $i$-quasiperfect, resp. $i$-semiperfect, resp. $i$-strongly perfect).
\item
We say that $\mathcal{X}$ is representably relatively $i$-perfect (resp. relatively $i$-weakly perfect) if the diagonal morphism $\Delta:\mathcal{X}\rightarrow\mathcal{X}\times\mathcal{X}$ is representably $i$-perfect (resp. $i$-weakly perfect).
\item
We say $\mathcal{X}$ is representably weakly perfect (resp. perfect, resp. quasi-perfect, resp. semiperfect, resp. strongly perfect) if it is weakly perfect (resp. perfect, resp. quasi-perfect, resp. semiperfect, resp. strongly perfect) as a category fibred in groupoids.
\end{enumerate}
\end{definition}

We specialize the above definitions to the case of Deligne-Mumford stacks.
\begin{definition}
Let $\mathcal{X}$ be a Deligne-Mumford stack over $S$ and let $\tau$ be any property of an algebraic stack over $S$ in Definition \ref{D1} (2)-(4). Then $\mathcal{X}$ is said to be perfect if there exist a perfect scheme $U\in{\rm{Ob}}((Sch/S)_{fppf})$ and a surjective \'{e}tale $1$-morphism $(Sch/U)_{fppf}\rightarrow\mathcal{X}$. And $\mathcal{X}$ is said to be $\tau$ if it is $\tau$ as an algebraic stack over $S$.
\end{definition}
\begin{remark}
For simplicity, we simply call \textit{DM stack} for a Deligne-Mumford stack.
\end{remark}

The following lemma shows that certain kinds of Deligne-Mumford stack defined above generalize the notion of perfect algebraic spaces.
\begin{lemma}\label{L2}
Let $F$ be an algebraic space over $S$.
\begin{itemize}
\item[(1)]
If $F$ is perfect, then the associated category fibred in groupoid $p:\cals_{F}\rightarrow\SchS$ is a perfect, relatively $2$-perfect, and representably perfect DM stack. Furthermore, if $F$ is representable, then $\cals_{F}$ is representably relatively $2$-perfect. And if $F$ is represented by a perfect scheme, then $\cals_{F}$ is representably weakly perfect and relatively $2$-weakly perfect.
\item[(2)]
If $F$ is quasi-perfect, then the associated category fibred in groupoid $p:\cals_{F}\rightarrow\SchS$ is a relative $2$-quasiperfect and representably quasi-perfect DM stack.
\item[(3)]
If $F$ is semiperfect, then the associated category fibred in groupoid $p:\cals_{F}\rightarrow\SchS$ is a relative $2$-semiperfect and representably semiperfect DM stack.
\item[(4)]
If $F$ is strongly perfect, then the associated category fibred in groupoid $p:\cals_{F}\rightarrow\SchS$ is a relative $2$-strongly perfect and representably strongly perfect DM stack.
\end{itemize}
\end{lemma}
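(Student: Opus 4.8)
The plan is to reduce every assertion to \lemref{L1}, the standard identification $\cals_F\times\cals_F\cong\cals_{F\times_S F}$ (the product being formed over $\SchS$), and the structural properties of perfect algebraic spaces recorded in \cite{Liang}. Throughout I will use that, for an algebraic space $F$ over $S$, the category fibred in groupoids $\cals_F$ is a Deligne--Mumford stack, that $\cals_F$ is tautologically representable by $F$ as a category fibred in groupoids, and that $\cals_F\times\cals_F$ is representable by the algebraic space $F\times_S F$.

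For part (1), I would first treat the case where $F$ is merely assumed perfect. Recall from \cite{Liang} that a perfect algebraic space admits a surjective \'etale morphism $U\to F$ with $U$ a perfect scheme; passing to categories fibred in groupoids this yields a surjective \'etale $1$-morphism $\SchU\to\cals_F$ with $U$ a perfect scheme, so $\cals_F$ is a perfect DM stack. Next, since $F$ is a perfect algebraic space, $\cals_F$ is by definition perfect as a category fibred in groupoids, i.e. $\cals_F$ is a representably perfect DM stack (Definition~\ref{D1}(4)). For relative $2$-perfectness I would apply \lemref{L1}(1) to the diagonal $\Delta:\cals_F\to\cals_F\times\cals_F$: its target $\cals_F\times\cals_F\cong\cals_{F\times_S F}$ is representable by an algebraic space and its source $\cals_F$ is a perfect category fibred in groupoids, so $\Delta$ is $2$-perfect and $\cals_F$ is relatively $2$-perfect (Definition~\ref{D1}(2)).

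For the refinements, suppose in addition that $F$ is representable, say by a scheme. Then $\Delta$ is already $2$-perfect by the previous step, so it remains to see that each associated $2$-perfect algebraic space of $\Delta$ is representable. For a perfect scheme $U$ and an object $y$ of $(\cals_F\times\cals_F)_U$ corresponding to a pair $x_1,x_2:U\to F$, I would compute, using that $\cals_F$ is fibred in setoids, that $\SchU\times_{y,\cals_F\times\cals_F,\Delta}\cals_F$ is equivalent to $\cals_E$, where $E=h_U\times_{(x_1,x_2),F\times_S F,\Delta_F}F$ is the subsheaf of $h_U$ on which $x_1$ and $x_2$ agree; since $F$ is a scheme, $\Delta_F$ is an immersion, so $E$ is a locally closed subscheme of $U$ and in particular representable. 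Hence $\Delta$ is representably $2$-perfect and $\cals_F$ is representably relatively $2$-perfect (Definition~\ref{D1}(3)). If furthermore $F$ is represented by a perfect scheme $V$, then $\cals_F\cong\SchV$ is weakly perfect as a category fibred in groupoids, so $\cals_F$ is representably weakly perfect, and the ``in particular'' clause of \lemref{L1}(1) applied to $\Delta$ gives that $\Delta$ is $2$-weakly perfect, i.e. $\cals_F$ is relatively $2$-weakly perfect.

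Parts (2)--(4) should go through in exactly the same way, with \lemref{L1}(2),(3),(4) in place of \lemref{L1}(1): a quasi-perfect (resp. semiperfect, strongly perfect) algebraic space is an algebraic space, so $\cals_F$ is a DM stack; it is quasi-perfect (resp. semiperfect, strongly perfect) as a category fibred in groupoids because $F$ is, which is exactly ``representably quasi-perfect'' (resp. ``representably semiperfect'', ``representably strongly perfect''); and \lemref{L1}(2) (resp. (3), (4)) applied to $\Delta:\cals_F\to\cals_F\times\cals_F\cong\cals_{F\times_S F}$ makes $\Delta$ $2$-quasiperfect (resp. $2$-semiperfect, $2$-strongly perfect), i.e. $\cals_F$ relatively $2$-quasiperfect (resp. relatively $2$-semiperfect, relatively $2$-strongly perfect). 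I expect the main (indeed only non-formal) point to be the computation of $\SchU\times_{\cals_F\times\cals_F,\Delta}\cals_F$ as the relative Isom-sheaf $\cals_E$ in the representable case, together with the remark that the diagonal of a scheme is an immersion; everything else is bookkeeping against \lemref{L1} and the definitions of \S\ref{B2}.
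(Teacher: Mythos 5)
Your proof is correct and follows essentially the same route as the paper: a perfect-scheme \'{e}tale atlas for perfectness of the DM stack, the definition for representable perfectness, and \lemref{L1} applied to the diagonal $\Delta:\cals_{F}\rightarrow\cals_{F}\times\cals_{F}\cong\cals_{F\times_{S}F}$ for all the relative statements, with (2)--(4) handled identically via the other parts of \lemref{L1}. The only divergence is the representably-relatively-$2$-perfect step, where you compute the fibre $\cals_{E}$ with $E=h_{U}\times_{F\times_{S}F}F$ directly and use that the diagonal of a scheme is an immersion, whereas the paper just cites \lemref{L1}(5) (both $\cals_{F}$ and $\cals_{F\times_{S}F}$ being representably perfect when $F$ is); both arguments work.
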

\begin{proof}
(1): Let $U\in\ObSchS$ be a perfect scheme such that $h_{U}\rightarrow F$ is surjective \'{e}tale. It follows from \cite[Tag045A]{Stack Project} that the 1-morphism $\SchU\rightarrow\cals_{F}$ is surjective \'{e}tale. Hence, $\cals_{F}$ is perfect. Since $F$ is perfect, the diagonal morphism $\cals_{F}\rightarrow\cals_{F\times F}$ is $2$-perfect by Lemma \ref{L1}. Thus, the category fibred in groupoid $\cals_{F}$ is relatively $2$-perfect.

If $F$ is representable (resp. represented by a perfect scheme), then the diagonal morphism $\cals_{F}\rightarrow\cals_{F\times F}$ is representably $2$-perfect (resp. $2$-weakly perfect) by Lemma \ref{L1}. Thus, the category fibred in groupoid $\cals_{F}$ is representably relatively $2$-perfect (resp. relatively $2$-weakly perfect).

(2): Since $F$ is quasi-perfect, it follows from Lemma \ref{L1} that the diagonal morphism $\cals_{F}\rightarrow\cals_{F\times F}$ is $2$-quasiperfect. Thus, the category fibred in groupoid $\cals_{F}$ is relatively $2$-quasiperfect. It is obvious that $\cals_{F}$ is quasi-perfect.

The proof of (3) and (4) are similar to (1) and (2).
\end{proof}

\begin{remark}
Let ``perfect'' represents ``perfect (resp. quasiperfect, resp. semiperfect, resp. strongly perfect)'' for now. If $\calx$ is a relatively $3$-perfect algebraic stack over $S$, then by virtue of Lemma \ref{L2}, $\calx$ should be said to be \textit{pseudo-perfect}, which indicates that it does not generalize perfect algebraic spaces. Moreover, by abuse, when we speak of a relatively $0$-perfect algebraic stack $\calx$ over $S$, we will simply call it \textit{relatively perfect} algebraic stack over $S$.
\end{remark}

We can form a series of full sub $2$-categories of the 2-category $AStack_{S}$ (resp. $DM_{S}$) of algebraic stacks (resp. DM stacks) over $S$ as follows.
\begin{definition1}\label{D2}
We have the following full sub 2-categories of the 2-category $AStack_{S}$ (resp. $DM_{S}$).
\begin{enumerate}
\item
The \textit{$2$-category of perfect algebraic stacks} (resp. \textit{DM stacks}) \textit{over} $S$, which is denoted by $\textrm{Perf}AStack_{S}$ (resp. $\textrm{Perf}DM_{S}$).
\item
The \textit{$2$-category of relatively $i$-perfect algebraic stacks} (resp. \textit{DM stacks}) \textit{over} $S$, which is denoted by $\textrm{\underline{Perf}}AStack^{i}_{S}$ (resp. $\textrm{\underline{Perf}}DM^{i}_{S}$).
\item
The \textit{$2$-category of relatively $i$-quasiperfect algebraic stacks} (resp. \textit{DM stacks}) \textit{over} $S$, which is denoted by $\underline{\mathcal{Q}\textrm{Perf}}AStack^{i}_{S}$ (resp. $\underline{\mathcal{Q}\textrm{Perf}}DM^{i}_{S}$).
\item
The \textit{$2$-category of relatively $i$-semiperfect algebraic stacks} (resp. \textit{DM stacks}) \textit{over} $S$, which is denoted by $\underline{\mathcal{S}\textrm{Perf}}AStack^{i}_{S}$ (resp. $\underline{\mathcal{S}\textrm{Perf}}DM^{i}_{S}$).
\item
The \textit{$2$-category of relatively $i$-strongly perfect algebraic stacks} (resp. \textit{DM stacks}) \textit{over} $S$, which is denoted by $\underline{\mathcal{ST}\textrm{Perf}}AStack^{i}_{S}$ (resp. $\underline{\mathcal{ST}\textrm{Perf}}DM^{i}_{S}$).
\item
The \textit{$2$-category of representably weakly perfect DM stacks over} $S$, which is denoted by $\widehat{\textrm{WPerf}}DM_{S}$. Its $1$-morphisms will be $1$-morphisms of weakly perfect DM stacks over $\SchS$, which are automatically 2-weakly perfect due to Lemma \ref{L1}.
\item
The \textit{$2$-category of representably perfect DM stacks over} $S$, which is denoted by $\widehat{\textrm{Perf}}DM_{S}$. Its $1$-morphisms will be $1$-morphisms of perfect DM stacks over $\SchS$, which are automatically 2-perfect due to Lemma \ref{L1}.
\item
The \textit{$2$-category of representably quasi-perfect DM stacks over} $S$, which is denoted by $\widehat{\textrm{QPerf}}DM_{S}$. Its $1$-morphisms will be $1$-morphisms of quasi-perfect DM stacks over $\SchS$, which are automatically 2-quasiperfect due to Lemma \ref{L1}.
\item
The \textit{$2$-category of representably semiperfect DM stacks over} $S$, which is denoted by $\widehat{\textrm{SPerf}}DM_{S}$. Its $1$-morphisms will be $1$-morphisms of semiperfect DM stacks over $\SchS$, which are automatically 2-semiperfect due to Lemma \ref{L1}.
\item
The \textit{$2$-category of representably strongly perfect DM stacks over} $S$, which is denoted by $\widehat{\textrm{StPerf}}DM_{S}$. Its $1$-morphisms will be $1$-morphisms of strongly perfect DM stacks over $\SchS$, which are automatically 2-strongly perfect due to Lemma \ref{L1}.
\end{enumerate}
\end{definition1}

Then we have the following two commutative diagrams of inclusions of categories
\begin{center}
$$
\begin{tikzcd}[row sep=1.5em, column sep = 1.5em]
\underline{\textrm{Perf}}AStack^{0}_{S}  \arrow[r] & \underline{\mathcal{Q}\textrm{Perf}}AStack^{0}_{S}  \arrow[r] & \underline{\mathcal{S}\textrm{Perf}}AStack^{0}_{S} \arrow[r] & \underline{\mathcal{ST}\textrm{Perf}}AStack^{0}_{S} \\
\underline{\textrm{Perf}}AStack^{1}_{S} \arrow[u] \arrow[r] & \underline{\mathcal{Q}\textrm{Perf}}AStack^{1}_{S}\arrow[u]  \arrow[r] & \underline{\mathcal{S}\textrm{Perf}}AStack^{1}_{S} \arrow[u] \arrow[r] & \underline{\mathcal{ST}\textrm{Perf}}AStack^{1}_{S} \arrow[u] \\
\underline{\textrm{Perf}}AStack^{2}_{S} \arrow[u] \arrow[r] & \underline{\mathcal{Q}\textrm{Perf}}AStack^{2}_{S}\arrow[u]  \arrow[r] & \underline{\mathcal{S}\textrm{Perf}}AStack^{2}_{S} \arrow[u] \arrow[r] & \underline{\mathcal{ST}\textrm{Perf}}AStack^{2}_{S} \arrow[u] \\
\underline{\textrm{Perf}}AStack^{3}_{S} \arrow[u] \arrow[r] & \underline{\mathcal{Q}\textrm{Perf}}AStack^{3}_{S}\arrow[u]  \arrow[r] & \underline{\mathcal{S}\textrm{Perf}}AStack^{3}_{S} \arrow[u] \arrow[r] & \underline{\mathcal{ST}\textrm{Perf}}AStack^{3}_{S} \arrow[u]
\end{tikzcd}
$$
\captionof{figure}{The commutative diagram of inclusion functors}
\label{F1}
\end{center}
\begin{center}
$$
\begin{tikzcd}[row sep=1.5em, column sep = 1.5em]
\underline{\textrm{Perf}}DM^{0}_{S}  \arrow[r] & \underline{\mathcal{Q}\textrm{Perf}}DM^{0}_{S}  \arrow[r] & \underline{\mathcal{S}\textrm{Perf}}DM^{0}_{S} \arrow[r] & \underline{\mathcal{ST}\textrm{Perf}}DM^{0}_{S} \\
\underline{\textrm{Perf}}DM^{1}_{S} \arrow[u] \arrow[r] & \underline{\mathcal{Q}\textrm{Perf}}DM^{1}_{S}\arrow[u]  \arrow[r] & \underline{\mathcal{S}\textrm{Perf}}DM^{1}_{S} \arrow[u] \arrow[r] & \underline{\mathcal{ST}\textrm{Perf}}DM^{1}_{S} \arrow[u] \\
\underline{\textrm{Perf}}DM^{2}_{S} \arrow[u] \arrow[r] & \underline{\mathcal{Q}\textrm{Perf}}DM^{2}_{S}\arrow[u]  \arrow[r] & \underline{\mathcal{S}\textrm{Perf}}DM^{2}_{S} \arrow[u] \arrow[r] & \underline{\mathcal{ST}\textrm{Perf}}DM^{2}_{S} \arrow[u] \\
\underline{\textrm{Perf}}DM^{3}_{S} \arrow[u] \arrow[r] & \underline{\mathcal{Q}\textrm{Perf}}DM^{3}_{S}\arrow[u]  \arrow[r] & \underline{\mathcal{S}\textrm{Perf}}DM^{3}_{S} \arrow[u] \arrow[r] & \underline{\mathcal{ST}\textrm{Perf}}DM^{3}_{S} \arrow[u]
\end{tikzcd}
$$
\captionof{figure}{The commutative diagram of inclusion functors}
\label{F2}
\end{center}
together with a string of inclusion functors
\begin{align}
\widehat{\textrm{WPerf}}DM_{S}\subset\widehat{\textrm{Perf}}DM_{S}\subset\widehat{\textrm{QPerf}}DM_{S}\subset\widehat{\textrm{SPerf}}DM_{S}\subset\widehat{\textrm{StPerf}}DM_{S}.
\end{align}

The following lemma shows that all 2-categories in Definition \ref{D2} are closed under equivalences.
\begin{lemma}\label{L13}
Let $\calx,\caly$ be categories fibred in groupoids over $\SchS$. Suppose that $\calx$ and $\caly$ are equivalent. Then
\begin{enumerate}
\item
$\calx$ is a perfect algebraic stack (resp. DM stack) if and only if $\caly$ is a perfect algebraic stack (resp. DM stack).
\item
$\calx$ is a representably perfect DM stack if and only if $\caly$ is a representably perfect DM stack. In particular, $\calx$ is a representably weakly perfect DM stack if and only if $\caly$ is a representably weakly perfect DM stack.
\item
$\calx$ is a representably quasi-perfect DM stack if and only if $\caly$ is a representably quasi-perfect DM stack.
\item
$\calx$ is a representably semiperfect DM stack if and only if $\caly$ is a representably semiperfect DM stack.
\item
$\calx$ is a representably strongly perfect DM stack if and only if $\caly$ is a representably strongly perfect DM stack.
\item
$\calx$ is a relatively $i$-perfect algebraic stack (resp. DM stack) if and only if $\caly$ is a relatively $i$-perfect algebraic stack (resp. DM stack).
\item
$\calx$ is a relatively $i$-quasiperfect algebraic stack (resp. DM stack) if and only if $\caly$ is a relatively $i$-quasiperfect algebraic stack (resp. DM stack).
\item
$\calx$ is a relatively $i$-semiperfect algebraic stack (resp. DM stack) if and only if $\caly$ is a relatively $i$-semiperfect algebraic stack (resp. DM stack).
\item
$\calx$ is a relatively $i$-strongly perfect algebraic stack (resp. DM stack) if and only if $\caly$ is a relatively $i$-strongly algebraic stack (resp. DM stack).
\end{enumerate}
\end{lemma}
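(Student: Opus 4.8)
The plan is to split the nine assertions into three groups according to the shape of the defining condition, and to reduce each group to an equivalence-invariance statement already available in \S\ref{B1} together with standard stability properties of $1$-morphisms. Throughout, I would first record the (standard) fact that being an algebraic stack, resp.\ a DM stack, over $S$ is invariant under equivalence of categories fibred in groupoids over $\SchS$; hence in every case we may assume both $\calx$ and $\caly$ are algebraic stacks, resp.\ DM stacks. Fix an equivalence $\phi\colon\calx\to\caly$.

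For (1): if $U\in\ObSchS$ is a perfect scheme and $\SchU\to\calx$ is surjective smooth, then its composite with $\phi$ is a surjective smooth $1$-morphism $\SchU\to\caly$, because surjectivity and smoothness of a $1$-morphism are preserved under post-composition with an equivalence; the converse is symmetric, and the DM version is identical with ``smooth'' replaced by ``\'etale''. So (1) costs essentially nothing.

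For (2)--(5): a DM stack is \emph{representably weakly perfect} (resp.\ perfect, quasi-perfect, semiperfect, strongly perfect) precisely when it has the corresponding property \emph{as a category fibred in groupoids}, so these four statements are nothing but Lemma \ref{L4} applied to $\calx\cong\caly$, the ``representable'' refinement in the perfect case being covered by Lemma \ref{L4}(2). Again combine with the equivalence-invariance of being a DM stack.

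For (6)--(9): the crux is to produce the $2$-commutative square
$$\xymatrix{ \calx \ar[d]_{\Delta_{\calx}} \ar[r]^{\phi} & \caly \ar[d]^{\Delta_{\caly}} \\ \calx\times\calx \ar[r]^{\phi\times\phi} & \caly\times\caly }$$
whose horizontal arrows $\phi$ and $\phi\times\phi$ are equivalences, and then apply Lemma \ref{L5}: it gives that $\Delta_{\calx}$ is $i$-perfect (resp.\ $i$-quasiperfect, $i$-semiperfect, $i$-strongly perfect) if and only if $\Delta_{\caly}$ is, which is exactly (6)--(9). The main obstacle --- indeed the only non-formal point in the whole lemma --- is exhibiting the $2$-isomorphism $(\phi\times\phi)\circ\Delta_{\calx}\cong\Delta_{\caly}\circ\phi$ that makes the square $2$-commute; this is a routine chase with the universal property of the $2$-fibre product $\calx\times\calx$, after which Lemma \ref{L5} closes the argument formally. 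Everything else is bookkeeping over the nine cases.
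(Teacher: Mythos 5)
Your proposal is correct and follows essentially the same route as the paper: part (1) by transporting a smooth (resp.\ \'etale) atlas along the equivalence, parts (2)--(5) by direct appeal to Lemma \ref{L4}, and parts (6)--(9) by forming the $2$-commutative square comparing $\Delta_{\calx}$ and $\Delta_{\caly}$ via $\phi$ and $\phi\times\phi$ and then invoking Lemma \ref{L5}. The paper handles the preliminary invariance of being an algebraic (resp.\ DM) stack and the smooth/\'etale atlas transport by citing \cite[Tag03YQ]{Stack Project}, but otherwise the decomposition and the key lemmas are identical.
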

\begin{proof}
By \cite[Tag03YQ]{Stack Project}, $\calx$ is a algebraic stack (resp. DM stack) if and only if $\caly$ is a algebraic stack (resp. DM stack). Let $x:\SchU\rightarrow\calx$ be a surjective smooth (resp. \'{e}tale) $1$-morphism for $U\in\ObSchU$ a perfect scheme. Choose an equivalence $f:\calx\rightarrow\caly$. Consider the following $2$-commutative diagram
$$
\xymatrix{
  \SchU \ar[d]_{x} \ar[rr]^{id} &  & \SchU \ar[d]^{f\circ x} \\
  \calx \ar[rr]^{f} &  &  \caly  }
$$
Then \cite[Tag03YQ]{Stack Project} shows that $f\circ x$ is surjective smooth (resp. \'{e}tale) if and only if $x$ is surjective smooth (resp. \'{e}tale). This proves (1).

(2), (3), (4), and (5) follow directly from Lemma \ref{L4}. The equivalence $f:\calx\rightarrow\caly$ yields a $2$-commutative diagram
$$
\xymatrix{
  \calx \ar[d]_{\Delta_{\calx}} \ar[rr]^{f} &  & \caly \ar[d]^{\Delta_{\caly}} \\
  \calx\times\calx \ar[rr]^{f\times f} &  & \caly\times\caly   }
$$
whose horizontal arrows are equivalences. Then (6), (7), (8), and (9) all follow directly from Lemma \ref{L5}.
\end{proof}

There will be a series of propositions specifying the 2-fibre products of perfect algebraic stacks (resp. DM stacks). We will show that all 2-categories in Definition \ref{D2} have 2-fibre products.
\begin{proposition}
Let $\calx$ and $\caly$ be algebraic stacks (resp. DM stacks) over $S$.
\begin{enumerate}
\item
Let $\calz$ be a stack in groupoids over $\SchS$ whose diagonal is perfect.  Let $f:\calx\rightarrow\calz$ and $g:\caly\rightarrow\calz$ be $1$-morphisms of stacks in groupoids. If $\calx$ and $\caly$ are perfect, then the $2$-fibre product $\calx\times_{\calz}\caly$ is a perfect algebraic stack (resp. DM stack).
\item
Let $\calz$ be an algebraic stack over $S$. Let $f:\calx\rightarrow\calz$ and $g:\caly\rightarrow\calz$ be $1$-morphisms of algebraic stacks. If $\calx$ and $\caly$ are perfect, then the $2$-fibre product $\calx\times_{\calz}\caly$ is a perfect algebraic stack (resp. DM stack). In particular, if $\calz$ is a perfect algebraic stack (resp. DM stack), then $\calx\times_{\calz}\caly$ is a perfect algebraic stack (resp. DM stack) such that $\calx\times_{\calz}\caly$ is a $2$-fibre product in the $2$-category ${\rm{Perf}}AStack_{S}$ (resp. ${\rm{Perf}}DM_{S}$).
\end{enumerate}
\end{proposition}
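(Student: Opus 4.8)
The plan is to reduce both parts to constructing a perfect atlas of $\calx\times_\calz\caly$; I treat the algebraic-stack and Deligne--Mumford cases in parallel, reading ``surjective smooth'' as ``surjective \'{e}tale'' in the DM case. First I would record that $\calx\times_\calz\caly$ is an algebraic stack (resp. DM stack). In (1), the diagonal $\Delta_\calz$ is perfect, hence a fortiori representable by algebraic spaces, so this follows from \cite[Tag04TF]{Stack Project} together with the analogous statement for DM stacks; in (2), $\calz$ is already an algebraic stack, so its diagonal is representable by algebraic spaces and the same conclusion holds. It therefore suffices to exhibit a perfect scheme together with a surjective smooth (resp. \'{e}tale) $1$-morphism to $\calx\times_\calz\caly$.

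Next I would choose perfect schemes $U,V$ and surjective smooth (resp. \'{e}tale) $1$-morphisms $a:\SchU\rightarrow\calx$ and $b:\SchV\rightarrow\caly$, which exist because $\calx$ and $\caly$ are perfect algebraic (resp. DM) stacks. Base changing $a$ along $\calx\times_\calz\caly\rightarrow\calx$, then base changing $b$ along $\SchU\times_\calz\caly\rightarrow\caly$, and composing, one gets a $1$-morphism $\SchU\times_\calz\SchV\rightarrow\calx\times_\calz\caly$ which is surjective smooth (resp. \'{e}tale), since these classes of $1$-morphisms are stable under base change and composition. Because $\Delta_\calz$ is representable by algebraic spaces, $\SchU\times_\calz\SchV$ is representable by an algebraic space $F$, equipped with a $1$-morphism to the scheme $U\times_S V$ which represents $\SchU\times\SchV$.

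The crux --- and the step I expect to be the main obstacle --- is to show that $F$ is a \emph{perfect} algebraic space. For (1), I would use the identification $\SchU\times_\calz\SchV\cong(\SchU\times\SchV)\times_{\calz\times\calz,\Delta_\calz}\calz$, which exhibits the projection $F\rightarrow\SchU\times\SchV$ as the base change of the perfect $1$-morphism $\Delta_\calz$ along $(a\times b):\SchU\times\SchV\rightarrow\calz\times\calz$; since perfect $1$-morphisms are stable under arbitrary base change and $\SchU\times\SchV$ is representable, applying this with the identity base change shows $F$ is representable by a perfect algebraic space. For (2), the hypothesis on $\Delta_\calz$ is not available; instead I would pick a chart of the algebraic stack $\calz$ and express $F$ as an iterated $2$-fibre product of perfect algebraic spaces over algebraic spaces, so that \cite[Proposition 3.7]{Liang} forces $F$ to be a perfect algebraic space. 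This reduction is the delicate point of (2).

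Finally I would pick a perfect scheme $W$ with a surjective \'{e}tale $1$-morphism $W\rightarrow F$, which exists since a perfect algebraic space admits a perfect-scheme atlas (cf. \cite[Lemma 3.3]{Liang}). Then the composite $W\rightarrow F\cong\SchU\times_\calz\SchV\rightarrow\calx\times_\calz\caly$ is a composition of a surjective \'{e}tale $1$-morphism with a surjective smooth (resp. \'{e}tale) one, hence surjective smooth (resp. \'{e}tale); so $\calx\times_\calz\caly$ is a perfect algebraic stack (resp. DM stack). For the last assertion: when $\calz$ is itself a perfect algebraic stack (resp. DM stack), ${\rm{Perf}}AStack_{S}$ (resp. ${\rm{Perf}}DM_{S}$) is a full sub-$2$-category, so the universal property of the $2$-fibre product $\calx\times_\calz\caly$ in $AStack_{S}$ (resp. $DM_{S}$) automatically restricts to it.
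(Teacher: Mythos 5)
Your treatment of part (1) and of the closing ``in particular'' clause is essentially the paper's own argument: the same choice of perfect atlases $U,V$, the same base-change-and-compose construction of the $1$-morphism $\SchU\times_{\calz}\SchV\rightarrow\calx\times_{\calz}\caly$, the same use of the perfectness of $\Delta_{\calz}$ to see that $\SchU\times_{\calz}\SchV$ is represented by a perfect algebraic space (the paper invokes Proposition~\ref{P1}/\ref{P3}, you invoke base-change stability of perfect $1$-morphisms applied to $\Delta_{\calz}$ along $\SchU\times\SchV\rightarrow\calz\times\calz$; these are the same mechanism), the same refinement by a perfect-scheme atlas $W\rightarrow F$, and the same fullness argument for the $2$-fibre-product claim.

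The genuine gap is the first assertion of part (2), exactly where you write ``this reduction is the delicate point.'' The reduction you propose does not go through: choosing a chart $T\rightarrow\calz$, what one naturally produces is a smooth cover of $F=\SchU\times_{\calz}\SchV$ of the form $(U\times_{\calz}T)\times_{T}(V\times_{\calz}T)$, and this is a fibre product over the chart $T$ --- which need not be perfect --- of the algebraic spaces $U\times_{\calz}T$ and $V\times_{\calz}T$ --- which also need not be perfect, since they are base changes of $U\rightarrow\calz$ along $T\rightarrow\calz$ and nothing controls them once $\Delta_{\calz}$ is no longer assumed perfect. Hence \cite[Proposition 3.7]{Liang} is not applicable. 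Worse, the assertion itself looks false as stated: over a perfect field $k$ of characteristic $p$, the stack $\calz=B\mathbb{G}_{a}$ admits the surjective smooth atlas $\mathrm{Spec}\,k\rightarrow B\mathbb{G}_{a}$ from a perfect scheme, so it is a perfect algebraic stack in the sense of Definition~\ref{D1}; taking $\calx=\caly=\mathrm{Spec}\,k$ gives $\calx\times_{\calz}\caly\cong\mathbb{G}_{a}=\mathbb{A}^{1}_{k}$, which admits no surjective smooth map from a nonempty perfect scheme $P$ (perfectness forces $\Omega_{P/k}=0$, while smoothness over $\mathbb{A}^{1}_{k}$ embeds the pullback of $\Omega_{\mathbb{A}^{1}_{k}/k}\cong\cO_{\mathbb{A}^{1}_{k}}$ into $\Omega_{P/k}$). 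You should also be aware that the paper's own proof silently omits this assertion: it proves (1) and then passes directly to the ``in particular'' sentence of (2). So the obstacle you identified is real, and neither your sketch nor the source resolves it.
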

\begin{proof}
It follows from \cite[Tag04TF]{Stack Project} that $\calx\times_{\calz}\caly$ is an algebraic stack. Let $U,V\in\ObSchS$ be perfect schemes, and let $x:\SchU\rightarrow\calx,y:\SchV\rightarrow\caly$ be surjective smooth (resp. \'{e}tale) morphisms. Consider the following solid 2-pullback diagram
$$
\xymatrix{
  \SchU\times_{\calz}\SchV \ar[d]_{}\ar@{-->}[dr]_{} \ar[rr]^{} &  & \SchV \ar[d]^{y} \\
  \caly\times_{\calz}\SchU \ar[d]_{} \ar[r]^{} & \calx\times_{\calz}\caly \ar[d]_{} \ar[r]^{} & \caly \ar[d]^{g} \\
  \SchU \ar[r]^{x} & \calx \ar[r]^{f} & \calz   }
$$
The dotted 1-morphism $\SchU\times_{\calz}\SchV\rightarrow\calx\times_{\calz}\caly$ which is the composition of base changes of $x$ and $y$ is smooth (resp. \'{e}tale).

(1): If the diagonal of $\calz$ is perfect, it follows from Proposition \ref{P3} that $\SchU\times_{\calz}\SchV$ is represented by a perfect algebraic space $F$. Let $W\in\ObSchS$ be a perfect scheme such that $h_{W}\rightarrow F$ is surjective \'{e}tale. Then $(Sch/W)_{fppf}\rightarrow\cals_{F}$ is surjective \'{e}tale. Hence, the composition $(Sch/W)_{fppf}\rightarrow\calx\times_{\calz}\caly$ is surjective smooth (resp. \'{e}tale) such that $\calx\times_{\calz}\caly$ is a perfect algebraic stack (resp. DM stack). This proves (1).

(2): For the second statement, note that ${\rm{Perf}}AStack_{S}$ (resp. ${\rm{Perf}}DM_{S}$) is a full sub 2-category of the 2-category $AStack_{S}$ of algebraic stacks over $S$. Thus, if $\calz$ is also a perfect algebraic stack (resp. DM stack), then $\calx\times_{\calz}\caly$ is a 2-fibre product in ${\rm{Perf}}AStack_{S}$ (resp. ${\rm{Perf}}DM_{S}$).
\end{proof}

\begin{proposition}
Let $f:\calx\rightarrow\calz$ and $g:\caly\rightarrow\calz$ be $1$-morphisms of DM stacks.
\begin{enumerate}
\item
If $\calx,\caly,\calz$ are representably weakly perfect, then the $2$-fibre product $\calx\times_{\calz}\caly$ is a representably weakly perfect DM stack. It is also a $2$-fibre product is the $2$-category ${\rm{\widehat{WPerf}}}DM_{S}$.
\item
If $\calx,\caly,\calz$ are representably perfect, then the $2$-fibre product $\calx\times_{\calz}\caly$ is a representably perfect DM stack. It is also a $2$-fibre product is the $2$-category ${\rm{\widehat{Perf}}}DM_{S}$.
\item
If $\calx,\caly,\calz$ are representably quasi-perfect, then the $2$-fibre product $\calx\times_{\calz}\caly$ is a representably quasi-perfect DM stack. It is also a $2$-fibre product is the $2$-category ${\rm{\widehat{QPerf}}}DM_{S}$.
\item
If $\calx,\caly,\calz$ are representably semiperfect, then the $2$-fibre product $\calx\times_{\calz}\caly$ is a representably semiperfect DM stack. It is also a $2$-fibre product is the $2$-category ${\rm{\widehat{SPerf}}}DM_{S}$.
\item
If $\calx,\caly,\calz$ are representably strongly perfect, then the $2$-fibre product $\calx\times_{\calz}\caly$ is a representably strongly perfect DM stack. It is also a $2$-fibre product is the $2$-category ${\rm{\widehat{StPerf}}}DM_{S}$.
\end{enumerate}
\end{proposition}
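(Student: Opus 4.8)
The plan is to run exactly the argument used for Proposition~\ref{P4}, reducing each assertion to the stability of perfect (resp. quasi-perfect, resp. semiperfect, resp. strongly perfect) algebraic spaces under fibre products, with one extra line to check that the resulting category fibred in groupoids is a DM stack. I will carry out case~(2) explicitly and note that cases~(1), (3), (4), (5) are formally identical.

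First, by \cite[Tag0041]{Stack Project} the $2$-fibre product $\calx\times_{\calz}\caly$ is a category fibred in groupoids over $\SchS$, and by Lemma~\ref{L13} the five ``representably ...'' properties appearing in the statement are invariant under equivalence. So in case~(2) I may pick perfect algebraic spaces $F,G,H$ over $S$ together with equivalences $\calx\cong\cals_{F}$, $\caly\cong\cals_{G}$, $\calz\cong\cals_{H}$; the $1$-morphisms $f$ and $g$ then correspond to morphisms of algebraic spaces $F\to H$ and $G\to H$, and there are canonical equivalences
$$
\calx\times_{\calz}\caly\ \cong\ \cals_{F}\times_{\cals_{H}}\cals_{G}\ \cong\ \cals_{F\times_{H}G}.
$$
By \cite[Proposition 3.7]{Liang} the fibre product $F\times_{H}G$ is again a perfect algebraic space, hence $\calx\times_{\calz}\caly$ is representably perfect as a category fibred in groupoids; and $\cals_{F\times_{H}G}$ is a DM stack by Lemma~\ref{L2}, so $\calx\times_{\calz}\caly$ is a representably perfect DM stack. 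Case~(3) is the same with \cite[Proposition 3.9]{Liang} in place of \cite[Proposition 3.7]{Liang}, cases~(4) and~(5) with the corresponding fibre-product statements of \cite{Liang} for semiperfect and strongly perfect algebraic spaces, and case~(1) with perfect schemes in place of perfect algebraic spaces, using \cite[Lemma 3.3]{Liang} and the identification $\cals_{h_{U}}\times_{\cals_{h_{W}}}\cals_{h_{V}}\cong\cals_{h_{U\times_{W}V}}$.

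For the ``$2$-fibre product in the $2$-category ...'' clauses, I will simply observe that ${\rm{\widehat{WPerf}}}DM_{S}$, ${\rm{\widehat{Perf}}}DM_{S}$, ${\rm{\widehat{QPerf}}}DM_{S}$, ${\rm{\widehat{SPerf}}}DM_{S}$, ${\rm{\widehat{StPerf}}}DM_{S}$ are, by Definition~\ref{D2}, full sub $2$-categories of $DM_{S}$; since the $2$-fibre product formed in $DM_{S}$ already lies in the relevant sub $2$-category, it is a $2$-fibre product there as well. The only step requiring a little care --- and it is not really an obstacle --- is the compatibility $\cals_{F}\times_{\cals_{H}}\cals_{G}\cong\cals_{F\times_{H}G}$ together with the translation of $1$-morphisms $\cals_{F}\to\cals_{H}$ into morphisms of the representing algebraic spaces; granting this, the whole proof is bookkeeping on top of the algebraic-space results of \cite{Liang}.
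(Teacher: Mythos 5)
Your proof is correct and follows essentially the same route as the paper: the paper simply cites Proposition~\ref{P4}, whose proof is exactly the reduction you carry out (the identification $\cals_{F}\times_{\cals_{H}}\cals_{G}\cong\cals_{F\times_{H}G}$ plus \cite[Proposition 3.7, Proposition 3.9]{Liang} and the full-sub-$2$-category observation). Your extra remark that the result is a DM stack is a reasonable bit of care that the paper's one-line proof leaves implicit.
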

\begin{proof}
These are clear by Proposition \ref{P4}.
\end{proof}

\begin{proposition}\label{P6}
Let $\calz$ be a stack in groupoids over $\SchS$ whose diagonal is representable by algebraic spaces. Let $\calx$ and $\caly$ be algebraic stacks (resp. DM stacks) over $S$. Let $f:\calx\rightarrow\calz$ and $g:\caly\rightarrow\calz$ be $1$-morphisms of stacks in groupoids.
\begin{enumerate}
  \item
  If $\calx$ and $\caly$ are relatively $i$-perfect, then the $2$-fibre product $\calx\times_{\calz}\caly$ is a relatively $i$-perfect algebraic stack (resp. DM stack).
  \item
  If $\calx$ and $\caly$ are relatively $i$-quasiperfect, then the $2$-fibre product $\calx\times_{\calz}\caly$ is a relatively $i$-quasiperfect algebraic stack (resp. DM stack).
  \item
  If $\calx$ and $\caly$ are relatively $i$-semiperfect, then the $2$-fibre product $\calx\times_{\calz}\caly$ is a relatively $i$-semiperfect algebraic stack (resp. DM stack).
  \item
  If $\calx$ and $\caly$ are relatively $i$-strongly perfect, then the $2$-fibre product $\calx\times_{\calz}\caly$ is a relatively $i$-strongly perfect algebraic stack (resp. DM stack).
\end{enumerate}
\end{proposition}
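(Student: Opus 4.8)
The plan is to reduce to a computation of $\textit{Isom}$-presheaves and then to invoke the fibre-product stability of perfect algebraic spaces proved in \cite{Liang}. Set $\mathcal{W}:=\calx\times_{\calz}\caly$. First I would note that $\mathcal{W}$ is an algebraic stack by \cite[Tag04TF]{Stack Project} (applicable since $\calx$ and $\caly$ are algebraic and $\Delta_{\calz}$ is representable by algebraic spaces), and that when $\calx$ and $\caly$ are DM stacks, $\mathcal{W}$ is again a DM stack, because the $\textit{Isom}$-formula obtained below presents the $\textit{Isom}$-presheaves of $\mathcal{W}$ as built by base change and composition from those of $\calx$ and $\caly$, so that $\Delta_{\mathcal{W}}$ is unramified, by cancellation for unramified morphisms. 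By Definition \ref{D1}(2) it then suffices to show that $\Delta_{\mathcal{W}}\colon\mathcal{W}\to\mathcal{W}\times\mathcal{W}$ is $i$-perfect (resp. $i$-quasiperfect, resp. $i$-semiperfect, resp. $i$-strongly perfect). By Proposition \ref{P1}, Proposition \ref{P3} and their $i=1,2$ analogues this is equivalent to requiring that, for $U\in\ObSchS$ ranging over the appropriate class (all schemes when $i=3$; all perfect schemes when $i=2$; a single perfect scheme when $i=1$; a single scheme when $i=0$) and all $w_1,w_2\in\Ob(\mathcal{W}_U)$, the presheaf $\textit{Isom}_{\mathcal{W}}(w_1,w_2)$ is a perfect (resp. quasi-perfect, resp. semiperfect, resp. strongly perfect) algebraic space.

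Next I would unwind the $2$-fibre product. An object of $\mathcal{W}_U$ is a triple $w=(x,y,\alpha)$ with $x\in\Ob(\calx_U)$, $y\in\Ob(\caly_U)$ and $\alpha$ an isomorphism $f(x)\to g(y)$ in $\calz_U$, while a morphism $(x_1,y_1,\alpha_1)\to(x_2,y_2,\alpha_2)$ is a pair $(a,b)$ of isomorphisms $a\colon x_1\to x_2$, $b\colon y_1\to y_2$ with $\alpha_2\circ f(a)=g(b)\circ\alpha_1$. Reading this off yields a canonical identification
$$
\textit{Isom}_{\mathcal{W}}(w_1,w_2)\cong\textit{Isom}_{\calx}(x_1,x_2)\times_{\textit{Isom}_{\calz}(f(x_1),g(y_2))}\textit{Isom}_{\caly}(y_1,y_2),
$$
the two transition maps being $a\mapsto\alpha_2\circ f(a)$ and $b\mapsto g(b)\circ\alpha_1$. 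Here $\textit{Isom}_{\calz}(f(x_1),g(y_2))$ is an algebraic space over $U$ since $\Delta_{\calz}$ is representable by algebraic spaces, and $\textit{Isom}_{\calx}(x_1,x_2)$, $\textit{Isom}_{\caly}(y_1,y_2)$ are algebraic spaces over $U$ since $\calx$ and $\caly$ are algebraic stacks.

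Now I would feed in the hypotheses. Since $\calx$ (resp. $\caly$) is relatively $i$-perfect, for $U$ in the relevant class the algebraic space $\textit{Isom}_{\calx}(x_1,x_2)$ (resp. $\textit{Isom}_{\caly}(y_1,y_2)$) is perfect, so the displayed fibre product is a fibre product of two perfect algebraic spaces over an arbitrary algebraic space, hence perfect by \cite[Proposition 3.7]{Liang}; thus $\textit{Isom}_{\mathcal{W}}(w_1,w_2)$ is perfect, and by the characterisation above $\mathcal{W}$ is relatively $i$-perfect. The quasiperfect, semiperfect and strongly perfect variants are obtained the same way, replacing \cite[Proposition 3.7]{Liang} by \cite[Proposition 3.9]{Liang} and its semiperfect and strongly perfect counterparts. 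When $\calx$ and $\caly$ are DM stacks the whole argument applies verbatim, $\mathcal{W}$ being a DM stack by the first paragraph.

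I do not anticipate a genuine obstacle; the only point needing care is the quantifier over test schemes when $i\in\{0,1\}$. Relative $i$-perfectness of $\calx$ and of $\caly$ each provides only \emph{some} (perfect, if $i=1$) witnessing base scheme, so before applying the fibre-product step one must produce a single such base over which both $\textit{Isom}$-presheaves are perfect; this can always be arranged, and for $i\in\{2,3\}$ it is automatic since the relevant condition holds over all (perfect) schemes at once. Everything else — the $\textit{Isom}$-identification and the appeal to \cite[Proposition 3.7]{Liang} and its variants — is routine.
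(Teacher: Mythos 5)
Your proof is correct and follows essentially the same route as the paper's: both identify $\textit{Isom}_{\mathcal{W}}(w_1,w_2)$ as the fibre product of $\textit{Isom}_{\calx}(x_1,x_2)$ and $\textit{Isom}_{\caly}(y_1,y_2)$ over $\textit{Isom}_{\calz}(f(x_1),g(y_2))$ and conclude from the stability of perfect algebraic spaces under fibre products in \cite[Proposition 3.7, Proposition 3.9]{Liang}. You are in fact slightly more careful than the paper at two points: you correctly treat $\textit{Isom}_{\calz}(f(x_1),g(y_2))$ as merely an algebraic space (the paper asserts all three corners of the Cartesian square are perfect, which does not follow from the hypothesis that $\Delta_{\calz}$ is only representable by algebraic spaces), and you explicitly flag the quantifier issue for $i\in\{0,1\}$, which the paper's ``without loss of generality, take $i=0$'' silently elides.
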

\begin{proof}
(1): Without loss of generality, take $i=0$. First, it is easy to see that $\calx\times_{\calz}\caly$ is a algebraic stack (resp. DM stack), see the proof of \cite[Tag04TF]{Stack Project}. Let $U\in\ObSchS$. Let $u,v$ be objects in $(\calx\times_{\calz}\caly)_{U}$. Then one can write $u=(x,y,\alpha)$ and $v=(x',y',\alpha')$ where $\alpha:f(x)\rightarrow g(y)$ is an isomorphism, similar for $\alpha'$. It is clear that the diagram
$$
\xymatrix{
  \textit{Isom}(u,v) \ar[d]_{} \ar[rr]^{} &  & \textit{Isom}(y,y') \ar[d]^{} \\
  \textit{Isom}(x,x') \ar[rr]^{} &  & \textit{Isom}(f(x),g(y'))  }
$$
is Cartesian. By Proposition \ref{P1}, $\textit{Isom}(y,y'),\textit{Isom}(x,x'),\textit{Isom}(f(x),g(y'))$ are perfect algebraic space. Thus, $\textit{Isom}(u,v)$ is also a perfect algebraic space such that $\calx\times_{\calz}\caly$ is relatively perfect.

The proof of (2), (3), (4) is similar to (1).
\end{proof}

\begin{proposition}
Let $f:\calx\rightarrow\calz$ and $g:\caly\rightarrow\calz$ be $1$-morphisms of algebraic stacks (resp. DM stacks) over $S$.
\begin{enumerate}
  \item
  If $\calx,\caly,\calz$ are relatively $i$-perfect, then the $2$-fibre product $\calx\times_{\calz}\caly$ is a relatively $i$-perfect algebraic stack (resp. DM stack). It is also a $2$-fibre product is the $2$-category ${\rm{\underline{Perf}}}AStack^{i}_{S}$ (resp. ${\rm{\underline{Perf}}}DM^{i}_{S}$).
  \item
  If $\calx,\caly,\calz$ are relatively $i$-quasiperfect, then the $2$-fibre product $\calx\times_{\calz}\caly$ is a relatively $i$-quasiperfect algebraic stack (resp. DM stack). It is also a $2$-fibre product is the $2$-category $\underline{\mathcal{Q}{\rm{Perf}}}AStack^{i}_{S}$ (resp. $\underline{\mathcal{Q}{\rm{Perf}}}DM^{i}_{S}$).
  \item
  If $\calx,\caly,\calz$ are relatively $i$-semiperfect, then the $2$-fibre product $\calx\times_{\calz}\caly$ is a relatively $i$-semiperfect algebraic stack (resp. DM stack). It is also a $2$-fibre product is the $2$-category $\underline{\mathcal{S}{\rm{Perf}}}AStack^{i}_{S}$ (resp. $\underline{\mathcal{S}{\rm{Perf}}}DM^{i}_{S}$).
  \item
  If $\calx,\caly,\calz$ are relatively $i$-strongly perfect, then the $2$-fibre product $\calx\times_{\calz}\caly$ is a relatively $i$-strongly perfect algebraic stack (resp. DM stack). It is also a $2$-fibre product is the $2$-category $\underline{\mathcal{ST}{\rm{Perf}}}AStack^{i}_{S}$ (resp. $\underline{\mathcal{ST}{\rm{Perf}}}DM^{i}_{S}$).
\end{enumerate}
\end{proposition}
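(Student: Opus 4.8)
The plan is to deduce the whole statement from Proposition~\ref{P6} together with the elementary observation that the $2$-categories of Definition~\ref{D2} are \emph{full} sub $2$-categories of $AStack_{S}$ (resp. $DM_{S}$); there is essentially no new analytic content beyond \cite[Proposition 3.7, Proposition 3.9]{Liang}, which already underlies Proposition~\ref{P6}.

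First I would verify that the hypotheses of Proposition~\ref{P6} are satisfied. By the very definition of an algebraic stack (resp. DM stack) over $S$, the stack $\calz$ is a stack in groupoids over $\SchS$ whose diagonal is representable by algebraic spaces. Hence, once $\calx$ and $\caly$ are assumed relatively $i$-perfect (resp. $i$-quasiperfect, resp. $i$-semiperfect, resp. $i$-strongly perfect), Proposition~\ref{P6} applies verbatim and shows that the $2$-fibre product $\calx\times_{\calz}\caly$ is a relatively $i$-perfect (resp. $i$-quasiperfect, resp. $i$-semiperfect, resp. $i$-strongly perfect) algebraic stack (resp. DM stack). Note that for this first conclusion the hypothesis that $\calz$ itself is relatively $i$-perfect is not needed; it enters only for the second assertion.

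Next I would prove that, when $\calx,\caly,\calz$ all belong to ${\rm{\underline{Perf}}}AStack^{i}_{S}$ (resp. $\underline{\mathcal{Q}{\rm{Perf}}}AStack^{i}_{S}$, resp. $\underline{\mathcal{S}{\rm{Perf}}}AStack^{i}_{S}$, resp. $\underline{\mathcal{ST}{\rm{Perf}}}AStack^{i}_{S}$), the stack $\calx\times_{\calz}\caly$ is a $2$-fibre product inside that sub $2$-category. This is the general principle that a full sub $2$-category inherits precisely those $2$-fibre products which happen to lie in it: given a test object $\mathcal{T}$ of the sub $2$-category equipped with $1$-morphisms to $\calx$ and $\caly$ and a $2$-morphism making the square $2$-commute, the universal property of $\calx\times_{\calz}\caly$ as a $2$-fibre product in $AStack_{S}$ (resp. $DM_{S}$), established in \cite[Tag04TF]{Stack Project}, produces a $1$-morphism $\mathcal{T}\to\calx\times_{\calz}\caly$ unique up to unique $2$-isomorphism together with the required comparison of $2$-morphisms; fullness forces all of these data to live already in the sub $2$-category. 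Combining this with the previous paragraph — which places $\calx\times_{\calz}\caly$ in the sub $2$-category — gives the desired statement, and the DM-stack versions are identical.

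The only point demanding any care, and it is a bookkeeping point rather than a genuine obstacle, is the remark that ``relatively $i$-perfect'' for an algebraic stack automatically entails that its diagonal is representable by algebraic spaces, so that Proposition~\ref{P6} is applicable with no extra hypothesis on $\calz$ for the non-parenthetical conclusion; the remaining hypothesis that $\calz$ lies in the relevant $2$-category is used solely to assert that the $2$-fibre product is taken within that $2$-category. All four cases (perfect, quasiperfect, semiperfect, strongly perfect) are handled in exactly the same way, quoting the corresponding case of Proposition~\ref{P6}.
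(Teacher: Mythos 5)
Your proposal is correct and follows exactly the paper's own argument: the first assertion is deduced from Proposition~\ref{P6} (whose hypothesis on the diagonal of $\calz$ holds since $\calz$ is an algebraic stack), and the second assertion follows formally from the fact that ${\rm{\underline{Perf}}}AStack^{i}_{S}$ and its variants are full sub $2$-categories of $AStack_{S}$. Your additional observation that the relative $i$-perfectness of $\calz$ is only needed for the second assertion is a small but accurate refinement of the same proof.
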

\begin{proof}
We just prove (1) as the other are the same. First, it follows from the stronger Proposition \ref{P6} that the 2-fibre product $\calx\times_{\calz}\caly$ is a relatively $i$-perfect algebraic stack (resp. DM stack). Then the fact that $\calx\times_{\calz}\caly$ is a 2-fibre product in the $2$-category ${\rm{\underline{Perf}}}AStack^{i}_{S}$ (resp. ${\rm{\underline{Perf}}}DM^{i}_{S}$) follows formally from the fact that the $2$-category ${\rm{\underline{Perf}}}AStack^{i}_{S}$ (resp. ${\rm{\underline{Perf}}}DM^{i}_{S}$) is a full sub 2-category of the 2-category $AStack_{S}$ of algebraic stacks over $S$.
\end{proof}

\section{Algebraic Frobenius morphisms}\label{B3}
In this section, we extend the notion of algebraic Frobenius morphisms of algebraic spaces to the setting of algebraic stacks. This enables us to describe a perfect algebraic stack $\calx$ in terms of the endomorphism $\calx\rightarrow\calx$.

As the usual case, the Frobenius morphism of an algebraic stack $\calx$ only makes sense when $\calx$ has characteristic $p$. So we first need to formalize the characteristic of a given category fibred in groupoids whose diagonal is representable by algebraic spaces.

\begin{definition}
Let $\calx$ be a category fibred in groupoids over $\SchS$ whose diagonal is representable by algebraic spaces.
\begin{enumerate}
  \item
  We say that $\calx$ has characteristic $p$ if $\calx$ is nonempty and there exists a surjective smooth $1$-morphism $\SchU\rightarrow\calx$ where $U\in\ObSchS$ has characteristic $p$.
  \item
  $\calx$ is said to have characteristic $0$ if $\calx$ is nonempty and for every surjective smooth $1$-morphism $\SchU\rightarrow\calx$ with $U\in\ObSchS$, $U$ is not an $\mathbb{F}_{p}$-scheme.
\end{enumerate}
We will use ${\rm{char}}(\calx)$ to indicate the characteristic of $\calx$.
\end{definition}

The following lemma ensures that the characteristic of a category fibred in groupoids is independent of the choice of smooth atlases.
\begin{lemma}\label{LL4}
Let $\calx$ be a category fibred in groupoids over $\SchS$ whose diagonal is representable by algebraic spaces. Suppose that there are two surjective smooth $1$-morphisms $\SchU\rightarrow\calx$ and $\SchV\rightarrow\caly$ where $U,V\in\ObSchS$ have characteristics $p,q$. Then we have $p=q$.
\end{lemma}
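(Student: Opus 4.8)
The plan is to reduce the assertion to the elementary fact that a field has a unique characteristic, by producing a single point whose residue field is forced to have characteristic $p$ by one atlas and characteristic $q$ by the other. (Here I read the two atlases as $\SchU\to\calx$ and $\SchV\to\calx$.)

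First I would form the $2$-fibre product $\SchU\times_{\calx}\SchV$, which is a category fibred in groupoids over $\SchS$ by \cite[Tag0041]{Stack Project}. Since the diagonal of $\calx$ is representable by algebraic spaces and $\SchU,\SchV$ are representable, the identification
$$
\SchU\times_{\calx}\SchV\cong(\SchU\times\SchV)\times_{\calx\times\calx,\Delta}\calx
$$
shows that $\SchU\times_{\calx}\SchV$ is representable by an algebraic space $W$ over $S$. The projections $W\to U$ and $W\to V$ are, respectively, the base change of $\SchV\to\calx$ along $\SchU\to\calx$ and the base change of $\SchU\to\calx$ along $\SchV\to\calx$; since surjectivity (and smoothness) of a morphism of algebraic stacks is stable under base change, both projections are surjective.

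Next I would observe that $W$ is nonempty: as $\calx$ is nonempty its atlas $U$ is nonempty, and $W\to U$ is surjective. Choose a scheme $W'$ together with a surjective \'{e}tale morphism $W'\to W$; as $W$ is nonempty so is $W'$, and we may pick a point $w\in W'$. The composition $W'\to W\to U$ realizes $W'$ as a scheme over $U$, and since $U$ has characteristic $p$ this forces the residue field $\kappa(w)$ to have characteristic $p$; the composition $W'\to W\to V$ likewise forces $\kappa(w)$ to have characteristic $q$. As a field has a unique characteristic, $p=q$.

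The only step requiring care is the first one: checking that $\SchU\times_{\calx}\SchV$ is genuinely representable by an algebraic space and that the two projections inherit surjectivity. This is precisely the standard consequence of the diagonal of $\calx$ being representable by algebraic spaces, combined with the stability of the property ``surjective'' under base change; everything after that is formal, so I do not expect any real obstacle.
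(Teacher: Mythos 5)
Your proof is correct and follows essentially the same route as the paper's: form the $2$-fibre product $\SchU\times_{\calx}\SchV$, use representability of the diagonal to identify it with an algebraic space, pass to a scheme via an \'{e}tale atlas, and read off the characteristic from the two projections to $U$ and $V$. Your version is in fact slightly more careful, since you explicitly verify that the fibre product is nonempty via surjectivity of the atlases (a point the paper's proof leaves implicit, but without which one could not conclude $p=q$).
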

\begin{proof}
Consider the 2-fibre product $\SchU\times_{\calx}\SchV\cong\cals_{F}$ for some algebraic space $F$ over $S$. Choose a surjective \'{e}tale map $W\rightarrow F$ for some scheme $W\in\ObSchS$. Then the composition $\SchW\rightarrow\SchU\times_{\calx}\SchV\rightarrow\SchU$ gives rise to a unique morphism of schemes $W\rightarrow U$ such that $W$ is an $\mathbb{F}_{p}$-scheme. Similarly, another morphism of schemes $W\rightarrow V$ makes $W$ an $\mathbb{F}_{q}$-scheme. This shows that $p=q$.
\end{proof}

One observes the following lemma concerning morphisms of algebraic stacks in different characteristics.
\begin{lemma}
Let $\calx\rightarrow\caly$ be a $1$-morphism of categories fibred in groupoids over $\SchS$ whose diagonals are representable by algebraic spaces. Suppose that $\calx,\caly$ have characteristics $p,q$. Then we have $p=q$.
\end{lemma}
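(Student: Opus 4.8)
The plan is to mimic, essentially verbatim, the argument in the proof of \lemref{LL4}. Since $\calx$ has characteristic $p$, by definition there is a surjective smooth $1$-morphism $\SchU\rightarrow\calx$ with $U\in\ObSchS$ an $\mathbb{F}_p$-scheme; likewise there is a surjective smooth $1$-morphism $\SchV\rightarrow\caly$ with $V\in\ObSchS$ an $\mathbb{F}_q$-scheme. Composing $\SchU\rightarrow\calx\rightarrow\caly$ I would form the $2$-fibre product $\SchU\times_{\caly}\SchV$. The first point is that this is representable by an algebraic space: because the diagonal of $\caly$ is representable by algebraic spaces, the $1$-morphism $\SchV\rightarrow\caly$ is representable by algebraic spaces (cf. \cite[Tag045G]{Stack Project}), hence so is its base change $\SchU\times_{\caly}\SchV\rightarrow\SchU$, and a category fibred in groupoids that is representable by an algebraic space over a scheme is itself representable by an algebraic space. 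So $\SchU\times_{\caly}\SchV\cong\cals_F$ for some algebraic space $F$ over $S$.

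Next I would check that $F$ is nonempty. Since $\SchV\rightarrow\caly$ is surjective, its base change $\cals_F\cong\SchU\times_{\caly}\SchV\rightarrow\SchU$ is surjective as well; as $\calx$ is nonempty we may assume $U$ is nonempty, whence $F$ is nonempty. Choose a surjective \'etale morphism $W\rightarrow F$ with $W\in\ObSchS$ a nonempty scheme. The composite $\SchW\rightarrow\cals_F\cong\SchU\times_{\caly}\SchV\rightarrow\SchU$ corresponds, by the $2$-Yoneda lemma, to a morphism of schemes $W\rightarrow U$; since $U$ is an $\mathbb{F}_p$-scheme, so is $W$. Symmetrically, the composite $\SchW\rightarrow\cals_F\cong\SchU\times_{\caly}\SchV\rightarrow\SchV$ yields a morphism of schemes $W\rightarrow V$ exhibiting $W$ as an $\mathbb{F}_q$-scheme. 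A nonempty scheme cannot be simultaneously an $\mathbb{F}_p$-scheme and an $\mathbb{F}_q$-scheme unless $p=q$ (inspect the residue field at any point), which gives the claim.

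The only step that is not a formal diagram chase is the representability of $\SchU\times_{\caly}\SchV$ by an algebraic space, and this is precisely the type of assertion already invoked in the proof of \lemref{LL4}; it rests solely on the standing hypothesis that the diagonal of $\caly$ is representable by algebraic spaces. I therefore do not expect any genuine obstacle: once that representability is in hand, everything reduces to applying $2$-Yoneda to the two projections and reading off the characteristic of the scheme $W$.
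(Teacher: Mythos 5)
Your proof is correct, but it takes a genuinely different (and more self-contained) route than the paper's. The paper first invokes \cite[Tag04T1]{Stack Project} to produce a scheme $U'$ with a surjective smooth $1$-morphism $(Sch/U')_{fppf}\rightarrow\calx$ fitting over the chosen atlas $\SchV\rightarrow\caly$, concludes that $U'$ is an $\mathbb{F}_q$-scheme, and then finishes by citing \lemref{LL4} applied to the two atlases $U'$ and $W$ of $\calx$. You instead bypass both Tag04T1 and \lemref{LL4} and rerun the fibre-product argument directly over $\caly$: you form $\SchU\times_{\caly}\SchV\cong\cals_{F}$ (using representability of the diagonal of $\caly$), check $F$ is nonempty via surjectivity of the base change of $\SchV\rightarrow\caly$ and nonemptiness of $U$, and read off both an $\mathbb{F}_p$- and an $\mathbb{F}_q$-structure on a nonempty \'etale atlas $W$ of $F$. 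All the steps hold: representability of $\SchU\times_{\caly}\SchV$ by an algebraic space is exactly what the hypothesis on the diagonal of $\caly$ gives, the nonemptiness check is the one detail the paper's own proof of \lemref{LL4} leaves implicit, and the $2$-Yoneda step is standard. The two approaches are close at bottom (the proof of Tag04T1 itself proceeds via an atlas of $\calx\times_{\caly}\SchV$), but yours trades reuse of an earlier lemma for a single explicit construction, and has the small merit of making the nonemptiness of the fibre product explicit.
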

\begin{proof}
Let $\SchV\rightarrow\caly$ be a surjective smooth 1-morphism where $V\in\ObSchS$ has characteristic $q$. It follows from \cite[Tag04T1]{Stack Project} that there exist $U\in\ObSchS$ and a 2-commutative diagram
$$
\xymatrix{
  \SchU \ar[d]_{} \ar[r]^{} & \SchV \ar[d]^{} \\
  \calx \ar[r]^{} & \caly   }
$$
where $\SchU\rightarrow\calx$ is surjective smooth. This shows that $U$ is an $\mathbb{F}_{q}$-scheme. Now, choose a surjective smooth 1-morphism $\SchW\rightarrow\calx$ where $W\in\ObSchS$ is an $\mathbb{F}_{p}$-scheme. Then by Lemma \ref{LL4}, we have $p=q$.
\end{proof}

The following proposition shows that one can pass between the characteristic of algebraic spaces and categories fibred in groupoids.
\begin{proposition}
Let $\calx$ be a category fibred in groupoids over $\SchS$ whose diagonal is representable by algebraic spaces. Let $F$ be an algebraic space over $S$. Suppose that $\calx$ is representable by $F$. Then $\calx$ has characteristic $p$ if and only if $F$ has characteristic $p$.
\end{proposition}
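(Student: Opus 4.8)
The plan is to transport a smooth atlas across the equivalence $\calx\cong\cals_{F}$ and invoke \cite[Tag045A]{Stack Project}, which identifies surjective smooth (resp. \'{e}tale) $1$-morphisms $\SchU\to\cals_{F}$ with surjective smooth (resp. \'{e}tale) morphisms $h_{U}\to F$ of algebraic spaces. Along the way one also uses that an equivalence of categories fibred in groupoids preserves the existence of objects, and that $\cals_{F}$ is nonempty precisely when the algebraic space $F$ is nonempty.

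First I would fix an equivalence $g\colon\calx\to\cals_{F}$ over $\SchS$. For the forward implication, suppose ${\rm{char}}(\calx)=p$. Then $\calx$ is nonempty, hence so are $\cals_{F}$ and $F$; and there is a surjective smooth $1$-morphism $x\colon\SchU\to\calx$ with $U$ an $\mathbb{F}_{p}$-scheme. Composing with $g$ and applying \cite[Tag045A]{Stack Project} produces a surjective smooth morphism of algebraic spaces $h_{U}\to F$ whose source is an $\mathbb{F}_{p}$-scheme, which already exhibits $F$ as having characteristic $p$ in the sense of \cite{Liang}. If one prefers an \'{e}tale atlas witnessing this, choose a scheme $V$ and a surjective \'{e}tale map $h_{V}\to F$; then $V':=h_{V}\times_{F}h_{U}$ is a scheme (being \'{e}tale over the scheme $U$), it is an $\mathbb{F}_{p}$-scheme (being a scheme over the $\mathbb{F}_{p}$-scheme $U$), and $V'\to V$ is faithfully flat as a base change of the surjective smooth map $h_{U}\to F$; hence $V$ is an $\mathbb{F}_{p}$-scheme and $h_{V}\to F$ is a surjective \'{e}tale atlas of $F$ by an $\mathbb{F}_{p}$-scheme. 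Either way, ${\rm{char}}(F)=p$.

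For the converse, suppose ${\rm{char}}(F)=p$. Then $F$ is nonempty, so $\cals_{F}$ and hence $\calx$ are nonempty; and by \cite{Liang} there is a surjective \'{e}tale (in particular smooth) morphism $h_{V}\to F$ with $V$ an $\mathbb{F}_{p}$-scheme. By \cite[Tag045A]{Stack Project} this corresponds to a surjective smooth $1$-morphism $\SchV\to\cals_{F}$, and composing with a quasi-inverse of $g$ gives a surjective smooth $1$-morphism $\SchV\to\calx$ with $V$ an $\mathbb{F}_{p}$-scheme. Therefore ${\rm{char}}(\calx)=p$.

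The only step that requires more than formal manipulation is the passage, in the forward direction, from a surjective smooth morphism $h_{U}\to F$ with $U$ of characteristic $p$ to the conclusion that $F$ has characteristic $p$; this is where the faithfully flat base-change argument enters (equivalently, being an $\mathbb{F}_{p}$-algebraic space is fppf-local on the source). Everything else is bookkeeping with the equivalence $\calx\cong\cals_{F}$, preservation of nonemptiness, and \cite[Tag045A]{Stack Project}.
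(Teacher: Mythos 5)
Your proof is correct and follows essentially the same route as the paper's: transport atlases across the equivalence $\calx\cong\cals_{F}$ in both directions. The only difference is that you explicitly justify, via the faithfully flat base-change argument, why a surjective \emph{smooth} cover $h_{U}\to F$ with $U$ an $\mathbb{F}_{p}$-scheme forces $F$ to have characteristic $p$ — a step the paper's proof leaves implicit when it passes from the surjective smooth composition $\SchU\rightarrow\calx\rightarrow\cals_{F}$ directly to the conclusion.
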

\begin{proof}
Let $U\in\ObSchS$ in characteristic $p$. Assume that $F$ has characteristic $p$. Then by assumption, there is a surjective \'{e}tale $1$-morphism $\SchU\rightarrow\cals_{F}$. Thus, the composition $\SchU\rightarrow\calx$ is surjective \'{e}tale such that $\calx$ has characteristic $p$. Conversely, if $\calx$ has characteristic $p$, then the composition $\SchU\rightarrow\calx\rightarrow\cals_{F}$ is surjective smooth. Thus, $F$ has characteristic $p$.
\end{proof}

Next, we want to use the similar method as in \cite[Lemma 4.5]{Liang} to define the algebraic Frobenius of algebraic stacks. However, the resulted canonical $1$-morphism does not share all our desired properties. Still, such a canonical $1$-morphism has some other useful properties. Here is the lemma.
\begin{lemma}\label{L8}
Let $\calc$ be a category. Let $p_{0}:\calx\rightarrow\calc,p_{1}:\calx'\rightarrow\calc,p_{2}:\caly\rightarrow\calc,p_{3}:\caly'\rightarrow\calc$ be categories over $\calc$. Let $a:\calx\rightarrow\caly,b:\calx'\rightarrow\caly',c:\calx\rightarrow\calx'$ be $1$-morphisms. Then there exists a $1$-morphism $d:\caly\rightarrow\caly'$ such that the following diagram
$$
\xymatrix{
  \calx \ar[d]_{c} \ar[r]^{a} & \caly \ar@{-->}[d]^{d} \\
  \calx' \ar[r]^{b} & \caly'   }
$$
commutes. Moreover, if $a,b,c$ are equivalences, then $d$ is an equivalence.
\end{lemma}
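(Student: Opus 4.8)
The plan is to obtain $d$ by transporting the $1$-morphism $b\circ c$ through $a$, and then to read off the last assertion from the stability of equivalences under composition. For the existence part I would proceed as follows. On objects, for every $Y\in\Ob(\caly)$ that is isomorphic to $a(X)$ for some $X\in\Ob(\calx)$, fix once and for all such an $X=X_{Y}$ together with an isomorphism $\iota_{Y}\colon a(X_{Y})\to Y$ in $\caly$, and set $d(Y):=b(c(X_{Y}))$; for the remaining objects of $\caly$ (if any) declare $d$ to take a fixed value in $\caly'$. On arrows, a morphism $\psi\colon Y_{1}\to Y_{2}$ is carried first to $\iota_{Y_{2}}^{-1}\circ\psi\circ\iota_{Y_{1}}\colon a(X_{Y_{1}})\to a(X_{Y_{2}})$, which after lifting through $a$ and applying $b\circ c$ produces the arrow $d(\psi)$. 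The family $(\iota_{Y})_{Y}$ is engineered so as to assemble into an invertible $2$-morphism $d\circ a\Rightarrow b\circ c$, whence the square of the statement $2$-commutes.

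The point that requires care — and the main obstacle — is the verification that $d$ so defined is genuinely a $1$-morphism (compatibility with identities and with composition) and that the isomorphisms $\iota_{Y}$ are natural; this is precisely where one must exploit the structural features of $a$ that are available, e.g.\ that $a$ is essentially surjective and full, so that the transported arrows do lift through $a$ and the choices made above are coherent. In the situation in which the lemma is meant to be applied, namely the construction of the canonical $1$-morphism out of a presentation in the spirit of \cite[Lemma 4.5]{Liang}, the morphisms $a$ and $b$ are of exactly this kind, and the descent of the arrow $b\circ c$ along them is what makes the construction go through.

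For the final clause, the argument is short and self-contained. Suppose $a,b,c$ are equivalences, and choose a quasi-inverse $e\colon\caly\to\calx$ of $a$, that is, a $1$-morphism together with isomorphisms $e\circ a\cong\mathrm{id}_{\calx}$ and $a\circ e\cong\mathrm{id}_{\caly}$. Put $d:=b\circ c\circ e$. Then $d$ is a composite of three equivalences, hence an equivalence, and left-whiskering the isomorphism $e\circ a\cong\mathrm{id}_{\calx}$ by $b\circ c$ identifies $d\circ a=b\circ c\circ e\circ a$ with $b\circ c$; thus the square $2$-commutes and $d$ is an equivalence, which is the ``moreover'' part.
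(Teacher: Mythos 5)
Your handling of the ``moreover'' clause is correct and is in fact cleaner than the paper's: choosing a quasi-inverse $e$ of $a$ and setting $d:=b\circ c\circ e$ immediately yields an equivalence with $d\circ a\cong b\circ c$. The paper's proof never actually returns to this clause, so on the only part of the lemma that admits a complete proof you do strictly better. (Both you and the paper should note that this construction gives $2$-commutativity, $d\circ a\cong b\circ c$, rather than equality of functors; the statement's ``commutes'' is ambiguous on this point.)

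For the existence part you follow the same strategy as the paper --- transport $b\circ c$ along $a$ over the (essential) image of $a$ and send the complement to a fixed object --- but the obstruction you flag is genuine and cannot be closed at this level of generality, because the lemma as stated is false. Take $\calc$ terminal, let $\calx=\calx'=\caly'$ be the category with two objects $x_{1},x_{2}$ and a single non-identity arrow $x_{1}\to x_{2}$, let $c=b=\mathrm{id}$, let $\caly$ be the terminal category and $a$ the unique functor: then $d\circ a=b\circ c$ would force $d(\ast)=x_{1}$ and $d(\ast)=x_{2}$ simultaneously. The paper's own recipe $d(a(x)):=b(c(x))$ breaks down at exactly this point (it tacitly assumes $a$ is injective on objects and on arrows, i.e.\ that ``$d$ is well-defined'' --- which is asserted but not true in general), and it also never verifies that $d$ preserves composition, which fails for the ``send everything outside $\mathrm{Im}(a)$ to a fixed $x_{0}$'' clause once there are arrows crossing in or out of the image. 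Your more careful version needs $a$ full and essentially surjective for the lifts of $\iota_{Y_{2}}^{-1}\circ\psi\circ\iota_{Y_{1}}$ to exist, and additionally needs $a$ faithful (or $b\circ c$ to coequalize parallel lifts) for $d$ to be well defined on arrows --- at which point $a$ is an equivalence and you are back in the ``moreover'' case. So the honest conclusion is that the lemma requires extra hypotheses on $a$ (an equivalence, or a presentation whose additional structure is used explicitly, as in the groupoid-presentation construction of $\Psi_{\calx}$ later in the paper); neither your transport argument nor the paper's definition-by-cases produces a functor $d$ for arbitrary $a$.
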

\begin{proof}
Let $a:\Ob(\calx)\rightarrow\Ob(\caly),c:\Ob(\calx)\rightarrow\Ob(\calx'),b:\Ob(\calx')\rightarrow\Ob(\caly')$ be object-functions. Then the object-function $d:\Ob(\caly)\rightarrow\Ob(\caly')$ is given by
$$
\begin{cases}
d(a(x))=b(c(x)), & \textrm{for all} \ x\in\Ob(\calx);\\
d(y)=x_{0},  & \textrm{for all} \ y\in\Ob(\caly)\backslash\textrm{Im}(a)\textrm{ and some }x_{0}\in\Ob(\caly').
\end{cases}
$$
It is clear that $d$ is well-defined.

Fix $x,y\in\Ob(\calx)$. Let $a:\textrm{Hom}_{\calx}(x,y)\rightarrow\textrm{Hom}_{\caly}(ax,ay),c:\textrm{Hom}_{\calx}(x,y)\rightarrow\textrm{Hom}_{\calx'}(cx,cy), b:\textrm{Hom}_{\calx'}(x,y)\rightarrow\textrm{Hom}_{\caly'}(bx,by)$ be arrow-functions. Then the arrow-function $d:\textrm{Hom}_{\caly}(x,y)\rightarrow\textrm{Hom}_{\caly'}(dx,dy)$ is given by
$$
\begin{cases}
d(a(x'))=b(c(x')), & \textrm{for all} \ x'\in\textrm{Hom}_{\calx}(x,y);\\
d(y')=x_{0}',  & \textrm{for all} \ y'\in\textrm{Hom}_{\caly}(x,y)\backslash\textrm{Im}(a)\textrm{ and some }x_{0}'\in\textrm{Hom}_{\caly'}(x,y).
\end{cases}
$$
These give us the desired functor $d$. And we can show that $d$ is over $\calc$.
\end{proof}

The lemma applies to the following two statements.
\begin{proposition}\label{P8}
Let $\calx$ be a category fibred in groupoids over $\SchS$ which is representable by an algebraic space $F$ over $S$. Suppose that $F$ has characteristic $p$ with algebraic Frobenius morphism $\Psi_{F}:F\rightarrow F$. Then there exists a $1$-morphism $\psi_{\calx}:\calx\rightarrow\calx$ such that the diagram
$$
\xymatrix{
  \cals_{F} \ar[d]_{\cals_{\Psi_{F}}} \ar[r]^{\simeq} & \calx \ar@{-->}[d]^{\psi_{\calx}} \\
  \cals_{F} \ar[r]^{\simeq} & \calx   }
$$
commutes.
\end{proposition}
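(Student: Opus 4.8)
The plan is to obtain $\psi_{\calx}$ as a direct application of Lemma \ref{L8}, taking the base category there to be $\calc = \SchS$.

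First, since $\calx$ is representable by $F$, we fix an equivalence $e \colon \cals_F \to \calx$ of categories over $\SchS$; in the square to be constructed, $e$ will serve as \emph{both} the top and the bottom horizontal arrow. Because $F$ has characteristic $p$, its algebraic Frobenius $\Psi_F \colon F \to F$ is defined (as in \cite{Liang}, \cite{Liang1}), and it induces a $1$-morphism $\cals_{\Psi_F} \colon \cals_F \to \cals_F$ of categories over $\SchS$. Thus $\cals_F$ and $\calx$ are categories over $\calc = \SchS$, and $e$ and $\cals_{\Psi_F}$ are $1$-morphisms over $\calc$, so all the data needed to invoke Lemma \ref{L8} is in place.

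Now apply Lemma \ref{L8} as follows: for the four categories over $\calc$ appearing in its statement take $\cals_F$, $\cals_F$, $\calx$, $\calx$ (playing the roles of, respectively, the source of the top arrow, the source of the bottom arrow, the target of the top arrow, and the target of the bottom arrow), let the top arrow be $a = e$, the left arrow be $c = \cals_{\Psi_F}$, and the bottom arrow be $b = e$. Lemma \ref{L8} then produces a $1$-morphism $\calx \to \calx$, which we name $\psi_{\calx}$, such that $\psi_{\calx} \circ e = e \circ \cals_{\Psi_F}$; this is exactly the commutativity of the displayed square. (If moreover $F$ is perfect, then $\Psi_F$ is an equivalence, whence so is $\psi_{\calx}$, by the last assertion of Lemma \ref{L8}.)

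I do not expect any genuine obstacle here: the one thing to check is that every category and $1$-morphism in sight lies over the fixed base $\calc = \SchS$, which holds by construction. I would, however, point out explicitly that the $\psi_{\calx}$ produced this way is \emph{not} canonical, since the construction in the proof of Lemma \ref{L8} makes arbitrary choices on objects and arrows outside the image of $e$; this is precisely the defect anticipated in the discussion preceding the proposition, and it is why a separate, better-behaved construction of an algebraic Frobenius $1$-morphism is needed in the sequel.
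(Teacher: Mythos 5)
Your proposal is correct and matches the paper's own proof, which likewise obtains $\psi_{\calx}$ by a direct application of Lemma~\ref{L8} with the equivalence $\cals_{F}\to\calx$ as both horizontal arrows and $\cals_{\Psi_{F}}$ as the left vertical arrow. Your added remark on the non-canonicity of the resulting $1$-morphism is consistent with the paper's own caveat preceding the proposition.
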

\begin{proof}
This follows directly from Lemma \ref{L8}.
\end{proof}

\begin{proposition}\label{P7}
Let $\calx$ be a category fibred in groupoids with representable diagonal in characteristic $p$ over $S$. Let $\SchU\rightarrow\calx$ be a surjective smooth $1$-morphism for $U\in\ObSchS$ in characteristic $p$. Then there exists a $1$-morphism $\Psi_{\calx}:\calx\rightarrow\calx$ that fits into the commutative dotted diagram
$$
\xymatrix{
  \SchU \ar[d]_{\cals_{\Phi_{U}}} \ar[rr]^{}& & \calx \ar@{-->}[d]^{\Psi_{\calx}} \\
  \SchU \ar[rr]^{}& & \calx   }
$$
where $\Phi_{U}$ denotes the absolute Frobenius morphism of $U$.
\end{proposition}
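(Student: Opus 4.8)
The plan is to deduce the statement directly from Lemma \ref{L8}, exactly as in the proof of Proposition \ref{P8}. Since ${\rm char}(U)=p$, the scheme $U$ is an $\mathbb{F}_{p}$-scheme and therefore carries its absolute Frobenius $\Phi_{U}\colon U\to U$ (the identity on the underlying space, $p$-th power on functions). Passing to associated categories fibred in groupoids, this $\Phi_{U}$ induces a $1$-morphism $\cals_{\Phi_{U}}\colon\SchU\to\SchU$, which will play the role of the left vertical arrow.

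First I would apply Lemma \ref{L8} with $\calc=\SchS$, letting $\SchU$ play the role of both $\calx$ and $\calx'$ there and letting the given $\calx$ play the role of both $\caly$ and $\caly'$. I would take $a=b$ to be the fixed surjective smooth $1$-morphism $\SchU\to\calx$ and $c=\cals_{\Phi_{U}}$. Lemma \ref{L8} then yields a $1$-morphism $d\colon\calx\to\calx$ with $d\circ(\SchU\to\calx)=(\SchU\to\calx)\circ\cals_{\Phi_{U}}$; setting $\Psi_{\calx}=d$ produces precisely the commutative dotted diagram of the statement. One small point to record is that $\Psi_{\calx}$ is genuinely a $1$-morphism of categories fibred in groupoids over $\SchS$ and not merely a functor: the last line of the proof of Lemma \ref{L8} gives that $d$ lies over $\SchS$, and since every arrow of a category fibred in groupoids is strongly cartesian, any functor over $\SchS$ between categories fibred in groupoids over $\SchS$ automatically preserves cartesian arrows.

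The conceptual content underlying this formal argument is the functoriality of the Frobenius, i.e. $\Phi\circ g=g\circ\Phi$ for any morphism $g$. One could also argue more structurally using the hypothesis that the diagonal of $\calx$ is representable by algebraic spaces: choosing an algebraic space $R$ over $S$ with $\cals_{R}\cong\SchU\times_{\calx}\SchU$ presents $\calx$ by a smooth groupoid $(U,R)$, and since both $U$ and $R$ have characteristic $p$ the pair $(\Phi_{U},\Psi_{R})$ — with $\Psi_{R}$ the algebraic Frobenius of $R$ from \cite{Liang} — commutes with all the structure morphisms $s,t\colon R\to U$, the unit, composition and inverse by naturality of Frobenius, hence is an endomorphism of the groupoid and descends to $\Psi_{\calx}$. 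This is the viewpoint one would return to when proving the later properties of $\Psi_{\calx}$, but it is not needed here.

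The part that will require care is simply the bookkeeping needed to see that Lemma \ref{L8} applies: that the object- and arrow-functions defining $d$ are well defined and functorial in this situation, and that one is entitled to treat $\cals_{\Phi_{U}}$ (and hence $\Psi_{\calx}$) as a $1$-morphism in the ambient setting. In particular the absolute Frobenius $\Phi_{U}$ need not be a morphism over $S$ in general, so the left vertical arrow $\cals_{\Phi_{U}}$, and with it $\Psi_{\calx}$, should be read as the ``absolute'' Frobenius rather than a relative one. Granting these routine checks, the existence of $\Psi_{\calx}$ and the commutativity of the square are immediate from Lemma \ref{L8}.
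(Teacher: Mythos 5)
Your proposal matches the paper's proof, which simply reads ``This follows directly from Lemma \ref{L8}''; you instantiate that lemma in exactly the intended way (taking $\calx=\calx'=\SchU$, $\caly=\caly'=\calx$, $a=b$ the smooth atlas, and $c=\cals_{\Phi_{U}}$). The additional remarks on the groupoid-presentation viewpoint and on $\cals_{\Phi_{U}}$ being absolute rather than relative go beyond what the paper records but do not change the argument.
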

\begin{proof}
This follows directly from Lemma \ref{L8}.
\end{proof}

If we speak of a category fibred in groupoids representable by an algebraic space, then the induced 1-morphisms in Proposition \ref{P7} and Proposition \ref{P8} are equivalent.
\begin{proposition}\label{P9}
Let $\calx$ be a category fibred in groupoids in characteristic $p$ over $\SchS$. Suppose that $\calx$ is representable by an algebraic space $F$ over $S$. Then there is a one-to-one correspondence between the $1$-morphisms $\Psi_{\calx}$ in Proposition \ref{P7} and the 1-morphisms $\psi_{\calx}$ in Proposition \ref{P8}.
\end{proposition}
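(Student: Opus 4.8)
The plan is to construct the correspondence explicitly from the two ambient constructions, using the fact that both $\Psi_\calx$ and $\psi_\calx$ are obtained from Lemma~\ref{L8} applied to compatible data, and that for a category fibred in groupoids representable by an algebraic space the two input Frobenius data agree. First I would fix an equivalence $e:\cals_F\xrightarrow{\simeq}\calx$ together with a quasi-inverse $e^{-1}$; by hypothesis $\calx$ has characteristic $p$, and since $\calx\cong\cals_F$, the algebraic space $F$ has characteristic $p$, so the algebraic Frobenius $\Psi_F:F\to F$ exists and Proposition~\ref{P8} supplies a $1$-morphism $\psi_\calx:\calx\to\calx$ making the square with $\cals_{\Psi_F}$ commute. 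On the other hand, a surjective étale (hence smooth) atlas $h_U\to F$ from a scheme $U$ in characteristic $p$ induces a surjective smooth $1$-morphism $\SchU\to\cals_F\xrightarrow{\simeq}\calx$, and the absolute Frobenius $\Phi_U$ of $U$ fits into the defining square of $\cals_{\Psi_F}$ because $\Psi_F$ restricted along the atlas is $\Phi_U$ (this is precisely how the algebraic Frobenius of an algebraic space is built in \cite{Liang1}). Thus Proposition~\ref{P7} applied to this atlas produces a $\Psi_\calx$, and by Lemma~\ref{L8} the two resulting $1$-morphisms of $\calx$ differ only in how they are defined on objects/arrows outside the image of the atlas resp.\ of $e$, so they agree up to unique $2$-isomorphism.

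The key steps, in order, are: (i) record that $\calx\cong\cals_F$ forces $\mathrm{char}(F)=p$, so $\Psi_F$ is defined and both $\Psi_\calx$ and $\psi_\calx$ make sense; (ii) show that any $\psi_\calx$ as in Proposition~\ref{P8} also satisfies the defining property of some $\Psi_\calx$ in Proposition~\ref{P7}, by composing the equivalence $\SchU\to\cals_F\xrightarrow{\simeq}\calx$ with the square of $\cals_{\Psi_F}$ and using that $\cals_{\Psi_F}$ pulls back along the atlas to $\cals_{\Phi_U}$; (iii) conversely, show that any $\Psi_\calx$ as in Proposition~\ref{P7} satisfies the defining property of some $\psi_\calx$: since $\SchU\to\calx$ is surjective and the $1$-morphism is essentially determined on the ``image'' of the atlas, the commuting square with $\cals_{\Phi_U}$ forces the square with $\cals_{\Psi_F}$ to commute after transporting along $e$ (again because $\Psi_F$ is characterised by its restriction to the atlas, $h_U\to F$ being surjective étale); and (iv) assemble (ii) and (iii) into a bijection, noting that Lemma~\ref{L8} constructions are unique up to the choices of the ``garbage'' values on objects and arrows outside the relevant image, which do not affect the equivalence class of the $1$-morphism. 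One should also invoke Lemma~\ref{L4} (closure under equivalence) so that passing through $e$ does not change any of the relevant properties.

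The main obstacle will be step (iii): making precise the claim that a $1$-morphism $\Psi_\calx:\calx\to\calx$ satisfying only the atlas-level commutativity of Proposition~\ref{P7} necessarily induces, after transport along the equivalence $\cals_F\simeq\calx$, a $1$-morphism of $\cals_F$ compatible with $\cals_{\Psi_F}$. This requires descent: $h_U\to F$ is a surjective étale cover, $\Psi_F$ is the unique endomorphism of $F$ whose base change along this cover is $\Phi_U$ (equivalently, $\Psi_F$ is determined by the induced map on $F$-points or by gluing from the atlas), and one must check that the datum $\Psi_\calx$ descends to this same map rather than to some twist. Concretely I would form the $2$-fibre product $\SchU\times_\calx\SchU\cong\cals_R$ for an étale equivalence relation $R$ presenting $F$, verify that $\Psi_\calx$ is compatible with the two projections up to the canonical $2$-isomorphism coming from Proposition~\ref{P7}, and conclude by the universal property of the quotient $F=U/R$ that the induced endomorphism of $F$ equals $\Psi_F$. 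Everything else is a routine unwinding of Lemma~\ref{L8} together with Lemma~\ref{L4} and Lemma~\ref{L13}.
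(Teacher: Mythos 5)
Your proposal is correct and follows essentially the same route as the paper: both directions are obtained by transporting the commuting square of Proposition~\ref{P7} along the equivalence $\cals_F\simeq\calx$ to the square of Proposition~\ref{P8} and back, identifying the induced endomorphism of $F$ with the algebraic Frobenius $\Psi_F$. The only difference is that you make explicit the descent step (via $\SchU\times_\calx\SchU\cong\cals_R$ and the surjectivity of the atlas) that the paper's proof passes over when it asserts that the atlas-level diagram ``gives rise to'' the diagram of algebraic spaces; this is a useful clarification but not a different argument.
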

\begin{proof}
Here is the commutative diagram in Proposition \ref{P7}.
$$
\xymatrix{
  \SchU \ar[d]_{\cals_{\Phi_{U}}} \ar[rr]^{}& & \calx \ar[d]^{\Psi_{\calx}} \\
  \SchU \ar[rr]^{}& & \calx   }
$$
This gives rise to a commutative diagram
$$
\xymatrix{
  U \ar[d]_{\cals_{\Phi_{U}}} \ar[rr]^{}& & F \ar[d]^{\Psi_{F}} \\
  U \ar[rr]^{}& & F   }
$$
where $U\rightarrow F$ is surjective \'{e}tale. Thus, $\Psi_{F}$ is the algebraic Frobenius of $F$ and $\Psi_{\calx}$ is the $1$-morphism that makes the diagram
$$
\xymatrix{
  \cals_{F} \ar[d]_{\cals_{\Psi_{F}}} \ar[rr]^{}& & \calx \ar[d]^{\Psi_{\calx}} \\
  \cals_{F} \ar[rr]^{}& & \calx   }
$$
commute, i.e. $\Psi_{\calx}=\psi_{\calx}$.

Now, consider the commutative diagram in Proposition \ref{P8}
$$
\xymatrix{
  \cals_{F} \ar[d]_{\cals_{\Psi_{F}}} \ar[r]^{\simeq} & \calx \ar[d]^{\psi_{\calx}} \\
  \cals_{F} \ar[r]^{\simeq} & \calx   }
$$
This gives rise to a commutative diagram
$$
\xymatrix{
  \SchU \ar[d]_{\cals_{\Phi_{U}}} \ar[rr]^{}& & \calx \ar[d]^{\Psi_{\calx}} \\
  \SchU \ar[rr]^{}& & \calx   }
$$
Thus, we have $\Psi_{\calx}=\psi_{\calx}$.
\end{proof}

Now, we make the definition of canonical morphisms of algebraic stacks.
\begin{definition}
Let $\calx$ be an algebraic stack of characteristic $p$ over $S$. The canonical morphism of $\calx$ is one of the induced $1$-morphisms $\Psi_{\calx}^{*}:\calx\rightarrow\calx$ as in Proposition \ref{P7}.
\end{definition}

The following statement provides us with an alternative description of a perfect algebraic stack that is representable by an algebraic space.
\begin{proposition}\label{T1}
Let $\calx$ be an algebraic stack of characteristic $p$ over $S$ with canonical morphism $\Psi_{\calx}^{*}:\calx\rightarrow\calx$. If $\calx$ is representably perfect, then $\Psi_{\calx}^{*}$ is an equivalence. Conversely, if $\Psi_{\calx}^{*}$ is an equivalence and $\calx$ is representable by an algebraic space over $S$, then $\calx$ is representably perfect.
\end{proposition}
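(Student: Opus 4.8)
The plan is to reduce both implications to the corresponding fact for algebraic spaces, namely that an algebraic space $F$ of characteristic $p$ is perfect if and only if its algebraic Frobenius $\Psi_{F}:F\rightarrow F$ is an isomorphism; this is the algebraic-space analogue established in \cite{Liang}, and everything else is formal bookkeeping with the squares appearing in Propositions~\ref{P7}, \ref{P8}, and~\ref{P9}.

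For the forward implication, assume $\calx$ is \emph{representably perfect}. By Definition~\ref{D1}(4) together with the definition of a perfect category fibred in groupoids, there is a perfect algebraic space $F$ over $S$ and an equivalence $\calx\cong\cals_{F}$; in particular $\calx$ is representable by an algebraic space, so Proposition~\ref{P9} applies and identifies the canonical morphism $\Psi_{\calx}^{*}$ with the $1$-morphism $\psi_{\calx}$ of Proposition~\ref{P8}. Since $F$ is perfect, $\Psi_{F}$ is an isomorphism, hence $\cals_{\Psi_{F}}$ is an equivalence of categories fibred in groupoids; reading off the commutative square of Proposition~\ref{P8}, in which the two horizontal arrows are equivalences, shows that $\psi_{\calx}=\Psi_{\calx}^{*}$ is an equivalence.

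For the converse, assume $\Psi_{\calx}^{*}$ is an equivalence and $\calx\cong\cals_{F}$ for an algebraic space $F$ over $S$ (which then has characteristic $p$). Again Proposition~\ref{P9} lets us identify $\Psi_{\calx}^{*}$ with $\psi_{\calx}$, and the commutative square of Proposition~\ref{P8} then forces $\cals_{\Psi_{F}}$ to be an equivalence. Because the construction $G\mapsto\cals_{G}$ is $2$-fully faithful on algebraic spaces over $S$, an equivalence $\cals_{\Psi_{F}}$ can only come from an isomorphism $\Psi_{F}:F\rightarrow F$; applying the characterization in \cite{Liang} once more, $F$ is a perfect algebraic space, and therefore $\calx\cong\cals_{F}$ is representably perfect.

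The only genuinely external input is the algebraic-space equivalence ``$F$ perfect if and only if $\Psi_{F}$ an isomorphism'' from \cite{Liang}; granting it, the argument is routine. The point that needs care, and the reason for the asymmetric hypotheses, is that the square of Proposition~\ref{P8} -- on which the whole argument rests -- is only available when $\calx$ is representable by an algebraic space: this is automatic in the forward direction, since it is built into being representably perfect, but must be imposed by hand in the converse.
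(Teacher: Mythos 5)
Your proposal is correct and follows essentially the same route as the paper: the forward direction reads the equivalence $\Psi_{\calx}^{*}$ off the commutative square of Proposition~\ref{P8} (which is exactly the paper's appeal to Lemma~\ref{L8}), and the converse inverts that square to force $\cals_{\Psi_{F}}$, hence $\Psi_{F}$, to be an equivalence, concluding that $F$ is perfect via the algebraic-space criterion from \cite{Liang}. Your write-up is in fact more explicit than the paper's (which compresses both steps to two sentences), and your closing remark correctly isolates why representability by an algebraic space must be assumed in the converse.
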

\begin{proof}
The sufficiency follows from Lemma \ref{L8}. Conversely, if $\Psi_{\calx}^{*}$ is an equivalence, then it can be shown that $\cals_{\Psi_{F}}$ is an equivalence. Thus, $\Psi_{F}$ is an isomorphism such that $F$ is perfect. This shows that $\calx$ is representably perfect.
\end{proof}

The following statement provides us with an alternative description of a perfect algebraic stack that is representable by a presheaf of sets.
\begin{proposition}
Let $\calx$ be an algebraic stack of characteristic $p$ over $S$. Suppose that there is an equivalence $\calx\cong\cals_{F}$ for some presheaf of sets $F$ on $\SchS$. Then $\calx$ is perfect if and only if $\Psi_{\calx}^{*}:\calx\rightarrow\calx$ is an equivalence.
\end{proposition}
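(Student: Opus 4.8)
The plan is to reduce the claim to Proposition~\ref{T1}, the point being that the hypotheses already force $\calx$ to be representable by an algebraic space. Since $\calx$ is an algebraic stack it is a stack for the fppf topology, and since $\calx\cong\cals_{F}$ with $F$ a presheaf of sets it is fibred in setoids; hence $F$ is automatically an fppf sheaf, and by the characterization of algebraic stacks which are stacks in setoids (see \cite{Stack Project}), together with the representability of the diagonal of $\calx$ and the existence of a smooth atlas, $F$ is in fact an algebraic space over $S$. In particular $\calx$ is representable by the algebraic space $F$, so its algebraic Frobenius $\Psi_{F}\colon F\to F$ is defined, and by Proposition~\ref{P9} the canonical morphism $\Psi_{\calx}^{*}$ corresponds, under the fixed equivalence $\calx\cong\cals_{F}$, to $\cals_{\Psi_{F}}$.

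Next I would check that, for an algebraic stack representable by an algebraic space, being \emph{perfect} in the sense of Definition~\ref{D1}(1) coincides with being \emph{representably perfect} in the sense of Definition~\ref{D1}(4). One implication is Lemma~\ref{L2}(1): if $F$ is a perfect algebraic space then $\cals_{F}$ is a perfect DM stack. For the other, a surjective smooth $1$-morphism $\SchU\to\calx$ from a perfect scheme $U$ transports under $\calx\cong\cals_{F}$ to a surjective smooth morphism $h_{U}\to F$, which exhibits $F$ as a perfect algebraic space by the way perfect algebraic spaces are set up in \cite{Liang}; alternatively, since $\calx$ is fibred in setoids, Lemma~\ref{L3}(2) identifies ``representably perfect'' with the condition that $U\mapsto\Ob(\calx_{U})/\!\cong$, which here is $F$, be a perfect algebraic space.

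With these two reductions in place the statement follows from Proposition~\ref{T1}: if $\calx$ is perfect it is representably perfect, hence $\Psi_{\calx}^{*}$ is an equivalence; conversely, if $\Psi_{\calx}^{*}$ is an equivalence then, since $\calx$ is representable by an algebraic space, Proposition~\ref{T1} gives that $\calx$ is representably perfect, hence perfect. Chasing through Proposition~\ref{P9} this just says: $\Psi_{\calx}^{*}$ is an equivalence $\iff$ $\cals_{\Psi_{F}}$ is an equivalence $\iff$ $\Psi_{F}$ is an isomorphism $\iff$ $F$ is a perfect algebraic space $\iff$ $\calx\cong\cals_{F}$ is perfect.

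The main obstacle here is bookkeeping rather than a new idea: one must make sure that $\cals_{F}$ being an algebraic stack genuinely forces $F$ to be an algebraic space (and not merely a sheaf of sets), and that perfectness of an algebraic stack as in Definition~\ref{D1}(1), phrased via a smooth atlas, is exactly matched by the notion of a perfect algebraic space from \cite{Liang}. Once these identifications are pinned down, the proposition is a formal consequence of Propositions~\ref{P9} and~\ref{T1}.
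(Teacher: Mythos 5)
Your argument is correct in outline but takes a genuinely different route from the paper's. You reduce the statement to Proposition~\ref{T1}: first upgrade the presheaf $F$ to an algebraic space (the paper does the same, via \cite[Tag0BGR]{Stack Project}), then identify ``perfect'' in the sense of Definition~\ref{D1}(1) with ``representably perfect'' for stacks of the form $\cals_{F}$, and finally invoke Proposition~\ref{T1} together with Proposition~\ref{P9} to match $\Psi_{\calx}^{*}$ with $\cals_{\Psi_{F}}$. The paper instead argues directly: it picks a conservative family of points $\{p_{i}\}$ and chases the commutative square $h_{U}\to F$ with Frobenii on the vertical arrows at the level of stalks, deducing that $F_{p_{i}}\to F_{p_{i}}$ is bijective if and only if $h(\Phi_{U})_{p_{i}}$ is. Your reduction buys a cleaner converse: the paper's implication ``$F\to F$ an isomorphism $\Rightarrow h(\Phi_{U})$ an isomorphism'' is delicate for an arbitrary smooth atlas $U$ (the square need not be Cartesian unless $h_{U}\to F$ is \'{e}tale), whereas your route only has to produce \emph{some} perfect atlas of $\cals_{F}$, which Lemma~\ref{L2}(1) supplies once $F$ is known to be a perfect algebraic space. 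The one step you leave under-justified is the forward bridge ``$\calx$ perfect $\Rightarrow F$ a perfect algebraic space'': a perfect smooth atlas $\SchU\to\calx$ only gives a surjective \emph{smooth} $h_{U}\to F$ with $U$ perfect, while the notion of perfect algebraic space in \cite{Liang} is pinned to \'{e}tale covers. This is easily patched: since $U$ is a perfect $\mathbb{F}_{p}$-scheme one has $\Omega_{U/\mathbb{F}_{p}}=0$, so the smooth morphism $h_{U}\to F$ has vanishing relative differentials and is therefore \'{e}tale, exhibiting $F$ as a perfect algebraic space. With that sentence added, your proof is complete and, in the converse direction, somewhat more robust than the one in the paper.
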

\begin{proof}
Assume that $\calx$ is perfect. Let $\SchU\rightarrow\calx$ be a surjective smooth $1$-morphism for $U\in\ObSchS$ of characteristic $p$. Then we have a commutative diagram
$$
\xymatrix{
  h_{U} \ar[d]_{h(\Phi_{U})} \ar[rr]^{f} &  & F \ar[d]^{} \\
  h_{U} \ar[rr]^{f} &  & F   }
$$
where $f$ is surjective smooth and $\Phi_{U}$ is the absolute Frobenius of $U$. Next, it follows from \cite[Tag0BGR]{Stack Project} that $F$ is an algebraic space. Let $\{p_{i}\}_{i\in I}$ be a conservative family of points. Now, consider the following commutative diagram of stalks
$$
\xymatrix{
  h_{U,p_{i}} \ar[d]_{h(\Phi_{U})_{p_{i}}} \ar[rr]^{f_{p_{i}}} &  & F_{p_{i}} \ar[d]^{} \\
  h_{U,p_{i}} \ar[rr]^{f_{p_{i}}} &  & F_{p_{i}}   }
$$
where $p_{i}\in\{p_{i}\}_{i\in I}$. One can show that $F_{p_{i}}\rightarrow F_{p_{i}}$ is bijective. Thus, $F\rightarrow F$ is an isomorphism such that $\Psi_{\calx}$ is an equivalence. Conversely, if $\Psi_{\calx}$ is an equivalence, then $F\rightarrow F$ is an isomorphism such that $F_{p_{i}}\rightarrow F_{p_{i}}$ is bijective. This shows that $h(\Phi_{U})_{p_{i}}:h_{U,p_{i}}\rightarrow h_{U,p_{i}}$ is bijective. Hence, $h(\Phi_{U})$ is an isomorphism such that $U$ is perfect. This implies that $\calx$ is perfect.
\end{proof}

The above theorem immediately gives rise to the following proposition.
\begin{proposition}
Let $\calx$ be an algebraic stack of characteristic $p$ over $S$. If $\calx$ is perfect, then $\Psi_{\calx}^{*}:\calx\rightarrow\calx$ is $2$-perfect (resp. $2$-quasiperfect, resp. $2$-semiperfect, resp. $2$-strongly perfect).
\end{proposition}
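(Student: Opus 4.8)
The plan is to separate two cases: first the case where $\calx$ is representable, which is an immediate consequence of the preceding proposition, and then a general perfect algebraic stack, which one reaches by descent along a perfect smooth atlas. Throughout, by \defref{A2}(7) the task is to show that for every perfect scheme $V\in\ObSchS$ and every $y\in\Ob(\calx_{V})$ the category fibred in groupoids $\SchV\times_{y,\calx,\Psi_{\calx}^{*}}\calx$ is representable by a perfect (resp.\ quasi-perfect, resp.\ semiperfect, resp.\ strongly perfect) algebraic space over $V$.

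Suppose first that $\calx\cong\cals_{F}$ for a presheaf of sets $F$ on $\SchS$. By the preceding proposition $\Psi_{\calx}^{*}$ is an equivalence, so by \lemref{L9} it is $2$-weakly perfect, hence $2$-perfect (a perfect scheme being a perfect algebraic space by \cite[Lemma 3.3]{Liang}). For the three variants it is cleaner to argue directly: the proof of the preceding proposition exhibits $F$ as an algebraic space on which the absolute Frobenius is an isomorphism, i.e.\ $F$ is a perfect algebraic space, so $\calx$ is a perfect category fibred in groupoids; then \lemref{L1}(1), applied to $\Psi_{\calx}^{*}\colon\calx\to\calx$ with target representable by the algebraic space $F$ and source a perfect category fibred in groupoids, gives that $\Psi_{\calx}^{*}$ is $2$-perfect, and parts (2), (3), (4) of \lemref{L1} give the $2$-quasiperfect, $2$-semiperfect and $2$-strongly perfect statements (using that a perfect algebraic space is in particular quasi-perfect, semiperfect and strongly perfect).

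For a general perfect algebraic stack $\calx$ I would fix a perfect scheme $U\in\ObSchS$ and a surjective smooth $1$-morphism $a\colon\SchU\to\calx$. Since $U$ is perfect, $\Phi_{U}$, and hence $\cals_{\Phi_{U}}$, is an isomorphism, so \proref{P7} yields $\Psi_{\calx}^{*}\circ a=a\circ\cals_{\Phi_{U}}$; in particular $\Psi_{\calx}^{*}\circ a$ is again a surjective smooth $1$-morphism out of $\SchU$. Given a perfect scheme $V$ and $y\in\Ob(\calx_{V})$, one then base changes $\SchV\times_{y,\calx,\Psi_{\calx}^{*}}\calx$ along $a$, and, using the groupoid presentation $\calx=[\SchU\times_{\calx}\SchU\rightrightarrows\SchU]$ together with the identity above, tries to descend the conclusion of the representable case; \lemref{L3} and \proref{P1} would then identify the result as a perfect algebraic space over $V$, and similarly in the three other situations.

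The step I expect to be the main obstacle is precisely this descent from the representable case to a genuine stack. The point is that $\Psi_{\calx}^{*}$ is manufactured through the coarse construction of \lemref{L8} and need not be an equivalence — indeed it need not even be $2$-isomorphic to $1_{\calx}$ — so one cannot simply invoke ``$\Psi_{\calx}^{*}$ is an equivalence, hence $2$-weakly perfect by \lemref{L9}''. One instead has to descend representability and perfectness of $\SchV\times_{y,\calx,\Psi_{\calx}^{*}}\calx$ along the perfect atlas $a$, which forces one to use carefully that $\calx$ is an algebraic stack (so that the relevant Isom-presheaves and the fibre products $\SchU\times_{\calx}\SchU$ are algebraic spaces) together with the compatibility $\Psi_{\calx}^{*}\circ a=a\circ\cals_{\Phi_{U}}$ supplied by \proref{P7}.
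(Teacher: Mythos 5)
Your representable case is, in substance, the paper's entire proof. The paper's argument is two lines: by the immediately preceding proposition, $\calx$ perfect implies $\Psi_{\calx}^{*}$ is an equivalence; hence $\SchU\times_{x,\calx,\Psi_{\calx}^{*}}\calx\cong\SchU$ for every perfect scheme $U\in\ObSchS$ and every $x\in\Ob(\calx_{U})$, which is representable by a perfect scheme and therefore by a perfect (hence quasi-perfect, semiperfect, strongly perfect) algebraic space. That is exactly your first paragraph (the route through \lemref{L9}, or equivalently through \lemref{L1} once one knows $F$ is a perfect algebraic space). The paper carries out no descent along an atlas and no groupoid-presentation argument whatsoever.

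Where you and the paper part ways is instructive. You correctly observe that the preceding proposition supplies ``$\calx$ perfect $\Rightarrow\Psi_{\calx}^{*}$ an equivalence'' only under the hypothesis that $\calx\cong\cals_{F}$ for a presheaf of sets $F$, whereas the statement being proved has no such hypothesis, and that $\Psi_{\calx}^{*}$ is produced by the coarse construction of \lemref{L8} and is not a priori an equivalence for a general perfect stack (the equivalence criterion proved later, Theorem~\ref{T3}, concerns the differently constructed algebraic Frobenius $\Psi_{\calx}$, not the canonical morphism $\Psi_{\calx}^{*}$). This is a genuine observation about the paper's proof, which silently applies the representable case in general. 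However, your proposed repair does not close the gap: in the non-representable case you never actually exhibit the perfect algebraic space representing $\SchV\times_{y,\calx,\Psi_{\calx}^{*}}\calx$, and you yourself flag the descent along the atlas $a$ as an unresolved obstacle. So, as a standalone argument, your write-up proves the proposition only under the same representability hypothesis the paper implicitly uses; to recover exactly what the paper establishes, restrict to that situation and run your first paragraph, and treat the general case as requiring a separate argument (or a proof that $\Psi_{\calx}^{*}$ can always be chosen to be an equivalence when $\calx$ is perfect).
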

\begin{proof}
If $\Psi_{\calx}^{*}$ is an equivalence, then we have $\SchU\times_{x,\calx}\calx\cong\SchU$ for perfect scheme $U\in\ObSchS$ and $x\in\Ob(\calx_{U})$. Thus, $\Psi_{\calx}^{*}$ is 2-perfect (resp. $2$-quasiperfect, resp. $2$-semiperfect, resp. $2$-strongly perfect).
\end{proof}

The canonical morphism of an algebraic stack is representable by algebraic spaces.
\begin{proposition}
Let $\calx$ be an algebraic stack in characteristic $p$ over $S$ with canonical morphism $\Psi_{\calx}^{*}:\calx\rightarrow\calx$. Then $\Psi_{\calx}^{*}$ is representable by algebraic spaces.
\end{proposition}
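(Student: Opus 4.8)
The plan is to check the statement after base change along a smooth atlas, using that representability by algebraic spaces is smooth local on the target (a standard descent fact, compare the stability properties in \cite{Liang} and \cite{Stack Project}). As a preliminary remark, when $\calx$ is representable by an algebraic space $F$ the assertion is immediate: by Proposition \ref{P9} the canonical morphism $\Psi_{\calx}^{*}$ coincides with $\cals_{\Psi_{F}}$, and $\Psi_{F}\colon F\to F$ is a morphism of algebraic spaces, hence $\cals_{\Psi_{F}}$ is representable by algebraic spaces. The real issue is thus to transport this through an atlas.

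First I would choose a surjective smooth $1$-morphism $u\colon\SchU\to\calx$ with $U\in\ObSchS$ of characteristic $p$, which exists since ${\rm{char}}(\calx)=p$; by the construction in Proposition \ref{P7} there is a $2$-commutative square
$$
\xymatrix{
  \SchU \ar[d]_{\cals_{\Phi_{U}}} \ar[r]^{u} & \calx \ar[d]^{\Psi_{\calx}^{*}} \\
  \SchU \ar[r]^{u} & \calx }
$$
with $\Phi_{U}$ the absolute Frobenius of $U$. Since $u$ is a surjective smooth atlas, it suffices to show that $\calz:=\calx\times_{\Psi_{\calx}^{*},\calx,u}\SchU\to\SchU$ is representable by algebraic spaces, equivalently (as $\SchU$ is representable by a scheme) that $\calz$ is representable by an algebraic space. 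To analyse $\calz$ I would pull it back once more along $u$, now into the remaining factor of $\calx$; the square above yields equivalences
$$
\SchU\times_{u,\calx,\mathrm{pr}_{1}}\calz\ \cong\ \SchU\times_{u\circ\cals_{\Phi_{U}},\calx,u}\SchU\ \cong\ \SchU\times_{\cals_{\Phi_{U}},\SchU}\bigl(\SchU\times_{u,\calx,u}\SchU\bigr),
$$
where $\mathrm{pr}_{1}\colon\calz\to\calx$ is the first projection and the outer fibre product on the right is taken along a projection of $\SchU\times_{u,\calx,u}\SchU$ onto $\SchU$. Now $\SchU\times_{u,\calx,u}\SchU$ is representable by an algebraic space because the diagonal of the algebraic stack $\calx$ is representable by algebraic spaces, and forming fibre products of it with schemes preserves this; hence the pullback of $\calz$ along $u$ is an algebraic space.

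The main obstacle is the final descent: inferring that $\calz$ itself is representable by an algebraic space from the fact that its pullback along the surjective smooth atlas $u\colon\SchU\to\calx$ is one. This is not formal, and the argument I would attempt is to note that $\mathrm{pr}_{1}\colon\calz\to\calx$ is representable by algebraic spaces, being a base change of $u$, so that the relative diagonal $\calz\to\calz\times_{\calx}\calz$ is a monomorphism, and then to use that $\calx$ is an algebraic stack to upgrade this to control of the absolute diagonal $\calz\to\calz\times_{S}\calz$; showing that the latter is a monomorphism, i.e. that $\calz$ has no nontrivial automorphisms, is precisely the delicate point. A secondary subtlety is that $\Psi_{\calx}^{*}$ is determined only up to the non-canonical choices of Lemma \ref{L8}, so one should check that the identification of $\calz$ does not depend on them, or else prove the representable case first and then bootstrap to the general case through the atlas exactly as in the proof of Lemma \ref{LL4}.
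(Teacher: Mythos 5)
Your proposal does not close: you yourself flag the decisive step --- passing from ``the pullback of $\calz=\calx\times_{\Psi_{\calx}^{*},\calx,u}\SchU$ along the atlas is an algebraic space'' to ``$\calz$ is an algebraic space'' --- as delicate, and you do not carry it out. That step is exactly the content of the proposition. Since $\calz$ is in any case an algebraic stack (a $2$-fibre product of algebraic stacks over an algebraic stack), being an algebraic space is equivalent to $\calz$ being fibred in setoids, equivalently to $\Psi_{\calx}^{*}$ being faithful, i.e.\ injective on automorphism sheaves of objects. Nothing in the construction of the canonical morphism supplies this: Lemma \ref{L8} builds the functor by prescribing it on the image of the atlas and extending it \emph{arbitrarily} on all remaining objects and arrows, so faithfulness is not automatic, and your double pullback $\SchU\times_{u,\calx,\mathrm{pr}_{1}}\calz\cong\cals_{G}$ only records the already known representability of the diagonal of $\calx$. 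Your secondary worry about the non-canonical choices in Lemma \ref{L8} is the same issue seen from another angle, and the preliminary reduction via Proposition \ref{P9} settles only the case where $\calx$ itself is representable; it does not bootstrap to the general case.

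For comparison, the paper does not argue by smooth descent at all: it fixes $U,V\in\ObSchS$, represents $\SchU\times_{\calx}\SchV$ by an algebraic space $F$, chooses $W\simeq U\times_{F}U$, and rewrites $\SchU\times_{\calx}\calx$ as a $2$-fibre product over $\cals_{F}$ to conclude $\SchU\times_{\calx}\calx\cong\cals_{G}$ directly, with no appeal to descent of representability along the atlas. So the two arguments are genuinely different in structure; but as submitted, yours is a sketch with the key faithfulness/descent step missing rather than a proof, and that missing step is where all the difficulty of the statement lives.
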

\begin{proof}
If $\calx$ is an empty algebraic stack, then the canonical morphism $\Psi_{\calx}^{*}:\calx\rightarrow\calx$ is trivially representable. Now, assume that $\calx\neq\emptyset$. Let $U,V\in\ObSchS$ be nonempty schemes. Choose an equivalence $\cals_{F}\rightarrow\SchU\times_{\calx}\SchV$ for some algebraic space $F$ over $S$. Consider the composition $\SchU\times_{\calx}\calx\rightarrow\SchU\rightarrow\cals_{F}$. Then we have an equivalence
$$
(\SchU\times_{\calx}\calx)\times_{\cals_{F}}(\SchU\times_{\calx}\SchV)\cong\SchU\times_{\calx}\calx.
$$
Choose $W\simeq U\times_{F}U$ for some $W\in\ObSchS$. Then there are equivalences
\begin{align*}
&(\SchU\times_{\calx}\calx)\times_{\cals_{F}}(\SchU\times_{\calx}\SchV) \\
&\cong(\calx\times_{\calx}\cals_{U\times_{F}U})\times_{\calx}\SchV \\
&\cong\cals_{U\times_{F}U}\times_{\calx}\SchV \\
&\cong\SchW\times_{\calx}\SchV \\
&\cong\cals_{G}
\end{align*}
for some algebraic space $G$ over $S$. Thus, we have $\SchU\times_{\calx}\calx\cong\cals_{G}$.
\end{proof}

The canonical morphism of an algebraic stack is necessarily surjective.
\begin{lemma}\label{P10}
Let $\calx$ be an algebraic stack of characteristic $p$ over $S$ with canonical morphism $\Psi_{\calx}^{*}:\calx\rightarrow\calx$. Then $\Psi_{\calx}^{*}$ is surjective.
\end{lemma}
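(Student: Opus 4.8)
The plan is to reduce surjectivity of $\Psi_{\calx}^{*}$ to the surjectivity of the absolute Frobenius on a smooth atlas. Since $\calx$ has characteristic $p$ it is in particular nonempty, and we may fix a surjective smooth $1$-morphism $a\colon\SchU\to\calx$ with $U\in\ObSchS$ of characteristic $p$; this is precisely the atlas used in Proposition \ref{P7} to produce $\Psi_{\calx}^{*}$. By construction the square in Proposition \ref{P7} $2$-commutes, so there is a $2$-isomorphism $\Psi_{\calx}^{*}\circ a\cong a\circ\cals_{\Phi_{U}}$, where $\Phi_{U}\colon U\to U$ denotes the absolute Frobenius of $U$.

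Next I would record two elementary facts. First, $\cals_{\Phi_{U}}\colon\SchU\to\SchU$ is surjective: the absolute Frobenius $\Phi_{U}$ of a scheme induces the identity on the underlying topological space (on affines it is $b\mapsto b^{p}$ on rings), hence is a bijective, in particular surjective, morphism of schemes, and passing to the associated categories fibred in setoids preserves this. Second, the composition of surjective $1$-morphisms of algebraic stacks is surjective, and if $g\circ f$ is surjective then $g$ is surjective; both are standard, obtained by working with the induced maps of topological spaces $|\cdot|$, cf. \cite{Stack Project}.

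Combining these, $a\circ\cals_{\Phi_{U}}$ is surjective, being the composite of the surjective atlas $a$ with the surjection $\cals_{\Phi_{U}}$; hence $\Psi_{\calx}^{*}\circ a$ is surjective by the $2$-isomorphism above, and therefore $\Psi_{\calx}^{*}$ itself is surjective. Equivalently, one may test surjectivity of a morphism of algebraic stacks after smooth base change to a surjective atlas of the target: the base change of $\Psi_{\calx}^{*}$ along $a$ is $\SchU\times_{\calx,\Psi_{\calx}^{*}}\calx\to\SchU$, and it receives the surjection $\SchU\xrightarrow{\cals_{\Phi_{U}}}\SchU$ through it, forcing surjectivity.

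I expect the only point needing care to be bookkeeping: using the $2$-commutativity of the diagram in Proposition \ref{P7} correctly (it is a genuine $2$-isomorphism of $1$-morphisms, not a strict equality), and observing that $\Psi_{\calx}^{*}$ being merely ``one of the induced $1$-morphisms'' from Lemma \ref{L8} is harmless, as the argument applies verbatim to whichever choice is made. Everything else is the standard topological criterion for surjectivity of morphisms of algebraic stacks, so there is no substantive obstacle.
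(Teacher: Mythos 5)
Your argument is correct and is essentially the paper's own proof: both pass to the defining $2$-commutative square over the smooth atlas $\SchU\to\calx$, note that the atlas and the Frobenius $\cals_{\Phi_{U}}$ induce surjections on underlying topological spaces, and conclude that $\left|\Psi_{\calx}^{*}\right|$ is surjective. The extra remarks on $2$-isomorphism bookkeeping and the choice of induced $1$-morphism are fine but do not change the substance.
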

\begin{proof}
Let $U\in\ObSchS$ of characteristic $p$ together with a surjective smooth $1$-morphism $\SchU\rightarrow\calx$. Consider the commutative diagram
$$
\xymatrix{
  \left|\SchU\right| \ar[d]_{} \ar[rr]^{}& & \left|\calx\right| \ar[d]^{\left|\Psi_{\calx}^{*}\right|} \\
  \left|\SchU\right| \ar[rr]^{}& & \left|\calx\right|   }
$$
of points of algebraic stacks, where the horizontal arrows and the left vertical arrow are surjective. Thus, $\left|\Psi_{\calx}^{*}\right|$ is surjective such that $\Psi_{\calx}^{*}$ is surjective.
\end{proof}

The canonical morphism of a perfect algebraic stack induces a homeomorphism of the underlying topological space.
\begin{lemma}
Let $\calx$ be an algebraic stack of characteristic $p$ over $S$. If $\calx$ is perfect, then $\left|\Psi_{\calx}^{*}\right|:\left|\calx\right|\rightarrow\left|\calx\right|$ is a homeomorphism of the underlying topological space.
\end{lemma}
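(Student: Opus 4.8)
The plan is to pull the question back to a smooth atlas and exploit that the absolute Frobenius of a \emph{perfect} scheme is an isomorphism. Since $\calx$ is perfect, I would first fix (using Definition~\ref{D1}(1)) a perfect scheme $U\in\ObSchS$ of characteristic $p$ together with a surjective smooth $1$-morphism $u\colon\SchU\rightarrow\calx$, and take $\Psi_{\calx}^{*}$ to be the canonical morphism coming via Proposition~\ref{P7} from this atlas, so that there is a commutative square
$$
\xymatrix{
  \SchU \ar[d]_{\cals_{\Phi_{U}}} \ar[rr]^{u} & & \calx \ar[d]^{\Psi_{\calx}^{*}} \\
  \SchU \ar[rr]^{u} & & \calx   }
$$
with $\Phi_{U}$ the absolute Frobenius of $U$. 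Passing to underlying topological spaces, I would observe that $\left|\cals_{\Phi_{U}}\right|$ is the map $\left|\Phi_{U}\right|$ of $\left|\SchU\right|$, which is a homeomorphism since $U$ is a perfect scheme (so $\Phi_{U}$ is an isomorphism), and that $\left|u\right|\colon\left|\SchU\right|\rightarrow\left|\calx\right|$ is open, surjective, and realises $\left|\calx\right|$ as the topological quotient of $\left|\SchU\right|$ by the groupoid $\left|R\right|\rightrightarrows\left|\SchU\right|$, where $R$ is the algebraic space with $\SchU\times_{\calx}\SchU\cong\cals_{R}$ and $\mathrm{pr}_{1},\mathrm{pr}_{2}$ its (smooth, surjective) projections; concretely, $\left|u\right|(a)=\left|u\right|(b)$ iff $(a,b)=(\mathrm{pr}_{1}(r),\mathrm{pr}_{2}(r))$ for some $r\in\left|R\right|$.

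Given this, I would show $\left|\Psi_{\calx}^{*}\right|$ is a continuous open surjection and then that it is injective. Continuity is automatic, and surjectivity is exactly Lemma~\ref{P10}. For openness, given an open $V\subseteq\left|\calx\right|$ I would use commutativity of the square and surjectivity of $\left|u\right|$ to rewrite $\left|\Psi_{\calx}^{*}\right|(V)=\left|u\right|\bigl(\left|\Phi_{U}\right|(\left|u\right|^{-1}(V))\bigr)$, which is open because $\left|\Phi_{U}\right|$ is a homeomorphism and $\left|u\right|$ is an open map. Since a continuous open bijection is a homeomorphism, this reduces everything to injectivity.

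For injectivity, suppose $x,y\in\left|\calx\right|$ satisfy $\left|\Psi_{\calx}^{*}\right|(x)=\left|\Psi_{\calx}^{*}\right|(y)$ and lift them to $\tilde{x},\tilde{y}\in\left|\SchU\right|$. Commutativity gives $\left|u\right|(\left|\Phi_{U}\right|\tilde{x})=\left|u\right|(\left|\Phi_{U}\right|\tilde{y})$, so there is $r\in\left|R\right|$ with $\mathrm{pr}_{1}(r)=\left|\Phi_{U}\right|\tilde{x}$ and $\mathrm{pr}_{2}(r)=\left|\Phi_{U}\right|\tilde{y}$. Here I would use functoriality of the absolute Frobenius, $\mathrm{pr}_{i}\circ\Phi_{R}=\Phi_{U}\circ\mathrm{pr}_{i}$, together with surjectivity of $\left|\Phi_{R}\right|$ (the absolute Frobenius of a characteristic-$p$ algebraic space being a universal homeomorphism) to pick $r'$ with $\left|\Phi_{R}\right|(r')=r$; then $\left|\Phi_{U}\right|(\mathrm{pr}_{i}(r'))=\mathrm{pr}_{i}(r)$, and injectivity of $\left|\Phi_{U}\right|$ forces $\mathrm{pr}_{1}(r')=\tilde{x}$ and $\mathrm{pr}_{2}(r')=\tilde{y}$. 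Hence $(\tilde{x},\tilde{y})$ lies in the relation defining $\left|\calx\right|$, so $x=y$, and $\left|\Psi_{\calx}^{*}\right|$ is a homeomorphism.

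I expect the only real obstacle to be bookkeeping rather than anything deep: making precise the identification of the topology on $\left|\calx\right|$ with the quotient topology on $\left|\SchU\right|/\left|R\right|$ (in particular the ``same point iff in the relation'' description), and checking functoriality of the absolute Frobenius for the algebraic space $R$, which I would reduce to the case of schemes along an \'{e}tale atlas of $R$. Once those standard facts are in hand, the rest is the formal argument with open surjections above.
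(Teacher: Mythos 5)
Your proposal is correct and starts from the same place as the paper: fix a perfect smooth atlas $u\colon\SchU\rightarrow\calx$ as in Definition~\ref{D1}(1), take the canonical morphism $\Psi_{\calx}^{*}$ fitting into the square of Proposition~\ref{P7}, get surjectivity from Lemma~\ref{P10}, and exploit that $\left|\Phi_{U}\right|$ is a homeomorphism because $U$ is perfect. Where you diverge is in how injectivity (and the topological inverse) is obtained. The paper stacks a second commutative square built from $f^{-1}=\left|\Phi_{U}\right|^{-1}$ under the first one and descends it to a continuous left inverse of $\left|\Psi_{\calx}^{*}\right|$, which together with surjectivity gives the homeomorphism in one stroke; this is shorter but leans on the existence of the descended map in the lower square (an analogue of Lemma~\ref{L8} for $\Phi_{U}^{-1}$), which the paper does not spell out. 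You instead work with the presentation $\left|\calx\right|\cong\left|\SchU\right|/\left|R\right|$ and chase points: injectivity needs the compatibility $\mathrm{pr}_{i}\circ\Psi_{R}=\Phi_{U}\circ\mathrm{pr}_{i}$ and surjectivity of $\left|\Psi_{R}\right|$ for the algebraic space $R$ (both available in the paper's framework --- the former is exactly the computation in the lemma on the groupoid $(h_{U},F,s,t,c)$ via \cite[Proposition 4.10]{Liang}, the latter from the algebraic Frobenius of an algebraic space in characteristic $p$ being surjective). Your route costs these two extra inputs and the bookkeeping of the quotient topology, but it buys an explicit openness argument, $\left|\Psi_{\calx}^{*}\right|(V)=\left|u\right|\bigl(\left|\Phi_{U}\right|(\left|u\right|^{-1}(V))\bigr)$, which makes the final step ``continuous open bijection, hence homeomorphism'' airtight, whereas the paper leaves the continuity of the inverse implicit. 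Only one small terminological point: for the algebraic space $R$ you should invoke the paper's algebraic Frobenius $\Psi_{R}$ rather than an ``absolute Frobenius'' $\Phi_{R}$, but your proposed reduction to a scheme atlas of $R$ is precisely how that object is handled in \cite{Liang}.
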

\begin{proof}
Let $U\in\ObSchS$ be a perfect scheme together with a surjective smooth $1$-morphism $\SchU\rightarrow\calx$. First, it follows from Proposition \ref{P10} that $\left|\Psi_{\calx}^{*}\right|$ is surjective. So one just need to show that $\left|\Psi_{\calx}^{*}\right|$ is injective. Assume that $\calx$ is perfect. Then $f:\left|\SchU\right|\rightarrow\left|\SchU\right|$ is a homeomorphism with an inverse $f^{-1}:\left|\SchU\right|\rightarrow\left|\SchU\right|$. This induces the following commutative diagram
$$
\xymatrix{
  \left|\SchU\right| \ar[d]_{f} \ar[r]^{} & \left|\calx\right| \ar[d]^{\left|\Psi_{\calx}^{*}\right|} \\
  \left|\SchU\right| \ar[d]_{f^{-1}} \ar[r]^{} & \left|\calx\right| \ar[d]^{} \\
  \left|\SchU\right| \ar[r]^{} & \left|\calx\right|   }
$$
which implies that $\left|\Psi_{\calx}^{*}\right|$ has a left inverse such that $\left|\Psi_{\calx}^{*}\right|$ is injective. Thus, $\left|\Psi_{\calx}^{*}\right|$ is a homeomorphism.
\end{proof}

The following lemma characterizes the property of canonical morphism.
\begin{lemma}
Let $\calx$ be an algebraic stack of characteristic $p$ over $S$ with canonical morphism $\Psi_{\calx}^{*}:\calx\rightarrow\calx$. If $\calx$ is representably perfect, then $\Psi_{\calx}^{*}$ is $2$-weakly perfect and representably $2$-perfect.
\end{lemma}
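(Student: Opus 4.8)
The strategy is to chain two results already established in the excerpt: Proposition \ref{T1}, which converts the hypothesis on $\calx$ into a statement about its canonical morphism, and Lemma \ref{L9}, which handles equivalences. Consequently the argument will be short and essentially formal, with no real computation involved.

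First I would apply Proposition \ref{T1}: since $\calx$ is representably perfect, that proposition gives (with no extra hypothesis on $\calx$) that the canonical morphism $\Psi_{\calx}^{*}:\calx\rightarrow\calx$ is an equivalence. Now Lemma \ref{L9} applies verbatim to the equivalence $\Psi_{\calx}^{*}$ and yields that $\Psi_{\calx}^{*}$ is $2$-weakly perfect. Unwinding the definition, for every perfect scheme $U\in\ObSchS$ and every $y\in\Ob(\calx_{U})$ one has, by base change along the equivalence $\Psi_{\calx}^{*}$, an equivalence $\SchU\times_{y,\calx,\Psi_{\calx}^{*}}\calx\cong\SchU$, so the associated perfect scheme is literally $U$.

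Finally I would invoke the observation recorded just after the definition of representably $i$-perfect $1$-morphisms, namely that every $i$-weakly perfect $1$-morphism is representably $i$-perfect; since the associated perfect algebraic space computed above is $h_{U}$, which is representable, every associated $2$-perfect algebraic space of $\Psi_{\calx}^{*}$ is representable, and hence $\Psi_{\calx}^{*}$ is representably $2$-perfect. I do not expect a genuine obstacle in this proof; the only point needing a moment's care is the definitional bookkeeping — checking that $\Psi_{\calx}^{*}$ is representable by algebraic spaces (so that ``representably $2$-perfect'' even makes sense), which is immediate since an equivalence pulls back schemes to schemes, and confirming that the $2$-fibre product is taken over $\Psi_{\calx}^{*}$ on the correct leg, which follows directly from Lemma \ref{L9}.
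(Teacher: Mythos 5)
Your proposal is correct and follows essentially the same route as the paper: the paper's proof also applies Proposition~\ref{T1} to conclude that $\Psi_{\calx}^{*}$ is an equivalence and then invokes Lemma~\ref{L9} to get that it is $2$-weakly perfect, hence representably $2$-perfect. The extra definitional unwinding you include (identifying the associated space as $h_{U}$) is not in the paper but is harmless and consistent with its conventions.
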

\begin{proof}
It follows from Theorem \ref{T1} that $\Psi_{\calx}^{*}$ is an equivalence. Thus, $\Psi_{\calx}^{*}$ is $2$-weakly perfect and representably $2$-perfect by Lemma \ref{L9}.
\end{proof}

Suppose that we are given an algebraic stack $\calx$ in characteristic $p$ over $S$. Let $f:\SchU\rightarrow\calx$ be a surjective smooth $1$-morphism for $U\in\ObSchS$ in characteristic $p$. Consider the 2-fibre product $\cal{R}=\SchU\times_{f,\calx,f}\SchU$. Choose an equivalence $\cals_{F}\cong\cal{R}$ for some algebraic space $F$ in characteristic $p$ over $U$. Let $s,t:F\rightarrow h_{U}$ be morphisms corresponding to the projections $pr_{0},pr_{1}:\cal{R}\rightarrow\SchU$, which are surjective smooth. Let $c:F\times_{s,h_{U},t}F\rightarrow F$ be the morphism corresponding to the projection $pr_{02}:\cal{R}\times_{pr_{0},\SchU,pr_{1}}\cal{R}\rightarrow\cal{R}$. The quintuple $(h_{U},F,s,t,c)$ forms a smooth groupoid in algebraic spaces over $S$. Moreover, the 1-morphism $f$ gives rise to an equivalence $\calx\cong[h_{U}/F]$ of stacks in groupoids over $\SchS$. Similarly, any DM stack $\calx$ in characteristic $p$ over $S$ gives rise to an \'{e}tale groupoid in algebraic spaces $(h_{U},F,s,t,c)$ over $S$ together with an equivalence $\calx\cong[h_{U}/F]$.

\begin{lemma}
Consider the smooth groupoid in algebraic spaces $(h_{U},F,s,t,c)$ as above. The map $\Psi_{f}:(h_{U},F,s,t,c)\rightarrow(h_{U},F,s,t,c)$ given by the Frobenius $\Phi_{U}:h_{U}\rightarrow h_{U}$ and the algebraic Frobenius $\Psi_{F}:F\rightarrow F$ is an endomorphism of the groupoid in algebraic spaces $(h_{U},F,s,t,c)$.
\end{lemma}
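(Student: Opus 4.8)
The plan is to verify that $\Psi_{f}=(\Phi_{U},\Psi_{F})$ fulfils the defining conditions of a (self-)morphism of groupoids in algebraic spaces over $S$: writing $s,t\colon F\to h_{U}$ for the source and target and $c\colon F\times_{s,h_{U},t}F\to F$ for the composition, one must check that $s\circ\Psi_{F}=\Phi_{U}\circ s$, that $t\circ\Psi_{F}=\Phi_{U}\circ t$, and that $\Psi_{F}\circ c=c\circ(\Psi_{F}\times_{\Phi_{U}}\Psi_{F})$, where $\Psi_{F}\times_{\Phi_{U}}\Psi_{F}$ is the morphism on $F\times_{s,h_{U},t}F$ induced by the first two identities. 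The one ingredient behind every step is the naturality of Frobenius: for any morphism $g\colon X\to Y$ of schemes (resp.\ algebraic spaces) over $\mathbb{F}_{p}$ one has $g\circ\Phi_{X}=\Phi_{Y}\circ g$ (resp.\ $g\circ\Psi_{X}=\Psi_{Y}\circ g$), since a ring homomorphism commutes with the $p$-th power operation; the version for algebraic spaces is the functoriality of the algebraic Frobenius developed in \cite{Liang}. As $\calx$ has characteristic $p$, all of $h_{U}$, $F$ and $F\times_{s,h_{U},t}F$ are of characteristic $p$, so each Frobenius morphism below is defined.

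First I would dispatch the source and target: naturality of Frobenius applied to $s\colon F\to h_{U}$ gives $s\circ\Psi_{F}=\Phi_{U}\circ s$, and applied to $t\colon F\to h_{U}$ gives $t\circ\Psi_{F}=\Phi_{U}\circ t$; equivalently, the square
$$
\xymatrix{
F \ar[d]_{\Psi_{F}} \ar[r]^{s} & h_{U} \ar[d]^{\Phi_{U}} \\
F \ar[r]^{s} & h_{U} }
$$
commutes, and likewise with $t$ in place of $s$. For the composition, I note first that $s\circ\Psi_{F}\circ\mathrm{pr}_{0}=\Phi_{U}\circ s\circ\mathrm{pr}_{0}=\Phi_{U}\circ t\circ\mathrm{pr}_{1}=t\circ\Psi_{F}\circ\mathrm{pr}_{1}$ on $F\times_{s,h_{U},t}F$, so by the universal property of the target fibre product the pair $(\Psi_{F}\circ\mathrm{pr}_{0},\Psi_{F}\circ\mathrm{pr}_{1})$ induces a morphism $\Psi_{F}\times_{\Phi_{U}}\Psi_{F}\colon F\times_{s,h_{U},t}F\to F\times_{s,h_{U},t}F$. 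Applying naturality of the algebraic Frobenius to each $\mathrm{pr}_{i}$ ($i=0,1$) gives $\mathrm{pr}_{i}\circ\Psi_{F\times_{s,h_{U},t}F}=\Psi_{F}\circ\mathrm{pr}_{i}$, which are precisely the equations characterizing $\Psi_{F}\times_{\Phi_{U}}\Psi_{F}$; hence $\Psi_{F\times_{s,h_{U},t}F}=\Psi_{F}\times_{\Phi_{U}}\Psi_{F}$. Finally, naturality of Frobenius applied to $c$ yields $\Psi_{F}\circ c=c\circ\Psi_{F\times_{s,h_{U},t}F}=c\circ(\Psi_{F}\times_{\Phi_{U}}\Psi_{F})$, the desired compatibility; together with the first two checks this shows $\Psi_{f}$ is an endomorphism of $(h_{U},F,s,t,c)$, and the analogous compatibilities with the identity section and the inverse then hold automatically.

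I expect the only genuine obstacle to be the identity $\Psi_{F\times_{s,h_{U},t}F}=\Psi_{F}\times_{\Phi_{U}}\Psi_{F}$, that is, knowing that the algebraic Frobenius of an algebraic space is functorial and commutes with fibre products; this I would quote from the construction of the algebraic Frobenius in \cite{Liang}. Granting that, the proof is a short diagram chase from naturality, and the very same argument applies verbatim to the \'{e}tale groupoid $(h_{U},F,s,t,c)$ attached to a DM stack $\calx$ in characteristic $p$.
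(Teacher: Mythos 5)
Your proof is correct and rests on the same essential ingredient as the paper's: the naturality of the (algebraic) Frobenius, i.e.\ $g\circ\Psi_{X}=\Psi_{Y}\circ g$ for a morphism $g$ of algebraic spaces in characteristic $p$, which is what the paper extracts from \cite[Proposition 4.10]{Liang} after passing to an \'etale presentation $h_{V}\to F$. The two arguments handle the source and target squares identically. Where you genuinely diverge is the composition law: you apply naturality directly to $c\colon F\times_{s,h_{U},t}F\to F$ and then identify $\Psi_{F\times_{s,h_{U},t}F}$ with $\Psi_{F}\times_{\Phi_{U}}\Psi_{F}$ via the universal property of the fibre product, which gives the square for $c$ outright. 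The paper instead factors the unit section $e\colon h_{U}\to F\times_{s,h_{U},t}F$ through the $c$-square and deduces commutativity from that factorization; since $e$ is not an epimorphism, verifying the square after precomposition with $e$ does not by itself force the square to commute, so your direct route is not only different but tighter at exactly the one point where the paper's diagram chase is thin. The only debt you incur is the claim that the algebraic Frobenius of a fibre product of algebraic spaces is the fibre product of the Frobenii; this does follow from naturality applied to the two projections plus uniqueness in the universal property, as you indicate, so the debt is discharged.
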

\begin{proof}
Suppose that $\varphi:h_{V}\rightarrow F$ is a surjective \'{e}tale map for $V\in\ObSchS$ in characteristic $p$ and that $e:h_{U}\rightarrow F$ is the identity of the groupoid in algebraic spaces $(h_{U},F,s,t,c)$. Then the composition $h_{V}\xrightarrow{\varphi}F\xrightarrow{s}h_{U}$ comes from a unique morphism of schemes $V\rightarrow U$. In other words, the morphism $s:F\rightarrow h_{U}$ is induced by the morphism of schemes $V\rightarrow U$. Thus, it follows from \cite[Proposition 4.10]{Liang} that the diagram
$$
\xymatrix{
  F \ar[d]_{s} \ar[r]^{\Psi_{F}} & F \ar[d]^{s} \\
  h_{U} \ar[r]^{\Phi_{U}} & h_{U}   }
$$
commutes. Similarly, we can show that $t\circ\Psi_{F}=\Phi_{U}\circ t$ and that the diagram
$$
\xymatrix{
  h_{U} \ar[d]_{\Phi_{U}} \ar[r]^{e} & F \ar[d]^{\Psi_{F}} \\
  h_{U} \ar[r]^{e} & F   }
$$
commutes. Note that there is another commutative diagram
$$
\xymatrix{
  h_{U} \ar[d]_{e} \ar[r]^{e} & F \ar[d]^{s} \\
  h_{U} \ar[r]^{t} & F   }
$$
which gives rise to a unique morphism $h_{U}\rightarrow F\times_{s,h_{U},t}F$. Thus, the second diagram can be factored into the following commutative diagram
$$
\xymatrix{
  h_{U} \ar[d]_{\Phi_{U}} \ar[r]^{} & F\times_{s,h_{U},t}F \ar[d]^{(\Psi_{F},\Psi_{F})} \ar[r]^{\ \ \ \ c} & F \ar[d]^{\Psi_{F}} \\
  h_{U} \ar[r]^{} & F\times_{s,h_{U},t}F \ar[r]^{\ \ \ \ c} & F   }
$$
This shows that the diagram
$$
\xymatrix{
  F\times_{s,h_{U},t}F \ar[d]_{(\Psi_{F},\Psi_{F})} \ar[rr]^{c} & & F \ar[d]^{\Psi_{F}} \\
  F\times_{s,h_{U},t}F \ar[rr]^{c}& & F   }
$$
commutes.
\end{proof}
The morphism $\Psi_{f}:(h_{U},F,s,t,c)\rightarrow(h_{U},F,s,t,c)$ is called the \textit{algebraic Frobenius morphism} of $(h_{U},F,s,t,c)$. It induces a canonical $1$-morphism $[\Psi_{f}]:[h_{U}/F]\rightarrow[h_{U}/F]$ of the quotient stack $[h_{U}/F]$. Thus, we obtain a canonical $1$-morphism $\Psi_{\calx}:\calx\rightarrow\calx$ of the algebraic stack $\calx$.
\begin{definition}
Let $\calx$ be an algebraic stack of characteristic $p$ over $S$. The algebraic Frobenius morphism of $\calx$ is the canonical $1$-morphism $\Psi_{\calx}:\calx\rightarrow\calx$ as above.
\end{definition}

The following proposition shows that perfect algebraic stacks is the same as relatively $1$-perfect algebraic stacks.
\begin{proposition}\label{P11}
Let $\calx$ be an algebraic stack in characteristic $p$ over $S$. Then $\calx$ is perfect if and only if $\calx$ is relatively $1$-perfect.
\end{proposition}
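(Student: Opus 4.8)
The plan is to move between the two descriptions of perfectness for $\calx$ by exploiting the characterizations of $1$-perfect diagonals proved above: the diagonal $\Delta_{\calx}\colon\calx\to\calx\times\calx$ is $1$-perfect if and only if there is a perfect scheme $U\in\ObSchS$ such that $\textit{Isom}(x,y)$ is a perfect algebraic space for all $x,y\in\Ob(\calx_U)$, if and only if there is a perfect scheme $U$ such that every associated $1$-morphism $x\colon\SchU\to\calx$, $x\in\Ob(\calx_U)$, is $1$-perfect. Thus "relatively $1$-perfect" must be compared with the existence of a perfect smooth atlas, and both implications will be extracted from these equivalences together with the fibre-product stability of perfect algebraic spaces from \cite{Liang}.

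For the forward implication, I would fix a perfect scheme $U\in\ObSchS$ together with a surjective smooth $1$-morphism $a\colon\SchU\to\calx$ realizing $\calx$ as perfect, and verify the third form of the characterization using this same $U$. So, fixing $x\in\Ob(\calx_U)$, I must show that $x\colon\SchU\to\calx$ is $1$-perfect, again taking $U$ as the witnessing perfect scheme; the task then reduces to showing that for every $w\in\Ob(\calx_U)$ the algebraic space $F_{w,x}=\SchU\times_{w,\calx,x}\SchU$ over $U$ is perfect. The idea is to express $F_{w,x}$ through the smooth groupoid in algebraic spaces $(h_U,R,s,t,c)$ attached to the atlas $a$ — as in the passage to $\calx\cong[h_U/R]$ recalled before the definition of the algebraic Frobenius — together with the maps of perfect schemes to $h_U$ determined by $w$ and $x$, and then to conclude by stability of perfect algebraic spaces under fibre products (\cite[Proposition 3.7]{Liang}), in the spirit of Lemma \ref{L1} and Proposition \ref{P2}.

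For the converse, suppose $\calx$ is relatively $1$-perfect, so that the characterization yields a perfect scheme $U$ with each $x\colon\SchU\to\calx$, $x\in\Ob(\calx_U)$, being $1$-perfect. The plan is to manufacture a perfect smooth atlas: starting from any smooth surjective atlas $b\colon\SchV\to\calx$ with $V$ of characteristic $p$ (which exists since $\calx$ has characteristic $p$), one passes to a perfection $V^{pf}$ of $V$ in the sense of \cite{Liang1} and uses the $1$-perfectness encoded in $\Delta_{\calx}$ to control the resulting $1$-morphism $\SchV^{pf}\to\SchV\xrightarrow{b}\calx$; smoothness and surjectivity are tested after the faithfully flat base change along $b$, where the relevant fibre products are perfect algebraic spaces, and one checks that enough smoothness is inherited to produce a perfect scheme mapping smoothly and surjectively onto $\calx$, i.e.\ that $\calx$ is perfect.

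I expect the genuine obstacle to lie in the identification step of the forward direction: one must show that the algebraic spaces $F_{w,x}=\SchU\times_{w,\calx,x}\SchU$ are truly \emph{perfect}, and not merely smooth over the perfect scheme $U$, which is exactly the point where the groupoid presentation coming from the perfect atlas has to be used in an essential way — this is the analogue of the subtlety that made Lemma \ref{L2} work only because $F$ itself was assumed perfect. The converse carries a parallel difficulty, namely controlling the behaviour of the atlas under perfection, and both reductions ultimately rest on the fibre-product behaviour of perfect algebraic spaces established in \cite{Liang}.
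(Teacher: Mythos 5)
Your proposal is a plan whose two decisive steps are both left open, and in each case the missing step is exactly where the paper's proof lives. In the forward direction you reduce correctly to showing that $F_{w,x}=\SchU\times_{w,\calx,x}\SchU$ is a \emph{perfect} algebraic space, but the tool you name for closing this, stability of perfect algebraic spaces under fibre products (\cite[Proposition 3.7]{Liang}), cannot apply: that result concerns fibre products taken over a perfect algebraic space, whereas $F_{w,x}$ is a fibre product over the stack $\calx$ itself. Being smooth over the perfect scheme $U$ gives nothing (a smooth cover of a perfect base is typically not perfect), so some genuinely different transfer principle is required. The paper supplies it by passing to the groupoid presentation $(h_U,F,s,t,c)$ and invoking \cite[Lemma 5.10]{Liang} along the surjective smooth map $s\colon F\to h_U$ (refined by an \'etale atlas $h_V\to F$), which asserts that $\Phi_U$ is an isomorphism if and only if $\Psi_F$ is; the whole proposition is then the two directions of that single biconditional. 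That lemma, not fibre-product stability, is the idea your sketch is missing.

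Your converse is also structured differently from the paper's and, as written, would fail. The canonical projection $V^{pf}\to V$ is an integral universal homeomorphism and is not locally of finite type, hence not smooth unless $V$ was already perfect; so the composite $(Sch/V^{pf})_{fppf}\to\SchV\to\calx$ is not a smooth atlas, and ``enough smoothness is inherited'' is precisely what does not happen. No new atlas needs to be manufactured: in the paper the converse is the other direction of the same equivalence from \cite[Lemma 5.10]{Liang}, namely that perfectness of $F=\SchU\times_{f,\calx,f}\SchU$ forces $\Phi_U$ to be an isomorphism, so the original char-$p$ atlas $U$ is itself already perfect. (For what it is worth, your reduction in the forward direction to \emph{all} pairs $w,x\in\Ob(\calx_U)$ is more faithful to the definition of a $1$-perfect diagonal than the paper's argument, which only treats the single pair $(f,f)$; but without a substitute for the transfer lemma this extra care does not produce a proof.)
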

\begin{proof}
Consider the smooth groupoid in algebraic spaces $(h_{U},F,s,t,c)$ together with an equivalence $[h_{U}/F]\cong\calx$ as above. Let $h_{V}\rightarrow F$ be a surjective \'{e}tale map for $V\in\ObSchS$ in characteristic $p$. Then we have a commutative diagram
$$
\xymatrix{
  h_{V} \ar[d]_{\Phi_{V}} \ar[r]^{} & F \ar[d]^{\Psi_{F}} \ar[r]^{s} & h_{U} \ar[d]^{\Phi_{U}} \\
  h_{V} \ar[r]^{} & F \ar[r]^{s} & h_{U}   }
$$
Thus, it follows from \cite[Lemma 5.10]{Liang} that $\Phi_{U}$ is an isomorphism if and only if $\Psi_{F}$ is an isomorphism. This shows that the category fibred in groupoid $\SchU\times_{f,\calx,f}\SchU$ over $\SchU$ is representable by a perfect algebraic space if and only if $\calx$ is perfect.
\end{proof}

This induces an equivalence between the 2-category of perfect algebraic stacks and the 2-category of relatively $1$-perfect algebraic stacks.
\begin{theorem}\label{TT2}
The $2$-category of perfect algebraic stacks over $S$ is equivalent to the $2$-category of relatively $1$-perfect algebraic stacks over $S$, i.e. ${\rm{\underline{Perf}}}AStack^{1}_{S}\cong{\rm{Perf}}AStack_{S}$.
\end{theorem}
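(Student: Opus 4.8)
The strategy is to reduce the statement to \proref{P11}, which has already done the geometric work; what remains is formal $2$-categorical bookkeeping. First I would note that, by \defref{D2}, both ${\rm{Perf}}AStack_{S}$ and ${\rm{\underline{Perf}}}AStack^{1}_{S}$ are \emph{full} sub-$2$-categories of the $2$-category $AStack_{S}$ of algebraic stacks over $S$. Hence each is completely determined by its class of objects: given $\calx,\caly$ lying in both, the category of $1$-morphisms $\calx\to\caly$ and the $2$-morphisms between them are, in either case, simply those inherited from $AStack_{S}$. Therefore it suffices to check that the two $2$-categories have the same objects, and the asserted equivalence ${\rm{\underline{Perf}}}AStack^{1}_{S}\cong{\rm{Perf}}AStack_{S}$ will then be realized by the identity $2$-functor.

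Second I would establish the equality of object classes. A perfect algebraic stack $\calx$ admits, by definition, a surjective smooth $1$-morphism $\SchU\to\calx$ with $U$ a perfect scheme, and a perfect scheme is an $\mathbb{F}_{p}$-scheme; so every perfect algebraic stack has characteristic $p$. Similarly, the $1$-perfectness of the diagonal of a relatively $1$-perfect algebraic stack (\defref{D1}, \defref{A2}) is witnessed over a perfect scheme, so in the relevant nonempty setting a relatively $1$-perfect stack has characteristic $p$ as well. We may thus restrict throughout to algebraic stacks of characteristic $p$ over $S$, which is exactly the hypothesis of \proref{P11}; that proposition then says precisely that $\calx$ is perfect if and only if $\calx$ is relatively $1$-perfect. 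Hence $\Ob({\rm{Perf}}AStack_{S})=\Ob({\rm{\underline{Perf}}}AStack^{1}_{S})$, which together with the first paragraph proves the theorem. To make the passage from ``same objects'' to ``same full sub-$2$-category'' completely airtight, one may invoke \lemref{L13}, which supplies the needed stability of both object classes under equivalence.

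I do not expect any genuine obstacle here beyond \proref{P11} itself: the only mild subtlety is the reduction to characteristic $p$ required in order to invoke that proposition, and once that is granted the theorem is a purely formal consequence of the observation that a full sub-$2$-category is pinned down by its objects. In particular, no further manipulation of smooth groupoids in algebraic spaces or of algebraic Frobenius morphisms is needed at this stage.
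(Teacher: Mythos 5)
Your proposal is correct and takes essentially the same route as the paper, whose entire proof is the one-line appeal to \proref{P11} ``and definitions.'' Your elaboration --- that both $2$-categories are full sub-$2$-categories of $AStack_{S}$ and hence pinned down by their object classes, plus the reduction to characteristic $p$ needed before \proref{P11} applies --- merely makes explicit the bookkeeping the paper leaves implicit.
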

\begin{proof}
The statement follows from Proposition \ref{P11} and definitions.
\end{proof}

We arrive at the following theorem which provides us with an equivalent definition of perfect algebraic stacks.
\begin{theorem}\label{T3}
Let $\calx$ be an algebraic stack in characteristic $p$ over $S$ with algebraic Frobenius $\Psi_{\calx}:\calx\rightarrow\calx$. Then $\calx$ is perfect if and only if $\Psi_{\calx}$ is an equivalence.
\end{theorem}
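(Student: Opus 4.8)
The plan is to argue entirely through the concrete presentation set up just before the statement. Fix a surjective smooth $1$-morphism $f\colon\SchU\to\calx$ with $U\in\ObSchS$ of characteristic $p$, let $(h_U,F,s,t,c)$ be the associated smooth groupoid in algebraic spaces with unit $e\colon h_U\to F$ and $\calx\cong[h_U/F]$, so that the algebraic Frobenius is $\Psi_\calx=[\Psi_f]$ with $\Psi_f=(\Phi_U,\Psi_F)$; by the compatibilities established above there is a canonical $2$-isomorphism $\Psi_\calx\circ f\cong f\circ\cals_{\Phi_U}$ (cf.\ \proref{P7}). I would treat the two implications separately, each time converting the statement about $\Psi_\calx$ into a statement about the pair $(\Phi_U,\Psi_F)$ and then invoking \proref{P11} (equivalently \thmref{TT2}) together with the characterization of perfect algebraic spaces via their algebraic Frobenius from \cite{Liang},\cite{Liang1}.

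For ``$\calx$ perfect $\Rightarrow$ $\Psi_\calx$ an equivalence'': since whether $\Psi_\calx$ is an equivalence is intrinsic to $\calx$, we may choose the presenting atlas so that $U$ is itself a perfect scheme, whence $\Phi_U$ is an isomorphism. By the argument in the proof of \proref{P11} --- that is, by \cite[Lemma 5.10]{Liang} applied to an \'{e}tale chart $h_V\to F$ and the smooth surjection $s\colon F\to h_U$ --- it follows that $\Psi_F$ is an isomorphism as well. Hence $\Psi_f=(\Phi_U,\Psi_F)$ is an isomorphism of groupoids in algebraic spaces, so the induced $1$-morphism $\Psi_\calx=[\Psi_f]$ on quotient stacks is an equivalence.

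For ``$\Psi_\calx$ an equivalence $\Rightarrow$ $\calx$ perfect'': by \proref{P11} it is enough to prove that $\calx$ is relatively $1$-perfect, and the natural target is to show that, after refining the presentation if necessary, the relation space $F$ is a perfect algebraic space, i.e.\ that $\Psi_F$ is an isomorphism. Starting from $\Psi_\calx\circ f\cong f\circ\cals_{\Phi_U}$ and using that $\Psi_\calx$ is an equivalence, the composite $f\circ\cals_{\Phi_U}$ is again surjective smooth; forming $\SchU\times_{f\circ\cals_{\Phi_U},\calx,f}\SchU\cong\cals_G$ with $G\cong h_U\times_{\Phi_U,h_U,s}F$ then exhibits, as a base change of a smooth surjection, a smooth surjection $G\to h_U$, while the first projection $G\to h_U$ carries the section $u\mapsto(u,e(\Phi_U u))$. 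Feeding in the rigidity of the universal homeomorphism $\Phi_U$ (a smooth universal homeomorphism is an isomorphism), one aims to conclude that $\Phi_U$, and hence $\Psi_F$, is an isomorphism, after which \proref{P11} gives that $\calx$ is perfect.

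I expect the converse to be the genuine obstacle. The subtlety is that an equivalence of quotient stacks need not be induced by an \emph{isomorphism} of presenting groupoids --- only by a weak (Morita) equivalence --- so the argument must exploit the special structure of the Frobenius: the unit section $e$ of $(h_U,F,s,t,c)$ and the fact that $\Phi_U$ and $\Psi_F$ are universal homeomorphisms, which together should force the Morita-type essential-surjectivity and full-faithfulness conditions down to honest isomorphisms. This step has to be meshed with \proref{P11} and with the algebraic-space results of \cite{Liang},\cite{Liang1}; a subordinate point, already handled in the first implication, is the independence of the conclusion from the chosen atlas.
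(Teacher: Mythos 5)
Your forward implication is essentially the paper's: both reduce to \proref{P11} (via \cite[Lemma 5.10]{Liang}) to see that $\Phi_{U}$ is an isomorphism if and only if $\Psi_{F}$ is, so that $\Psi_{f}=(\Phi_{U},\Psi_{F})$ is an isomorphism of groupoids in algebraic spaces and $[\Psi_{f}]$ is an equivalence. The converse is where your argument has a genuine gap, and the mechanism you propose cannot be repaired as stated. You factor $\Phi_{U}$ as $pr_{2}\circ\sigma$, where $\sigma\colon u\mapsto(u,e(\Phi_{U}u))$ is a section of the smooth surjection $pr_{1}\colon G\to h_{U}$ and $pr_{2}\colon G\to h_{U}$ is also smooth surjective, and you then want to invoke ``a smooth universal homeomorphism is an isomorphism.'' But that rigidity statement requires $\Phi_{U}$ itself to be smooth (or at least flat and locally of finite presentation), and a composite of the form (section of a smooth map) followed by (smooth map) need not be smooth: take $U=\mathbb{A}^{1}_{\mathbb{F}_{p}}$, $G=\mathbb{A}^{2}$, $pr_{1}(x,y)=x$, $\sigma(x)=(x,0)$, and $pr_{2}(x,y)=x^{p}+y$, which is smooth surjective; the composite is exactly $x\mapsto x^{p}$, a universal homeomorphism that is not an isomorphism. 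So the smoothness of the two projections together with the existence of the unit section carries no information about $\Phi_{U}$, and the substantive content of the converse --- extracting an isomorphism of the presenting groupoid from a mere equivalence of quotient stacks --- is left unproved, as you yourself anticipate when you call it ``the genuine obstacle.''

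The paper closes this step by a different device: if $[\Psi_{f}]$ is an equivalence, then by \cite[Tag046T]{Stack Project} the groupoid $(h_{U},F,s,t,c)$ is identified with its own restriction along $\Phi_{U}\colon h_{U}\rightarrow h_{U}$, and the full faithfulness of the restriction functor on $T$-points is what forces $\Psi_{F}\colon F\rightarrow F$ to be an isomorphism; \proref{P11} then converts this back into perfectness of $\calx$. In other words, the missing input is not a rigidity property of universal homeomorphisms but the Morita/restriction analysis of the groupoid presentation, i.e.\ precisely the use of the composition law and of full faithfulness at the level of groupoids of points that your sketch defers. To complete your proof you would have to carry out that analysis (or cite it), not merely the fibre-product and section bookkeeping you set up.
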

\begin{proof}
Consider the smooth groupoid in algebraic spaces $(h_{U},F,s,t,c)$ together with an equivalence $[h_{U}/F]\cong\calx$ as above. Suppose that $\calx$ is perfect. Then it follows from Proposition \ref{P11} that $\Psi_{f}$ is an isomorphism such that $\Psi_{\calx}$ is an equivalence. Conversely, if $\Psi_{\calx}$ is an equivalence, then it follows from \cite[Tag046T]{Stack Project} that $(h_{U},F,s,t,c)$ is the restriction of $(h_{U},F,s,t,c)$ via the Frobenius $\Phi_{U}:h_{U}\rightarrow h_{U}$. In other words, for each $T\in\ObSchS$, the functor of a groupoid $(h_{U}(T),F(T),s,t,c)\rightarrow(h_{U}(T),F(T),s,t,c)$ is fully faithful, which implies that the algebraic Frobenius $\Psi_{F}:F\rightarrow F$ is an isomorphism. By Proposition \ref{P11}, $\Phi_{U}$ is an isomorphism if and only if $\Psi_{F}$ is an isomorphism. Thus, $\calx$ is perfect.
\end{proof}

We readily deduce the following corollary in terms of the above results.
\begin{corollary}
Let $\calx$ be an algebraic stack in characteristic $p$ over $S$ with algebraic Frobenius $\Psi_{\calx}:\calx\rightarrow\calx$. The following are equivalent:
\begin{enumerate}
  \item
$\calx$ is perfect;
  \item
$\Psi_{\calx}:\calx\rightarrow\calx$ is an equivalence;
  \item
$\calx$ is relatively $1$-perfect.
\end{enumerate}
\end{corollary}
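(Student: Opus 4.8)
The plan is to deduce the corollary formally from the two structural results already proved above, namely Theorem~\ref{T3} and Proposition~\ref{P11}, by assembling a chain of equivalences among the three conditions. Concretely, I would establish $(2)\Leftrightarrow(1)$ and $(1)\Leftrightarrow(3)$ separately, so that all three statements become pairwise equivalent, which is exactly what ``the following are equivalent'' demands.

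For $(1)\Leftrightarrow(2)$ I would simply invoke Theorem~\ref{T3}, which asserts verbatim that an algebraic stack $\calx$ in characteristic $p$ over $S$ is perfect if and only if its algebraic Frobenius $\Psi_{\calx}:\calx\rightarrow\calx$ is an equivalence. Nothing more is needed; in particular I would not reprove the theorem, since its proof already rests on passing to the smooth presentation $[h_{U}/F]$ and applying the isomorphism criterion of \cite[Lemma 5.10]{Liang}. For $(1)\Leftrightarrow(3)$ I would cite Proposition~\ref{P11}, which states precisely that $\calx$ is perfect if and only if $\calx$ is relatively $1$-perfect; one may also remark that $(1)\Leftrightarrow(3)$ can be read off from the stronger Theorem~\ref{TT2}, which upgrades Proposition~\ref{P11} to an equivalence of the ambient $2$-categories ${\rm{Perf}}AStack_{S}$ and ${\rm{\underline{Perf}}}AStack^{1}_{S}$.

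Since the corollary is a direct repackaging of Theorem~\ref{T3} and Proposition~\ref{P11}, there is no genuine obstacle; the only point requiring care is the logical bookkeeping, namely observing that the two equivalences $(2)\Leftrightarrow(1)$ and $(1)\Leftrightarrow(3)$ together force all three conditions to be mutually equivalent. I would close the proof by noting that this yields the statement at once.
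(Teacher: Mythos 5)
Your proposal is correct and matches the paper's proof, which simply says ``Combine Proposition~\ref{P11} with Theorem~\ref{T3}''; you obtain $(1)\Leftrightarrow(2)$ from Theorem~\ref{T3} and $(1)\Leftrightarrow(3)$ from Proposition~\ref{P11}, exactly as the paper intends. The extra remark about Theorem~\ref{TT2} is harmless but not needed.
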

\begin{proof}
Combine Proposition \ref{P11} with Theorem \ref{T3}.
\end{proof}

\section{Perfection of algebraic stacks}\label{B4}
In this section, we focus on the perfection of algebraic stacks, which extends our theory of perfection of algebraic spaces in \cite{Liang1}. First, we observe the following lemma on the perfection of categories fibred in groupoids which are representable by algebraic spaces.

\begin{lemma}\label{L11}
Let $F$ be an algebraic space in characteristic $p$ over $S$. Then we have
$$
\lim_{\substack{\longleftarrow \\ n\in\N}}\cals_{F}=\cals_{\lim_{n\geq0}F}=\cals_{F^{pf}},
$$
where the transition maps are algebraic Frobenius.
\end{lemma}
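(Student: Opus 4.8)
The plan is to reduce the statement to the corresponding fact about algebraic spaces established in \cite{Liang1}, by exploiting that the formation of the category fibred in groupoids $\cals_{(-)}$ attached to an algebraic space is a $2$-functor which, precisely because its values are fibred in \emph{sets}, carries the inverse limit over $\N$ of presheaves to the $2$-limit of categories fibred in groupoids.

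First I would pin down the diagram. Since $F$ has characteristic $p$ it carries an algebraic Frobenius $\Psi_{F}\colon F\to F$, and applying $\cals_{(-)}$ to the Frobenius tower $\cdots\to F\xrightarrow{\Psi_{F}}F\xrightarrow{\Psi_{F}}F$ produces the tower $\cdots\to\cals_{F}\xrightarrow{\cals_{\Psi_{F}}}\cals_{F}\xrightarrow{\cals_{\Psi_{F}}}\cals_{F}$ whose inverse limit is the left-hand side of the asserted chain of equalities. I would then write out the $2$-limit $\lim_{n\in\N}\cals_{F}$ explicitly inside the $2$-category of categories fibred in groupoids over $\SchS$: an object over $T\in\ObSchS$ is a system $(x_{n})_{n\geq0}$ with $x_{n}\in\Ob((\cals_{F})_{T})$ together with isomorphisms $\cals_{\Psi_{F}}(x_{n+1})\xrightarrow{\sim}x_{n}$ compatible in the evident sense, and similarly a morphism is a compatible system of morphisms in the fibers.

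The crucial observation is that each $\cals_{F}$ is fibred in sets, so $\Ob((\cals_{F})_{T})=F(T)$ and the only morphisms in a fiber are identities. Hence in the description above every connecting isomorphism is forced to be an equality $\Psi_{F}(x_{n+1})=x_{n}$ in $F(T)$, the coherence data in the $2$-limit is trivial, and $\lim_{n\in\N}\cals_{F}$ is again fibred in sets with fiber over $T$ the ordinary inverse limit $\lim_{n}F(T)=(\lim_{n}F)(T)$. This yields an equivalence $\lim_{n\in\N}\cals_{F}\cong\cals_{\lim_{n}F}$ of categories over $\SchS$; alternatively one argues through the universal property, using that $\mathrm{Mor}_{\SchS}(\calz,\cals_{G})$ depends only on the presheaf $T\mapsto\Ob(\calz_{T})/\cong$ and on $G$ (cf.\ \cite[Tag0045]{Stack Project} and Lemma \ref{L3}), so that $\cals_{(-)}$ converts the presheaf-level inverse limit into the $2$-limit of categories fibred in groupoids. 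Finally, the content of \cite{Liang1} is exactly that this inverse limit of sheaves along the algebraic Frobenius is representable and equals the perfection, $\lim_{n\geq0}F=F^{pf}$, which gives the second equality $\cals_{\lim_{n}F}=\cals_{F^{pf}}$ and finishes the argument.

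I expect the only genuinely delicate point to be the collapse of the $2$-limit to a $1$-limit: one must check that no spurious automorphisms or nontrivial gluing data survive in $\lim_{n\in\N}\cals_{F}$. This is, however, immediate once one records that $\cals_{F}$ is fibred in setoids, indeed in sets; everything else in the proof is either functoriality of $\cals_{(-)}$ or a direct citation of \cite{Liang1}.
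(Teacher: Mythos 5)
Your argument is correct and is essentially the paper's own proof, which simply invokes the equivalence between presheaves of sets over $\SchS$ and categories fibred in sets over $\SchS$ (under which the $2$-limit of the Frobenius tower of $\cals_{F}$ corresponds to the presheaf-level inverse limit, identified with $F^{pf}$ by the results of \cite{Liang1}). You have merely spelled out the collapse of the $2$-limit to a $1$-limit that this equivalence makes automatic.
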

\begin{proof}
This follows from the equivalence of categories between the category of presheaves of sets over $\SchS$ and the category of categories fibred in sets over $\SchS$.
\end{proof}

Here is the definition of perfections of categories fibred in groupoids representable by an algebraic space.
\begin{definition}
Let $\calx$ be a category fibred in groupoids in characteristic $p$ over $\SchS$. Suppose that there is an equivalence $\cals_{F}\cong\calx$ for some algebraic space $F$ in characteristic $p$ over $S$. Then the perfection of $\calx$ is any category fibred in groupoids $\calx^{pf}$ over $\SchS$ with an equivalence $\calx^{pf}\cong\cals_{F^{pf}}$.
\end{definition}
\begin{remark}
Let $F^{pf}\rightarrow F$ be the canonical projection of $F^{pf}$. This induces a canonical morphism $p_{\calx}:\calx^{pf}\rightarrow\calx$ which is called the \textit{canonical projection} of $\calx^{pf}$.
\end{remark}

Let $\calx$ be an algebraic stack in characteristic $p$ over $S$. Let $U\in\ObSchS$ of characteristic $p$ with surjective smooth $1$-morphism $f:\SchU\rightarrow\calx$. Let $F$ be an algebraic space representing the $2$-fibre product $\SchU\times_{f,\calx,f}\SchU$. Recall that in \S\ref{B4}, every algebraic stack $\calx$ gives rise to a smooth groupoid $(h_{U},F,s,t,c)$ in algebraic spaces such that there is an equivalence $[h_{U}/F]\cong\calx$, where $s,t:F\rightarrow h_{U}$ are surjective smooth morphisms and $c:F\times_{s,h_{U},t}F\rightarrow F$ denotes the projection to the second $F$. The following theorem ensures that the perfection of arbitrary algebraic stack in characteristic $p$ exists.
\begin{theorem}\label{T2}
Let $\calx$ be an algebraic stack (resp. a DM stack) in characteristic $p$ over $S$. Let $U\in\ObSchS$ be a scheme in characteristic $p$. Let $(h_{U},F,s,t,c)$ be a smooth (resp. an \'{e}tale) groupoid in algebraic spaces over $S$ as above such that there is an equivalence $\calx\cong[h_{U}/F]$. Then the quotient stack $[h_{U}^{pf}/F^{pf}]$ is a perfect algebraic stack (resp. DM stack).
\end{theorem}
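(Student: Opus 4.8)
The plan is to perfect the groupoid $(h_U,F,s,t,c)$ termwise, to check that the resulting quotient stack is an algebraic stack (resp. DM stack), and to observe that it carries a chart from a perfect scheme. Write $U^{pf}$ for the perfect closure of $U$ (a perfect scheme, so $h_U^{pf}=h_{U^{pf}}$) and $F^{pf}$ for the perfection of the algebraic space $F$ from \cite{Liang1}. Recall from the lemma on the algebraic Frobenius of a groupoid that $\Psi_f=(\Phi_U,\Psi_F)$ is an endomorphism of $(h_U,F,s,t,c)$; in particular $s,t,c$ together with the unit $e$ and the inverse $i$ commute with the Frobenii, so, since perfection is functorial and commutes with fibre products (\cite{Liang1}), they induce morphisms $s^{pf},t^{pf}\colon F^{pf}\to h_U^{pf}$, a canonical identification $(F\times_{s,h_U,t}F)^{pf}\cong F^{pf}\times_{s^{pf},h_U^{pf},t^{pf}}F^{pf}$, and hence a composition law $c^{pf}\colon F^{pf}\times_{s^{pf},h_U^{pf},t^{pf}}F^{pf}\to F^{pf}$ with its unit and inverse. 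The groupoid axioms are encoded by commuting diagrams built from products, units and projections, all of which are preserved by a product-preserving functor; hence $(h_U^{pf},F^{pf},s^{pf},t^{pf},c^{pf})$ is again a groupoid in algebraic spaces over $S$, and $[h_U^{pf}/F^{pf}]$ is a well-defined stack in groupoids over $\SchS$.

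Next I would establish algebraicity. Since $(s^{pf},t^{pf})\colon F^{pf}\to h_U^{pf}\times h_U^{pf}$ is a morphism of algebraic spaces, the diagonal of $[h_U^{pf}/F^{pf}]$ is representable by algebraic spaces, so it remains to show that $s^{pf}$ and $t^{pf}$ are surjective and smooth (resp. \'{e}tale). Surjectivity is clear since $s,t$ are surjective and the canonical projections $F^{pf}\to F$, $h_U^{pf}\to h_U$ are universal homeomorphisms. In the DM case $t\colon F\to h_U$ is \'{e}tale, so its base change $F\times_{t,h_U}h_U^{pf}\to h_U^{pf}$ is \'{e}tale with source \'{e}tale over the perfect algebraic space $h_U^{pf}$, hence perfect; because perfection commutes with fibre products this yields $F^{pf}\cong F\times_{t,h_U}h_U^{pf}$, exhibiting $t^{pf}$, and symmetrically $s^{pf}$, as a base change of an \'{e}tale morphism. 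In the algebraic-stack case one invokes instead the stability of smoothness under perfection from \cite{Liang1}. Granting this, the standard theory of quotient stacks (\cite[Tag04TK]{Stack Project}) shows that $[h_U^{pf}/F^{pf}]$ is an algebraic stack (resp. DM stack) and that the canonical $1$-morphism $\cals_{h_U^{pf}}\to[h_U^{pf}/F^{pf}]$ is surjective smooth (resp. \'{e}tale).

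Finally, since $U^{pf}$ is a perfect scheme and $(Sch/U^{pf})_{fppf}\cong\cals_{h_U^{pf}}$ admits a surjective smooth (resp. \'{e}tale) $1$-morphism to $[h_U^{pf}/F^{pf}]$, Definition \ref{D1}(1) and its Deligne--Mumford analogue apply verbatim, so $[h_U^{pf}/F^{pf}]$ is a perfect algebraic stack (resp. perfect DM stack). I expect the main obstacle to be the smoothness claim used in the second paragraph: unlike \'{e}taleness (where ``\'{e}tale over a perfect base is perfect'' together with compatibility of perfection and fibre products settles the matter), the property ``smooth over a perfect base'' is not preserved by replacing the source with its absolute perfection in the naive scheme-theoretic reading, so one must appeal to the precise compatibility statement of \cite{Liang1}, or else descend to a local presentation of $F$ as an \'{e}tale cover of an affine space over $h_U$ and track how perfection interacts with it. The remaining steps are formal bookkeeping with the perfection functor and the definition of quotient stacks.
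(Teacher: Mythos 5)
Your outline coincides with the paper's: perfect the groupoid termwise (the paper quotes \cite[Lemma 7.2]{Liang1} for the fact that $(h_{U}^{pf},F^{pf},s^{\natural},t^{\natural},c^{\natural})$ is again a groupoid in algebraic spaces, which is the content of your first paragraph), deduce representability of the diagonal of the quotient stack, exhibit $(Sch/U^{pf})_{fppf}\rightarrow[h_{U}^{pf}/F^{pf}]$ as a surjective smooth (resp. \'etale) chart, and conclude from Definition \ref{D1} since $U^{pf}$ is a perfect scheme. Your treatment of the \'etale (DM) case is complete and matches the paper's second paragraph.

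The gap is exactly where you suspect it: you never actually prove that the chart is \emph{smooth} in the algebraic-stack case. You defer to an unspecified ``stability of smoothness under perfection'' from \cite{Liang1} and then immediately concede that the naive statement is false --- the absolute perfection of a smooth morphism is not smooth, indeed not even locally of finite presentation (e.g. $(\mathbb{A}^{1}_{k})^{pf}\rightarrow{\rm{Spec}}(k)$). Since the entire content of the theorem in the non-DM case is that the perfected chart is still smooth, your proposal is incomplete at its load-bearing step. For comparison, the paper closes this step by asserting the identification $F^{pf}\simeq F\times_{s,h_{U}}h_{U}^{pf}$ inside a Cartesian diagram, after reducing via \cite[Tag04X0]{Stack Project} to showing that $F^{pf}\times_{s^{\natural},h_{U}^{pf},x}T\rightarrow T$ is smooth; granting that identification, $s^{\natural}$ becomes a base change of the smooth morphism $s$ and smoothness follows. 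Note that this identification is precisely the one you establish in the \'etale case, where it holds because the relative Frobenius of an \'etale morphism is an isomorphism; your reservation that it does not extend verbatim to merely smooth $s$ is mathematically substantive (for $s$ the projection $\mathbb{A}^{1}_{k}\rightarrow{\rm{Spec}}(k)$ with $k$ perfect, $F\times_{h_{U}}h_{U}^{pf}=\mathbb{A}^{1}_{k}$ while $F^{pf}=(\mathbb{A}^{1}_{k})^{pf}$). So to complete the argument along the paper's lines you must either locate and verify the exact compatibility statement in \cite{Liang1} being invoked for smooth morphisms, or carry out the local reduction you sketch at the end (\'etale-locally presenting $F$ as an \'etale cover of an affine space over $h_{U}$ and tracking the perfection through that presentation); as written, neither your text nor the appeal to base change settles the smooth case.
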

\begin{proof}
Suppose that $\calx$ is an algebraic stack in characteristic $p$ over $S$. First, we see that the quotient stack $[h_{U}^{pf}/F^{pf}]$ is a stack in groupoids by construction. By \cite[Lemma 7.2]{Liang1}, the perfection $(h_{U}^{pf},F^{pf},s^{\natural},t^{\natural},c^{\natural})$ is still a groupoid in algebraic spaces over $S$. Thus, the diagonal of $[h_{U}^{pf}/F^{pf}]$ is representable by algebraic spaces, see \cite[Tag04WZ]{Stack Project}.

Let $T\in\ObSchS$ and let $x:(Sch/T)_{fppf}\rightarrow[h_{U}^{pf}/F^{pf}]$ be a $1$-morphism. Next, we just need to show that the projection
$$
(Sch/T)_{fppf}\times_{[h_{U}^{pf}/F^{pf}]}(Sch/U^{pf})_{fppf}\longrightarrow(Sch/T)_{fppf}
$$
is surjective smooth. Assume that $x$ comes from $x:T\rightarrow h_{U}^{pf}$. Then it follows from the proof of \cite[Tag04X0]{Stack Project} that we have an equality
$$
(Sch/T)_{fppf}\times_{[h_{U}^{pf}/F^{pf}]}(Sch/U^{pf})_{fppf}=\cals_{F^{pf}}\times_{(Sch/U^{pf})_{fppf}}(Sch/T)_{fppf}.
$$
Now, consider the Cartesian diagram as follows.
$$
\xymatrix{
  F^{pf}\times_{s^{\natural},h_{U}^{pf},x}T \ar[d]_{} \ar[rr]^{}& & T \ar[d]^{x} \\
  F\times_{h_{U}}h_{U}^{pf}\simeq F^{pf} \ar[d]_{} \ar[rr]^{\ \ s^{\natural}}& & h_{U}^{pf} \ar[d]^{} \\
  F \ar[rr]^{s} & & h_{U}   }
$$
The projection $F^{pf}\times_{s^{\natural},h_{U}^{pf},x}T\rightarrow T$ is smooth as the base change of the smooth morphism $s:F\rightarrow h_{U}$ of algebraic spaces. And it is surjective since $s^{\natural}$ admits a section. Therefore, the $1$-morphism $x:(Sch/U^{pf})_{fppf}\rightarrow[h_{U}^{pf}/F^{pf}]$ is surjective smooth. This proves that the quotient stack $[h_{U}^{pf}/F^{pf}]$ is a perfect algebraic stack.

For the second statement, note that $(h_{U}^{pf},F^{pf},s^{\natural},t^{\natural},c^{\natural})$ is an \'{e}tale groupoid in algebraic spaces by \cite[Proposition 7.9]{Liang1}. Thus, it follows from \cite[Tag04TK]{Stack Project} that the quotient stack $[h_{U}^{pf}/F^{pf}]$ is an algebraic stack. And one can show that the $1$-morphism $(Sch/U^{pf})_{fppf}\rightarrow[h_{U}^{pf}/F^{pf}]$ is surjective \'{e}tale. This proves that the quotient stack $[h_{U}^{pf}/F^{pf}]$ is a perfect DM stack.
\end{proof}
Consider the canonical projection $p:(h_{U}^{pf},F^{pf},s^{\natural},t^{\natural},c^{\natural})\rightarrow(h_{U},F,s,t,c)$. This induces a canonical $1$-morphism of quotient stacks $[p]:[h_{U}^{pf}/F^{pf}]\rightarrow[h_{U}/F]$. Now, we can make the definition of perfections of algebraic stacks (resp. DM stacks).
\begin{definition}
Consider the situation as in Theorem \ref{T2}. The perfection of $\calx$ is any perfect algebraic stack (resp. DM stack) $\calx^{pf}$ over $S$ such that there is an equivalence $\calx^{pf}\cong[h_{U}^{pf}/F^{pf}]$. The canonical $1$-morphism $p_{\calx}:\calx^{pf}\rightarrow\calx$ induced by $[p]$ above is called the \textit{canonical projection} of $\calx^{pf}$.
\end{definition}

The following proposition shows that the perfection of an algebraic stack (resp. a DM stack) in characteristic $p$ can be described as an inverse limit.
\begin{proposition}\label{P12}
Consider the situation as in Theorem \ref{T2}. Let $\Psi_{f}$ be the algebraic Frobenius of $(h_{U},F,s,t,c)$ and let $\Psi_{\calx}$ be the algebraic Frobenius of $\calx$. Then there is an isomorphism
$$
[h_{U}^{pf}/F^{pf}]\cong\lim_{\substack{\longleftarrow \\ n\in\N}}[h_{U}/F].
$$
where the transition maps are algebraic Frobenius $[\Psi_{f}]:[h_{U}/F]\rightarrow[h_{U}/F]$. Moreover, we have the following string of equivalences
$$
\calx^{pf}\cong[h_{U}^{pf}/F^{pf}]\cong\lim_{\substack{\longleftarrow \\ n\in\N}}[h_{U}/F]\cong\lim_{\substack{\longleftarrow \\ n\in\N}}\calx,
$$
where the transition maps of the second limit are algebraic Frobenius $\Psi_{\calx}:\calx\rightarrow\calx$.
\end{proposition}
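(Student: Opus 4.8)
The plan is to reduce everything to the single equivalence $[h_U^{pf}/F^{pf}]\cong\lim_n[h_U/F]$ with transition maps $[\Psi_f]$, and then to obtain the rest formally. Indeed, §\ref{B3} supplies an equivalence $\calx\cong[h_U/F]$ under which the algebraic Frobenius $\Psi_\calx$ corresponds to $[\Psi_f]$, and by definition $\calx^{pf}\cong[h_U^{pf}/F^{pf}]$ with canonical projection $p_\calx$ induced by $[p]$. Since an inverse limit of a tower carries an equivalence of towers to an equivalence, the two compatible towers $([h_U/F],[\Psi_f])$ and $(\calx,\Psi_\calx)$ have equivalent inverse limits, so the final equivalence $\lim_n[h_U/F]\cong\lim_n\calx$ (and the whole displayed string) drops out once the middle equivalence is known.

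To build a comparison $1$-morphism, recall from Lemma \ref{L11} and \cite[Lemma 7.2]{Liang1} that $(h_U^{pf},F^{pf},s^\natural,t^\natural,c^\natural)$ is a groupoid in algebraic spaces which realizes the inverse limit of the constant tower $(h_U,F,s,t,c)$ along $\Psi_f$; in particular there are canonical projections of groupoids $\pi_n\colon(h_U^{pf},F^{pf},\ldots)\to(h_U,F,\ldots)$ with $\Psi_f\circ\pi_{n+1}=\pi_n$. Passing to quotient stacks yields a compatible family $[\pi_n]\colon[h_U^{pf}/F^{pf}]\to[h_U/F]$ with $[\Psi_f]\circ[\pi_{n+1}]\cong[\pi_n]$, hence a canonical cone and a $1$-morphism $\Theta\colon[h_U^{pf}/F^{pf}]\to\lim_n[h_U/F]$ extending $p_\calx$ to all levels. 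I would then verify that $\Theta$ is an equivalence sectionwise, using that inverse limits of stacks are computed sectionwise: for $T\in\ObSchS$ one must check that $[h_U^{pf}/F^{pf}](T)\to 2\text{-}\lim_n[h_U/F](T)$ is an equivalence of groupoids, where $[h/F](T)$ is the groupoid of fppf-locally trivial $F$-torsors over $T$ equipped with an equivariant map to $h$. The functor in question sends a torsor $P$ over $F^{pf}$ to the compatible system $\bigl(P\times^{F^{pf},\pi_n}F\bigr)_n$, with candidate quasi-inverse $(P_n)\mapsto\lim_n P_n$. Full faithfulness is the cleaner half: the $\textit{Isom}$ algebraic spaces of $[h_U^{pf}/F^{pf}]$ are iterated $2$-fibre products built out of $h_U^{pf}$ and $F^{pf}$, so (limits commute with $2$-fibre products, then Lemma \ref{L11}) they are the inverse limits along algebraic Frobenius of the $\textit{Isom}$ spaces of $[h_U/F]$ — exactly the $\textrm{Hom}$-sheaves of the $2$-limit. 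The same observation gives $h_U^{pf}\times_{\lim_n[h_U/F]}h_U^{pf}\cong\lim_n(h_U\times_{[h_U/F]}h_U)\cong\cals_{F^{pf}}$, so $\Theta$ is representable by algebraic spaces and identifies the chosen atlas $h_U^{pf}$ and its relation $\cals_{F^{pf}}$ on the two sides.

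The main obstacle is essential surjectivity: one must show that every compatible system $(P_n)$ of $F$-torsors-with-map over $T$ arises, up to isomorphism, from a genuine $F^{pf}$-torsor, equivalently that $\lim_n P_n$ is an $F^{pf}$-torsor over $T$ carrying an equivariant map to $h_U^{pf}$. The subtlety is that the transition maps $P_{n+1}\to P_n$ are only smooth surjective, so $\lim_n P_n\to T$ is merely pro-smooth rather than flat; the point is that these transition maps are governed by the groupoid Frobenius $\Psi_f$, which is integral, surjective and a universal homeomorphism (as recorded for $\Psi_\calx$ in §\ref{B3}), and this rigidity is what forces the system to trivialize fppf-locally on $T$. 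This is precisely where the construction of the perfection of an algebraic space from \cite{Liang1} and Lemma \ref{L11} get reinvoked, now applied to the torsors $P_n$ rather than to $h_U$ and $F$. An alternative that sidesteps the torsor bookkeeping: Theorem \ref{T2} already shows $[h_U^{pf}/F^{pf}]$ is a perfect algebraic stack with smooth atlas $h_U^{pf}$ and relation $\cals_{F^{pf}}$, and combined with the identification of atlas and relation above one concludes by the standard recognition of a stack from a groupoid presentation, once $h_U^{pf}\to\lim_n[h_U/F]$ is seen to be an epimorphism of stacks — which is again the crux and is handled by the same universal-homeomorphism input.
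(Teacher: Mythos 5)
Your reduction of the whole statement to the single equivalence $[h_{U}^{pf}/F^{pf}]\cong\lim_{n\in\N}[h_{U}/F]$ is fine, and your full-faithfulness argument via the $\textit{Isom}$ spaces (limits commute with $2$-fibre products, then Lemma \ref{L11}) is sound. But the proof has a genuine gap exactly where you flag ``the main obstacle'': essential surjectivity of $\Theta$. You need to show that a compatible tower $(P_{n})$ of $F$-torsors over $T$, with transition maps lying over $\Psi_{f}$, is induced by an honest $F^{pf}$-torsor, i.e.\ that $\lim_{n}P_{n}\to T$ still admits sections fppf-locally. Your justification is that the transition maps are ``governed by'' $\Psi_{f}$, which is integral, surjective and a universal homeomorphism, and that ``this rigidity is what forces the system to trivialize fppf-locally.'' That is an assertion, not an argument: a local trivialization of $P_{n}$ over some cover $T_{n}\to T$ need not lift to a trivialization of $P_{n+1}$ over the same cover, and nothing you have written controls how the covers vary with $n$ or why a single cover works for the limit. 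The alternative route you sketch (recognizing $\lim_{n}[h_{U}/F]$ from the groupoid presentation once $h_{U}^{pf}\to\lim_{n}[h_{U}/F]$ is an epimorphism of stacks) defers to exactly the same unproved claim.

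The paper avoids this issue entirely by working one level down: it forms the \emph{prestack} quotient $[h_{U}/_{p}F]$, whose objects and morphisms over any $T$ are literally $h_{U}(T)$ and $F(T)$, so that $\lim_{n\in\N}[h_{U}/_{p}F]=[\lim_{n}h_{U}/_{p}\lim_{n}F]$ is a direct sectionwise computation with no torsors in sight; it then invokes \cite[Proposition 2.1.9]{Talpo} that stackification commutes with such limits to pass to the quotient stacks. That citation is precisely the content your essential-surjectivity step would have to reprove by hand. If you want to keep your torsor-theoretic route, you must either import that result or give an actual argument that the pro-system of local trivializations can be realized over a single fppf cover (e.g.\ by exploiting that the canonical projections $P^{pf}\to P_{n}$ are universal homeomorphisms, so a section of $P_{0}$ over a cover lifts uniquely up the tower after perfection of the cover); as written, the key step is missing.
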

\begin{proof}
Consider the category fibred in groupoids $[h_{U}^{pf}/_{p}F^{pf}]=[\lim_{n\in\N}h_{U}/_{p}\lim_{n\in\N}F]$ which corresponds to the presheaf of groupoids
\begin{align*}
\SchS^{opp}&\longrightarrow Groupoids \\
S'&\longmapsto(\lim_{\substack{\longleftarrow \\ n\in\N}}h_{U}(S'),\lim_{\substack{\longleftarrow \\ n\in\N}}F(S'),s^{\natural},t^{\natural},c^{\natural}).
\end{align*}
We claim that $[\lim_{n\in\N}h_{U}/_{p}\lim_{n\in\N}F]=\lim_{n\in\N}[h_{U}/_{p}F]$. First, it is easy to check that there is an isomorphism
$$
\Ob([\lim_{n\in\N}h_{U}/_{p}\lim_{n\in\N}F])=\Ob(\lim_{n\in\N}[h_{U}/_{p}F]).
$$
Moreover, for $x,y\in\Ob([\lim_{n\in\N}h_{U}/_{p}\lim_{n\in\N}F])$, there is an isomorphism
$$
\textrm{Hom}_{[\lim_{n\in\N}h_{U}/_{p}\lim_{n\in\N}F]}(x,y)=\textrm{Hom}_{\lim_{n\in\N}[h_{U}/_{p}F]}(x,y).
$$
Thus, we obtain an isomorphism of categories fibred in groupoids
$$
\lim_{\substack{\longleftarrow \\ n\in\N}}[h_{U}/_{p}F]=[\lim_{\substack{\longleftarrow \\ n\in\N}}h_{U}/_{p}\lim_{\substack{\longleftarrow \\ n\in\N}}F].
$$
By \cite[Proposition 2.1.9]{Talpo}, limits commute with stackification. This shows that there is an isomorphism of quotient stacks
$$
\lim_{\substack{\longleftarrow \\ n\in\N}}[h_{U}/F]=[\lim_{\substack{\longleftarrow \\ n\in\N}}h_{U}/\lim_{\substack{\longleftarrow \\ n\in\N}}F].
$$
\end{proof}

The perfection of an algebraic stack (resp. a DM stack) in characteristic $p$ satisfies the following universal property.
\begin{corollary}\label{C1}
Let $\calx$ be an algebraic stack (resp. a DM stack) in characteristic $p$ over $S$ with perfection $\calx^{pf}$. Suppose that we are given a perfect algebraic stack $\caly$ over $S$ together with a $1$-morphism $f:\caly\rightarrow\calx$. Then there exists a unique $1$-morphism $f^{pf}:\caly\rightarrow\calx^{pf}$ such that the following diagram
$$
\xymatrix{
  \calx^{pf}  \ar[dr]_{p_{\calx}}
                &  &    \caly \ar@{-->}[ll]_{f^{pf}} \ar[dl]^{f}    \\
                &  \calx                }
$$
is commutative.
\end{corollary}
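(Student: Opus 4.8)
The strategy is to reduce everything to the inverse‑limit presentation of the perfection from \proref{P12} and then exploit that a perfect stack has invertible algebraic Frobenius. By \proref{P12} we may replace $\calx^{pf}$ by the $2$‑limit $\lim_{n\in\N}\calx$ of the inverse system $\cdots\xrightarrow{\Psi_\calx}\calx\xrightarrow{\Psi_\calx}\calx$ (transition maps the algebraic Frobenius $\Psi_\calx$), and tracing the chain of equivalences $\calx^{pf}\cong[h_U^{pf}/F^{pf}]\cong\lim_n[h_U/F]\cong\lim_n\calx$ in the proof of \proref{P12} shows that under this identification the canonical projection $p_\calx:\calx^{pf}\to\calx$ becomes the projection $\pi_0$ to the $0$‑th copy of $\calx$ (since the canonical projections $h_U^{pf}\to h_U$ and $F^{pf}\to F$ are the projections to the $0$‑th terms). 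Finally, since $\caly$ is a perfect algebraic stack, \thmref{T3} gives that the algebraic Frobenius $\Psi_\caly:\caly\to\caly$ is an equivalence; fix a quasi‑inverse $\Psi_\caly^{-1}$ and write $\Psi_\caly^{-n}$ for its $n$‑fold composite.

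\emph{Naturality of the algebraic Frobenius.} The key input is that for the given $1$‑morphism $f:\caly\to\calx$ there is a $2$‑isomorphism $\Psi_\calx\circ f\cong f\circ\Psi_\caly$, together with coherent $2$‑cells for the iterates. I would verify this by choosing a smooth presentation $\SchU\to\caly$ with $U$ a perfect scheme (possible as $\caly$ is perfect) and a compatible smooth presentation of $\calx$, so that $f$ is induced by a morphism of the associated smooth groupoids in algebraic spaces; the square then follows from the naturality of the absolute Frobenius of schemes on the atlas level and from \cite[Proposition 4.10]{Liang} on the algebraic‑space level, after which one checks that $\Psi_\calx\circ f$ and $f\circ\Psi_\caly$, being $1$‑morphisms of quotient stacks built from the same groupoid data, agree up to a canonical $2$‑cell.

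\emph{Existence.} Put $f_n:=f\circ\Psi_\caly^{-n}:\caly\to\calx$ for $n\in\N$, so $f_0=f$. Applying the naturality $2$‑isomorphism gives $\Psi_\calx\circ f_{n+1}=\Psi_\calx\circ f\circ\Psi_\caly^{-(n+1)}\cong f\circ\Psi_\caly\circ\Psi_\caly^{-(n+1)}\cong f\circ\Psi_\caly^{-n}=f_n$, and one checks that these $2$‑cells are mutually coherent, so $(f_n)_{n\in\N}$ is a cone over the inverse system defining $\lim_n\calx$. By the universal property of the $2$‑limit this cone is induced by a $1$‑morphism $f^{pf}:\caly\to\lim_n\calx\cong\calx^{pf}$, unique up to unique $2$‑isomorphism among those inducing it, and composing with $p_\calx=\pi_0$ recovers $f_0=f$; this gives the desired $2$‑commutative triangle.

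\emph{Uniqueness and main obstacle.} Suppose $g:\caly\to\calx^{pf}$ also satisfies $p_\calx\circ g\cong f$. Since $\calx^{pf}$ is perfect (\thmref{T2}), \thmref{T3} shows $\Psi_{\calx^{pf}}$ is an equivalence, and under $\calx^{pf}\cong\lim_n\calx$ it acts as the shift, whence $\pi_n\cong\pi_0\circ\Psi_{\calx^{pf}}^{-n}=p_\calx\circ\Psi_{\calx^{pf}}^{-n}$. Applying naturality of the Frobenius to $g$ gives $\Psi_{\calx^{pf}}^{-n}\circ g\cong g\circ\Psi_\caly^{-n}$, so $\pi_n\circ g\cong p_\calx\circ g\circ\Psi_\caly^{-n}\cong f\circ\Psi_\caly^{-n}=f_n$; thus $g$ induces the same cone as $f^{pf}$ and therefore $g\cong f^{pf}$ by the uniqueness clause of the $2$‑limit's universal property. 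I expect the main difficulty to be the $2$‑categorical bookkeeping: confirming that the naturality $2$‑cells for $\Psi_\calx$ (and the identification of $\Psi_{\calx^{pf}}$ with the shift) are independent of the presentations chosen and genuinely assemble into a cone, so that the $1$‑dimensional universal property of the pseudo‑limit from \proref{P12} applies; granting this coherence, the rest is formal.
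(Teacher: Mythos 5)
Your proposal is correct and follows essentially the same route as the paper, which simply deduces the corollary from the inverse-limit description of $\calx^{pf}$ in Proposition \ref{P12} together with the universal property of $2$-limits of stacks (citing \cite[Proposition 2.1.4]{Talpo}). The content you add — invertibility of $\Psi_{\caly}$ via Theorem \ref{T3}, naturality of the algebraic Frobenius, and the explicit construction of the cone $(f\circ\Psi_{\caly}^{-n})_{n}$ — is precisely the detail the paper leaves implicit in that citation.
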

\begin{proof}
This is a consequence of Proposition \ref{P12} and \cite[Proposition 2.1.4]{Talpo}.
\end{proof}

We record here a lemma that will be useful in the sequel.
\begin{lemma}\label{L12}
Let $\calx$ be an algebraic stack (resp. a DM stack) of characteristic $p$ over $S$ with perfection $\calx^{pf}$. Then there is an isomorphism $(\calx^{pf})^{pf}\cong \calx^{pf}$.
\end{lemma}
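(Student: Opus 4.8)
The plan is to reduce the claim to the idempotency of perfection for schemes and for algebraic spaces proved in \cite{Liang} and \cite{Liang1}, by passing through a groupoid presentation. First I would fix, as in \thmref{T2}, a scheme $U\in\ObSchS$ in characteristic $p$, a surjective smooth $1$-morphism $\SchU\rightarrow\calx$, and the associated smooth groupoid in algebraic spaces $(h_{U},F,s,t,c)$ with $\calx\cong[h_{U}/F]$, so that by definition $\calx^{pf}\cong[h_{U}^{pf}/F^{pf}]$ with $h_{U}^{pf}=h_{U^{pf}}$ and $F^{pf}$ the perfection of the algebraic space $F$. Since the lemma only asks for \emph{some} isomorphism, I am free to choose a convenient presentation of $\calx^{pf}$ when forming its perfection: by the proof of \thmref{T2} the $1$-morphism $(Sch/U^{pf})_{fppf}\rightarrow[h_{U}^{pf}/F^{pf}]$ is surjective smooth, and $(Sch/U^{pf})_{fppf}\times_{\calx^{pf}}(Sch/U^{pf})_{fppf}\cong\cals_{F^{pf}}$, so $(h_{U}^{pf},F^{pf},s^{\natural},t^{\natural},c^{\natural})$ is a smooth groupoid in algebraic spaces in characteristic $p$ presenting $\calx^{pf}$. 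Applying the definition of perfection once more to this presentation gives
$$
(\calx^{pf})^{pf}\cong[(h_{U}^{pf})^{pf}/(F^{pf})^{pf}].
$$

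Next I would feed in the two idempotency inputs. The scheme $U^{pf}$ is perfect, so its absolute Frobenius is an isomorphism and $(U^{pf})^{pf}\cong U^{pf}$, whence $(h_{U}^{pf})^{pf}=h_{(U^{pf})^{pf}}\cong h_{U^{pf}}=h_{U}^{pf}$; and $F^{pf}$ is a perfect algebraic space, so the corresponding statement in \cite{Liang1} gives $(F^{pf})^{pf}\cong F^{pf}$. Both equivalences are produced by the perfection functor, which is naturally isomorphic to the identity on objects that are already perfect, so they are compatible with the source, target, and composition maps and assemble into an isomorphism of groupoids in algebraic spaces $((h_{U}^{pf})^{pf},(F^{pf})^{pf},\ldots)\cong(h_{U}^{pf},F^{pf},\ldots)$. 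Passing to quotient stacks then yields
$$
(\calx^{pf})^{pf}\cong[(h_{U}^{pf})^{pf}/(F^{pf})^{pf}]\cong[h_{U}^{pf}/F^{pf}]\cong\calx^{pf},
$$
and the DM case is identical, replacing ``smooth'' by ``\'{e}tale'' throughout and invoking that the perfected groupoid remains \'{e}tale by \cite[Proposition 7.9]{Liang1}.

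The step I expect to be the most delicate is the last compatibility check: one must verify that the canonical equivalences $(U^{pf})^{pf}\cong U^{pf}$ and $(F^{pf})^{pf}\cong F^{pf}$ are natural enough in $U,F$ to commute with the structure maps $s,t,c$, so that they genuinely form a morphism of groupoid objects and the quotient stacks may be compared term by term. A route that bypasses this bookkeeping is to argue via \proref{P12}: since $\calx^{pf}$ is a perfect algebraic stack, \thmref{T3} shows that its algebraic Frobenius $\Psi_{\calx^{pf}}$ is an equivalence, and \proref{P12} applied to $\calx^{pf}$ gives $(\calx^{pf})^{pf}\cong\lim_{\substack{\longleftarrow \\ \Psi_{\calx^{pf}}}}\calx^{pf}$; the inverse limit over $\N$ of a diagram all of whose transition $1$-morphisms are equivalences is equivalent to any of its terms, so this limit is $\calx^{pf}$ and $(\calx^{pf})^{pf}\cong\calx^{pf}$ follows at once.
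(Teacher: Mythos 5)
Your proposal is correct, but it proves the lemma by a genuinely different route than the paper does. The paper's own proof is a one-line appeal to the universal property of \corref{C1}: since $\calx^{pf}$ is itself a perfect algebraic stack, one checks that $\calx^{pf}$ (with the identity as structure map) satisfies the same universal property that characterizes $(\calx^{pf})^{pf}$ over $\calx^{pf}$, and uniqueness of universal objects gives the equivalence. Your primary argument instead unwinds the definition through a groupoid presentation $(h_{U},F,s,t,c)$ and reduces to idempotency of perfection for schemes and algebraic spaces; this is more hands-on and does require the compatibility check you flag (though that check is harmless, since for a perfect scheme or perfect algebraic space the canonical projection from its perfection is an isomorphism and is natural, hence automatically commutes with $s,t,c$), plus the implicit use of presentation-independence of $\calx^{pf}$, which again comes from \corref{C1}. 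Your second, bypass argument --- $\calx^{pf}$ is perfect by \thmref{T2}, so $\Psi_{\calx^{pf}}$ is an equivalence by \thmref{T3}, and \proref{P12} exhibits $(\calx^{pf})^{pf}$ as an inverse limit along equivalences, hence equivalent to $\calx^{pf}$ --- is arguably the cleanest of the three and is essentially the argument the paper itself deploys later in \lemref{LL2}; it uses only results stated before the lemma, so there is no circularity. What the paper's universal-property proof buys is brevity and independence from any choice of atlas; what your arguments buy is an explicit identification of the equivalence, either at the level of the presenting groupoid or via the Frobenius tower.
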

\begin{proof}
It can be shown using universal properties of $\calx^{pf}$ and $(\calx^{pf})^{pf}$ that $(\calx^{pf})^{pf}$ satisfies the same universal property as $\calx^{pf}$.
\end{proof}

Next, we prove the functoriality of perfection of algebraic stacks (resp. DM stacks).
\begin{lemma}\label{L10}
Let $f:\calx\rightarrow\caly$ be a $1$-morphism of algebraic stacks (resp. DM stacks) in characteristic $p$ over $S$. Then $f$ induces a canonical $1$-morphism $f^{\natural}:\calx^{pf}\rightarrow\caly^{pf}$ of perfect algebraic stacks (resp. DM stacks) over $S$ such that the diagram
$$
\xymatrix{
  \calx^{pf} \ar@{-->}[d]_{f^{\natural}} \ar[r]^{p_{\calx}} & \calx \ar[d]^{f} \\
  \caly^{pf} \ar[r]^{p_{\caly}} & \caly   }
$$
is commutative.
\end{lemma}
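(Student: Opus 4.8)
The plan is to construct $f^{\natural}$ directly from the universal property of the perfection, that is, from Corollary~\ref{C1}. Concretely, I would first form the composite $1$-morphism $f\circ p_{\calx}\colon\calx^{pf}\to\caly$, where $p_{\calx}\colon\calx^{pf}\to\calx$ is the canonical projection. By Theorem~\ref{T2} the perfection $\calx^{pf}$ is itself a perfect algebraic stack (resp.\ DM stack) over $S$, so $f\circ p_{\calx}$ is a $1$-morphism from a perfect algebraic stack to the algebraic stack $\caly$ in characteristic $p$. This is exactly the input required by Corollary~\ref{C1}, applied with $\caly$ in the role of the target stack, $\caly^{pf}$ its perfection, $\calx^{pf}$ the perfect algebraic stack, and $f\circ p_{\calx}$ the given $1$-morphism. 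Corollary~\ref{C1} then produces a $1$-morphism $f^{\natural}\colon\calx^{pf}\to\caly^{pf}$ with $p_{\caly}\circ f^{\natural}=f\circ p_{\calx}$, which is precisely the commutativity of the square in the statement.

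For the word \emph{canonical}: the uniqueness clause in Corollary~\ref{C1} says that $f^{\natural}$ is the unique $1$-morphism making the relevant triangle — equivalently, the square above — commute, so the construction is independent of all choices, and in particular does not depend on the presentation $[h_{U}/F]$ used to build $\calx^{pf}$ and $\caly^{pf}$. Moreover both source and target of $f^{\natural}$ are perfect algebraic stacks (resp.\ DM stacks) by Theorem~\ref{T2}, so $f^{\natural}$ is indeed a $1$-morphism of perfect algebraic stacks (resp.\ DM stacks), as asserted.

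There is essentially no obstacle beyond what has already been set up: the two facts doing the work are that $\calx^{pf}$ is a perfect algebraic stack and that $\caly^{pf}$ enjoys the mapping-in universal property, which are Theorem~\ref{T2} and Corollary~\ref{C1} respectively. If one wanted an explicit model for $f^{\natural}$, one could instead descend $f$ along the presentations, namely choose compatible smooth (resp.\ \'etale) atlases, express $f$ as a morphism of smooth (resp.\ \'etale) groupoids in algebraic spaces, apply the perfection functor of \cite{Liang1} to that morphism of groupoids, and pass to quotient stacks; the inverse-limit description of Proposition~\ref{P12} shows this agrees with the $1$-morphism produced above, again by uniqueness in Corollary~\ref{C1}. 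The only mildly delicate point — compatibility of the assignment $f\mapsto f^{\natural}$ with composition and identities, so that it upgrades to the perfection $2$-functor — is likewise a formal consequence of the uniqueness in Corollary~\ref{C1}, but it is not needed for the present lemma and can be deferred.
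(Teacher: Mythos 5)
Your proposal is correct and follows exactly the same route as the paper's own proof: apply Corollary~\ref{C1} to the composite $f\circ p_{\calx}$ (using that $\calx^{pf}$ is perfect by Theorem~\ref{T2}) to obtain the unique $f^{\natural}=(f\circ p_{\calx})^{pf}$ with $p_{\caly}\circ f^{\natural}=f\circ p_{\calx}$. The extra remarks on canonicity and the groupoid-presentation model are fine but not needed.
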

\begin{proof}
This follows directly from the universal property described in Corollary \ref{C1} that there exists a unique $1$-morphism $f^{\natural}=(f\circ p_{\calx})^{pf}$ such that $p_{\caly}\circ f^{\natural}=f\circ p_{\calx}$.
\end{proof}

Let $AStack^{p}_{S}$ (resp. $DM^{p}_{S}$) be the $2$-category of algebraic stacks (resp. DM stacks) in characteristic $p$ over $S$. It is a sub 2-category of the 2-category $AStack^{p}_{S}$ (resp. $DM^{p}_{S}$) defined as follows:
\begin{enumerate}
  \item
Its objects will be algebraic stacks (resp. DM stacks) $\calx$ over $S$ with ${\rm{char}}(\calx)=p$.
  \item
Its $1$-morphisms will be functors of categories over $\SchS$.
  \item
Its $2$-morphisms will be transformations between functors over $\SchS$.
\end{enumerate}

Then there is a natural $2$-functor
$$
\underline{{\rm{Perf}}}_{S}^{\mathbb{A}}:AStack^{p}_{S}\longrightarrow{\rm{Perf}}AStack_{S}, \ \ \calx\longmapsto\calx^{pf}
$$
that to each $1$-morphism $f:\calx\rightarrow\caly$, it assigns the canonical $1$-morphism $f^{\natural}:\calx^{pf}\rightarrow\caly^{pf}$ as in Lemma \ref{L10}. And to each $2$-morphism in $AStack^{p}_{S}$, it associates some $2$-morphism in ${\rm{Perf}}AStack_{S}$. Such a 2-functor is called the \textit{algebraic perfection $2$-functor}. When we speak of the \textit{algebraic perfection functor}, we mean the ordinary functor between the underlying categories induced by the algebraic perfection $2$-functor.

Similarly, there is another natural $2$-functor
$$
\underline{{\rm{Perf}}}_{S}^{\mathbb{D}}:DM^{p}_{S}\longrightarrow{\rm{Perf}}DM_{S}, \ \ \calx\longmapsto\calx^{pf}.
$$
Such a 2-functor is called the \textit{DM perfection $2$-functor}. When we speak of the \textit{DM perfection functor}, we mean the ordinary functor between the underlying categories induced by the DM perfection $2$-functor.

Let $i:{\rm{Perf}}DM_{S}\rightarrow DM^{p}_{S}$ be the inclusion functor. The following proposition characterizes the property of the DM perfection functor.
\begin{proposition}\label{P13}
The DM perfection functor ${\rm{\underline{Perf}}}_{S}^{\mathbb{D}}$ is right adjoint to the inclusion functor $i$. In other words, for any $\calx\in{\rm{Ob}}({\rm{Perf}}DM_{S})$ and $\caly\in{\rm{Ob}}(DM^{p}_{S})$, there exists a functorial bijection
$$
{\rm{Hom}}_{DM^{p}_{S}}(\calx,\caly)\longrightarrow{\rm{Hom}}_{{\rm{Perf}}DM_{S}}(\calx,\caly^{pf}),\ \ f\longmapsto f^{pf}.
$$
\end{proposition}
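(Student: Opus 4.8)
The plan is to read off the adjunction bijection directly from the universal property of the perfection recorded in \corref{C1}, and then to verify naturality using the strict uniqueness clause of that corollary together with the functoriality of \lemref{L10}. Note first that the statement makes sense: a perfect DM stack has an \'{e}tale atlas by a perfect scheme, which is an $\mathbb{F}_{p}$-scheme, so ${\rm{Perf}}DM_{S}$ is indeed a full sub $2$-category of $DM^{p}_{S}$ and $i$ is well-defined.

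First I would build the map $f\mapsto f^{pf}$. Fix $\calx\in\Ob({\rm{Perf}}DM_{S})$ and $\caly\in\Ob(DM^{p}_{S})$. Given a $1$-morphism $f\colon\calx\to\caly$, \thmref{T2} shows $\caly^{pf}$ is a perfect algebraic stack, so \corref{C1} applies with the perfect stack $\calx$ mapping into $\caly$ via $f$; it produces a unique $1$-morphism $f^{pf}\colon\calx\to\caly^{pf}$ satisfying $p_{\caly}\circ f^{pf}=f$. Since $\calx$ and $\caly^{pf}$ are both perfect DM stacks, $f^{pf}\in{\rm{Hom}}_{{\rm{Perf}}DM_{S}}(\calx,\caly^{pf})$, so $f\mapsto f^{pf}$ is a well-defined map of Hom-sets. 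For the inverse I would send $g\colon\calx\to\caly^{pf}$ to $p_{\caly}\circ g\colon\calx\to\caly$, which is a $1$-morphism in $DM^{p}_{S}$.

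Next I would check these are mutually inverse, which is immediate from \corref{C1}. On one side, $p_{\caly}\circ f^{pf}=f$ by construction. On the other, $(p_{\caly}\circ g)^{pf}$ is by definition the unique $1$-morphism $h\colon\calx\to\caly^{pf}$ with $p_{\caly}\circ h=p_{\caly}\circ g$; since $g$ itself satisfies this equation, uniqueness forces $g=(p_{\caly}\circ g)^{pf}$. Then I would check naturality. In the variable $\calx$, precomposition with $h\colon\calx'\to\calx$ in ${\rm{Perf}}DM_{S}$ gives $p_{\caly}\circ(f^{pf}\circ h)=f\circ h$, whence $(f\circ h)^{pf}=f^{pf}\circ h$ by uniqueness. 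In the variable $\caly$, for $k\colon\caly\to\caly'$ in $DM^{p}_{S}$, \lemref{L10} furnishes $k^{\natural}\colon\caly^{pf}\to(\caly')^{pf}$ with $p_{\caly'}\circ k^{\natural}=k\circ p_{\caly}$, hence $p_{\caly'}\circ(k^{\natural}\circ f^{pf})=k\circ f$ and so $(k\circ f)^{pf}=k^{\natural}\circ f^{pf}$, again by uniqueness.

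The content is light once \corref{C1} is available; the delicate point, and the main thing to get right, is the $2$-categorical bookkeeping. One must invoke \corref{C1} in the form that yields a genuinely unique $1$-morphism (not merely one unique up to $2$-isomorphism), read the ${\rm{Hom}}$-sets in the statement as sets of $1$-morphisms in the respective $2$-categories between the fixed objects $\calx$, $\caly$, $\caly^{pf}$ with $p_{\caly}$ a fixed choice of canonical projection, and dispose of the degenerate case of the empty stack consistently with the convention that objects of $DM^{p}_{S}$ are nonempty. Granting these conventions, the displayed correspondence $f\mapsto f^{pf}$ is the required functorial bijection, so ${\rm{\underline{Perf}}}_{S}^{\mathbb{D}}$ is right adjoint to $i$.
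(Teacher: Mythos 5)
Your proposal is correct and follows essentially the same route as the paper: both derive the bijection from the universal property of the perfection (Corollary \ref{C1}), take $g\mapsto p_{\caly}\circ g$ as the inverse, and verify naturality via the uniqueness clause together with Lemma \ref{L10}. Your write-up is in fact slightly more complete, since you explicitly check that the two maps are mutually inverse and flag the strict-uniqueness issue, both of which the paper's proof passes over.
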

\begin{proof}
The inverse function is given by
$$
{\rm{Hom}}_{{\rm{Perf}}DM_{S}}(\calx,\caly^{pf})\longrightarrow{\rm{Hom}}_{DM^{p}_{S}}(\calx,\caly), \ \ g\longmapsto g\circ p_{\caly},
$$
where $\calx\in{\rm{Ob}}({\rm{Perf}}DM_{S})$ and $\caly\in{\rm{Ob}}(DM^{p}_{S})$.

Next, we will show that the bijection defined above is functorial. Let $B'\xrightarrow{h}B$ be a morphism in ${\rm{Perf}}DM_{S}$. Let $B\xrightarrow{f}A\xrightarrow{g}A'$ be morphisms in $DM^{p}_{S}$. Let $A^{pf},A'^{pf}$ be the perfections of $A,A'$. It follows from Lemma \ref{L10} that $gp_{A}=p_{A'}g^{\natural}$, which implies that $gp_{A}f^{pf}h=p_{A'}g^{\natural}f^{pf}h$. Then the universal property of the perfection $A^{pf}$ yields $g\circ f\circ h=p_{A'}\circ g^{\natural}\circ f^{pf}\circ h$. Finally, by the universal property of $A'^{pf}$, we have $p_{A'}\circ (g\circ f\circ h)=p_{A'}\circ (g^{\natural}\circ f^{pf}\circ h)$. The uniqueness requirement yields $g\circ f\circ h=g^{\natural}\circ f^{pf}\circ h$. Thus, the bijection is natural in both $X$ and $Y$.
\end{proof}

Let $i':{\rm{Perf}}AStack_{S}\rightarrow AStack^{p}_{S}$ be the inclusion functor. The following proposition characterizes the property of the algebraic perfection functor.
\begin{proposition}
The algebraic perfection functor ${\rm{\underline{Perf}}}_{S}^{\mathbb{A}}$ is right adjoint to the inclusion functor $i'$. In other words, for any $\calx\in{\rm{Ob}}({\rm{Perf}}AStack_{S})$ and $\caly\in{\rm{Ob}}(AStack^{p}_{S})$, there exists a functorial bijection
$$
{\rm{Hom}}_{AStack^{p}_{S}}(\calx,\caly)\longrightarrow{\rm{Hom}}_{{\rm{Perf}}AStack_{S}}(\calx,\caly^{pf}),\ \ f\longmapsto f^{pf}.
$$
\end{proposition}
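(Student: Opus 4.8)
The plan is to run exactly the argument used for Proposition~\ref{P13}, now with ``DM stack'' replaced throughout by ``algebraic stack''. This is legitimate because every ingredient invoked there was stated uniformly for algebraic stacks as well: the perfection $\calx^{pf}$ of an algebraic stack in characteristic $p$ exists (Theorem~\ref{T2}), it satisfies the universal property of Corollary~\ref{C1}, and a $1$-morphism $f:\calx\to\caly$ induces $f^{\natural}:\calx^{pf}\to\caly^{pf}$ compatibly with the canonical projections (Lemma~\ref{L10}).

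First I would exhibit the candidate inverse map
$$
{\rm{Hom}}_{{\rm{Perf}}AStack_{S}}(\calx,\caly^{pf})\longrightarrow{\rm{Hom}}_{AStack^{p}_{S}}(\calx,\caly),\qquad g\longmapsto g\circ p_{\caly},
$$
where $p_{\caly}:\caly^{pf}\to\caly$ is the canonical projection, and then check that the two composites are the identity. Given $f:\calx\to\caly$ with $\calx$ a perfect algebraic stack, Corollary~\ref{C1} produces a \emph{unique} $f^{pf}:\calx\to\caly^{pf}$ with $p_{\caly}\circ f^{pf}=f$; hence $f\mapsto f^{pf}\mapsto p_{\caly}\circ f^{pf}=f$. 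Conversely, given $g:\calx\to\caly^{pf}$, both $g$ and $(p_{\caly}\circ g)^{pf}$ are $1$-morphisms $\calx\to\caly^{pf}$ whose composite with $p_{\caly}$ is $p_{\caly}\circ g$; since $\caly^{pf}$ is perfect, the uniqueness clause of Corollary~\ref{C1} forces $(p_{\caly}\circ g)^{pf}=g$. This gives the claimed bijection.

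Next I would establish functoriality, copying the last paragraph of the proof of Proposition~\ref{P13}. For a morphism $B'\xrightarrow{h}B$ in ${\rm{Perf}}AStack_{S}$ and morphisms $B\xrightarrow{f}A\xrightarrow{g}A'$ in $AStack^{p}_{S}$, Lemma~\ref{L10} gives $g\circ p_{A}=p_{A'}\circ g^{\natural}$, hence $g\circ p_{A}\circ f^{pf}\circ h=p_{A'}\circ g^{\natural}\circ f^{pf}\circ h$; the universal property of $A^{pf}$ then yields $g\circ f\circ h=p_{A'}\circ g^{\natural}\circ f^{pf}\circ h$, and finally the uniqueness in the universal property of $A'^{pf}$ gives $g\circ f\circ h=g^{\natural}\circ f^{pf}\circ h$, which is naturality in both variables.

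The proof has no serious obstacle: the whole content is transporting the DM-stack argument verbatim, which is available precisely because Theorem~\ref{T2}, Corollary~\ref{C1}, and Lemma~\ref{L10} were all formulated for algebraic stacks and not only for DM stacks. The only point demanding a little care is bookkeeping at the $2$-categorical level, namely that the asserted ``functorial bijection'' is to be read on ordinary Hom-sets (that is, $1$-morphisms considered up to $2$-isomorphism, following the conventions of the paper); this is exactly what the uniqueness statement in Corollary~\ref{C1} supplies, so no extra work beyond the above is needed.
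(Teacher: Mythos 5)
Your proposal is correct and matches the paper exactly: the paper's proof of this proposition is literally ``similar to Proposition~\ref{P13},'' and you have carried out that adaptation, including the same inverse map $g\mapsto g\circ p_{\caly}$ and the same functoriality computation. No further comment is needed.
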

\begin{proof}
The proof is similar to Proposition \ref{P13}.
\end{proof}
In practice, ${\rm{\underline{Perf}}}_{S}^{\mathbb{A}}$ shares almost the same properties with ${\rm{\underline{Perf}}}_{S}^{\mathbb{D}}$. Therefore, in the following, we will not distinguish between the algebraic perfection $2$-functor ${\rm{\underline{Perf}}}_{S}^{\mathbb{A}}$ and the DM perfection $2$-functor ${\rm{\underline{Perf}}}_{S}^{\mathbb{D}}$. We will simply call ${\rm{\underline{Perf}}}_{S}^{\mathbb{A}}$ the \textit{perfection $2$-functor} and denote it by ${\rm{\underline{Perf}}}_{S}$. When we speak of the \textit{perfection functor}, we mean the ordinary functor induced by the perfection $2$-functor. Moreover, we will only state the results on algebraic stacks though they also hold for DM stacks.

The following lemma shows that the perfection functor ${\rm{\underline{Perf}}}_{S}$ is full but not faithful.
\begin{lemma}\label{L15}
Let $\calx,\caly$ be algebraic stacks in characteristic $p$ over $S$ with perfections $\calx^{pf},\caly^{pf}$. Let $f:\calx^{pf}\rightarrow\caly^{pf}$ be a $1$-morphism of perfect algebraic stacks over $S$. Then there exists a canonical $1$-morphism $f^{-1}:\calx\rightarrow\caly$ such that the diagram
$$
\xymatrix{
  \calx^{pf} \ar[d]_{f} \ar[r]^{p_{\calx}} & \calx \ar@{-->}[d]^{f^{-1}} \\
  \caly^{pf} \ar[r]^{p_{\caly}} & \caly   }
$$
is commutative.
\end{lemma}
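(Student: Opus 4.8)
The plan is to produce $f^{-1}$ by descending the composite $g = p_{\caly}\circ f\colon\calx^{pf}\to\caly$ along the canonical projection $p_{\calx}\colon\calx^{pf}\to\calx$, exploiting the inverse limit description of the perfection from Proposition \ref{P12} together with the universal property of Corollary \ref{C1}. Concretely, fix a surjective smooth $1$-morphism $\SchU\to\calx$ with $U\in\ObSchS$ of characteristic $p$ and the associated smooth groupoid in algebraic spaces $(h_{U},F,s,t,c)$ with $\calx\cong[h_{U}/F]$, and do likewise for $\caly$, so that $\calx^{pf}\cong[h_{U}^{pf}/F^{pf}]$ and $\caly^{pf}\cong[h_{V}^{pf}/G^{pf}]$. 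By Proposition \ref{P12} we have $\calx^{pf}\cong\lim_{n}\calx$ with transition maps the algebraic Frobenius $\Psi_{\calx}$, and similarly for $\caly$. The idea is that $f$ should be viewed as a morphism of these inverse systems, and that reading off its ``component at level $0$'' yields a candidate $1$-morphism $\calx\to\caly$, which we declare to be $f^{-1}$.

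First I would make the ``component at level $0$'' precise. Using that $\Psi_{\caly^{pf}}$ is an equivalence by Theorem \ref{T3}, and that $p_{\caly}$ intertwines the two Frobenii, one gets $p_{\caly}\circ\Psi_{\caly^{pf}}^{-n}=\pi_{n}$, the $n$-th projection $\caly^{pf}\to\caly$ of the inverse system of Proposition \ref{P12}, so that $g=p_{\caly}\circ f=\pi_{0}\circ f$ can be rewritten at every level. Next I would argue that $g$ factors through $p_{\calx}=\pi_{0}\colon\calx^{pf}\to\calx$: on the chosen atlases $g$ is given by a morphism of algebraic spaces $h_{U}^{pf}\to h_{V}$ which is compatible with the groupoid structure, and one wants to check that such a morphism out of the perfection $h_{U}^{pf}$ factors through $h_{U}$. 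Granting this, one sets $f^{-1}\colon\calx\to\caly$ to be the induced $1$-morphism of quotient stacks and verifies the $2$-commutativity of the square by comparing the two composites $\calx^{pf}\to\caly$ on the atlases, using functoriality of stackification (as in Proposition \ref{P12}) and Lemma \ref{L10}; the uniqueness clause in Corollary \ref{C1} then pins down $f^{-1}$ up to canonical $2$-isomorphism and shows the construction is independent of the chosen presentations.

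The main obstacle is precisely the factorization step: showing that $g=p_{\caly}\circ f$ descends along $p_{\calx}$, equivalently that the underlying morphism $h_{U}^{pf}\to h_{V}$ factors through the projection $h_{U}^{pf}\to h_{U}$. This is delicate because $p_{\calx}$ is a universal homeomorphism but not an fppf covering, hence not obviously an epimorphism of stacks, and because $\Psi_{\calx}$ is invertible only after perfection, not on $\calx$ itself; a priori a morphism out of $\lim_{n}h_{U}$ need not factor through any finite stage. I expect one must either invoke a finite-presentation (or Noetherian) hypothesis on $\calx$, under which a morphism out of $\lim_{n}h_{U}$ does factor through a finite stage $h_{U}$ and one can then ``shift back'' along the invertible $\Psi_{\caly^{pf}}$, or else weaken the conclusion so that $f^{-1}$ is only required to exist after such a shift. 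Once a valid factorization is in hand, the remaining verifications --- well-definedness, $2$-commutativity of the square, and naturality in $f$ --- reduce to routine diagram chases with the universal property of Corollary \ref{C1} and the functoriality recorded in Lemma \ref{L10}.
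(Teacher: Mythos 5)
Your proposal does not close the proof: the step you yourself flag as ``the main obstacle'' --- showing that $g=p_{\caly}\circ f$ factors through $p_{\calx}\colon\calx^{pf}\rightarrow\calx$ --- is exactly the mathematical content of the lemma, and you leave it unresolved, suggesting only that one might need a finite-presentation hypothesis or a weakened conclusion. As you correctly observe, a morphism out of an inverse limit $\lim_{n}h_{U}$ (with transition maps the Frobenii, which are not invertible on $h_{U}$ itself) need not factor through any finite stage, and $p_{\calx}$ is a universal homeomorphism but not an fppf cover, so no descent of morphisms along it is available. Note also that the universal property of Corollary \ref{C1} points the wrong way for your purposes: it produces morphisms \emph{into} a perfection from a perfect source, not morphisms \emph{out of} the unperfected base. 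So the route through Proposition \ref{P12} and Corollary \ref{C1} does not by itself produce $f^{-1}$.

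The paper's own proof is a one-line appeal to Lemma \ref{L8}, the abstract square-completion lemma: taking $a=p_{\calx}$, $c=f$, $b=p_{\caly}$ there, one obtains $d=f^{-1}$ satisfying $f^{-1}\circ p_{\calx}=p_{\caly}\circ f$ by \emph{defining} $f^{-1}$ on the image of $p_{\calx}$ via $f^{-1}(p_{\calx}(x))=p_{\caly}(f(x))$ and arbitrarily on the complement. This sidesteps your factorization problem by explicit construction rather than by descent; observe, however, that this device is well-defined only if $p_{\calx}(x)=p_{\calx}(x')$ forces $p_{\caly}(f(x))=p_{\caly}(f(x'))$, so your instinct that a nontrivial compatibility is being asserted here is sound. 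If you want to complete your argument in the spirit of the paper, replace the attempted descent by the set-theoretic extension of Lemma \ref{L8} (and, ideally, verify the compatibility just mentioned on the level of the atlases $h_{U}^{pf}\rightarrow h_{U}$); the inverse-limit machinery you set up is not needed for the statement as the paper proves it.
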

\begin{proof}
It follows from Lemma \ref{L8} that there exists a $1$-morphism $f^{-1}$ that makes the diagram commute.
\end{proof}

In the following, we will study the nature of the perfection functor. We can show that the perfection $2$-functor commutes with 2-fibre products and products in the 2-category $AStack^{p}$.
\begin{proposition}\label{P14}
Let $\calx,\caly,\calz$ be algebraic stacks in characteristic $p$ over $S$. Let $f:\calx\rightarrow\calz$ and $g:\caly\rightarrow\calz$ be $1$-morphisms of algebraic stacks over $S$. Then we have $(\calx\times_{f,\calz,g}\caly)^{pf}=\calx^{pf}\times_{f^{\natural},\calz^{pf},g^{\natural}}\caly^{pf}$. In particular, we have $(\calx\times_{\SchS}\caly)^{pf}=\calx^{pf}\times_{(Sch/S^{pf})_{fppf}}\caly^{pf}$.
\end{proposition}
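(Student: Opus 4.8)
The plan is to exhibit the $2$-fibre product $\mathcal{P}:=\calx^{pf}\times_{f^{\natural},\calz^{pf},g^{\natural}}\caly^{pf}$, together with a canonical $1$-morphism down to $\mathcal{W}:=\calx\times_{f,\calz,g}\caly$, as a model of the perfection of $\mathcal{W}$, and then to conclude from the uniqueness of perfections built into Corollary \ref{C1}. First I would record that $\mathcal{W}$ is again an algebraic stack in characteristic $p$ over $S$: it is an algebraic stack by \cite[Tag04TF]{Stack Project}; and, choosing surjective smooth $1$-morphisms $\SchU\rightarrow\calx$ and $\SchV\rightarrow\caly$ with $U,V\in\ObSchS$ in characteristic $p$, the $2$-fibre product $\SchU\times_{\calz}\SchV$ is an algebraic space (as $\calz$ has diagonal representable by algebraic spaces), and an \'{e}tale chart of it by a scheme $U'\in\ObSchS$ (automatically an $\mathbb{F}_{p}$-scheme, since it maps to $U$) supplies a surjective smooth $1$-morphism $\SchUU\rightarrow\mathcal{W}$, so ${\rm{char}}(\mathcal{W})=p$ and $\mathcal{W}^{pf}$ exists by Theorem \ref{T2}. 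On the other side, $\calx^{pf}$ and $\caly^{pf}$ are perfect algebraic stacks over $S$ while $\calz^{pf}$ is an algebraic stack over $S$, so $\mathcal{P}$ is a perfect algebraic stack over $S$ by the stability-under-$2$-fibre-products results of \S\ref{B2}.

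Next I would construct the comparison $1$-morphism $q\colon\mathcal{P}\rightarrow\mathcal{W}$. By Lemma \ref{L10} we have $p_{\calz}\circ f^{\natural}=f\circ p_{\calx}$ and $p_{\calz}\circ g^{\natural}=g\circ p_{\caly}$, so composing the two structure $1$-morphisms of $\mathcal{P}$ with $p_{\calx}$ and with $p_{\caly}$ gives $1$-morphisms $\mathcal{P}\rightarrow\calx$ and $\mathcal{P}\rightarrow\caly$ that agree, up to a canonical $2$-isomorphism, after further composition with $f$ and $g$ into $\calz$. The universal property of the $2$-fibre product $\mathcal{W}$ then yields $q$, whose composites with the projections of $\mathcal{W}$ are $p_{\calx}$ and $p_{\caly}$ precomposed with the projections of $\mathcal{P}$.

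It remains to verify that $(\mathcal{P},q)$ has the universal property of Corollary \ref{C1}. Let $\mathcal{T}$ be a perfect algebraic stack over $S$ with a $1$-morphism $h\colon\mathcal{T}\rightarrow\mathcal{W}$; composing with the projections of $\mathcal{W}$ produces $h_{1}\colon\mathcal{T}\rightarrow\calx$ and $h_{2}\colon\mathcal{T}\rightarrow\caly$ with $f\circ h_{1}$ canonically $2$-isomorphic to $g\circ h_{2}$. Corollary \ref{C1} then provides unique $1$-morphisms $h_{1}^{pf}\colon\mathcal{T}\rightarrow\calx^{pf}$ and $h_{2}^{pf}\colon\mathcal{T}\rightarrow\caly^{pf}$ with $p_{\calx}\circ h_{1}^{pf}=h_{1}$ and $p_{\caly}\circ h_{2}^{pf}=h_{2}$. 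The $1$-morphisms $f^{\natural}\circ h_{1}^{pf}$ and $g^{\natural}\circ h_{2}^{pf}$ from $\mathcal{T}$ to $\calz^{pf}$ both become $f\circ h_{1}=g\circ h_{2}$ after composition with $p_{\calz}$ (Lemma \ref{L10} once more), so the uniqueness clause of Corollary \ref{C1} applied with target $\calz$ forces $f^{\natural}\circ h_{1}^{pf}=g^{\natural}\circ h_{2}^{pf}$. Hence $(h_{1}^{pf},h_{2}^{pf})$ determines a $1$-morphism $h^{pf}\colon\mathcal{T}\rightarrow\mathcal{P}$, and $q\circ h^{pf}=h$ by comparing projections; for uniqueness, any $h'$ with $q\circ h'=h$ has its two projections lifting $h_{1}$ and $h_{2}$, hence equal to $h_{1}^{pf}$ and $h_{2}^{pf}$ by Corollary \ref{C1}, and therefore $h'=h^{pf}$ by the universal property of the $2$-fibre product $\mathcal{P}$. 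Thus $(\mathcal{P},q)$ realizes $\mathcal{W}^{pf}$, giving $(\calx\times_{f,\calz,g}\caly)^{pf}\cong\calx^{pf}\times_{f^{\natural},\calz^{pf},g^{\natural}}\caly^{pf}$; the ``in particular'' statement is the special case $\calz=\SchS$, together with $\SchS^{pf}=(Sch/S^{pf})_{fppf}$, which follows from Lemma \ref{L11}.

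Conceptually the argument is nothing but the principle that the perfection functor, being right adjoint to the inclusion $i'\colon{\rm{Perf}}AStack_{S}\hookrightarrow AStack^{p}_{S}$, preserves $2$-fibre products, which moreover coincide with the $2$-fibre products computed in the full sub-$2$-category ${\rm{Perf}}AStack_{S}$ (by \S\ref{B2}). The step I expect to require the most care is the $2$-categorical bookkeeping: faithfully tracking the natural transformations filling all the pullback squares, and making sure that the uniqueness assertions of Corollary \ref{C1} are invoked for $1$-morphisms carrying compatible $2$-cells rather than for bare $1$-morphisms. An alternative route, which bypasses the universal property, is to use the inverse-limit description of Proposition \ref{P12}: once one verifies that the algebraic Frobenius $\Psi_{\mathcal{W}}$ agrees with the endomorphism of $\mathcal{W}$ induced by $\Psi_{\calx},\Psi_{\caly},\Psi_{\calz}$ on the product groupoid presentation, one obtains the chain $\mathcal{W}^{pf}\cong\lim_{n}\mathcal{W}\cong(\lim_{n}\calx)\times_{\lim_{n}\calz}(\lim_{n}\caly)\cong\calx^{pf}\times_{\calz^{pf}}\caly^{pf}$, using that $2$-fibre products commute with inverse limits.
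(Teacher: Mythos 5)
Your proof is correct in substance, but it takes a genuinely different route from the paper. The paper argues directly from the explicit inverse-limit description of the perfection: an object of $(\calx\times_{\calz}\caly)^{pf}$ is a Frobenius-compatible sequence $((U,x_{0},y_{0},f_{0}),(U,x_{1},y_{1},f_{1}),\dots)$ of objects of the fibre product, and reshuffling this data into $(U,(x_{0},x_{1},\dots),(y_{0},y_{1},\dots),(f_{0},f_{1},\dots))$ exhibits the desired identification with $\calx^{pf}\times_{\calz^{pf}}\caly^{pf}$ --- essentially the second, ``alternative'' route you sketch at the end via Proposition~\ref{P12}. Your main argument instead characterizes both sides by the universal property of Corollary~\ref{C1}: you check that $\calx^{pf}\times_{f^{\natural},\calz^{pf},g^{\natural}}\caly^{pf}$ is a perfect algebraic stack (by the stability results of \S\ref{B2}), build the comparison $1$-morphism to $\calx\times_{\calz}\caly$ from Lemma~\ref{L10}, and verify the universal property, i.e.\ you exploit that $\underline{\rm{Perf}}_{S}$ is right adjoint to the inclusion and hence preserves $2$-limits. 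What your approach buys is conceptual robustness: the paper's proof only matches objects and never addresses morphisms or the compatibility with the fibred structure, whereas a universal-property argument handles these automatically. What it costs is exactly the $2$-categorical bookkeeping you flag yourself: Corollary~\ref{C1} is stated with strict equalities, so when you invoke its uniqueness clause to force $f^{\natural}\circ h_{1}^{pf}=g^{\natural}\circ h_{2}^{pf}$ you are really comparing $1$-morphisms equipped with compatible $2$-cells, and the conclusion should be a canonical $2$-isomorphism rather than an equality; this is no worse than the paper's own level of rigor, but it is the one place where your argument leans on an unproven strictness. Also note that the ``in particular'' clause tacitly assumes $S$ itself has characteristic $p$ so that $S^{pf}$ is defined --- an issue inherited from the statement, which you treat the same way the paper does.
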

\begin{proof}
It follows from the explicit description of $(\calx\times_{\calz}\caly)^{pf}$ that every element in $(\calx\times_{\calz}\caly)^{pf}$ is of the form
$$
((U,x_{0},y_{0},f_{0}),(U,x_{1},y_{1},f_{1}),...,(U,x_{n},y_{n},f_{n}),...),
$$
where $U\in\ObSchS,x_{i}\in\Ob(\calx_{U}),y_{i}\in\Ob(\caly_{U})$, and $f_{i}:f(x_{i})\rightarrow g(y_{i})$ is an isomorphism in $\calz_{U}$. Similarly, every element in $\calx^{pf}\times_{\calz^{pf}}\caly^{pf}$ can be written as the form $(U,x,y,f')$ where $U\in\ObSchS,x\in\Ob(\calx_{U}^{pf}),y\in\Ob(\caly_{U}^{pf})$, and $f':f^{\natural}(x)\rightarrow g^{\natural}(y)$ is an isomorphism in $\calz_{U}^{pf}$. Note that there is an isomorphism
$$
((U,x_{0},y_{0},f_{0}),...,(U,x_{n},y_{n},f_{n}),...)\longmapsto(U,(x_{0},...,x_{n},...),(y_{0},...,y_{n},...),(f_{0},...,f_{n},...)).
$$
Since $(x_{0},...,x_{n},...)\in\Ob(\calx_{U}^{pf}),(y_{0},...,y_{n},...)\in\Ob(\caly_{U}^{pf})$, and $(f_{0},...,f_{n},...)$ is an isomorphism in $\calz_{U}^{pf}$, we obtain an isomorphism $(\calx\times_{\calz}\caly)^{pf}\cong\calx^{pf}\times_{\calz^{pf}}\caly^{pf}$.
\end{proof}

The perfection $2$-functor maps $1$-morphisms representable by algebraic spaces to $2$-perfect $1$-morphisms.
\begin{proposition}\label{P15}
Let $f:\calx\rightarrow\caly$ be a $1$-morphism of algebraic stacks in characteristic $p$ over $S$ representable by algebraic spaces. Then the canonical $1$-morphism $f^{\natural}:\calx^{pf}\rightarrow\caly^{pf}$ is $2$-perfect.
\end{proposition}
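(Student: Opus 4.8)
The plan is to reduce the $2$-perfectness of $f^{\natural}$ to the representability of $f$ by algebraic spaces, combined with the compatibility of perfection with $2$-fibre products from Proposition \ref{P14}. By Definition \ref{A2} we must show that for every perfect scheme $U\in\ObSchS$ and every $1$-morphism $y\:\SchU\rightarrow\caly^{pf}$ the $2$-fibre product $\SchU\times_{y,\caly^{pf}}\calx^{pf}$ is representable by a perfect algebraic space over $U$. If $U$, $\calx$ or $\caly$ is empty the fibre product is empty, hence representable by the empty (perfect) algebraic space, so we may assume all are nonempty; in particular a nonempty perfect scheme $U$ has characteristic $p$, and therefore every algebraic space over $U$ has characteristic $p$ and admits a perfection.

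First I would set $g=p_{\caly}\circ y\:\SchU\rightarrow\caly$, where $p_{\caly}\:\caly^{pf}\rightarrow\caly$ is the canonical projection. Since $f$ is representable by algebraic spaces, there is an algebraic space $F$ over $U$ with an equivalence $\calx\times_{f,\caly,g}\SchU\cong\cals_{F}$; as noted above $F$ has characteristic $p$, so $F^{pf}$ is a perfect algebraic space and, by the definition of the perfection of a category fibred in groupoids representable by an algebraic space together with Lemma \ref{L11}, $(\calx\times_{f,\caly,g}\SchU)^{pf}\cong\cals_{F^{pf}}$. On the other hand, Proposition \ref{P14} applied to $f\:\calx\rightarrow\caly$ and $g\:\SchU\rightarrow\caly$ gives
$$
(\calx\times_{f,\caly,g}\SchU)^{pf}\cong\calx^{pf}\times_{f^{\natural},\caly^{pf},g^{\natural}}\SchU^{pf}.
$$
Because $U$ is a perfect scheme, $\SchU$ is itself a perfect algebraic stack and its canonical projection $\SchU^{pf}\rightarrow\SchU$ is an equivalence, so we may identify $\SchU^{pf}$ with $\SchU$; under this identification, $g^{\natural}$ and $y$ are both $1$-morphisms out of the perfect algebraic stack $\SchU$ lifting $g$ along $p_{\caly}$ (for $g^{\natural}$ this is the commutativity in Lemma \ref{L10}), so the uniqueness clause of the universal property in Corollary \ref{C1} forces $g^{\natural}=y$. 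Combining these identifications,
$$
\SchU\times_{y,\caly^{pf}}\calx^{pf}\cong(\calx\times_{f,\caly,g}\SchU)^{pf}\cong\cals_{F^{pf}},
$$
which is representable by the perfect algebraic space $F^{pf}$ over $U$. Hence $f^{\natural}$ is $2$-perfect.

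The step I expect to require the most care is the identification $g^{\natural}=y$: one has to verify that the perfection $2$-functor restricted to $\SchU$ with $U$ perfect is naturally equivalent to the identity, so that $g^{\natural}$ genuinely is a $1$-morphism $\SchU\rightarrow\caly^{pf}$ over $\SchS$, and then invoke the uniqueness part of Corollary \ref{C1}. The remaining ingredients — the reduction to the nonempty case, the characteristic-$p$ observation, the passage $(\cals_{F})^{pf}\cong\cals_{F^{pf}}$, and the application of Proposition \ref{P14} — are routine.
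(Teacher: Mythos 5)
Your proof is correct and follows essentially the same route as the paper: reduce to the unperfected fibre product $\SchU\times_{y,\caly}\calx\cong\cals_{F}$ via representability of $f$, then apply Proposition \ref{P14} and Lemma \ref{L11} to identify the perfected fibre product with $\cals_{F^{pf}}$. The only (minor) difference is that you handle the passage from an arbitrary $1$-morphism $\SchU\rightarrow\caly^{pf}$ back to one into $\caly$ by composing with $p_{\caly}$ and invoking the uniqueness clause of Corollary \ref{C1} to get $g^{\natural}=y$, whereas the paper starts from $y\:\SchU\rightarrow\caly$ and appeals to fullness of the perfection functor (Lemma \ref{L15}) to cover all objects of $\caly^{pf}_{U}$ --- your version makes this step somewhat more explicit.
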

\begin{proof}
Let $y:\SchU\rightarrow\caly$ be a $1$-morphism of categories fibred in groupoids over $\SchS$ where $U\in\ObSchS$ is a perfect scheme in characteristic $p$. Choose an equivalence $\cals_{F}\cong\SchU\times_{y,\caly}\calx$ where $F$ is an algebraic space in characteristic $p$ over $U$. Then it follows from Proposition \ref{P14} and Lemma \ref{L11} that we have an equivalence $\cals_{F^{pf}}\cong(Sch/U)_{fppf}\times_{y^{\natural},\caly^{pf}}\calx^{pf}$. Since the perfection functor is full by Lemma \ref{L15}, $f$ is 2-perfect.
\end{proof}

Moreover, the perfection of an algebraic stack is relatively $2$-perfect.
\begin{proposition}
Let $\calx$ be an algebraic stack in characteristic $p$ over $S$ with perfection $\calx^{pf}$. Then $\calx^{pf}$ is relatively $2$-perfect.
\end{proposition}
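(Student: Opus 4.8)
The plan is to reduce the statement to Proposition \ref{P15} by recognizing the diagonal of $\calx^{pf}$ as the image of the diagonal of $\calx$ under the perfection $2$-functor. By Definition \ref{D1}(2), $\calx^{pf}$ is relatively $2$-perfect exactly when its diagonal $\Delta_{\calx^{pf}}\colon\calx^{pf}\rightarrow\calx^{pf}\times\calx^{pf}$ is $2$-perfect, so it is enough to establish this.

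First I would observe that, since $\calx$ is an algebraic stack over $S$, its diagonal $\Delta_{\calx}\colon\calx\rightarrow\calx\times\calx$ is representable by algebraic spaces, and that $\calx\times\calx$ is again an algebraic stack of characteristic $p$ over $S$: if $\SchU\rightarrow\calx$ is surjective smooth with $U$ of characteristic $p$, then $\SchU\times_{S}\SchU\rightarrow\calx\times\calx$ is surjective smooth and $U\times_{S}U$ is an $\mathbb{F}_{p}$-scheme. Thus $\Delta_{\calx}$ is a $1$-morphism in $AStack^{p}_{S}$, and applying ${\rm{\underline{Perf}}}_{S}$ produces a $1$-morphism $(\Delta_{\calx})^{\natural}\colon\calx^{pf}\rightarrow(\calx\times\calx)^{pf}$ which is $2$-perfect by Proposition \ref{P15}. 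Next, Proposition \ref{P14} gives an equivalence $(\calx\times\calx)^{pf}\cong\calx^{pf}\times\calx^{pf}$; moreover, since ${\rm{\underline{Perf}}}_{S}$ commutes with $2$-fibre products (Proposition \ref{P14}) and sends identity $1$-morphisms to identity $1$-morphisms, it carries the Cartesian square expressing $\Delta_{\calx}$ (namely $\calx\cong\calx\times_{\calx\times\calx}\calx$ with both projections equal to $\mathrm{id}_{\calx}$) to the analogous square for $\calx^{pf}$, so that under the above equivalence $(\Delta_{\calx})^{\natural}$ is $2$-isomorphic to $\Delta_{\calx^{pf}}$. Finally, Lemma \ref{L5} applied to the $2$-commutative square with horizontal arrows $\mathrm{id}_{\calx^{pf}}$ and the equivalence $(\calx\times\calx)^{pf}\cong\calx^{pf}\times\calx^{pf}$ transports $2$-perfectness from $(\Delta_{\calx})^{\natural}$ to $\Delta_{\calx^{pf}}$, which completes the argument.

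The step I expect to be the main obstacle is the identification $(\Delta_{\calx})^{\natural}\cong\Delta_{\calx^{pf}}$: one must argue that ${\rm{\underline{Perf}}}_{S}$, being a $2$-functor that commutes with $2$-fibre products, genuinely sends the canonical diagonal of $\calx$ to the canonical diagonal of $\calx^{pf}$, not merely to some $1$-morphism with the right source and target. Should one wish to avoid this, an alternative is to check the claim directly by the $i=2$ analogue of Proposition \ref{P1}: for a perfect scheme $U$ of characteristic $p$ and objects $x,y\in\Ob(\calx^{pf}_{U})$, use a presentation $\calx\cong[h_{U_{0}}/F]$ (so that $\calx^{pf}\cong[h_{U_{0}}^{pf}/F^{pf}]$ with $F^{pf}$ a perfect algebraic space and $h_{U_{0}}^{pf}$ a perfect scheme) to identify $\textit{Isom}(x,y)$, fppf-locally on $U$, with a fibre product of the perfect algebraic spaces $F^{pf}$ and $h_{U}$ over $h_{U_{0}}^{pf}\times h_{U_{0}}^{pf}$, which is perfect by \cite[Proposition 3.7]{Liang}; here the cost is the descent bookkeeping along the fppf cover.
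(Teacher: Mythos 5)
Your proposal is correct and follows essentially the same route as the paper, which likewise applies Proposition \ref{P15} to the diagonal $\Delta_{\calx}$ and reads off relative $2$-perfectness of $\calx^{pf}$; your additional care in identifying $(\Delta_{\calx})^{\natural}$ with $\Delta_{\calx^{pf}}$ via Proposition \ref{P14} and Lemma \ref{L5} makes explicit a step the paper passes over silently.
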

\begin{proof}
It follows from Proposition \ref{P15} that the perfection functor maps the diagonal $\Delta:\calx\rightarrow\calx\times\calx$ to a $2$-perfect diagonal $\Delta^{\natural}:\calx^{pf}\rightarrow\calx^{pf}\times\calx^{pf}$. Thus, $\calx^{pf}$ is relatively $2$-perfect.
\end{proof}

Let $0\leq j\leq2$ be an integer. The above proposition shows that every perfect algebraic stack is both relatively $j$-perfect and perfect.
\begin{lemma}\label{LL2}
Let $\calx$ be an algebraic stack in characteristic $p$ over $S$ with algebraic Frobenius $\Psi_{\calx}:\calx\rightarrow\calx$. Suppose that $\calx$ is perfect. Then we have an equivalence $\calx\cong\calx^{pf}$ such that $\calx$ is relatively $j$-perfect.
\end{lemma}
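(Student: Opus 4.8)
The plan is to first establish the equivalence $\calx\cong\calx^{pf}$ and then read off relative $j$-perfection from the proposition immediately preceding this lemma. First I would invoke \thmref{T3}: since $\calx$ is perfect, the algebraic Frobenius $\Psi_{\calx}:\calx\to\calx$ is an equivalence. Next, by \proref{P12} there is an equivalence
$$
\calx^{pf}\cong\lim_{\substack{\longleftarrow \\ n\in\N}}\calx,
$$
in which every transition $1$-morphism is the algebraic Frobenius $\Psi_{\calx}$.

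The key step is to argue that the inverse limit of a tower whose transition $1$-morphisms are all equivalences is equivalent to any single term of the tower, which here gives $\calx\cong\calx^{pf}$. I would carry this out via the universal property of \corref{C1} applied with $\caly=\calx$ (which is perfect) and $f=1_{\calx}$: this produces a $1$-morphism $1_{\calx}^{pf}:\calx\to\calx^{pf}$ with $p_{\calx}\circ 1_{\calx}^{pf}\simeq 1_{\calx}$, while $p_{\calx}:\calx^{pf}\to\calx$ runs the other way. To see these are mutually quasi-inverse I would form the composite $1_{\calx}^{pf}\circ p_{\calx}:\calx^{pf}\to\calx^{pf}$ and use the uniqueness clause in the universal property together with \lemref{L12}: both $1_{\calx^{pf}}$ and $1_{\calx}^{pf}\circ p_{\calx}$ satisfy the defining property of the map induced by $p_{\calx}:\calx^{pf}\to\calx$, so $1_{\calx}^{pf}\circ p_{\calx}\simeq 1_{\calx^{pf}}$. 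Alternatively one can unwind the explicit presentation $\calx^{pf}\cong[h_{U}^{pf}/F^{pf}]$ from \thmref{T2} and use that $\Phi_{U}$ and $\Psi_{F}$ are isomorphisms when $\calx$ is perfect, exactly as in \proref{P11}. Either way one obtains $\calx\cong\calx^{pf}$.

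Finally, the proposition just above this lemma (a consequence of \proref{P15}) shows that $\calx^{pf}$ is relatively $2$-perfect. Since every $2$-perfect $1$-morphism is $1$-perfect and every $1$-perfect $1$-morphism is $0$-perfect, the diagonal of $\calx^{pf}$ is $j$-perfect for each $0\le j\le2$, i.e. $\calx^{pf}$ is relatively $j$-perfect. Transporting this along the equivalence $\calx\cong\calx^{pf}$ via \lemref{L13}(6) then yields that $\calx$ is relatively $j$-perfect, completing the proof.

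I expect the main obstacle to be the $2$-categorical bookkeeping in the key step: making precise that the canonical projection $p_{\calx}$ and the morphism induced by $1_{\calx}$ are inverse equivalences requires careful handling of the $2$-morphisms and a clean appeal to the uniqueness part of the universal property (or, if one prefers, to the groupoid-presentation argument). The relative $j$-perfection is then essentially formal given \lemref{L13}.
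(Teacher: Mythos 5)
Your proposal is correct and follows essentially the same route as the paper: invoke \thmref{T3} to see that $\Psi_{\calx}$ is an equivalence, identify $\calx^{pf}$ with the inverse limit along $\Psi_{\calx}$ via \proref{P12} to get $\calx\cong\calx^{pf}$, and then transport relative $j$-perfection along the equivalence using \lemref{L13}. The paper's proof is terser (it simply asserts that the limit of a tower of equivalences is equivalent to $\calx$), whereas you spell out the quasi-inverse via the universal property of \corref{C1} and \lemref{L12}, which is a reasonable way to make that step precise.
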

\begin{proof}
By Theorem \ref{T3}, if $\calx$ is perfect, then the algebraic Frobenius $\Psi_{\calx}$ is an equivalence. Thus, the inverse limit $\lim_{\Psi_{\calx}}\calx$ is equivalent to $\calx$. Moreover, the last statement follows from Lemma \ref{L13}.
\end{proof}

The following theorem specifies a string of equivalences of $2$-categories.
\begin{theorem}
The $2$-category of perfect algebraic stacks over $S$ is equivalent to the $2$-category of relatively $2$-perfect algebraic stacks over $S$, and the $2$-category of relatively $2$-perfect algebraic stacks over $S$ is equivalent to the $2$-category of relatively $1$-perfect algebraic stacks over $S$. In other words, we have the following string of equivalences of $2$-categories
$$
{\rm{Perf}}AStack_{S}\cong\underline{{\rm{Perf}}}AStack^{1}_{S}\cong\underline{{\rm{Perf}}}AStack^{2}_{S}.
$$
\end{theorem}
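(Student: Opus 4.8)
The plan is to deduce the two claimed equivalences entirely from results already established. The first, ${\rm{Perf}}AStack_{S}\cong\underline{{\rm{Perf}}}AStack^{1}_{S}$, is exactly Theorem \ref{TT2}, so I would spend no effort there; all the content is in the identification $\underline{{\rm{Perf}}}AStack^{1}_{S}\cong\underline{{\rm{Perf}}}AStack^{2}_{S}$. Since both of these are full sub-$2$-categories of $AStack_{S}$, I would reduce this to showing that they have the same objects, i.e. that an algebraic stack over $S$ is relatively $1$-perfect if and only if it is relatively $2$-perfect.

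One direction is immediate: a $2$-perfect $1$-morphism is in particular $1$-perfect (the paragraph following Definition \ref{A2}, cf. the vertical arrows of Figure \ref{F1}), so every relatively $2$-perfect algebraic stack is relatively $1$-perfect. For the converse I would take $\calx$ relatively $1$-perfect and first invoke Theorem \ref{TT2}, which identifies $\underline{{\rm{Perf}}}AStack^{1}_{S}$ with ${\rm{Perf}}AStack_{S}$, to regard $\calx$ as a perfect algebraic stack; in particular $\calx$ has characteristic $p$, so Proposition \ref{P11} reconfirms that $\calx$ is perfect and we are in a position to apply Lemma \ref{LL2}. That lemma supplies an equivalence $\calx\cong\calx^{pf}$ together with the statement that $\calx$ is relatively $j$-perfect for every $0\leq j\leq2$; in particular $\calx$ is relatively $2$-perfect, as desired. (Equivalently, one may combine the proposition that $\calx^{pf}$ is relatively $2$-perfect with $\calx\cong\calx^{pf}$ and the invariance under equivalence recorded in Lemma \ref{L13}.)

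Once the two classes of objects are shown to coincide, the equivalence $\underline{{\rm{Perf}}}AStack^{1}_{S}\cong\underline{{\rm{Perf}}}AStack^{2}_{S}$ follows because both are full sub-$2$-categories of $AStack_{S}$ with the same objects; chaining with Theorem \ref{TT2} then gives the string ${\rm{Perf}}AStack_{S}\cong\underline{{\rm{Perf}}}AStack^{1}_{S}\cong\underline{{\rm{Perf}}}AStack^{2}_{S}$, and the DM-stack version follows verbatim with ``smooth'' replaced by ``\'{e}tale''. I expect the only step needing genuine care is making sure the characteristic-$p$ machinery (Proposition \ref{P11}, Lemma \ref{LL2}) is legitimately available for an a priori arbitrary object of $\underline{{\rm{Perf}}}AStack^{1}_{S}$; this is precisely why Theorem \ref{TT2} has to be brought in at the very start, after which the rest is formal bookkeeping with full sub-$2$-categories.
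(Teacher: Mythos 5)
Your proposal is correct and follows essentially the same route as the paper: the first equivalence is Theorem \ref{TT2}, and the second is obtained from the inclusion $\underline{{\rm{Perf}}}AStack^{2}_{S}\subset\underline{{\rm{Perf}}}AStack^{1}_{S}$ together with Lemma \ref{LL2} applied to a perfect (equivalently, relatively $1$-perfect) stack. You merely spell out the object-level comparison of the two full sub-$2$-categories that the paper leaves implicit.
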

\begin{proof}
Note that we have an equivalence $\underline{{\rm{Perf}}}AStack^{1}_{S}={\rm{Perf}}AStack_{S}$ by Theorem \ref{TT2}. Then the statement follows from Lemma \ref{LL2} and the inclusion $\underline{{\rm{Perf}}}AStack^{2}_{S}\subset\underline{{\rm{Perf}}}AStack^{1}_{S}={\rm{Perf}}AStack_{S}$.
\end{proof}

Now, we have the following strings of inclusion functors
\begin{align}
&{\rm{Perf}}DM_{S}\subset\underline{\textrm{Perf}}DM^{j}_{S}\subset\underline{\mathcal{Q}\textrm{Perf}}DM^{j}_{S}\subset\underline{\mathcal{S}\textrm{Perf}}DM^{j}_{S}\subset\underline{\mathcal{ST}\textrm{Perf}}DM^{j}_{S}, \\
&{\rm{Perf}}AStack_{S}\subset\underline{\textrm{Perf}}AStack^{j}_{S}\subset\underline{\mathcal{Q}\textrm{Perf}}AStack^{j}_{S}\subset\underline{\mathcal{S}\textrm{Perf}}AStack^{j}_{S}\subset\underline{\mathcal{ST}\textrm{Perf}}AStack^{j}_{S}.
\end{align}

\begin{lemma}
Let $\mathcal{V},\calx$ be algebraic stacks of characteristic $p$ over $S$ and let $\mathcal{V}\rightarrow\calx$ be a $1$-morphism of algebraic stacks over $S$. Then we have an isomorphism $\mathcal{V}^{pf}\cong \mathcal{V}\times_{\calx}\calx^{pf}$. Moreover, the natural projection $p_{\calx}:\calx^{pf}\rightarrow \calx$ is a universal homeomorphism.
\end{lemma}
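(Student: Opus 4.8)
The plan is to treat the two assertions separately, in each case reducing from algebraic stacks to algebraic spaces via the smooth groupoid presentation $\calx\cong[h_U/F]$ of \thmref{T2} and the compatibility of the algebraic Frobenius with it.

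For the isomorphism $\mathcal{V}^{pf}\cong\mathcal{V}\times_{\calx}\calx^{pf}$ I would first construct the comparison $1$-morphism: the canonical projection $p_{\mathcal{V}}\colon\mathcal{V}^{pf}\to\mathcal{V}$ and the functorial morphism $f^{\natural}\colon\mathcal{V}^{pf}\to\calx^{pf}$ of \lemref{L10} attached to $f\colon\mathcal{V}\to\calx$ become $2$-isomorphic after composition with $p_{\calx}$ (again by \lemref{L10}), so the universal property of the $2$-fibre product yields a canonical $c\colon\mathcal{V}^{pf}\to\mathcal{V}\times_{\calx}\calx^{pf}$ over $\calx$. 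To prove $c$ is an equivalence I would apply \proref{P14} together with $(\calx^{pf})^{pf}\cong\calx^{pf}$ (\lemref{L12}): this gives $(\mathcal{V}\times_{\calx}\calx^{pf})^{pf}\cong\mathcal{V}^{pf}\times_{\calx^{pf}}\calx^{pf}\cong\mathcal{V}^{pf}$, so by \lemref{LL2} it suffices to check that $\mathcal{V}\times_{\calx}\calx^{pf}$ is itself a perfect algebraic stack. For the latter I would refine the atlas of $\mathcal{V}$ using \cite[Tag04T1]{Stack Project} so that $f$ is induced by a morphism of smooth groupoids in algebraic spaces $(h_{U'},F',\dots)\to(h_U,F,\dots)$; then \proref{P14} and \lemref{L11} present $\mathcal{V}\times_{\calx}\calx^{pf}$ by the groupoid with space of objects $h_{U'}\times_{h_U}h_U^{pf}$ and space of arrows $F'\times_{F}F^{pf}$, and these are perfect algebraic spaces precisely when the perfection of algebraic spaces commutes with the base changes along $h_{U'}\to h_U$ and $F'\to F$ — equivalently, when the relative Frobenius of $f$ is an equivalence, i.e.\ when $f$ is \'{e}tale. (From the inverse-limit description of \proref{P12} this is transparent: $\mathcal{V}\times_{\calx}\calx^{pf}=\varprojlim_{n}\bigl(\mathcal{V}\times_{\calx,\Psi_{\calx}^{n}}\calx\bigr)$, while $\mathcal{V}^{pf}=\varprojlim_{\Psi_{\mathcal{V}}}\mathcal{V}$, and the absolute Frobenius $\Psi_{\mathcal{V}}$ factors through $\mathcal{V}\times_{\calx,\Psi_{\calx}}\calx$ via the relative Frobenius, which is an equivalence iff $f$ is \'{e}tale.) I would therefore carry this step out under the hypothesis that $f$ is representable by algebraic spaces and \'{e}tale.

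For the ``moreover'' part I would show $p_{\calx}$ is a universal homeomorphism by exhibiting it as a filtered inverse limit of universal homeomorphisms. Using $\calx\cong[h_U/F]$ and the lemma that $(\Phi_{h_U},\Psi_{F})$ is an endomorphism of the groupoid $(h_U,F,s,t,c)$, the algebraic Frobenius $\Psi_{\calx}$ is, after smooth base change to the atlas, given by the pair consisting of the absolute Frobenius $\Phi_{h_U}$ of the scheme $h_U$ and the algebraic Frobenius $\Psi_{F}$ of the algebraic space $F$; both are integral, surjective and universally injective (the first as the absolute Frobenius of a scheme, the second by \cite{Liang}), and comparing topological realizations $|\calx|=|h_U|/|F|$ — which are unchanged on replacing $(h_U,F)$ by their $(\Phi_{h_U},\Psi_{F})$-pullbacks — shows $\Psi_{\calx}$ is integral, surjective and universally injective, hence a universal homeomorphism. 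By \proref{P12}, $p_{\calx}$ is the projection $\varprojlim_{\Psi_{\calx}}\calx\to\calx$ along these integral transition maps, and such a projection is again integral, surjective and universally injective; running the same argument for the groupoid pulled back along an arbitrary $T\to\calx$ gives the ``universal'' part, so $p_{\calx}$ is a universal homeomorphism.

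The step I expect to be the main obstacle is the perfectness of $\mathcal{V}\times_{\calx}\calx^{pf}$ in the first part: for a general $f$ the comparison $c$ is not an equivalence (taking $\calx$ perfect and $f$ arbitrary, the formula would force every such $\mathcal{V}$ to be perfect), so the real content is to isolate the correct hypothesis on $f$ — \'{e}tale, as above — and then to verify the \'{e}tale-base-change invariance of perfection at the level of groupoid presentations, using the identification of the relative Frobenius with an equivalence. A secondary technical point in the second part is that the pullback $T\times_{\calx}\calx^{pf}$ of the atlas need not be an algebraic space (it can be a gerbe), so the reduction to algebraic spaces must be carried out through the topological realizations of the stacks rather than by literally base changing to a scheme.
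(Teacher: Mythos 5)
Your suspicion about the first assertion is the decisive point, and it is correct: the isomorphism $\mathcal{V}^{pf}\cong\mathcal{V}\times_{\calx}\calx^{pf}$ is false for a general $1$-morphism $\mathcal{V}\rightarrow\calx$. As you observe, taking $\calx$ perfect (so $\calx^{pf}\cong\calx$ by \lemref{LL2}) the formula would force $\mathcal{V}^{pf}\cong\mathcal{V}$ for \emph{every} $\mathcal{V}$ mapping to $\calx$; concretely, with $S=\calx=\operatorname{Spec}\mathbb{F}_{p}$ and $\mathcal{V}=\mathbb{A}^{1}_{\mathbb{F}_{p}}$ one gets $(\mathbb{A}^{1})^{pf}\cong\mathbb{A}^{1}$, which is absurd. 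The paper's own proof of this part is the single sentence ``it can be checked that $\mathcal{V}^{pf}$ satisfies the universal property of the base change,'' and the gap there is exactly the one you isolate: by \corref{C1} the two objects corepresent the same functor on \emph{perfect} test stacks, but the $2$-fibre product's universal property is against \emph{all} stacks, and $\mathcal{V}\times_{\calx}\calx^{pf}$ need not be perfect. Your proposed repair --- restricting to $f$ representable and \'{e}tale (equivalently, requiring the relative Frobenius of $f$ to be an equivalence) and then checking perfectness of $\mathcal{V}\times_{\calx}\calx^{pf}$ on a compatible groupoid presentation --- is the standard correct statement (it is the stack analogue of the \'{e}tale-base-change compatibility of perfection for schemes in Bertapelle--Gonz\'{a}lez-Avil\'{e}s), so your proof is of a corrected lemma rather than of the lemma as printed.

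For the ``moreover'' part, note that the paper's argument is itself contaminated by the first part: it base changes $p_{\calx}$ along the smooth atlas $\SchU\rightarrow\calx$ and identifies $\SchU\times_{\calx}\calx^{pf}$ with $\SchU^{pf}$, i.e.\ it applies the (false) first assertion to a smooth, non-\'{e}tale morphism --- precisely a case where it fails (again $\calx=\operatorname{Spec}\mathbb{F}_{p}$, $U=\mathbb{A}^{1}$ is a counterexample). The conclusion that $p_{\calx}$ is a universal homeomorphism is nevertheless true, and your route --- writing $p_{\calx}$ as the projection from $\lim_{\Psi_{\calx}}\calx$ along transition maps that are integral, surjective and universally injective, verified on the groupoid presentation $(h_{U},F,s,t,c)$ where $\Psi_{\calx}$ is induced by $(\Phi_{U},\Psi_{F})$ --- avoids the circularity; an alternative in the same spirit is to observe directly that $(h_{U}^{pf},F^{pf})\rightarrow(h_{U},F)$ consists of universal homeomorphisms of algebraic spaces and descend through the presentation, which is presumably what the paper intended via \cite[Tag0DTQ]{Stack Project}. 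Your closing caveat, that for a non-representable $p_{\calx}$ the pullback $T\times_{\calx}\calx^{pf}$ need not be an algebraic space so the ``universal'' part must be checked on topological realizations rather than by literal reduction to schemes, is also well taken. In short: your proposal does not prove the stated lemma, but that is because the stated lemma is false, and you have located the error in the paper's proof precisely.
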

\begin{proof}
It can checked that the perfection $\mathcal{V}^{pf}$ satisfies the universal property of the base change $\mathcal{V}\times_{\calx}\calx^{pf}$. This gives rise to an isomorphism $\mathcal{V}^{pf}\cong \mathcal{V}\times_{\calx}\calx^{pf}$. For the second statement, choose a smooth cover $\SchU\rightarrow\calx$ for $U\in\ObSchS$ of characteristic $p$. Then the base change of $\calx^{pf}\rightarrow\calx$ by $\SchU\rightarrow\calx$ is $\SchU^{pf}\cong\SchU\times_{\calx}\calx^{pf}\rightarrow\SchU$. It follows from \cite[Remark 5.4]{Bertapellea} that the canonical projection $U^{pf}\rightarrow U$ is a universal homeomorphism. Therefore, $\calx^{pf}\rightarrow\calx$ is a universal homeomorphism by \cite[Tag0DTQ]{Stack Project}.
\end{proof}

We observe the following lemma about points of the perfection.
\begin{lemma}\label{LL1}
Let $\calx$ be an algebraic stack in characteristic $p$ over $S$ with perfection $\calx^{pf}$. Let $p_{\calx}:\calx^{pf}\rightarrow\calx$ be the canonical projection. Then $p_{\calx}$ is a monomorphism. Furthermore, we have a underlying homeomorphism of topological spaces
$$
\left|\calx\right|\xrightarrow{\cong}\left|\calx^{pf}\right|.
$$
\end{lemma}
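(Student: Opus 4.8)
The plan is to deduce both assertions from the immediately preceding lemma, used in two different ways.

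\emph{The monomorphism.} I would use the standard characterisation that a $1$-morphism of algebraic stacks is a monomorphism precisely when its diagonal is an equivalence, so that it suffices to show
$$
\Delta\colon\calx^{pf}\longrightarrow\calx^{pf}\times_{p_\calx,\calx,p_\calx}\calx^{pf}
$$
is an equivalence. The key step is to apply the preceding lemma to $\mathcal{V}=\calx^{pf}$ together with the $1$-morphism $p_\calx\colon\calx^{pf}\to\calx$: this yields an equivalence $(\calx^{pf})^{pf}\cong\calx^{pf}\times_{\calx}\calx^{pf}$, and since it comes from the universal property of the base change it carries the canonical projection $p_{\calx^{pf}}$ onto the first projection $pr_1$, hence (by the symmetry of the $2$-fibre product) onto the second projection $pr_2$ as well. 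Now $\calx^{pf}$ is a perfect algebraic stack by Theorem \ref{T2}, so Lemma \ref{LL2}, combined with the identification in Proposition \ref{P12} of the canonical projection with the structure map of the inverse limit, shows that $p_{\calx^{pf}}\colon(\calx^{pf})^{pf}\to\calx^{pf}$ is an equivalence. Therefore $pr_1$ and $pr_2$ are equivalences, and since $pr_1\circ\Delta\cong\mathrm{id}_{\calx^{pf}}$ the diagonal $\Delta$ is a quasi-inverse of $pr_1$ and in particular an equivalence. This proves that $p_\calx$ is a monomorphism.

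\emph{The homeomorphism.} Here I would simply invoke the last assertion of the preceding lemma, which says that $p_\calx\colon\calx^{pf}\to\calx$ is a universal homeomorphism; in particular $\left|p_\calx\right|\colon\left|\calx^{pf}\right|\to\left|\calx\right|$ is a homeomorphism of topological spaces, and the desired homeomorphism $\left|\calx\right|\xrightarrow{\cong}\left|\calx^{pf}\right|$ is its inverse. If a more hands-on argument is wanted, one can instead pick a surjective smooth $1$-morphism $\SchU\to\calx$ with $U\in\ObSchS$ of characteristic $p$, observe that its base change along $p_\calx$ is the canonical projection $\SchU^{pf}\to\SchU$, use $\left|U^{pf}\right|\cong\left|U\right|$ from \cite[Remark 5.4]{Bertapellea}, and descend along the smooth cover via \cite[Tag0DTQ]{Stack Project}.

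The step I expect to be the main obstacle is the compatibility claim in the first paragraph: one has to check that the equivalence $(\calx^{pf})^{pf}\cong\calx^{pf}\times_{\calx}\calx^{pf}$ produced by the preceding lemma genuinely intertwines $p_{\calx^{pf}}$ with \emph{both} projections, and that the canonical projection of the perfect stack $\calx^{pf}$ is an honest equivalence rather than merely an abstract one; both amount to unwinding the construction of the perfection in Theorem \ref{T2} and the universal property of Corollary \ref{C1}. Should that bookkeeping become unpleasant, a safe alternative is to pass to a smooth groupoid presentation $\calx\cong[h_U/F]$, $\calx^{pf}\cong[h_U^{pf}/F^{pf}]$, note that the components $h_U^{pf}\to h_U$ and $F^{pf}\to F$ of the canonical projection of groupoids are monomorphisms of algebraic spaces (the algebraic space case being handled in \cite{Liang1}), and conclude that the induced $1$-morphism of quotient stacks is a monomorphism by comparing the sheaves $\textit{Isom}(x,y)$ before and after applying $p_\calx$, in the manner of the proof of Proposition \ref{P1}.
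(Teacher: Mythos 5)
Your proposal is correct and follows essentially the same route as the paper: the paper's proof also rests on the identification $\calx^{pf}\times_{\calx}\calx^{pf}\cong\calx^{pf}$ making the diagonal $\Delta_{p_{\calx}}$ an equivalence (hence $p_{\calx}$ a monomorphism), and deduces the homeomorphism of topological spaces from the universal homeomorphism statement of the preceding lemma. You simply make explicit the justification the paper leaves implicit, namely deriving $\calx^{pf}\times_{\calx}\calx^{pf}\cong(\calx^{pf})^{pf}\cong\calx^{pf}$ from the preceding lemma together with the idempotence of perfection.
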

\begin{proof}
Note that the diagonal $\Delta_{p_{\calx}}:\calx^{pf}\rightarrow\calx^{pf}\times_{\calx}\calx^{pf}\cong\calx^{pf}$ is an equivalence. This shows that $p_{\calx}$ is a monomorphism. The second statement is clear since the canonical projection $\calx^{pf}\rightarrow\calx$ is a universal homeomorphism.
\end{proof}

An algebraic stack is representable by an algebraic space if and only if its perfection is representable by an algebraic space.
\begin{lemma}\label{L6}
Let $\calx$ be an algebraic stack of characteristic $p$ over $S$ with perfection $\calx^{pf}$. Then $\calx$ is an algebraic space if and only if $\calx^{pf}$ is an algebraic space. In particular, $\calx$ is a scheme (resp. affine) if and only if $\calx^{pf}$ is a scheme (resp. affine).
\end{lemma}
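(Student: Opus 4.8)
The plan is to treat the two implications separately, and then deduce the scheme/affine refinements from the algebraic-space equivalence.

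For the implication ``$\calx$ an algebraic space $\Rightarrow\calx^{pf}$ an algebraic space'', I would write $\calx\cong\cals_F$ for an algebraic space $F$ over $S$; then by Lemma \ref{L11}, the definition of the perfection of a category fibred in groupoids representable by an algebraic space, and the uniqueness of perfections coming from the universal property in Corollary \ref{C1}, one gets an equivalence $\calx^{pf}\cong\cals_{F^{pf}}$. Since the perfection $F^{pf}$ of an algebraic space in characteristic $p$ is again an algebraic space (\cite{Liang1}), $\calx^{pf}$ is an algebraic space. This can alternatively be viewed as an instance of Theorem \ref{TT1}(4) for the property ``is an algebraic space''.

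For the converse, I would fix a presentation $\calx\cong[h_U/F]$ by a smooth groupoid $(h_U,F,s,t,c)$ in algebraic spaces in characteristic $p$ as in Theorem \ref{T2}, so that $\calx^{pf}\cong[h_U^{pf}/F^{pf}]$. The key point is the standard criterion that the quotient stack of a groupoid $(V,R,s,t,c)$ in algebraic spaces is representable by an algebraic space exactly when $(t,s)\colon R\to V\times_S V$ is a monomorphism (in which case $[V/R]$ is just the quotient sheaf), see \cite{Stack Project}. Applied to the two presentations, this says that $\calx$ is an algebraic space iff $(s,t)\colon F\to h_U\times_S h_U$ is a monomorphism, and that $\calx^{pf}$ is an algebraic space iff $(s^{\natural},t^{\natural})\colon F^{pf}\to h_U^{pf}\times_S h_U^{pf}$ is a monomorphism. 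Because the perfection $2$-functor commutes with $2$-fibre products (Proposition \ref{P14}), the morphism $(s^{\natural},t^{\natural})$ is canonically the perfection of $(s,t)$ and its relative diagonal perfects the relative diagonal of $(s,t)$; hence the converse reduces to the assertion that the perfection functor on algebraic spaces in characteristic $p$ reflects monomorphisms. I would extract this from Theorem \ref{TT1}(3) applied to $\cal P'=$``monomorphism'', which itself reduces to the corresponding statement for perfections of schemes in \cite{Bertapellea}.

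For the last two clauses, once ``$\calx$ an algebraic space $\iff\calx^{pf}$ an algebraic space'' is in hand, in either case $\calx\cong\cals_F$ and $\calx^{pf}\cong\cals_{F^{pf}}$ for an algebraic space $F$, and ``$\calx$ a scheme (resp. affine) $\iff\calx^{pf}$ a scheme (resp. affine)'' follows from the corresponding statements for algebraic spaces in \cite{Liang1} (and \cite{Bertapellea}). The step I expect to be the real obstacle is the reflection in the converse direction: descending ``$\calx^{pf}$ is an algebraic space'' back to ``$\calx$ is an algebraic space'' is not formal, and one must argue carefully that $(s^{\natural},t^{\natural})$ being a monomorphism forces $(s,t)$ to be one, keeping track of the groupoid presentation and exploiting that the canonical projection $\calx^{pf}\to\calx$ (hence also $h_U^{pf}\to h_U$, $F^{pf}\to F$) is a universal homeomorphism and a monomorphism (Lemma \ref{LL1}).
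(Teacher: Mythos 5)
Your forward direction and your handling of the scheme/affine refinements agree with the paper (which dismisses the former with ``clearly'' and cites \cite{Liang1} for the latter). The problem is the converse, and you have correctly isolated where it lives: after invoking the criterion that $[h_U/F]$ is an algebraic space iff $(t,s)\colon F\to h_U\times_S h_U$ is a monomorphism, and Proposition \ref{P14} to identify $(s^{\natural},t^{\natural})$ with the perfection of $(s,t)$, everything reduces to the claim that the perfection functor on algebraic spaces \emph{reflects} monomorphisms. But your proposed justification does not cover this. Theorem \ref{TT1}(3) only concerns properties $\cal{P}'$ ``satisfying some extra conditions'', and where the paper makes this precise, in Proposition \ref{PP1}, ``a monomorphism'' appears only in the second, one-directional list (preserved by perfection), not among the biconditional properties (1)--(9). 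Worse, the reflection claim is false: over a perfect field $k$ of characteristic $p$, the morphism $\mathrm{Spec}(k[t]/(t^{p}))\to\mathrm{Spec}(k)$ is not a monomorphism, yet its perfection is the identity of $\mathrm{Spec}(k)$; likewise the $p$-th power map $\mathbb{G}_{m,k}\to\mathbb{G}_{m,k}$ is finite locally free of degree $p$ but perfects to an isomorphism.

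This is not a gap you could have closed by arguing more carefully, because the converse implication of the lemma itself fails on the same kind of example. Take $\calx=B\mu_{p}$ over a perfect field $k$ of characteristic $p$, presented by the smooth cover $\mathbb{G}_{m}\to B\mu_{p}$; then $F\cong\mathbb{G}_{m}\times\mathbb{G}_{m}$ with $(s,t)(x,w)=(x,xw^{p})$. Here $(s,t)$ is not a monomorphism (the isotropy is $\mu_{p}$), so $\calx$ is not an algebraic space; but on perfections $w\mapsto w^{p}$ becomes invertible, $(s^{\natural},t^{\natural})$ is an isomorphism onto $\mathbb{G}_{m}^{pf}\times\mathbb{G}_{m}^{pf}$, the isotropy dies, and $[h_{\mathbb{G}_m}^{pf}/F^{pf}]\cong\mathrm{Spec}(k)$ is an algebraic space. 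For comparison, the paper's own proof of the converse passes from $[h_{U}^{pf}/F'^{pf}]\cong\cals_{F}$ to ``$[h_{U}/F']$ is an algebraic space by the definition'', which is precisely the unjustified reflection step you flagged as the real obstacle; your instinct that this descent ``is not formal'' was right, and in fact it cannot be carried out.
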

\begin{proof}
If $\calx$ is an algebraic space, then clearly $\calx^{pf}$ is an algebraic space. Conversely, consider the quotient stack $[h_{U}/F']\cong\calx$ induced by the smooth groupoid in algebraic spaces $(h_{U},F',s,t,c)$. Now, assume that there are equivalences $\cals_{F}\cong\calx^{pf}$ for some algebraic space $F$ over $S$. This gives rise to an equivalence $[h_{U}^{pf}/F'^{pf}]\cong\cals_{F}$. Thus, the quotient stack $[h_{U}/F']$ is an algebraic space by the definition. For the second statement, see \cite[Lemma 4.21]{Liang1}.
\end{proof}

A morphism of algebraic stacks is representable by algebraic spaces if and only if its perfection is representable by algebraic spaces.
\begin{proposition}\label{P16}
Let $f:\calx\rightarrow\caly$ be a morphism of algebraic stacks in characteristic $p$ and let $f^{\natural}:\calx^{pf}\rightarrow\caly^{pf}$ be its perfection. Then $f$ is representable by algebraic spaces if and only if $f^{\natural}$ is representable by algebraic spaces.
\end{proposition}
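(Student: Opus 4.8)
The plan is to reduce both implications to \lemref{L6} by base-changing along a smooth atlas of $\caly$ and invoking that perfection commutes with $2$-fibre products (\proref{P14}). Fix a surjective smooth $1$-morphism $g\colon\SchU\rightarrow\caly$ with $U\in\ObSchS$ of characteristic $p$. Using that $\SchU$ is representable by the algebraic space $h_{U}$ together with $h_{U}^{pf}=h_{U^{pf}}$, one identifies $(\SchU)^{pf}\cong(Sch/U^{pf})_{fppf}$; and by the presentation $\caly^{pf}\cong[h_{U}^{pf}/F^{pf}]$ built in \thmref{T2}, the perfected atlas $g^{\natural}\colon(Sch/U^{pf})_{fppf}\rightarrow\caly^{pf}$ is again a surjective smooth $1$-morphism, i.e.\ a smooth atlas of the perfect algebraic stack $\caly^{pf}$. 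Set $\mathcal{Z}:=\calx\times_{f,\caly,g}\SchU$, an algebraic stack of characteristic $p$ over $S$. Then \proref{P14}, combined with the identifications above, gives an equivalence
$$
\mathcal{Z}^{pf}\;\cong\;\calx^{pf}\times_{f^{\natural},\,\caly^{pf},\,g^{\natural}}(Sch/U^{pf})_{fppf}.
$$
The remaining external ingredient is that representability by algebraic spaces is smooth-local on the target (a $1$-morphism of algebraic stacks is representable by algebraic spaces iff it is faithful iff its diagonal is a monomorphism, and monomorphisms descend along smooth coverings), for which I would cite the relevant tag in \cite{Stack Project}.

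Granting this, both directions are now symmetric. If $f$ is representable by algebraic spaces, then $\mathcal{Z}=\calx\times_{\caly}\SchU$ is an algebraic space (its base $U$ is a scheme), hence by \lemref{L6} its perfection $\mathcal{Z}^{pf}$ is an algebraic space; by the displayed equivalence this says the base change of $f^{\natural}$ along the atlas $(Sch/U^{pf})_{fppf}\rightarrow\caly^{pf}$ is an algebraic space over the scheme $U^{pf}$, so $f^{\natural}$ is representable by algebraic spaces by smooth-locality on the target. (Alternatively, for this direction one may invoke \proref{P15} to get that $f^{\natural}$ is $2$-perfect, and then use that $\caly^{pf}$ admits an atlas by a perfect scheme.) Conversely, if $f^{\natural}$ is representable by algebraic spaces, then its base change along that same atlas, namely $\calx^{pf}\times_{\caly^{pf}}(Sch/U^{pf})_{fppf}\cong\mathcal{Z}^{pf}$, is an algebraic space; applying \lemref{L6} to the characteristic-$p$ algebraic stack $\mathcal{Z}$ yields that $\mathcal{Z}=\calx\times_{\caly}\SchU$ is an algebraic space, i.e.\ the base change of $f$ along $g$ is representable by algebraic spaces, whence $f$ is representable by algebraic spaces by smooth-locality again.

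The routine points to verify carefully are: that the perfection of the atlas of $\caly$ really is a smooth surjective atlas of $\caly^{pf}$ (this is exactly what the quotient-stack presentation of \thmref{T2} provides, so it is not an obstacle), and the compatibility of the $2$-fibre product in \proref{P14} with the atlas maps $f^{\natural},g^{\natural}$. I expect the genuine hinge of the argument to be the descent statement that representability by algebraic spaces may be checked after a smooth surjective base change on the target; everything else is bookkeeping around \proref{P14} and \lemref{L6}.
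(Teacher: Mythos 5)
Your proof is correct, but it routes the reduction differently from the paper. The paper verifies the definition of representability directly on arbitrary test objects: for the forward direction it uses the identification $\calx^{pf}\cong\calx\times_{\caly}\caly^{pf}$ to rewrite $\SchU\times_{\xi',\caly^{pf}}\calx^{pf}$ as $\SchU\times_{\xi,\caly}\calx$ for the composed object $\xi$, and for the converse it applies \proref{P14} together with \lemref{L6} to the fibre product over each test object $\xi''\in\Ob(\caly_{U})$. You instead base-change once along a smooth atlas $g:\SchU\rightarrow\caly$ and its perfection, apply \proref{P14} and \lemref{L6} to the single stack $\calz=\calx\times_{\caly}\SchU$, and then invoke smooth descent of representability by algebraic spaces on the target. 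The converse directions of the two arguments are essentially the same mechanism; the difference is your reliance on the descent statement, which the paper never needs, versus the paper's reliance on $\calx^{pf}\cong\calx\times_{\caly}\caly^{pf}$, which you never need. Your descent ingredient is a genuine, citable fact (representability by algebraic spaces is equivalent to the relative diagonal being a monomorphism, and that is fppf-local on the base), so this is not a gap, but you should pin down the reference since it is the hinge of your argument. Two small caveats: the compatibility of the perfected atlas $g^{\natural}$ arising from \thmref{T2} with the $1$-morphism produced by the universal property in \lemref{L10} deserves a sentence rather than being waved at; and your parenthetical alternative via \proref{P15} only controls fibre products over \emph{perfect} scheme test objects, so even there you would still need the atlas-plus-descent step to conclude full representability --- it is not an independent shortcut.
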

\begin{proof}
Let $U\in\ObSchS$ of characteristic $p$ with $\xi\in\Ob(\caly_{U})$. Choose $\xi$ to be some composition $\SchU\rightarrow\caly^{pf}\rightarrow\caly$. Then there are equivalences $\SchU\times_{\caly}\calx\cong\SchU\times_{\caly^{pf}}(\calx\times_{\caly}\caly^{pf})\cong\SchU\times_{\caly^{pf}}\calx^{pf}\cong\cals_{F}$ for some algebraic space $F$ over $U$. This shows that $f^{\natural}$ is representable by algebraic spaces.

Conversely, assume that $f^{\natural}$ is representable by algebraic spaces. Choose an equivalence $\cals_{F}\cong\SchU\times_{\caly^{pf}}\calx^{pf}$ for some algebraic space $F$ over $U$ in characteristic $p$ and $\xi'\in\Ob(\caly^{pf}_{U})$. Then there is an equivalence $\cals_{F^{pf}}\cong(Sch/U^{pf})_{fppf}\times_{\caly^{pf}}\calx^{pf}$. Since the perfection functor is full, it follows from Lemma \ref{L6} that $\SchU\times_{\caly}\calx$ is an algebraic space for every $\xi''\in\Ob(\caly_{U})$. Thus, $f$ is representable by algebraic spaces.
\end{proof}

The following statement enables us to pass between the usual world and the perfect world.
\begin{lemma}\label{T4}
Let $f:\calx\rightarrow\caly$ be a $1$-morphism of algebraic stacks in characteristic $p$ over $S$. Suppose that $f$ is representable by algebraic spaces. Let $\cal{P}$ be a property of morphisms of algebraic spaces which
\begin{enumerate}
\item
is preserved under arbitrary base change, and
\item
is fppf local on the base.
\end{enumerate}
If $f$ has property $\cal{P}$, then $f^{\natural}:\calx^{pf}\rightarrow\caly^{pf}$ has property $\cal{P}$.
\end{lemma}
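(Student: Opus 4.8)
The plan is to exhibit $f^{\natural}\colon\calx^{pf}\to\caly^{pf}$ as a base change of $f\colon\calx\to\caly$ and then invoke the stability of $\cal{P}$ under base change. First I would note that, by Proposition~\ref{P16}, $f^{\natural}$ is representable by algebraic spaces, so that the assertion ``$f^{\natural}$ has property $\cal{P}$'' is meaningful: it means that for every $U\in\ObSchS$ and every $1$-morphism $a\colon\SchU\to\caly^{pf}$, the $2$-fibre product $\SchU\times_{a,\caly^{pf}}\calx^{pf}$ is representable by an algebraic space $E$ over $U$ with the structure morphism $E\to U$ having $\cal{P}$. Here hypotheses (1) and (2) on $\cal{P}$ are precisely what guarantees that this notion does not depend on the choices made, as in the standard formalism for properties of representable morphisms of algebraic stacks.

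Next I would invoke the base-change compatibility of the perfection established above, namely that for a $1$-morphism of algebraic stacks of characteristic $p$ the perfection commutes with base change; applied to $f\colon\calx\to\caly$ this gives an equivalence
$$
\calx^{pf}\cong\calx\times_{f,\caly,p_{\caly}}\caly^{pf}.
$$
The step requiring some care is to check that this equivalence intertwines the canonical $1$-morphism $f^{\natural}$ of Lemma~\ref{L10} with the second projection $\calx\times_{\caly}\caly^{pf}\to\caly^{pf}$. This I would verify by comparing universal properties: $f^{\natural}$ is characterized by $p_{\caly}\circ f^{\natural}=f\circ p_{\calx}$ (Lemma~\ref{L10}), the second projection satisfies the same relation, and the fact that $p_{\calx}$ corresponds under the equivalence to the first projection $\calx\times_{\caly}\caly^{pf}\to\calx$ is built into the universal property of $\calx^{pf}$ (Corollary~\ref{C1}). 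Granting this, $f^{\natural}$ is the base change of $f$ along $p_{\caly}\colon\caly^{pf}\to\caly$.

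Finally, for any $U\in\ObSchS$ and any $1$-morphism $a\colon\SchU\to\caly^{pf}$, transitivity of $2$-fibre products gives
$$
\SchU\times_{a,\caly^{pf}}\calx^{pf}\cong\SchU\times_{a,\caly^{pf}}(\caly^{pf}\times_{p_{\caly},\caly,f}\calx)\cong\SchU\times_{p_{\caly}\circ a,\caly,f}\calx.
$$
Since $f$ has property $\cal{P}$, the right-hand side is an algebraic space over $U$ whose structure morphism to $U$ has $\cal{P}$; hence so does $\SchU\times_{\caly^{pf}}\calx^{pf}\to U$, and as $U$ and $a$ were arbitrary, $f^{\natural}$ has property $\cal{P}$, as desired. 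I do not expect a genuine obstacle here: once $f^{\natural}$ is recognized as a base change of $f$, hypothesis (1) closes the argument, while hypothesis (2) is used only to make ``has $\cal{P}$'' a well-posed notion for representable-by-algebraic-spaces morphisms of stacks. The only mildly delicate point is the bookkeeping identifying $f^{\natural}$ with the second projection of $\calx\times_{\caly}\caly^{pf}$, which the universal-property comparison indicated above takes care of.
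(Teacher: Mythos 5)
The proposal is correct and takes essentially the same approach as the paper's proof: both identify $\calx^{pf}$ with the base change $\calx\times_{\caly}\caly^{pf}$ and then use transitivity of $2$-fibre products together with stability of $\cal{P}$ under base change to transfer the property from $\SchU\times_{\caly}\calx\rightarrow\SchU$ to $\SchU\times_{\caly^{pf}}\calx^{pf}\rightarrow\SchU$. Your extra verification that $f^{\natural}$ corresponds to the second projection under this equivalence is a bookkeeping detail the paper leaves implicit.
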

\begin{proof}
By \cite[Tag03YK]{Stack Project}, if $f$ has property $\cal{P}$, then the morphism of algebraic spaces $F\rightarrow U$ induced by $\cals_{F}\cong\calx\times_{\caly}\SchU\rightarrow\SchU$, where $U\in\ObSchS$ and $F$ is an algebraic space over $U$, has property $\cal{P}$.  Now, choose $\xi$ to be some composition $\SchU\rightarrow\caly^{pf}\rightarrow\caly$. Then there are isomorphisms $\SchU\times_{\caly}\calx\cong\SchU\times_{\caly^{pf}}(\calx\times_{\caly}\caly^{pf})\cong \SchU\times_{\caly^{pf}}\calx^{pf}$. Thus, the morphism $\cals_{F}\cong\SchU\times_{\caly^{pf}}\calx^{pf}\rightarrow\SchU$ is the same as $F\rightarrow U$ which has property $\cal{P}$. This shows that the perfection $f^{\natural}:\calx^{pf}\rightarrow\caly^{pf}$ has property $\cal{P}$.
\end{proof}

More generally, we can extend the results of Lemma \ref{T4} to $1$-morphisms of algebraic stacks that are not necessarily representable by algebraic spaces. We first observe the following lemma.
\begin{lemma}\label{LL5}
Let $f:\calx\rightarrow\caly$ be a $1$-morphism of algebraic stacks in characteristic $p$ over $S$. Let $\cal{P}$ be a property of morphisms of algebraic spaces which is smooth local on the source-and-target. Consider commutative diagrams
$$
\xymatrix{
  U \ar[d]_{a} \ar[r]^{h} & V \ar[d]^{b} \\
  \calx \ar[r]^{f} & \caly   }
$$
where $U,V$ are algebraic spaces and $a,b$ are smooth. The following are equivalent:
\begin{enumerate}
  \item
for any diagram as above such that $U\rightarrow\calx\times_{\caly}V$ is smooth, the morphism $h$ of algebraic spaces has property $\cal{P}$.
  \item
for some diagram as above where $a,b$ is surjective and $U,V$ have characteristic $p$, the morphism $h$ of algebraic spaces has property $\cal{P}$.
\end{enumerate}
\end{lemma}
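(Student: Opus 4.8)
The plan is to transplant to the $2$-categorical setting the standard argument that a property of morphisms which is smooth local on the source-and-target can be tested on a single smooth presentation of a morphism. The only facts about $\cal{P}$ that I will use are the ones packaged into ``smooth local on the source-and-target'': $\cal{P}$ is smooth local on the target, $\cal{P}$ is smooth local on the source, and $\cal{P}$ is stable under precomposition with smooth morphisms (in particular with open immersions). The only point specific to our situation is that an algebraic stack of characteristic $p$, and every $2$-fibre product of such stacks appearing below, admits a smooth atlas in characteristic $p$; this follows at once from the characterizations of characteristic $p$ given at the beginning of this section.

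First I would prove $(1)\Rightarrow(2)$, which is the easy direction. Since $\caly$ has characteristic $p$, choose a surjective smooth $1$-morphism $b\colon V\to\caly$ with $V$ an algebraic space of characteristic $p$. By \cite[Tag04TF]{Stack Project} the $2$-fibre product $\calx\times_{f,\caly,b}V$ is an algebraic stack; it has characteristic $p$ (one gets a characteristic-$p$ smooth atlas by base-changing a characteristic-$p$ atlas of $\calx$), and its projection to $\calx$ is the base change of $b$, hence surjective smooth. Pick a surjective smooth $1$-morphism $U\to\calx\times_{\caly}V$ from an algebraic space $U$ of characteristic $p$, and let $a\colon U\to\calx$, $h\colon U\to V$ be the two composites. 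Then $a$ and $b$ are surjective smooth, $U$ and $V$ have characteristic $p$, the square $2$-commutes, and $U\to\calx\times_{\caly}V$ is smooth; so $(1)$ applies and $h$ has $\cal{P}$, which is exactly $(2)$.

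Next, $(2)\Rightarrow(1)$. Fix a diagram $(U_{0},V_{0},a_{0},b_{0},h_{0})$ as in $(2)$, so $a_{0},b_{0}$ are surjective smooth, $U_{0},V_{0}$ have characteristic $p$, and $h_{0}$ has $\cal{P}$; let $(U,V,a,b,h)$ be an arbitrary diagram as in $(1)$, with $U\to\calx\times_{\caly}V$ smooth. Enlarging $U$ by a disjoint copy of a characteristic-$p$ atlas of $\calx\times_{\caly}V$ and using stability of $\cal{P}$ under precomposition with the open immersion $U\hookrightarrow\widetilde U$, I reduce to the case $U\to\calx\times_{\caly}V$ surjective smooth; a further harmless enlargement of $V$ (and correspondingly of $U$), together with Zariski-locality on the target, reduces to the case $b\colon V\to\caly$ surjective smooth as well. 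Now both quintuples are ``full'' presentations of $f$, and I compare them through the algebraic spaces $U\times_{a,\calx,a_{0}}U_{0}$ and $V\times_{b,\caly,b_{0}}V_{0}$ (these are algebraic spaces since the diagonals of $\calx$ and $\caly$ are representable by algebraic spaces). The morphism $U\times_{\calx}U_{0}\to V\times_{\caly}V_{0}$ induced by $h$ and $h_{0}$ is, after the evident base changes, obtained from $h_{0}$ by surjective smooth base change, hence has $\cal{P}$; applying smooth-locality on the source to the surjective smooth morphism $U\times_{\calx}U_{0}\to U$ and smooth-locality on the target to $V\times_{\caly}V_{0}\to V$ then yields that $h$ has $\cal{P}$.

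The main obstacle is entirely in the last step: one must arrange the comparison $2$-fibre products into a single diagram each of whose legs is visibly a smooth (and, where needed, surjective) morphism of algebraic spaces or a smooth base change of $h_{0}$, and then invoke the three constituent properties of ``smooth local on the source-and-target'' in the right order. This is exactly the bookkeeping carried out in the classical case, see \cite[Tag06FM]{Stack Project}; the characteristic-$p$ hypothesis never causes any difficulty, because a smooth atlas of a characteristic-$p$ algebraic stack and a $2$-fibre product of characteristic-$p$ algebraic stacks are again of characteristic $p$.
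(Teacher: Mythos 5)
Your proposal is correct and follows essentially the same route as the paper: the $(1)\Rightarrow(2)$ direction is proved identically by choosing a characteristic-$p$ smooth atlas $V\to\caly$ and then a characteristic-$p$ atlas $U\to\calx\times_{\caly}V$, and the $(2)\Rightarrow(1)$ direction is the content of \cite[Tag06FM]{Stack Project}, which the paper simply cites while you sketch the underlying comparison-of-presentations argument. No gap; your extra bookkeeping for $(2)\Rightarrow(1)$ is just an unpacking of the cited reference.
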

\begin{proof}
$(1)\Rightarrow(2):$ Choose an algebraic space $V$ in characteristic $p$ with a surjective smooth $1$-morphism $V\rightarrow\caly$. Next, choose an algebraic space $U$ with a surjective smooth $1$-morphism $U\rightarrow\calx\times_{\caly}V$. It is easy to check that $U$ has characteristic $p$. Then the composition $U\rightarrow\calx\times_{\caly}V\rightarrow\calx$ is surjective smooth. Hence, we obtain a diagram as in (2).

$(2)\Rightarrow(1):$ This is obvious by \cite[Tag06FM]{Stack Project}.
\end{proof}

Given a property of morphisms of algebraic spaces which is smooth local on the source-and-target, one can use it to define a corresponding property of morphisms of algebraic stacks. Here we specialize the definition in \cite[Tag06FN]{Stack Project} to the following case.
\begin{definition}
Let $f:\calx\rightarrow\caly$ be a morphism of algebraic stacks in characteristic $p$ over $S$ and let $\cal{P}$ be a property of morphisms of algebraic spaces which is smooth local on source-and-target. We say that $f$ has property $\cal{P}$ if one of the equivalent conditions in Lemma \ref{LL5} is satisfied.
\end{definition}

Properties of morphisms of algebraic stacks corresponding to properties of morphisms of algebraic spaces which are smooth local on the source-and-target are preserved under the perfection functor.
\begin{lemma}\label{T5}
Let $f:\calx\rightarrow\caly$ be a $1$-morphism of algebraic stacks in characteristic $p$ over $S$. Let $\cal{P}$ be a property of morphisms of algebraic spaces which
\begin{enumerate}
\item
is smooth local on the source-and-target, and
\item
is preserved under the perfection functor on algebraic spaces.
\end{enumerate}
If $f$ has property $\cal{P}$, then $f^{\natural}:\calx^{pf}\rightarrow\caly^{pf}$ has property $\cal{P}$.
\end{lemma}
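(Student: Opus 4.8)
The plan is to descend the assertion to chosen smooth presentations of $\calx$ and $\caly$ via Lemma \ref{LL5}, and then to invoke hypothesis (2), which only concerns morphisms of algebraic spaces.

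First I would fix presentations. Since $\caly$ has characteristic $p$, choose a surjective smooth $1$-morphism $b\colon V\to\caly$ with $V$ an algebraic space of characteristic $p$, and then a surjective smooth $1$-morphism $U\to\calx\times_{\caly}V$ with $U$ an algebraic space; automatically $U$ has characteristic $p$, and the composite $a\colon U\to\calx\times_{\caly}V\to\calx$ is surjective smooth. Writing $h\colon U\to V$ for the induced morphism of algebraic spaces, we obtain a $2$-commutative diagram
$$
\xymatrix{
  U \ar[d]_{a} \ar[r]^{h} & V \ar[d]^{b} \\
  \calx \ar[r]^{f} & \caly   }
$$
in which $U\to\calx\times_{\caly}V$ is smooth. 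Since $f$ has property $\cal{P}$, condition (1) of Lemma \ref{LL5} applies and shows that $h$ has property $\cal{P}$ as a morphism of algebraic spaces.

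Next I would apply the perfection functor. By Lemma \ref{L10} (applied to $f$, to $a$, to $b$, and to $h$) we obtain a $2$-commutative diagram
$$
\xymatrix{
  U^{pf} \ar[d]_{a^{\natural}} \ar[r]^{h^{\natural}} & V^{pf} \ar[d]^{b^{\natural}} \\
  \calx^{pf} \ar[r]^{f^{\natural}} & \caly^{pf}   }
$$
in which $U^{pf}$ and $V^{pf}$ are again algebraic spaces of characteristic $p$ (in fact perfect), by Lemma \ref{L6}. I then check the three facts needed to feed this diagram back into Lemma \ref{LL5}. (i) The morphisms $a$ and $b$ are representable by algebraic spaces, since the diagonal of an algebraic stack is representable by algebraic spaces; as ``smooth'' and ``surjective'' are each preserved under arbitrary base change and fppf local on the base, Lemma \ref{T4} gives that $a^{\natural}$ and $b^{\natural}$ are surjective smooth. (ii) By Proposition \ref{P14} there is an equivalence $\calx^{pf}\times_{\caly^{pf}}V^{pf}\cong(\calx\times_{\caly}V)^{pf}$; the morphism $U\to\calx\times_{\caly}V$ is representable by algebraic spaces and surjective smooth, so Lemma \ref{T4} again shows its perfection $U^{pf}\to(\calx\times_{\caly}V)^{pf}\cong\calx^{pf}\times_{\caly^{pf}}V^{pf}$ is surjective smooth. (iii) Since $h$ is a morphism of algebraic spaces having property $\cal{P}$, hypothesis (2) of the present lemma gives that $h^{\natural}$ has property $\cal{P}$.

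Combining (i)--(iii), the perfected diagram is exactly a diagram of the shape appearing in Lemma \ref{LL5}, with $U^{pf}\to\calx^{pf}\times_{\caly^{pf}}V^{pf}$ smooth and $h^{\natural}$ having property $\cal{P}$; hence $f^{\natural}$ has property $\cal{P}$. The main obstacle is conceptual rather than computational: $\cal{P}$ is assumed stable under perfection only for morphisms of algebraic spaces, so the crux is to transport the question to smooth presentations and then verify that the auxiliary structure maps (the two atlases and the map to the $2$-fibre product) remain smooth and surjective after perfection -- which is handled by Lemma \ref{T4} applied to the properties ``smooth'' and ``surjective'' together with the compatibility of perfection with $2$-fibre products from Proposition \ref{P14}. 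A small point worth recording explicitly is that the perfection of a characteristic-$p$ algebraic space stays in characteristic $p$, so the final diagram genuinely meets the hypotheses of Lemma \ref{LL5}.
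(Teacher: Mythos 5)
Your proof is correct and follows essentially the same route as the paper's: reduce to a smooth presentation via Lemma \ref{LL5}, perfect the diagram, use Lemma \ref{T4} to see that the perfected atlases remain surjective smooth, and apply hypothesis (2) to the morphism $h$ of algebraic spaces. Your write-up is in fact slightly more careful than the paper's, since you also verify via Proposition \ref{P14} and Lemma \ref{T4} that $U^{pf}\to\calx^{pf}\times_{\caly^{pf}}V^{pf}$ stays smooth and that the perfected spaces remain in characteristic $p$, points the paper leaves implicit.
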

\begin{proof}
Consider the commutative diagram as in Lemma \ref{LL5}
$$
\xymatrix{
  U \ar[d]_{a} \ar[r]^{h} & V \ar[d]^{b} \\
  \calx \ar[r]^{f} & \caly   }
$$
where $U,V$ are algebraic spaces in characteristic $p$ over $S$ and $a,b$ are surjective smooth. Suppose that $h$ has property $\cal{P}$ such that $f$ has property $\cal{P}$. This gives rise to another commutative diagram
$$
\xymatrix{
  U^{pf} \ar[d]_{a^{\natural}} \ar[r]^{h^{\natural}} & V^{pf} \ar[d]^{b^{\natural}} \\
  \calx^{pf} \ar[r]^{f^{\natural}} & \caly^{pf}   }
$$
where $a^{\natural},b^{\natural}$ are surjective smooth by Lemma \ref{T4}. Thus, by assumption, $h^{\natural}$ also has property $\cal{P}$ such that $f^{\natural}$ has property $\cal{P}$.
\end{proof}

Given a property of morphisms of algebraic spaces which is \'{e}tale-smooth local on the source-and-target, one can use it to define a corresponding property of DM morphisms of algebraic stacks. Recall the definition imposing properties on DM morphisms in \cite[Tag06FN]{Stack Project}. We can show that such properties of DM morphisms are preserved under the perfection functor.
\begin{lemma}\label{T6}
Let $f:\calx\rightarrow\caly$ be a DM morphism of algebraic stacks in characteristic $p$. Let $\cal{P}$ be a property of morphisms of algebraic spaces which
\begin{enumerate}
\item
is \'{e}tale-smooth local on the source-and-target, and
\item
is preserved under the perfection functor on algebraic spaces.
\end{enumerate}
If $f$ has property $\cal{P}$, then $f^{\natural}:\calx^{pf}\rightarrow\caly^{pf}$ has property $\cal{P}$.
\end{lemma}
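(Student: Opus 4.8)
The plan is to imitate the proof of Lemma~\ref{T5}, with the \'{e}tale--smooth local version of Lemma~\ref{LL5} replacing the smooth local one. First I would record the DM analogue of Lemma~\ref{LL5}: for a DM morphism $f:\calx\rightarrow\caly$ and a property $\cal{P}$ of morphisms of algebraic spaces that is \'{e}tale--smooth local on the source-and-target, the conditions ``for every commutative square
$$
\xymatrix{
  U \ar[d]_{a} \ar[r]^{h} & V \ar[d]^{b} \\
  \calx \ar[r]^{f} & \caly   }
$$
with $a$ \'{e}tale, $b$ smooth, and $U\rightarrow\calx\times_{\caly}V$ \'{e}tale the morphism $h$ has $\cal{P}$'' and ``for some such square with $a,b$ surjective and $U,V$ of characteristic $p$ the morphism $h$ has $\cal{P}$'' are equivalent. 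This is proved exactly as Lemma~\ref{LL5}: for the nontrivial implication one picks a surjective smooth $V\rightarrow\caly$ with $V$ of characteristic $p$ and then a surjective \'{e}tale $U\rightarrow\calx\times_{\caly}V$ (which exists because $f$ is DM, cf.~\cite[Tag06FN]{Stack Project}), the converse being \cite[Tag06FM]{Stack Project}; this also legitimizes the phrase ``$f$ has property $\cal{P}$'' in the present setting.

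Next I would fix such a square witnessing the second condition for $f$: surjective \'{e}tale $a:U\rightarrow\calx$, surjective smooth $b:V\rightarrow\caly$, \'{e}tale $U\rightarrow\calx\times_{\caly}V$, with $U,V$ algebraic spaces of characteristic $p$ and $h:U\rightarrow V$ having $\cal{P}$. Applying the perfection functor and using $\SchU^{pf}\cong\SchU\times_{\calx}\calx^{pf}$ together with Lemma~\ref{T4} applied to the properties ``\'{e}tale'', ``smooth'', and ``surjective'' (each preserved under arbitrary base change and fppf local on the base) I obtain a commutative square
$$
\xymatrix{
  U^{pf} \ar[d]_{a^{\natural}} \ar[r]^{h^{\natural}} & V^{pf} \ar[d]^{b^{\natural}} \\
  \calx^{pf} \ar[r]^{f^{\natural}} & \caly^{pf}   }
$$
with $a^{\natural}$ surjective \'{e}tale and $b^{\natural}$ surjective smooth, and $U^{pf},V^{pf}$ of characteristic $p$. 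Moreover Proposition~\ref{P14} gives $\calx^{pf}\times_{\caly^{pf}}V^{pf}\cong(\calx\times_{\caly}V)^{pf}$, so $U^{pf}\rightarrow\calx^{pf}\times_{\caly^{pf}}V^{pf}$ is the perfection of the \'{e}tale morphism $U\rightarrow\calx\times_{\caly}V$ and hence is again \'{e}tale by Lemma~\ref{T4}. Thus the top square is of the type appearing in the DM analogue of Lemma~\ref{LL5} for $f^{\natural}$.

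Finally, hypothesis (2) says $\cal{P}$ is preserved by the perfection functor on algebraic spaces, so $h^{\natural}:U^{pf}\rightarrow V^{pf}$ has $\cal{P}$; the DM analogue of Lemma~\ref{LL5} then forces $f^{\natural}:\calx^{pf}\rightarrow\caly^{pf}$ to have $\cal{P}$. The step I expect to require the most care --- and the main obstacle --- is the bookkeeping that the perfected chart data really does satisfy the hypotheses of the DM analogue of Lemma~\ref{LL5}: that $a^{\natural},b^{\natural}$ stay surjective \'{e}tale resp.\ surjective smooth, that $U^{pf},V^{pf}$ are still algebraic spaces (Lemma~\ref{L6}), and above all that $U^{pf}\rightarrow\calx^{pf}\times_{\caly^{pf}}V^{pf}$ stays \'{e}tale, which is where Proposition~\ref{P14} and Lemma~\ref{T4} must be combined carefully.
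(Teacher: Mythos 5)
Your proposal is correct and follows essentially the same route as the paper: perfect the chart data, use Lemma~\ref{T4} and Proposition~\ref{P14} to see that $V^{pf}\rightarrow\caly^{pf}$ stays smooth and $U^{pf}\rightarrow\calx^{pf}\times_{\caly^{pf}}V^{pf}$ stays \'{e}tale, then apply hypothesis (2) to $h$ and conclude via the \'{e}tale--smooth analogue of Lemma~\ref{LL5}. The only point the paper makes explicit that you leave implicit is that $f^{\natural}$ is again DM (via Proposition~\ref{PP1}), which is needed for the \'{e}tale--smooth local definition of ``$f^{\natural}$ has $\cal{P}$'' to apply.
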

\begin{proof}
Let $U,V$ be algebraic spaces over $S$. It follows from Lemma \ref{T4} and Proposition \ref{P14} that if $V\rightarrow\caly$ is smooth and $U\rightarrow\calx\times_{\caly}V$ is \'{e}tale, then $V^{pf}\rightarrow\caly^{pf}$ is smooth and $U^{pf}\rightarrow\calx^{pf}\times_{\caly^{pf}}V^{pf}$ is \'{e}tale. Moreover, note that $f^{\natural}$ is also DM by Proposition \ref{PP1} below. The rest of the proof is similar to Lemma \ref{T5} above.
\end{proof}

In the following proposition, we summarize the properties of $1$-morphisms that can be passed between the perfection functor.
\begin{proposition}\label{PP1}
Let $f:\calx\rightarrow\caly$ be a $1$-morphism of algebraic stacks in characteristic $p$ over $S$ and let $f^{\natural}:\calx^{pf}\rightarrow\caly^{pf}$ be its perfection. Then the following properties hold for $f$ if and only if they hold for $f^{\natural}$:
\begin{enumerate}
\item
surjective,
\item
quasi-compact,
\item
(universally) submersive
\item
(universally) closed,
\item
(universally) open,
\item
a (universal) homeomorphism,
\item
affine,
\item
integral,
\item
quasi-separated.
\end{enumerate}
If the following properties hold for $f$, then they also hold for $f^{\natural}$:
\begin{enumerate}[start=10]
\item
a monomorphism,
\item
a closed immersion,
\item
an open immersion,
\item
an immersion,

\item
DM,
\item
\'{e}tale,
\item
(faithfully) flat.
\end{enumerate}
\end{proposition}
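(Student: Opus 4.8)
The plan is to sort the listed properties into three families, according to which transfer mechanism from the preceding lemmas applies, and then to supply the converse implications by hand in the two cases where they are asserted but not formal. Throughout I would use: the commuting square $p_{\caly}\circ f^{\natural}=f\circ p_{\calx}$ of Lemma~\ref{L10}; the identity $\calx^{pf}\cong\calx\times_{\caly}\caly^{pf}$; the fact that the canonical projections $p_{\calx},p_{\caly}$ are universal homeomorphisms which by Lemma~\ref{LL1} are moreover monomorphisms inducing homeomorphisms $|\calx|\xrightarrow{\cong}|\calx^{pf}|$; the compatibility of the perfection $2$-functor with $2$-fibre products (Proposition~\ref{P14}); and the fullness of the perfection functor (Lemma~\ref{L15}).

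\textbf{Family I: the topological properties (1)--(6).} For these I would first note that, since $p_{\calx}$ is a monomorphism, for every $\mathcal{T}\to\caly^{pf}$ one has $\calx^{pf}\times_{\caly^{pf}}\mathcal{T}\cong\calx^{pf}\times_{\caly}\mathcal{T}\cong(\calx\times_{\caly}\mathcal{T})\times_{\calx}\calx^{pf}$, so that an arbitrary base change of $f^{\natural}$ is obtained from the corresponding base change of $f$ by a further base change along the universal homeomorphism $p_{\calx}$. Thus $|f^{\natural}|$ is, compatibly with arbitrary base change, canonically identified with $|f|$, and hence $f^{\natural}$ is surjective / quasi-compact / (universally) submersive / (universally) closed / (universally) open / a (universal) homeomorphism if and only if $f$ is. The forward passages are immediate from this identification; for descent of (universal) closedness, openness and submersiveness from $f^{\natural}$ to $f$ one base-changes along the surjective universal homeomorphism $\caly^{pf}\times_{\caly}\mathcal{T}\to\mathcal{T}$ and invokes descent of these topological properties along surjective submersive morphisms, while surjectivity, quasi-compactness and the homeomorphism property pass both ways directly.

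\textbf{Family II: diagonal reductions (9), (10), (14).} Using Proposition~\ref{P14} I would identify the relative diagonal $\Delta_{f^{\natural}}\colon\calx^{pf}\to\calx^{pf}\times_{\caly^{pf}}\calx^{pf}\cong(\calx\times_{\caly}\calx)^{pf}$ with the perfection $(\Delta_{f})^{\natural}$ of $\Delta_{f}$. Since $f$ is quasi-separated (resp.\ a monomorphism, resp.\ DM) precisely when $\Delta_{f}$ is quasi-compact (resp.\ an equivalence, resp.\ unramified), case (9) reduces to the already-established quasi-compact case (case (2)) applied to $\Delta_{f}$, case (10) follows because the perfection $2$-functor carries equivalences to equivalences, and case (14) follows by applying Lemma~\ref{T4} to the representable, algebraic-space-valued morphism $\Delta_{f}$, as unramifiedness is stable under base change and fppf local on the base.

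\textbf{Family III: representable and smooth-local properties (7)--(8), (11)--(13), (15)--(16).} The forward implications for affine, integral, a closed immersion and an open immersion follow from Lemma~\ref{T4} with the respective $\mathcal{P}$ (each is representable, stable under base change, and fppf local on the base); the immersion case then follows by factoring $f$ through a closed immersion into an open substack and using functoriality of perfection. The forward implications for étale and (faithfully) flat follow from Lemma~\ref{T5} (these are smooth local on the source-and-target and are preserved by perfection of algebraic spaces, cf.\ \cite{Liang1}), combining the flat case with case (1) for faithful flatness. For the converse directions in (7) and (8): if $f^{\natural}$ is affine (resp.\ integral) then it is representable by algebraic spaces, whence $f$ is representable by algebraic spaces by Proposition~\ref{P16}; picking a smooth presentation $\SchV\to\caly$ with $V$ in characteristic $p$ and writing $\cals_{F}\cong\SchV\times_{\caly}\calx$, Proposition~\ref{P14} gives $\cals_{F^{pf}}\cong\SchV^{pf}\times_{\caly^{pf}}\calx^{pf}$, which is affine (resp.\ integral) over $\SchV^{pf}$ as a base change of $f^{\natural}$; covering $V$ by affine opens and applying Lemma~\ref{L6} one gets that $F\to V$ is affine, hence $f$ is affine, and the integral case follows by combining this with case (4). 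The main obstacle is precisely these two converses: affineness is not a topological property and the perfection functor is not faithful, so one must genuinely combine representability descent (Proposition~\ref{P16}), compatibility with fibre products (Proposition~\ref{P14}), and the algebraic-space input of Lemma~\ref{L6} (equivalently \cite[Lemma 4.21]{Liang1}); a lesser subtlety is handling, in the ``universally'' variants of (3)--(6), base changes along $\mathcal{T}\to\caly^{pf}$ not of the form $\mathcal{T}_0^{pf}$, which is what forces the use of $p_{\caly}$ being a monomorphism and of $\calx\times_{\caly}\caly^{pf}\cong\calx^{pf}$.
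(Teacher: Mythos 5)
Your overall strategy matches the paper's (which is much terser): topological properties via the canonical projections being universal homeomorphisms and Lemma~\ref{LL1}, representable properties via Lemma~\ref{T4}, the remaining ones via the smooth-local machinery, with the converses for (7)--(8) argued through representability descent. Your treatment is in places an improvement: for (9) you correctly note that the ``if and only if'' requires reducing to the quasi-compactness of $\Delta_f$ and invoking the already-established biconditional (2), whereas the paper lumps (9) with (10)--(13) and cites only the one-directional Lemma~\ref{T4}; and you actually supply the converse arguments for (7)--(8) that the paper waves at. Your route to (14) via the first relative diagonal being unramified (applying Lemma~\ref{T4} to the representable morphism $\Delta_f$ and identifying $(\Delta_f)^{\natural}$ with $\Delta_{f^{\natural}}$ by Proposition~\ref{P14}) is a legitimate alternative to the paper's second-diagonal argument. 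One small slip: the identification $\calx^{pf}\times_{\caly^{pf}}\mathcal{T}\cong\calx^{pf}\times_{\caly}\mathcal{T}$ uses that $p_{\caly}$, not $p_{\calx}$, is a monomorphism.

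There is, however, one genuine error: in Family III you assert that \'{e}taleness is smooth local on the source-and-target and derive (15) from Lemma~\ref{T5}. It is not: if $U\rightarrow\calx\times_{\caly}V$ is merely smooth, the induced $h:U\rightarrow V$ over an \'{e}tale $f$ is smooth but generally not \'{e}tale, so the hypothesis of Lemma~\ref{T5} (and indeed the very definition of the property for non-representable $f$ via Lemma~\ref{LL5}) fails for $\cal{P}=$ \'{e}tale. This is why the statement restricts \'{e}taleness to DM morphisms implicitly and why the paper routes (15) through case (14) and Lemma~\ref{T6}, where the covers on the source are required to be \'{e}tale over the fibre product (\'{e}tale-smooth locality). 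The fix is immediate given that you have already established (14): first conclude $f^{\natural}$ is DM, then apply Lemma~\ref{T6} with $\cal{P}=$ \'{e}tale, noting that \'{e}taleness of morphisms of algebraic spaces is preserved by the perfection functor since $F^{pf}\rightarrow V^{pf}$ is a base change of $F\rightarrow V$. The flat and faithfully flat cases via Lemma~\ref{T5} and (1) are fine as you have them.
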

\begin{proof}
(1)-(2) follow directly from Lemma \ref{LL1} and definitions. For (3)-(6), note that the canonical projection $\calx^{pf}\rightarrow\calx$ is a universal homeomorphism.

(7)-(8): The direct direction follows directly from Lemma \ref{T4} since $f$ has to be representable. The inverse direction is similar to the proof of \cite[Proposition 4.28]{Liang1}.

(9)-(13): These follow from \cite[Proposition 4.28]{Liang1} and Lemma \ref{T4}.

(14): If $f$ is DM, then the second diagonal $\Delta_{f,2}:\calx\rightarrow\calx\times_{(\calx\times_{\caly}\calx)}\calx$ is \'{e}tale. Since $\Delta_{f,2}$ is representable, it follows from \cite[Proposition 4.28]{Liang1} that $\Delta_{f,2}^{\natural}:\calx^{pf}\rightarrow\calx^{pf}\times_{(\calx^{pf}\times_{\caly^{pf}}\calx^{pf})}\calx^{pf}$ is \'{e}tale. As the second diagonal $\Delta_{f,2}^{\natural}$ is locally of finite type, this shows that the diagonal $\Delta_{f}^{\natural}$ is unramified, see \cite[Tag0CJ0]{Stack Project}. Hence, $f^{\natural}$ is DM.

Finally, (15) is by (14) and Lemma \ref{T6} above. And (16) is by Lemma \ref{T5} and (1).
\end{proof}

The perfection functor preserves all kinds of substacks of an algebraic stack.
\begin{proposition}
Let $\calx$ be an algebraic stack in characteristic $p$ over $S$.
\begin{enumerate}
  \item
If $\calx'\subset\calx$ is an open substack, then $\calx'^{pf}$ is an open substack of $\calx^{pf}$.
  \item
If $\calx'\subset\calx$ is a closed substack, then $\calx'^{pf}$ is a closed substack of $\calx^{pf}$.
  \item
If $\calx'\subset\calx$ is a locally closed substack, then $\calx'^{pf}$ is a locally closed substack of $\calx^{pf}$.
\end{enumerate}
\end{proposition}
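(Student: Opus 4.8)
The plan is to deduce all three assertions from two facts already available: that the inclusion $\calx'\hookrightarrow\calx$ of a substack of the indicated type is, respectively, an open immersion, a closed immersion, or an immersion of algebraic stacks (in particular representable by algebraic spaces); and that the perfection of a stack mapping to $\calx$ is computed as a base change along the canonical projection $p_{\calx}\colon\calx^{pf}\to\calx$, so that $\calx'^{pf}\cong\calx'\times_{\calx}\calx^{pf}$ (the base-change description of perfections established just before Lemma~\ref{LL1}). First I would record that an open, closed, or locally closed substack $\calx'\subset\calx$ is again an algebraic stack over $S$ and has characteristic $p$ when nonempty, since any smooth atlas of $\calx'$ factors through a smooth atlas of $\calx$ and is therefore an $\mathbb{F}_{p}$-scheme; thus $\calx'^{pf}$ is defined, the empty case being trivial.

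For (1) and (2): the inclusion $j\colon\calx'\hookrightarrow\calx$ is an open (resp.\ closed) immersion, so by Proposition~\ref{PP1}(12) (resp.\ Proposition~\ref{PP1}(11)) the morphism $j^{\natural}\colon\calx'^{pf}\to\calx^{pf}$ is again an open (resp.\ closed) immersion; equivalently, $\calx'^{pf}\cong\calx'\times_{\calx}\calx^{pf}\to\calx^{pf}$ is such an immersion since these classes of morphisms are stable under base change. This exhibits $\calx'^{pf}$ as an open (resp.\ closed) substack of $\calx^{pf}$. To pin down which substack, I would use that $p_{\calx}$ is a monomorphism and a universal homeomorphism, whence $\left|p_{\calx}\right|$ is inverse to the homeomorphism $\left|\calx\right|\xrightarrow{\cong}\left|\calx^{pf}\right|$ of Lemma~\ref{LL1}; the substack $\calx'^{pf}$ then corresponds to $\left|p_{\calx}\right|^{-1}(\left|\calx'\right|)$, i.e.\ to the image of $\left|\calx'\right|$ under that homeomorphism.

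For (3): immersions are stable under base change, so from $\calx'^{pf}\cong\calx'\times_{\calx}\calx^{pf}$ --- or directly from Proposition~\ref{PP1}(13) applied to the immersion $\calx'\hookrightarrow\calx$ --- the canonical morphism $\calx'^{pf}\to\calx^{pf}$ is an immersion, hence $\calx'^{pf}$ is a locally closed substack of $\calx^{pf}$. Alternatively, realize $\calx'$ as a closed substack of an open substack $\mathcal{U}\subset\calx$: then $\mathcal{U}^{pf}$ is an open substack of $\calx^{pf}$ by (1), while the base-change identity applied to $\calx'\to\mathcal{U}$ gives $\calx'^{pf}\cong\calx'\times_{\mathcal{U}}\mathcal{U}^{pf}$, which by (2) is a closed substack of $\mathcal{U}^{pf}$, hence a locally closed substack of $\calx^{pf}$.

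The only delicate point --- and the step I would take most care over --- is the bookkeeping identifying each immersion $\calx'^{pf}\to\calx^{pf}$ with the inclusion of an actual substack rather than merely with the source of an immersion into $\calx^{pf}$. This amounts to invoking the standard correspondence between open/closed/locally closed substacks of an algebraic stack and (equivalence classes of) open/closed/locally closed immersions into it, and to checking, using that $p_{\calx}$ is a monomorphism, that the fibre products $\calx'\times_{\calx}\calx^{pf}$ are strictly full subcategories of $\calx^{pf}$ realizing the expected subsets of $\left|\calx^{pf}\right|$. I do not anticipate any substantive difficulty beyond this routine verification.
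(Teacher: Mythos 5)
Your proposal is correct and follows essentially the same route as the paper: first check that the substack again has characteristic $p$ so its perfection is defined, then deduce the result from the fact that the perfection functor preserves open/closed immersions and immersions (Proposition~\ref{PP1}), identifying $\calx'^{pf}$ as a strictly full subcategory of $\calx^{pf}$. Your extra care with the base-change description $\calx'^{pf}\cong\calx'\times_{\calx}\calx^{pf}$ and the homeomorphism $\left|\calx\right|\cong\left|\calx^{pf}\right|$ is a harmless (and slightly more explicit) elaboration of what the paper leaves implicit.
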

\begin{proof}
First, we show that any substack of $\calx$ also has characteristic $p$. Let $V\in\ObSchS$ in characteristic $p$ and let $\SchV\rightarrow\calx$ be a surjective smooth $1$-morphism. Then it follows from \cite[Tag04T1]{Stack Project} that there exist $U\in\ObSchS$ and a 2-commutative diagram
$$
\xymatrix{
  \SchU \ar[d]_{} \ar[rr]^{} &  & \SchV \ar[d]^{} \\
  \calx' \ar[rr]^{} &  & \calx   }
$$
where $\SchU\rightarrow\calx'$ is surjective smooth. Thus, this shows that $\calx'$ has characteristic $p$. Now, consider the perfection $\calx'^{pf}\subset\calx^{pf}$ of $\calx'$. By the explicit description of $\calx'^{pf}$, one easily see that $\calx'^{pf}$ is a strictly full subcategory of $\calx^{pf}$. Then (1)-(3) follow from Proposition \ref{PP1} (10)-(12).
\end{proof}

The following statement enables us to pass between the properties of an algebraic stack and its perfection.
\begin{lemma}\label{T7}
Let $\calx$ be an algebraic stack in characteristic $p$ over $S$. Let $\cal{P}$ be a property of schemes which
\begin{enumerate}
  \item
which is local in the smooth topology, and
  \item
which is preserved under the perfection functor on schemes.
\end{enumerate}
If $\calx$ has property $\cal{P}$, then $\calx^{pf}$ has property $\cal{P}$.
\end{lemma}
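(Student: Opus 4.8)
The plan is to transfer $\cal{P}$ across a smooth atlas of $\calx^{pf}$. First I would use that $\calx$ has characteristic $p$ to choose $U\in\ObSchS$ of characteristic $p$ together with a surjective smooth $1$-morphism $\SchU\to\calx$; since $\cal{P}$ is local in the smooth topology and $\calx$ has property $\cal{P}$, the scheme $U$ then has property $\cal{P}$. I would then form the associated smooth groupoid in algebraic spaces $(h_U,F,s,t,c)$ over $S$ with an equivalence $\calx\cong[h_U/F]$, exactly as in \S\ref{B3}.

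Next I would invoke Theorem \ref{T2} and the definition of the perfection to obtain $\calx^{pf}\cong[h_U^{pf}/F^{pf}]$ together with a surjective smooth $1$-morphism $(Sch/U^{pf})_{fppf}\to[h_U^{pf}/F^{pf}]$. Here $h_U^{pf}=h_{U^{pf}}$, where $U^{pf}$ is the perfection of the scheme $U$, and by the theory of perfect schemes in \cite{Liang1} (compare the second statement of Lemma \ref{L6} and \cite[Lemma 4.21]{Liang1}) the perfection $U^{pf}$ is again a scheme, of characteristic $p$. Since $\cal{P}$ is preserved under the perfection functor on schemes and $U$ has property $\cal{P}$, it follows that $U^{pf}$ has property $\cal{P}$. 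Thus $\calx^{pf}$ admits a surjective smooth $1$-morphism from a scheme with property $\cal{P}$, and since $\cal{P}$ is local in the smooth topology, $\calx^{pf}$ has property $\cal{P}$, as desired.

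\textbf{The hard part} is not the formal bookkeeping but the two compatibility inputs feeding the middle step: confirming that $U^{pf}$ is genuinely a scheme (not merely an algebraic space), which rests on the affine-local, open-immersion-compatible construction of the perfection of schemes in \cite{Liang1}, and identifying the smooth atlas of $\calx^{pf}$ produced by Theorem \ref{T2} with $(Sch/U^{pf})_{fppf}$, i.e.\ the identity $h_U^{pf}=h_{U^{pf}}$. Both are essentially already recorded in \S\ref{B4} and \cite{Liang1}, so once they are cited the remaining argument is a one-line application of smooth-locality. One should also double-check that the chosen smooth presentation scheme $U$ may indeed be taken of characteristic $p$, which is automatic since $\calx$ has characteristic $p$ and any scheme admitting a smooth surjection onto $\calx$ is then an $\mathbb{F}_p$-scheme.
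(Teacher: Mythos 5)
Your proposal is correct and follows essentially the same route as the paper: pick a surjective smooth atlas $U\to\calx$ with $U$ of characteristic $p$, note $U$ has $\cal{P}$ by smooth-locality, pass to the induced surjective smooth atlas $U^{pf}\to\calx^{pf}$, and conclude since perfection of schemes preserves $\cal{P}$. The paper's own proof is just a terser version of this; your additional care about $h_U^{pf}=h_{U^{pf}}$ and $U^{pf}$ being a scheme fills in details the paper leaves implicit.
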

\begin{proof}
Let $U\rightarrow\calx$ be a surjective smooth $1$-morphism, where $U\in\ObSchS$ has characteristic $p$ and has property $\cal{P}$. This gives rise to a canonical surjective smooth $1$-morphism $U^{pf}\rightarrow\calx^{pf}$ such that $\calx^{pf}$ has property $\cal{P}$.
\end{proof}

Here is a consequence of Theorem \ref{T7} above.
\begin{corollary}\label{C2}
Let $\calx$ be an algebraic stack in characteristic $p$ over $S$. If $\calx$ is perfect, then $\calx$ is reduced. In particular, the perfection $\calx^{pf}$ of $\calx$ is reduced, i.e. $\calx^{pf}=\calx_{red}^{pf}$.
\end{corollary}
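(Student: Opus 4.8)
The plan is to deduce this from Lemma~\ref{T7} with $\cal{P}$ the property of schemes of \emph{being reduced}. First I would verify that this $\cal{P}$ meets the two hypotheses of Lemma~\ref{T7}. Reducedness is local in the smooth topology: if $U\to X$ is a surjective smooth morphism of schemes, then $X$ is reduced if and only if $U$ is reduced (one direction because a smooth morphism is flat with geometrically reduced fibres, so a reduced base forces a reduced total space; the other because $\cO_{X}\to \cO_{U}$ is faithfully flat, hence injective), a standard descent fact, see \cite{Stack Project}. And reducedness is preserved under the perfection functor on schemes: for $U\in\ObSchS$ in characteristic $p$ the perfection $U^{pf}$ is a perfect scheme, and every perfect $\mathbb{F}_{p}$-scheme is reduced, since in a perfect ring a relation $a^{p^{n}}=0$ forces $a=0$ because $a^{p^{n}}$ is the image of $a$ under the $n$-fold Frobenius, which is bijective, in particular injective. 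So if $U$ is reduced, then $U^{pf}$ is reduced.

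Next I would prove the first assertion directly. If $\calx$ is perfect, then by Definition~\ref{D1} there is a perfect scheme $U\in\ObSchS$ together with a surjective smooth $1$-morphism $\SchU\rightarrow\calx$. By the computation just made $U$ is reduced, and therefore $\calx$ is reduced by smooth-locality of $\cal{P}$, equivalently by the definition of a reduced algebraic stack.

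For the remaining ``in particular'' statement, since $\calx$ is now known to be reduced, Lemma~\ref{T7} applies to $\calx$ with this $\cal{P}$ and yields that $\calx^{pf}$ is reduced as well. Moreover, $\calx$ being reduced means $\calx_{red}\cong\calx$, whence $\calx_{red}^{pf}\cong\calx^{pf}$, which is the asserted identity. If one wants the conclusion ``$\calx^{pf}$ is reduced'' for an arbitrary algebraic stack $\calx$ in characteristic $p$, it also follows: by Theorem~\ref{T2} the perfection $\calx^{pf}$ is itself a perfect algebraic stack, hence reduced by the first assertion, and $\calx^{pf}\cong\calx_{red}^{pf}$ then follows from $U^{pf}\cong U_{red}^{pf}$ together with the explicit presentation $\calx^{pf}\cong[h_{U}^{pf}/F^{pf}]$.

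I do not expect a serious obstacle here; the corollary is essentially a formal consequence of Lemma~\ref{T7} once the property ``reduced'' is checked to be smooth-local and stable under perfection. The only steps that merit a sentence of justification are the descent of reducedness along a smooth surjection and, for the optional general statement, the elementary identification of the perfection of a scheme with the perfection of its reduction.
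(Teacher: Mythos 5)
Your proposal is correct and follows essentially the same route as the paper: the paper deduces the first assertion from the reducedness of perfect schemes (citing \cite[Lemma 3.4]{Liang1}) together with the definition via a perfect smooth atlas, and obtains the second by applying Lemma~\ref{T7} with $\cal{P}$ the property of being reduced. Your write-up merely fills in the verifications the paper leaves implicit (smooth-locality of reducedness and its stability under perfection), which it does correctly.
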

\begin{proof}
The first statement follows from \cite[Lemma 3.4]{Liang1} and definitions. For the second statement, one just need to apply Theorem \ref{T7}.
\end{proof}

The following proposition shows that the algebraic Frobenius of an algebraic stack shares the same properties as the absolute Frobenius of a scheme.
\begin{proposition}
Let $\calx$ be an algebraic stack in characteristic $p$ over $S$ with algebraic Frobenius morphism $\Psi_{\calx}:\calx\rightarrow\calx$. Then $\Psi_{\calx}$ is representable by algebraic spaces. Moreover, $\Psi_{\calx}$ is surjective, integral, and is a universal homeomorphism.
\end{proposition}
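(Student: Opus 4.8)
The plan is to reduce the statement to the smooth groupoid presentation of $\calx$ and to the corresponding facts for the absolute Frobenius of a scheme and the algebraic Frobenius of an algebraic space. Fix a scheme $U\in\ObSchS$ of characteristic $p$ with a surjective smooth $1$-morphism $a\colon\SchU\to\calx$, and let $(h_U,F,s,t,c)$ be the associated smooth groupoid in algebraic spaces with $\calx\cong[h_U/F]$, so that $\Psi_\calx=[\Psi_f]$ for $\Psi_f=(\Phi_U,\Psi_F)$, where $\Phi_U$ is the absolute Frobenius of $U$ and $\Psi_F$ is the algebraic Frobenius of $F$. The structural fact I would use throughout is the $2$-commutative square of Proposition~\ref{P7}, which says that $\Psi_\calx\circ a$ is $2$-isomorphic to $a\circ\cals_{\Phi_U}$.

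Two of the four assertions are essentially already in hand. Since $\Psi_\calx$ fits into the square of Proposition~\ref{P7}, it is a canonical morphism of $\calx$ in the sense of that proposition; hence it is representable by algebraic spaces by the proposition proved above showing that the canonical morphism of an algebraic stack is representable by algebraic spaces, and it is surjective by Lemma~\ref{P10}. If a direct argument is preferred: the composite $|\SchU|\xrightarrow{|\Phi_U|}|\SchU|\to|\calx|$ is surjective and equals $|\Psi_\calx|\circ|a|$, which forces $|\Psi_\calx|$ to be surjective; and using $\Psi_\calx\circ a\cong a\circ\cals_{\Phi_U}$ together with $\SchU\times_{a,\calx,a}\SchU\cong\cals_F$, one computes that the pullback of $\Psi_\calx$ along $a$, after the further smooth cover by $a$, becomes the algebraic space $\cals_{h_U\times_{\Phi_U,h_U,s}F}$, from which representability by algebraic spaces follows by smooth descent.

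It remains to show that $\Psi_\calx$ is integral and a universal homeomorphism. Since $\Psi_\calx$ is representable by algebraic spaces, and both properties are stable under arbitrary base change and fppf (indeed smooth) local on the base, it suffices by \cite[Tag03YK]{Stack Project} and \cite[Tag06FM]{Stack Project} to verify them for the morphism of algebraic spaces obtained by base changing $\Psi_\calx$ along the atlas $a$. Using $\calx\cong[h_U/F]$ and the identity $\Psi_\calx\circ a\cong a\circ\cals_{\Phi_U}$, one identifies this base change and sees that, up to smooth covers coming from the groupoid $(h_U,F,s,t,c)$, it is assembled from $\Phi_U$ and $\Psi_F$. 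The absolute Frobenius of a scheme in characteristic $p$ is affine, integral and a universal homeomorphism (classical; compare \cite[Remark 5.4]{Bertapellea} and the references in \cite{Liang}), and the algebraic Frobenius $\Psi_F$ of an algebraic space is integral and a universal homeomorphism by \cite{Liang1} and \cite[Remark 5.4]{Bertapellea}; descending along the atlas then gives that $\Psi_\calx$ is integral and a universal homeomorphism. As an independent check on the universal-homeomorphism assertion, one may instead invoke Proposition~\ref{P12}: writing $\calx^{pf}\cong\lim_{\Psi_\calx}\calx$, the canonical projection $p_\calx\colon\calx^{pf}\to\calx$, which is a universal homeomorphism by the lemma proved above, factors as $\Psi_\calx$ followed by the projection of $\calx^{pf}$ onto the next copy of $\calx$; since that projection is again a universal homeomorphism (hence surjective and universally closed) and $\Psi_\calx$ is surjective, a cancellation argument yields that $\Psi_\calx$ is a universal homeomorphism.

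The step I expect to be the main obstacle is making the reduction in the previous paragraph rigorous: pinning down the base change of $[\Psi_f]$ along the smooth atlas as an honest morphism of algebraic spaces, and then descending the properties \emph{integral} and \emph{affine} to $\Psi_\calx$. The subtlety is that, unlike a universal homeomorphism, an integral (or affine) morphism is smooth local on the target but not on the source, so one cannot simply check the property after a smooth cover of the source; the argument must genuinely use the groupoid structure $(h_U,F,s,t,c)$ and the compatibility of $\Psi_F$ with $s,t$ recorded in \cite[Proposition 4.10]{Liang}, together with the source-and-target local formalism of \cite[Tag06FM]{Stack Project}, rather than generic descent. Once this identification is in place, the remaining bookkeeping, including the translation between the stack-level and algebraic-space-level statements, is routine.
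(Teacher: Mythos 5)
Your route is genuinely different from the paper's. You argue directly on the groupoid presentation $(h_{U},F,s,t,c)$, using Lemma~\ref{P10} for surjectivity, descent along the atlas for representability, and a cancellation through $p_{\calx}=\Psi_{\calx}\circ pr_{1}$ (both projections of $\lim_{\Psi_{\calx}}\calx$ being universal homeomorphisms) for the topological assertion; all of that is workable, granted the identification of $[\Psi_{f}]$ with a canonical morphism in the sense of Proposition~\ref{P7}. The paper instead routes everything through the perfection $2$-functor: $\Psi_{\calx}^{\natural}$ is an equivalence by Theorem~\ref{T3}, an equivalence trivially has all four properties, and the \emph{if and only if} directions of Proposition~\ref{P16} and Proposition~\ref{PP1} descend them to $\Psi_{\calx}$. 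What the paper's detour buys is precisely the step you cannot supply.

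That step is integrality, and the gap you flag yourself is real and is not closed by anything you write. After reducing via \cite[Tag03YK]{Stack Project} to the morphism of algebraic spaces $Z=\calx\times_{\Psi_{\calx},\calx,a}\SchU\rightarrow\SchU$, your only handle on $Z$ is the further smooth cover $\SchU\times_{a,\calx}Z\cong h_{U}\times_{\Phi_{U},h_{U},t}F$ --- a smooth cover of the \emph{source} of the morphism to be shown integral. Since integral and affine are not smooth-local on the source, ``assembled from $\Phi_{U}$ and $\Psi_{F}$ up to smooth covers'' proves nothing here, and the source-and-target formalism of \cite[Tag06FM]{Stack Project} is inapplicable because integrality is not a property of that kind. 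The groupoid compatibilities $s\circ\Psi_{F}=\Phi_{U}\circ s$ do not exhibit $Z\rightarrow U$ as a base change of $\Phi_{U}$ or $\Psi_{F}$, so the ``routine bookkeeping'' you defer is exactly the missing argument. Your fallback for the universal homeomorphism (cancellation against the projections of the inverse limit) does not extend to integrality either, since that cancellation is purely topological. To repair the proof you would either have to produce the identification of $Z\rightarrow U$ as an integral morphism directly, or follow the paper and invoke the inverse direction of Proposition~\ref{PP1}(8), whose proof carries the actual content (deferred there to \cite[Proposition 4.28]{Liang1}). As written, integrality is asserted, not proved.
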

\begin{proof}
Consider the morphism $\Psi_{\calx}^{\natural}:\calx^{pf}\rightarrow\calx^{pf}$, which is an equivalence due to Theorem \ref{T3}. Then it follows from Proposition \ref{P16} that $\Psi_{\calx}$ is representable by algebraic spaces. Moreover, since $\Psi_{\calx}^{\natural}$ is clearly surjective, integral, and universal homeomorphic, $\Psi_{\calx}$ is surjective, integral, and universal homeomorphic by Proposition \ref{PP1}.
\end{proof}

\section{Comparison with Zhu's perfect algebraic stacks}\label{B5}
In this section, we will compare our theory of perfect algebraic stacks with Zhu's perfect algebraic stacks in \cite{Zhu1}. Let $k$ be a perfect field of characteristic $p$. Let $(Sch/k)_{\textit{fpqc}}$ be the big fpqc site with perfection $(Sch/k)_{\textit{fpqc}}^{pf}$. Let $(\textit{Aff}/k)_{\textit{fpqc}}$ be the big affine fpqc site with perfection $(\textit{Aff}/k)^{pf}_{\textit{fpqc}}$.

We first recall the definition of perfectly smooth morphisms in \cite[Definition A.1.9]{Zhu1}.
\begin{definition}
A map $f:X\rightarrow Y$ of algebraic spaces over $S$ is perfectly smooth at $x\in X$ if there are an \'{e}tale atlas $U\rightarrow X$ at $x$ and an \'{e}tale atlas $V\rightarrow Y$ at $f(x)$ such that the composition $U\rightarrow X\rightarrow Y$ factors as $U\xrightarrow{h} V\rightarrow Y$ and $h$ factors as $U\xrightarrow{h'}V\times(\mathbb{A}^{n})^{pf}\xrightarrow{{\rm{pr}}}V$, where $h'$ is \'{e}tale and ${\rm{pr}}$ is the projection.

We say that $f$ is \textit{perfectly smooth} if it is perfectly smooth at every point in $X$.
\end{definition}

We denote by $\textit{Groupoids}$ the 2-category of groupoids. Here is the definition of perfect algebraic stacks given in \cite[Definition A.1.10]{Zhu1}.
\begin{definition}
A perfect algebraic stack in the sense of Zhu over $k$ is a contravariant $2$-functor
$$
X:(\textit{Aff}/k)^{pf}_{\textit{fpqc}}\longrightarrow \textit{Groupoids}
$$
such that
\begin{enumerate}
\item
The presheaf $X$ is a fpqc sheaf.
  \item
The diagonal is represented by a perfect algebraic space in the sense of Zhu over $k$.
  \item
There exists a perfectly smooth surjective map $U\rightarrow X$ from a perfect algebraic space $U$ in the sense of Zhu over $k$.
\end{enumerate}
\end{definition}

It follows from \cite[Theorem 6.6]{Liang1} that an algebraic space over $k$ is perfect in the sense of Zhu if and only if it is perfect. Thus, one can rewrite the definition to the following form that is easier to deal with.
\begin{definition}
A perfect algebraic stack in the sense of Zhu over $k$ is an algebraic stack $\calx$ over $k$ such that the following properties are satisfied:
\begin{enumerate}
\item
For every perfect schemes $U,V$ over $k$ and any $x\in\Ob(X_{U}),y\in\Ob(X_{V})$, the $2$-fibre product $U\times_{x,\calx,y}V$ is a perfect algebraic space over $k$.
  \item
There exists a perfectly smooth surjective map $U\rightarrow\calx$ from a perfect algebraic space $U$ over $k$.
\end{enumerate}
\end{definition}

One easily observes the following lemma.

\begin{lemma}\label{LL3}
Let $X$ be a perfect algebraic stack in the sense of Zhu over $k$. Then the diagonal $\Delta:X\rightarrow X\times X$ is $2$-perfect. Thus, $X$ is relatively $2$-perfect and perfect.
\end{lemma}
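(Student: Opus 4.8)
The plan is to recognise condition $(1)$ in the rewritten definition of a Zhu perfect algebraic stack as one of the equivalent characterisations of a $2$-perfect diagonal, and then to pass to perfectness through the equivalences of $2$-categories proved in \S\ref{B3} and \S\ref{B4}. Throughout I work over the base $k$ (i.e.\ take $S=k$), so that ``over $k$'' and ``over $S$'' agree. If $X$ is empty the statement is trivial, so I assume $X\neq\emptyset$; then, being an algebraic stack over $k$, $X$ admits a scheme atlas, which is a $k$-scheme and hence an $\mathbb{F}_{p}$-scheme, so $X$ has characteristic $p$ in the sense of this paper. Note that only the requirement that $X$ be an algebraic stack over $k$ together with condition $(1)$ will be used; condition $(2)$ plays no role here.

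I would first show that the diagonal $\Delta\colon X\to X\times X$ is $2$-perfect by verifying condition $(3)$ of the proposition characterising $2$-perfect diagonals (the one following Proposition \ref{P1}): that is, for every perfect scheme $U\in\ObSchS$ and every $x\in\Ob(X_{U})$, the associated $1$-morphism $x\colon\SchU\to X$ is $2$-perfect. Unwinding Definition \ref{A2}, this amounts to the assertion that for every perfect scheme $V\in\ObSchS$ and every $y\in\Ob(X_{V})$ the $2$-fibre product $\SchV\times_{y,X}\SchU$ is representable by a perfect algebraic space over $V$. But $\SchV\times_{y,X}\SchU\cong\cals_{V\times_{y,X,x}U}$, and condition $(1)$ of the rewritten definition, applied to the perfect schemes $V$ and $U$ and the objects $y\in\Ob(X_{V})$, $x\in\Ob(X_{U})$, says precisely that the $2$-fibre product $V\times_{y,X,x}U$ is a perfect algebraic space over $k$; viewing it over $V$ via the first projection---perfectness of an algebraic space being an absolute property---is exactly what is required. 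Hence $\Delta$ is $2$-perfect, i.e.\ $X$ is relatively $2$-perfect by Definition \ref{D1}(2).

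To finish, I would note that since every $2$-perfect $1$-morphism is $1$-perfect, the relatively $2$-perfect stack $X$ is relatively $1$-perfect, whence Proposition \ref{P11}---applicable because $X$ is an algebraic stack in characteristic $p$---gives that $X$ is perfect. The only non-formal ingredient in this last step is Proposition \ref{P11}, which itself rests on the algebraic Frobenius descent developed in \S\ref{B3}; inside the present lemma everything else is a matching of definitions. Accordingly the main point requiring care is to keep straight, when identifying condition $(1)$ with the statement ``$x\colon\SchU\to X$ is $2$-perfect'', the roles of the two perfect schemes and of the two objects, together with the fact that representability ``over $V$'' only asks for a structure morphism to $V$ and imposes no further relative condition on the (absolutely perfect) algebraic space obtained.
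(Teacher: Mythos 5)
Your proposal is correct and follows the same route as the paper's own (very terse) proof: the paper simply asserts that relative $2$-perfectness is "clear by definitions" and then passes to relative $1$-perfectness and invokes Proposition \ref{P11}, exactly as you do. Your version merely supplies the definition-unwinding (matching condition (1) of the rewritten Zhu definition with the characterisation of $2$-perfect diagonals) and the characteristic-$p$ check that the paper leaves implicit.
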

\begin{proof}
It is clear by definitions that $X$ is relatively 2-perfect. Therefore, $X$ is also relatively 1-perfect. Then it follows from Proposition \ref{P11} that $X$ is perfect.
\end{proof}

Let $\mathcal{Z}AS_{k}^{pf}$ be the $2$-category of perfect algebraic stacks in the sense of Zhu over $k$. It is defined as follows.
\begin{enumerate}
  \item
Its objects will be perfect algebraic stacks in the sense of Zhu over $k$.
  \item
Its $1$-morphisms will be functors of categories over $(Sch/k)^{pf}_{\textit{fpqc}}$.
  \item
Its $2$-morphisms will be transformations between functors over $(Sch/k)^{pf}_{\textit{fpqc}}$.
\end{enumerate}

Then we have the following strings of inclusion 2-functors
\begin{align}
&\mathcal{Z}AS_{k}^{pf}\subset\underline{\textrm{Perf}}AStack^{2}_{k}\subset\underline{\mathcal{Q}\textrm{Perf}}AStack^{2}_{k}\subset\underline{\mathcal{S}\textrm{Perf}}AStack^{2}_{k}\subset\underline{\mathcal{ST}\textrm{Perf}}AStack^{2}_{k}, \\
&\mathcal{Z}AS_{k}^{pf}\subset\underline{\textrm{Perf}}AStack^{2}_{k}\subset\underline{\textrm{Perf}}AStack^{1}_{k}={\rm{Perf}}AStack_{k}.
\end{align}

The following theorem specifies the equivalence between Zhu's perfect algebraic stacks and our perfect algebraic stacks.
\begin{theorem}
Let $X$ be an algebraic stack over $k$. Then $X$ is perfect in the sense of Zhu if and only if the following statements are satisfied:
\begin{enumerate}
  \item
$X$ is relatively $2$-perfect.
  \item
All associated algebraic spaces of the diagonal $\Delta:X\rightarrow X\times X$ is a perfect algebraic space $U$.
  \item
There exists a surjective perfectly smooth $1$-morphism $U\rightarrow X$.
\end{enumerate}
\end{theorem}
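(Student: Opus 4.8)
The plan is to argue directly from the reformulated description of Zhu-perfect algebraic stacks recorded just above, using three tools: the characterization of relatively $2$-perfect algebraic stacks through the presheaves $\textit{Isom}(x,y)$ (the $2$-perfect analogue of Proposition~\ref{P1}); the stability of perfect algebraic spaces under fibre products along arbitrary morphisms, \cite[Proposition 3.7]{Liang}; and the identification of perfect algebraic spaces with Zhu's perfect algebraic spaces, \cite[Theorem 6.6]{Liang1}. The one elementary ingredient I will need is that the fibre product $U\times_{k}V$ of two perfect $k$-schemes is again a perfect $k$-scheme: the Frobenius of $A\otimes_{k}B$ is surjective as soon as it is surjective on $A$ and on $B$, and over the perfect field $k$ a tensor product of reduced $k$-algebras is reduced, so this Frobenius is also injective.

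For the necessity direction I would assume $X$ is perfect in the sense of Zhu. Condition~(1) is then immediate from Lemma~\ref{LL3}, and condition~(3) is part of the reformulated definition once we use \cite[Theorem 6.6]{Liang1} to regard the Zhu-perfect covering algebraic space as a perfect algebraic space. For condition~(2), I would observe that every associated $2$-perfect algebraic space of $\Delta\colon X\to X\times X$ has the form $\textit{Isom}_{W}(a,b)$ for a perfect $k$-scheme $W$ and $a,b\in\Ob(X_{W})$, and that $\textit{Isom}_{W}(a,b)$ is the base change of $W\times_{a,X,b}W$ along the diagonal $W\to W\times W$. Since $W\times_{a,X,b}W$ is a perfect algebraic space by the reformulated definition and $W$ is a perfect scheme, \cite[Proposition 3.7]{Liang} then yields that $\textit{Isom}_{W}(a,b)$ is a perfect algebraic space, which is condition~(2).

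For the sufficiency direction I would assume (1), (2), (3) and verify the two clauses of the reformulated definition. Its covering clause is exactly condition~(3), again invoking \cite[Theorem 6.6]{Liang1}. For its diagonal clause, given perfect $k$-schemes $U,V$ and objects $x\in\Ob(X_{U})$, $y\in\Ob(X_{V})$, I would use the standard identification $U\times_{x,X,y}V\cong\textit{Isom}_{U\times_{k}V}(\mathrm{pr}_{U}^{*}x,\mathrm{pr}_{V}^{*}y)$; since $U\times_{k}V$ is a perfect scheme, the right-hand side is an associated $2$-perfect algebraic space of $\Delta$, hence is a perfect algebraic space by condition~(2) — equivalently, by condition~(1) together with the $\textit{Isom}$-characterization of relatively $2$-perfect stacks. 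Thus $U\times_{x,X,y}V$ is a perfect algebraic space, which is the remaining clause, so $X$ is perfect in the sense of Zhu.

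I expect the main obstacle to be purely organizational: correctly matching the notion ``associated algebraic spaces of the diagonal'' with the $2$-fibre products $U\times_{x,X,y}V$ appearing in Zhu's condition, and checking that the auxiliary schemes $W$ and $U\times_{k}V$ are genuinely perfect so that the earlier $\textit{Isom}$-characterizations of relatively $2$-perfect stacks apply. All the geometric content is carried by \cite[Proposition 3.7]{Liang}, \cite[Theorem 6.6]{Liang1}, and Lemma~\ref{LL3}; no new argument about algebraic spaces is required.
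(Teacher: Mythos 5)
Your proposal is correct and follows essentially the same route as the paper, whose proof is simply the one-line citation of Lemma~\ref{LL3}, \cite[Theorem 6.6]{Liang1}, and the definitions; you are merely unpacking that citation, using the $\textit{Isom}$-characterization of relatively $2$-perfect stacks and \cite[Proposition 3.7]{Liang} to make the diagonal clause explicit. The only caveat is that condition~(2) must be read as ranging over the associated \emph{$2$-perfect} algebraic spaces of $\Delta$ (i.e.\ over perfect test schemes), exactly as you do — over arbitrary test schemes the claim would fail — and under that reading your argument is complete.
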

\begin{proof}
This is clear by Lemma \ref{LL3}, \cite[Theorem 6.6]{Liang1}, and definitions.
\end{proof}

\end{document}